\newcommand\cone{{\mathcal C}}
\newcommand\ccup{\sqcup}
\newcommand\bo{<}
\newcommand\wbo{\lessdot}
\newcommand\inv{^{-1}}
\newcommand\KoL{{\mathcal L}}\newcommand\KoE{{\mathcal E}}\newcommand\KoR{{\mathcal R}}
\newcommand\End{\operatorname{End}}\newcommand\Ho{\operatorname{H}}
\newcommand\lleq{{\preccurlyeq}}\newcommand\ggeq{{\succcurlyeq}}\newcommand\sggeq{{\succ}}\newcommand\slleq{{\prec}}
\newcommand\nggeq{{\not\hspace{-0.5ex}\ggeq}}\newcommand\nlleq{{\not\hspace{-0.5ex}\lleq}}
\newcommand\G{\operatorname{Gr}}
\newcommand\AAf{{\Omega}}
\newcommand\SO{\operatorname{SO}}\newcommand\Sp{\operatorname{Sp}}\newcommand\SL{\operatorname{SL}}\newcommand\GL{\operatorname{GL}}
\newcommand\kbprod{{\odot_0}}\newcommand\bkprod{\kbprod}
\newcommand\Ker{\operatorname{Ker}}\newcommand\tr{\operatorname{tr}}\renewcommand\Im{\operatorname{Im}}
\newcommand\Ad{\operatorname{Ad}}
\newcommand\Hom{\operatorname{Hom}}
\newcommand\rk{\operatorname{rk}}
\newcommand\Greg{^{X(Z)-{\rm reg}}}
\newcommand\Id{\operatorname{Id}}
\newcommand\lieg{{\mathfrak g}}
\newcommand\liel{{\mathfrak l}}
\newcommand\lk{{\mathfrak k}}
\newcommand\lp{{\mathfrak p}}\newcommand\lu{{\mathfrak u}}\newcommand\lr{{\mathfrak r}}
\newcommand\CC{{\mathbb C}}\newcommand\PP{{\mathbb P}}\newcommand\ZZ{{\mathbb
    Z}}\newcommand\NN{{\mathbb N}}\newcommand\RR{{\mathbb R}}
\newcommand\QQ{{\mathbb Q}}
\newcommand\longto{\longrightarrow}
\newcommand\Sym{{\mathcal S}}
\newcommand\Li{{\mathcal L}}
\newtheorem{lemma}{Lemma}
\newtheorem{conj}{Conjecture}
\newtheorem{prop}{Proposition}
\newtheorem{theo}{Theorem}
\newtheorem{coro}{Corollary}
\newenvironment{proof}{{\noindent\bf Proof.}}{\hfill $\square$}
\newenvironment{defin}{{\noindent\bf Definition.}}{}
\newenvironment{remark}{{\noindent\bf Remark.}}{}
\newenvironment{exple}{{\noindent\bf Example.}}{}
\newcounter{paragrafsubsub}[subsubsection]
\renewcommand{\theparagrafsubsub}{%
\thesubsubsection.\roman{paragrafsubsub}}
\newcommand{\paragrafsubsub}{%
\refstepcounter{paragrafsubsub}
{\bf \theparagrafsubsub}\hspace{0.2em}--- }
\newcounter{paragrafsub}[subsection]
\renewcommand{\theparagrafsub}{\thesubsection.\arabic{paragrafsub}}
\newcommand{\paragrafsub}{%
\refstepcounter{paragrafsub}
{\bf \theparagrafsub}\hspace{0.2em}--- }
\newcounter{paragraf}[section]
\renewcommand{\theparagraf}{\thesection.\arabic{paragraf}}
\newcommand{\paragraf}{%
\refstepcounter{paragraf}
{\bf \theparagraf}\hspace{0.2em}--- }
\newcommand\paragraphe{%
\par \indent
\ifcase\value{subsection} %
\paragraf
\else
\ifcase\value{subsubsection}\paragrafsub %
\else\paragrafsubsub
\fi\fi
}
\begin{document}
\title{Distributions on homogeneous spaces and applications}

\author{N. Ressayre\footnote{Universit{\'e} Montpellier II - 
CC 51-Place Eug{\`e}ne Bataillon -
34095 Montpellier Cedex 5 -
France - {\tt ressayre@math.univ-montp2.fr}}}

\maketitle

\begin{abstract}
Let $G$ be a complex semisimple algebraic group. 
In 2006, Belkale-Kumar defined a new product $\bkprod$ on the
cohomology group $\Ho^*(G/P,\CC)$ of any projective $G$-homogeneous
space $G/P$.
Their definition uses the notion of Levi-movability for triples of
Schubert varieties in $G/P$.

In this article, we introduce a family of $G$-equivariant subbundles
of the tangent bundle of $G/P$ and the associated filtration of the De
Rham complex of $G/P$ viewed as a manifold. 
As a consequence one gets a filtration of the ring $\Ho^*(G/P,\CC)$
and prove that $\bkprod$ is the associated graded product.
One of the aim of this more intrinsic construction of $\bkprod$ is
that there is a natural notion of fundamental class
$[Y]_\bkprod\in(\Ho^*(G/P,\CC),\bkprod)$ 
for any irreducible subvariety $Y$ of $G/P$.

Given two Schubert classes $\sigma_u$ and $\sigma_v$ in
$\Ho^*(G/P,\CC)$, we define a subvariety $\Sigma_u^v$ of $G/P$. 
This variety should play the role of the Richardson variety; more
precisely, we conjecture that
$[\Sigma_u^v]_\bkprod=\sigma_u\bkprod\sigma_v$.
We give some evidence for this conjecture,  and prove special cases.

Finally, we use the subbundles of $TG/P$ to give a geometric
characterization of the $G$-homogeneous locus of any Schubert
subvariety of $G/P$.
\end{abstract}

\section{Introduction}

Let $G$ be a complex semisimple group and let $P$ be a parabolic subgroup
of $G$. 
In this paper, we are interested in the Belkale-Kumar product $\bkprod$
on the cohomology group of the flag variety $G/P$.

\bigskip
\noindent
{\bf The Belkale-Kumar product.}
Fix a maximal torus $T$ and a Borel subgroup $B$ such that
$T\subset B\subset P$. Let $W$ and $W_P$ denote respectively the Weyl groups of $G$
and $P$.
Let $W^P$ be the set of  minimal length representative in the cosets of
$W/W_P$.
For any $w\in W^P$, let $X_w$ be the corresponding Schubert variety 
(that is, the closure of $BwP/P$) and let $[X_w]\in \Ho^*(G/P,\CC)$ be its 
cohomology class.
  The structure coefficients $c_{uv}^w$ of the cup product are written as
  \begin{eqnarray}
    \label{eq:defc}
    [X_u].[X_v]=\sum_{w\in W^P}c_{uv}^w[X_w].
  \end{eqnarray}
Let $L$ be the Levi subgroup of $P$ containing $T$. This group acts on the
tangent space $T_{P/P}G/P$ of $G/P$ at the  base point $P/P$.
Moreover, this action is multiplicity free and we have a unique
decomposition 
\begin{eqnarray}
  \label{eq:decgp}
  T_{P/P}G/P=V_1\oplus\cdots\oplus V_s,
\end{eqnarray}
as sum of irreducible $L$-modules. 
It turns out that, for any $w\in W^P$, the tangent space 
$T_w:=T_{P/P}w^{-1}X_w$ of the variety $w^{-1}X_w$
at the smooth point $P/P$ decomposes as 
\begin{eqnarray}
  \label{eq:decTw}
  T_w=(V_1\cap T_w)\oplus\cdots\oplus (V_s\cap T_w).
\end{eqnarray}
Set $T^i_w:=T_w\cap V_i$.
Since $[X_w]$ has degree $2(\dim(G/P)-\dim(T_w))$ in the graded algebra
$\Ho^*(G/P)$, if $c_{uv}^w\neq 0$ then
\begin{eqnarray}
  \label{eq:conddim}
  \dim(T_u)+\dim(T_v)=\dim(G/P)+\dim(T_w),
\end{eqnarray}
or equivalently
\begin{eqnarray}
  \label{eq:conddim2}
  \sum_{i=1}^s\bigg (\dim(T_u^i)+\dim(T_v^i)\bigg)=
\sum_{i=1}^s\bigg(\dim(V_i)+\dim(T_w^i)\bigg).
\end{eqnarray}
The Belkale-Kumar product requires  the  equality~\eqref{eq:conddim2} to
hold term by term.
More precisely, the structure constants $\tilde c_{uv}^w$ of the Belkale-Kumar product \cite{BK},
\begin{eqnarray}
    \label{eq:defctilde1}
    [X_u]\bkprod [X_v]=\sum_{w\in W^P}\tilde c_{uv}^w[X_w]
  \end{eqnarray}
can be defined as follows (see \cite[Proposition~2.4]{RR}):
\begin{eqnarray}
  \label{eq:defctilde2}
  \tilde c_{uv}^w=\left\{
    \begin{array}{ll}
      c_{uv}^w &\mbox{ if } \forall 1\leq i\leq s\ \ \dim(T^i_u)+\dim(T^i_v)=
\dim(V_i)+\dim(T^i_w),\\
0 &\mbox{ otherwise.}
    \end{array}
\right .
\end{eqnarray}
The product $\bkprod$ defined in such a way is  associative and satisfies Poincar\'e duality. 
The Belkale-Kumar product was proved to be the
more relevant product for describing the Littlewood-Richardson cone 
(see \cite{BK,GITEigen,GITEigen2}).
%The product $\bkprod$ is associative and satisfies Poincar\'e duality.

\bigskip
\noindent
{\bf Motivations.}
If $G/P$ is cominuscule then $T_{P/P}G/P$ is an irreducible $L$-module 
(that is, $s=1$).
In this case, the Belkale-Kumar product is simply the cup product.
This paper is motivated by the guess that several known results for 
cominuscule $G/P$ could be generalized to any $G/P$  using
the Belkale-Kumar product.
In particular, it might be a first step toward a positive geometric
uniform combinatorial rule for computing the coefficients $\tilde c_{uv}^w$.
Indeed, we define a subvariety $\Sigma_u^v$ which is encoded by combinatorial datum
(precisely a subset of roots of $G$).
We also define a Belkale-Kumar fundamental class $[\Sigma_u^v]_{\bkprod}$ and 
conjecture that $[\Sigma_u^v]_{\bkprod}=[X_u]\bkprod [X_v]$. 
% This conjecture is true for $G/B$ if $G$ is simple of type $A$, $B$, or $C$.
% It will be proved for the Grassmannians in a forthcoming work.

\bigskip
\noindent
{\bf A geometric construction of the Belkale-Kumar ring.}
The first aim of this paper is to give a geometric construction of the 
Belkale-Kumar ring which does not deal  with the Schubert basis.
Consider the connected center $Z$ of $L$ and its character group $X(Z)$. 
The
Azad-Barry-Seitz theorem (see \cite{AzBaSe}) asserts that each $V_i$ in 
the  decomposition~\eqref{eq:decgp} is an isotipical component for the action 
of $Z$ associated to some  weight denoted by $\alpha_i\in X(Z)$.
The group $P$ acts on $T_{P/P}G/P$ but does not stabilize the
decomposition~\eqref{eq:decgp}. But, the group $X(Z)$ is endowed with 
 a partial order $\ggeq$ (see Section~\ref{sec:GP} for
 details), such that for any $\alpha\in X(Z)$
the sum
\begin{eqnarray}
  \label{eq:sum}
  V^{\ggeq\alpha}:=\oplus_{\alpha_i\ggeq\alpha}V_i
\end{eqnarray}
is $P$-stable. 
Since $V^{\ggeq\alpha}$ is $P$-stable, it induces a $G$-homogeneous subbundle
$T^{\ggeq\alpha}G/P$ of the tangent bundle $TG/P$. 
We obtain a family of distributions indexed by $X(Z)$.
This family  forms a decreasing multi-filtration: if $\alpha\ggeq\beta$ then
$T^{\ggeq\alpha}G/P$ is a subbundle of $T^{\ggeq\beta}G/P$. Moreover, 
these distributions are globally integrable in the sense that 
\begin{eqnarray}
  \label{eq:integ}
  [T^{\ggeq\alpha}G/P,T^{\ggeq\beta}G/P]\subset T^{\ggeq\alpha+\beta}G/P.
\end{eqnarray}
This allows us to define a filtration (``à la Hodge'') of the De Rham complex 
and so of the algebra $\Ho^*(G/P,\CC)$ indexed by the group $X(Z)\times\ZZ$.
We consider the associated graded algebra.

\begin{theo}
  \label{th:BKprodfil}
 The $(X(Z)\times\ZZ)$-graded algebra $\G \Ho^*(G/P,\CC)$ associated to the 
 $(X(Z)\times\ZZ)$-filtration is  isomorphic to the Belkale-Kumar
  algebra $(\Ho^*(G/P,\CC),\bkprod)$.
\end{theo}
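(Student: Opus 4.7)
My plan is to lift the $X(Z)$-filtration of the tangent bundle to the de Rham complex, check that it is preserved by $d$ and is multiplicative under $\wedge$, and then locate the Schubert basis inside the associated graded algebra.

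For each $\alpha\in X(Z)$ and $n\geq 0$, I define $F^{\ggeq\alpha}\Omega^n(G/P)$ as the space of $n$-forms $\omega$ such that $\omega(X_1,\dots,X_n)=0$ whenever $X_j\in T^{\ggeq\beta_j}G/P$ with $\beta_1+\cdots+\beta_n\nggeq\alpha$. This is a $G$-invariant, decreasing, $(X(Z)\times\ZZ)$-indexed filtration, and wedge product is immediately multiplicative: $F^{\ggeq\alpha}\wedge F^{\ggeq\beta}\subset F^{\ggeq\alpha+\beta}$. Cartan's formula
\[
d\omega(X_0,\dots,X_n)=\sum_i(-1)^iX_i\cdot\omega(\dots)+\sum_{i<j}(-1)^{i+j}\omega([X_i,X_j],\dots),
\]
combined with the integrability \eqref{eq:integ}, forces $d$ to preserve $F^{\ggeq\alpha}$: the bracket $[X_i,X_j]\in T^{\ggeq\beta_i+\beta_j}G/P$ keeps the total $X(Z)$-weight of the arguments of $\omega$ unchanged. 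The filtration therefore descends to $\Ho^*(G/P,\CC)$, yielding, after passage to graded, a bigraded algebra $\G \Ho^*(G/P,\CC)$.

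Next I locate each Schubert class in the filtration. For $w\in W^P$, set $\mu_w:=\sum_{i=1}^s(\dim V_i-\dim T_w^i)\alpha_i\in X(Z)$. The claim is that $[X_w]\in F^{\ggeq\mu_w}\Ho^*$ has nonzero image in the bigraded piece $(\mu_w,2\operatorname{codim}X_w)$. The inclusion follows by representing $[X_w]$ via a Thom form of a tubular neighborhood of the smooth locus of $X_w$: the conormal at the generic point $wP/P$ decomposes as $\bigoplus_i(V_i/T_w^i)^*$, each summand of filtration weight $\alpha_i$, so the Poincar\'e-dual top form has total $X(Z)$-weight $\mu_w$. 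Nontriviality in the graded follows from \emph{Schubert compatibility} of the filtration, namely the identification $F^{\ggeq\alpha}\Ho^*=\operatorname{span}\{[X_w]:\mu_w\ggeq\alpha\}$, which I would obtain by a dimension count on the spectral sequence associated to $(F^{\ggeq\bullet}\Omega^*,d)$: the $E_0$-page is the exterior algebra of the graded of the cotangent bundle, its cohomology has total dimension $|W^P|$, and the spectral sequence degenerates by a Deligne-type argument, so no Schubert class can drop to a strictly larger filtration index.

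It remains to match the graded product with $\bkprod$. Expanding $[X_u]\cdot[X_v]=\sum_w c_{uv}^w[X_w]$ inside $F^{\ggeq\mu_u+\mu_v}\Ho^*$, only the $w$ with $\mu_w=\mu_u+\mu_v$ and $\operatorname{codim}X_w=\operatorname{codim}X_u+\operatorname{codim}X_v$ survive in the bigraded piece of bidegree $(\mu_u+\mu_v,2(\operatorname{codim}X_u+\operatorname{codim}X_v))$; the degree condition is \eqref{eq:conddim}. The relation $\mu_w=\mu_u+\mu_v$ reads $\sum_i(\dim T_u^i+\dim T_v^i-\dim V_i-\dim T_w^i)\alpha_i=0$; combined with the overall dimension identity \eqref{eq:conddim2} and with the fact that, for $w$'s actually contributing to the cup product, these coefficients share a definite sign (a consequence of Schubert compatibility, which forces $\mu_w\ggeq\mu_u+\mu_v$ whenever $c_{uv}^w\neq 0$), this forces the term-by-term vanishing, i.e.\ the BK condition \eqref{eq:defctilde2}. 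Thus the graded structure constants are exactly the $\tilde c_{uv}^w$, and the map $[X_w]\mapsto\overline{[X_w]}$ is the desired algebra isomorphism.

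The main obstacle is the Schubert compatibility asserted in the third paragraph: proving that each $[X_w]$ survives in the associated graded at exactly level $\mu_w$ and that these images remain linearly independent. Without it one loses both the nonvanishing of graded structure constants and the passage from the $X(Z)$-weight equation to the term-by-term BK condition (since the $\alpha_i$'s need not be linearly independent in $X(Z)$). A robust route is to refine everything to $Z$-equivariant cohomology, where the filtration becomes a grading and each Schubert class carries an explicitly computable $Z$-character equal to $\mu_w$.
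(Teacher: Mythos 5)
Your overall strategy is the same as the paper's: filter the de Rham complex by the distributions, use integrability plus Cartan's formula so that $d$ preserves the filtration (this part is fine and matches Propositions~\ref{prop:filform1} and~\ref{prop:filform2}), prove that each filtration subspace of $\Ho^*(G/P,\CC)$ is spanned by the Schubert classes it contains, and compare graded structure constants with $\tilde c_{uv}^w$. But the two points you yourself flag as delicate are genuine gaps, and the routes you sketch do not close them. For Schubert compatibility, your Thom-form argument only examines the conormal space at the generic point $wP/P$, whereas membership in $F^{\lleq\tilde\beta}\AAf^p$ is a pointwise vanishing condition at \emph{every} point where the form is supported; over the non-$X(Z)$-regular locus of $X_w$ the tangent/conormal spaces decompose differently and nothing guarantees the Thom form stays in the filtration there. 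The paper circumvents this with Kostant's harmonic representative $\omega_w$: being $K$-invariant, it need only be tested at $P/P$, i.e.\ on $s_w\in(\wedge^\bullet\lr^*)^L$, and the weight bound comes from the explicit operators ($h_w$ has weight $((\rho-w^{-1}\rho)_{|Z},-p)$ and $\KoR$ strictly lowers the $X(Z)$-weight, Lemma~\ref{lem:Rfilt}). Your ``Deligne-type degeneration'' of the spectral sequence is not carried out and is not what the paper does either: the matching upper bound on $\dim F^{\lleq\tilde\beta}\Ho^p$ comes from an elementary Poincar\'e-duality count, pairing against integration over the $X_{w^\vee}$ via Lemmas~\ref{lem:intF} and~\ref{lem:calcul}.

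The second gap is the passage from the single weight equation to the term-by-term Belkale--Kumar condition. From $c_{uv}^w\neq0$, $\sum_i c_i\alpha_i=0$ and $\sum_i c_i=0$ (with $c_i=\dim T_u^i+\dim T_v^i-\dim V_i-\dim T_w^i$) one cannot conclude $c_i=0$ for all $i$: the $\alpha_i$ are distinct but may satisfy affine relations with mixed signs. Your proposed fix --- that the $c_i$ share a definite sign ``as a consequence of Schubert compatibility'' --- does not follow from that compatibility: together with multiplicativity of the filtration it only yields $\sum_i c_i\alpha_i\in\cone$, i.e.\ cone-order information on the \emph{sum}, not a sign on each coefficient. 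This term-by-term statement is a genuinely nontrivial external input; the paper obtains it from the Belkale--Kumar numerical criterion for Levi-movability (\cite[Theorem~15]{BK}) combined with Proposition~\ref{Prop_Dim}. Without that input (or the $Z$-equivariant refinement you mention only as a possibility), the identification of the graded structure constants with $\tilde c_{uv}^w$ is not established.
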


 The first step to get Theorem~\ref{th:BKprodfil} is to give it a precise sense
defining the orders on $X(Z)$ and $X(Z)\times\ZZ$ and the filtrations.
The key point to prove the isomorphism is that the Schubert basis $([X_w])_{w\in W^P}$ of
$\Ho^*(G/P,\CC)$ is adapted to the filtration. Indeed each subspace of the filtration
is spanned by the Schubert classes it contains.
To obtain this result, we use  Kostant's harmonic forms
\cite{Kostant:harmform1}. 
 
Theorem~\ref{th:BKprodfil} is closed to \cite[Theorem~43]{BK} obtained
by Belkale-Kumar.
In \cite{BK}, the filtration on $\Ho^*(G/P,\CC)$ is defined using the
Schubert basis. On the other hand, the filtration on
$\Ho_{DR}^*(G/P,\CC)$ is defined using Kostant's $K$-invariant forms
(where $K$ is a compact form of $G$). 
Here, the filtration is defined independently of any basis or any
choice of a compact form of $G$.

This ``intrinsic'' definition of the Belkale-Kumar also gives a
pleasant interpretation of the functoriality result of
\cite[Theorem~1.1]{RR}. Indeed, let $\tau$ be a one-parameter subgroup of $Z$
such that 
$$
\forall \alpha\in X(Z)\qquad \alpha\ggeq 0\,\Rightarrow\,\langle
\tau,\alpha\rangle\geq 0,
$$
and 
\begin{equation}
  \label{eq:taureg}
\forall 1\leq i\neq j\leq s\qquad \langle
\tau,\alpha_i\rangle\neq \langle
\tau,\alpha_j\rangle.
\end{equation}
Setting for any $n\in\ZZ$
$$
V^{\geq n}:=\oplus_{\langle
\tau,\alpha\rangle\geq n}V_i,
$$
one gets a globally integrable family of distributions on $G/P$
indexed by $\ZZ$.  
Then, one gets a $\ZZ$-filtration of the ring $\Ho^*(G/P,\CC)$. 
By \eqref{eq:defctilde1} and \eqref{eq:taureg}, the associated
$\ZZ$-graded  ring is isomorphic to $\G \Ho^*(G/P,\CC)$. 
Then, \cite[Theorem~1.1]{RR} is a direct consequence of the 
immediate lemma~\ref{lem:funct} below.

\bigskip
\noindent
{\bf A conjecture.}
The main motivation to show Theorem~\ref{th:BKprodfil} is to define the fundamental
class for the Belkale-Kumar product of any irreducible subvariety $Y$ 
of $G/P$.
This class $[Y]_{\bkprod}$ which belongs to $\G \Ho^*(G/P,\CC)$ is defined
 in Section~\ref{sec:defclassY}.

Let $w_0$ and $w_0^P$ be the longest elements of $W$ and $W_P$
respectively. 
If $v\in W^P$ then  $v^\vee:=w_0vw_0^P$ belongs to $W^P$ and $[X_{v^\vee}]$ is 
the Poincaré dual class of $[X_v]$.
Consider the weak Bruhat order $\wbo$ on $W^P$.
We are interested in the product
$[X_u]\bkprod [X_v]\in \Ho^*(G/P,\CC)$, for given $u$ and $v$ in $W^P$. 
Lemma~\ref{lem:BKprodwbo} below shows that if  $[X_u]\bkprod [X_v]\neq 0$ then $v^\vee\wbo
u$.
Assume that  $v^\vee\wbo u$ and consider the group
\begin{eqnarray}
  \label{eq:839}
  H_u^v:=u^{-1}Bu\,\cap\,w_0^Pv^{-1}Bvw_0^P.
\end{eqnarray}
It is a closed connected subgroup of $G$ containing $T$; in
particular, it can be encoded by its set $\Phi_u^v$ of roots.
Let $\Sigma_u^v$ denote the closure of the $H_u^v$-orbit of $P/P$:
\begin{eqnarray}
  \label{eq:840}
  \Sigma_u^v=\overline{H_u^v.P/P}.
\end{eqnarray}

Another characterization of this subvariety is given by the following
statement.

\begin{prop}
\label{prop:defYuv}
  The variety $\Sigma_u^v$ is the unique irreducible component of the intersection
$u^{-1}X_u\cap w_0^Pv^{-1}X_v$ containing $P/P$. 
Moreover, this intersection is transverse along $\Sigma_u^v$.
\end{prop}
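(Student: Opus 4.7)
I would work at the smooth point $P/P$, compute the two tangent spaces there, reduce transversality at $P/P$ to a root-theoretic inclusion equivalent to the hypothesis $v^\vee\wbo u$, and then propagate the local identification to all of $\Sigma_u^v$ via the $H_u^v$-action.

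\textbf{Smoothness and tangent spaces at $P/P$.} Since $u\in W^P$, the Schubert cell $BuP/P$ is open and smooth in $X_u$, so $P/P\in u^{-1}BuP/P$ is smooth in $u^{-1}X_u$. A representative of $w_0^P\in W_P$ can be chosen inside $P$, hence $w_0^P\cdot P/P=P/P$ and $P/P=w_0^Pv^{-1}\cdot vP$ lies in the translated open cell $w_0^Pv^{-1}(BvP/P)$, smooth in $w_0^Pv^{-1}X_v$. Setting $\lb_u:=\operatorname{Lie}(u^{-1}Bu)$ and $\lb_v':=\operatorname{Lie}(w_0^Pv^{-1}Bvw_0^P)$, the two tangent spaces are $T_1:=T_{P/P}(u^{-1}X_u)=(\lb_u+\lp)/\lp$ and $T_2:=T_{P/P}(w_0^Pv^{-1}X_v)=(\lb_v'+\lp)/\lp$. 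As $\lb_u$, $\lb_v'$, $\lp$ are all $T$-stable direct sums of $\lt$ and root spaces, the intersection splits on weights, giving $T_1\cap T_2=((\lb_u\cap\lb_v')+\lp)/\lp=(\operatorname{Lie}(H_u^v)+\lp)/\lp=T_{P/P}\Sigma_u^v$.

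\textbf{Transversality at $P/P$.} The transversality $T_1+T_2=T_{P/P}G/P$ amounts to $\lb_u+\lb_v'+\lp=\lieg$, equivalently to the root inclusion that every $\alpha\in\Phi^-\setminus\Phi_L^-$ lies in $u^{-1}\Phi^+\cup w_0^Pv^{-1}\Phi^+$. Writing $\beta:=-\alpha\in\Phi^+\setminus\Phi_L^+$, this becomes: for every such $\beta$, $u\beta\in\Phi^+$ implies $vw_0^P\beta\in\Phi^-$. Since $w_0$ reverses signs, $vw_0^P\beta\in\Phi^-\iff w_0vw_0^P\beta\in\Phi^+\iff\beta\notin\operatorname{Inv}(v^\vee)$, where $\operatorname{Inv}(w):=\{\gamma\in\Phi^+:w\gamma\in\Phi^-\}$. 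Hence transversality at $P/P$ is equivalent to $\operatorname{Inv}(v^\vee)\subset\operatorname{Inv}(u)$, which is precisely the hypothesis $v^\vee\wbo u$.

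\textbf{Globalization and main obstacle.} Transversality at $P/P$ makes the intersection $u^{-1}X_u\cap w_0^Pv^{-1}X_v$ smooth there with tangent space $T_1\cap T_2=T_{P/P}\Sigma_u^v$, so exactly one irreducible component $C$ of the intersection passes through $P/P$. On the other hand $H_u^v\subset u^{-1}Bu$ and $H_u^v\subset w_0^Pv^{-1}Bvw_0^P$ give $H_u^v\cdot P/P$ inside both translates, whence $\Sigma_u^v\subset C$. Since $P/P$ lies in the open $H_u^v$-orbit, it is also smooth in $\Sigma_u^v$, and equality of tangent spaces at $P/P$ forces $\dim\Sigma_u^v=\dim C$ and therefore $\Sigma_u^v=C$. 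Transversality along $\Sigma_u^v$ then follows from $H_u^v$-equivariance of both Schubert translates, which propagates the transversality at $P/P$ to the dense open orbit $H_u^v\cdot P/P\subset\Sigma_u^v$. The main obstacle is the inversion-set step: one must precisely match $v^\vee\wbo u$ with the root inclusion that makes $\lb_u+\lb_v'+\lp$ exhaust $\lieg$; the Poincar\'e involution $v\mapsto v^\vee=w_0vw_0^P$ is engineered exactly so that this identification works, after which the rest is formal tangent-space algebra, smoothness of transverse intersections, and equivariance.
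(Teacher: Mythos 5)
Your proof is correct, but it takes a genuinely different route from the paper. You argue purely infinitesimally at the smooth point $P/P$: you compute $T_{P/P}(u^{-1}X_u)=(\lb_u+\lp)/\lp$ and $T_{P/P}(w_0^Pv^{-1}X_v)=(\lb_v'+\lp)/\lp$, observe that $T$-stability makes both the intersection and the sum split over root spaces, and translate the surjectivity $\lb_u+\lb_v'+\lp=\lieg$ into the inversion-set inclusion $\Phi(v^\vee)\subset\Phi(u)$, i.e.\ $v^\vee\wbo u$ (the paper's Lemma~\ref{lem:wboPhi}); smoothness of the scheme-theoretic intersection at $P/P$ then yields uniqueness of the component, the identification with $\overline{H_u^v\cdot P/P}$ by a dimension count, and $H_u^v$-equivariance spreads transversality over the dense orbit. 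The paper instead obtains the proposition as the case $y=u$ of the local structure Theorem~\ref{th:structloc}, which exhibits a $T$-equivariant product decomposition $U(y')\times \overset{\circ}I{}_u^{y^\vee}(y)\simeq\overset{\circ}I{}_u^v(y)$ of a neighborhood of $P/P$ by successively factoring unipotent groups via $\Phi(y)=\Phi(v^\vee)\sqcup v^\vee\Phi(y')$, and then invokes irreducibility and normality of Richardson varieties (Corollaries~\ref{lem:Ilocirred} and \ref{cor:quasihom}). Your argument is more elementary and self-contained for the statement at hand (for $y=u$ the Richardson factor degenerates to a point, so you never need Richardson varieties at all), and it makes the role of the hypothesis $v^\vee\wbo u$ completely transparent as an exact transversality criterion. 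What the paper's heavier machinery buys is strictly more: normality of the full intersection at $P/P$, and the whole family of varieties $\Sigma_u^v(y)$ for all $y$ with $v^\vee\wbo y\wbo u$, which is indispensable later in the degeneration argument of Proposition~\ref{prop:rec}. One small caveat: ``transverse along $\Sigma_u^v$'' can only mean generic transversality along that component (the Schubert translates may be singular at special points of $\Sigma_u^v$), and your equivariance argument correctly delivers exactly that on the open orbit.
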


Our main conjecture can be stated as follow.

\begin{conj}
  \label{conj1}
If  $v^\vee\wbo u$ then
$$
[\Sigma_u^v]_\bkprod=[X_u]\bkprod[X_v]
\in \G \Ho^*(G/P,\CC).
$$
\end{conj}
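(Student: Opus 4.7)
The plan is to combine the transversality statement of Proposition~\ref{prop:defYuv} with the $G$-equivariance of the $(X(Z)\times\ZZ)$-filtration used in Theorem~\ref{th:BKprodfil}. Because each subbundle $T^{\ggeq\alpha}G/P$ is $G$-homogeneous, the induced filtration on $\Ho^*(G/P,\CC)$ is preserved by $G$-translation of cycles. Consequently $[u\inv X_u]_\bkprod=[X_u]$ and $[w_0^Pv\inv X_v]_\bkprod=[X_v]$ in $\G\Ho^*(G/P,\CC)$, under the identification of Theorem~\ref{th:BKprodfil}.

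The first step is to pin down the multidegree of $[\Sigma_u^v]_\bkprod$. Since $H_u^v$ contains $T$, its Lie algebra is a sum of $T$-weight spaces, so the tangent space $T_{P/P}\Sigma_u^v$ decomposes compatibly with \eqref{eq:decgp}. Combined with the transversality of Proposition~\ref{prop:defYuv}, this forces the term-by-term equality \eqref{eq:conddim2} to be realized by $\Sigma_u^v$, placing $[\Sigma_u^v]_\bkprod$ in exactly the piece of the $(X(Z)\times\ZZ)$-filtration where $[X_u]\bkprod[X_v]$ lives.

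The second step is to relate this to the ordinary cup product. By Proposition~\ref{prop:defYuv}, the scheme-theoretic intersection $u\inv X_u\cap w_0^Pv\inv X_v$ admits a decomposition $\Sigma_u^v\cup Y_1\cup\cdots\cup Y_r$ with transversality along $\Sigma_u^v$, so in $\Ho^*(G/P,\CC)$ one has
\begin{eqnarray*}
[X_u]\cdot[X_v]=[\Sigma_u^v]+\sum_i m_i[Y_i].
\end{eqnarray*}
Passing to the associated graded ring and extracting the component of the multidegree isolated in the previous step should yield the conjectural equality, provided every $[Y_i]_\bkprod$ lies in a strictly lower piece of the $X(Z)$-filtration.

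The main obstacle is precisely this last vanishing. The residual components $Y_i$ are not in general $T$-stable, so their tangent spaces at a smooth point need not split compatibly with \eqref{eq:decgp}, and it is the expected failure of such compatibility that should push $[Y_i]_\bkprod$ into strictly lower filtration pieces. A natural attack is to use the functoriality of the filtration under a regular one-parameter subgroup $\tau\in Z$ satisfying \eqref{eq:taureg}, degenerating each $Y_i$ to a $T$-stable limit whose multidegree can be computed directly, and then showing that this multidegree is strictly dominated by that of $[X_u]\bkprod[X_v]$ in the order $\ggeq$. Handling possibly non-normal or non-reduced components, and ruling out the coincidence of multidegrees between $\Sigma_u^v$ and some $Y_i$, seem to be the substantive difficulties in this program.
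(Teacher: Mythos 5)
The statement you are proving is Conjecture~\ref{conj1}: the paper does not prove it, and your proposal does not close the gap either --- it reproduces, in outline, exactly the partial results the paper establishes (Propositions~\ref{prop:support} and~\ref{prop:zero}) and then names as ``the substantive difficulty'' precisely the point that is the open content of the conjecture. Concretely: your first step (the $T$-stability of $H_u^v$ forces $T_{P/P}\Sigma_u^v$ to split compatibly with \eqref{eq:decgp}, hence $\rho(\Sigma_u^v)=\rho(X_u)+\rho(X_v)$ and $[\Sigma_u^v]_\bkprod$ sits in the same graded piece as $[X_u]\bkprod[X_v]$) is correct and is how the paper proves $d_{uv}^w\leq\tilde c_{uv}^w$ together with the equivalence $d_{uv}^w\neq 0\iff\tilde c_{uv}^w\neq 0$. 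But your second step is where the argument stops being a proof. First, the intersection $u\inv X_u\cap w_0^Pv\inv X_v$ is only known to be proper \emph{along} $\Sigma_u^v$ (Proposition~\ref{prop:defYuv}); it need not be dimensionally proper globally, so the identity $[X_u]\cdot[X_v]=[\Sigma_u^v]+\sum_i m_i[Y_i]$ with effective correction terms is not available in the form you write it --- the paper is careful to extract only the coefficient inequality $c_{uv}^w\geq e_{uv}^w$ from properness along $\Sigma_u^v$. Second, and more seriously, the needed vanishing is not that each $[Y_i]_\bkprod$ drop to a strictly lower piece of the $X(Z)$-filtration; what is needed is that the residual cycle contribute nothing to the coefficients $\sigma_w$ with $\rho(X_w)=\rho(X_u)+\rho(X_v)$ and $l(w)=l(u)+l(v)$. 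Nothing in your setup rules out a residual component (or an excess contribution) of the same $X(Z)$-multidegree, and the paper explicitly identifies this as the remaining problem (``Conjecture~\ref{conj1} is implied by the fact that $\Sigma_u^v$ is the only irreducible component of this intersection that has the same $X(Z)$-dimension'').

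Your suggested degeneration by a regular one-parameter subgroup of $Z$ satisfying \eqref{eq:taureg} is a reasonable heuristic but is not carried out, and it is not what the paper does where it makes actual progress: Proposition~\ref{prop:rec} degenerates along \emph{root} subgroups $U_\beta$ to compare the varieties $\Sigma_u^v(y)$ for $v^\vee\wbo y\wbo u$, proving the conjecture when $\dim\Sigma_u^v\leq 2$ and showing it holds for $y=u$ if and only if it holds for all intermediate $y$; the $G/B$ case is handled separately by reduction to Conjecture~\ref{conjGB} and case-by-case verification. So your proposal should be read as a correct identification of the problem's structure plus a research plan, not as a proof; the conjecture remains open in the generality stated.
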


Write
$$
[\Sigma_u^v]_\bkprod=\sum_{w\in W^P}d_{uv}^w[X_w].
$$  
By Proposition~\ref{prop:BKclass} $d_{uv}^w$ are integers. Moreover,
Conjecture~\ref{conj1} is equivalent to $\tilde c_{uv}^w=d_{uv}^w$ for any $w\in W^P$.
The first evidence is the following weaker result.

\begin{prop}
\label{prop:support}

Then
\begin{enumerate}
\item $d_{uv}^w\neq 0\iff\tilde c_{uv}^w\neq 0$;
\item $d_{uv}^w\leq\tilde c_{uv}^w$.
\end{enumerate}
\end{prop}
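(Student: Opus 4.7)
The plan is to realize both $\tilde c_{uv}^w$ and $d_{uv}^w$ as coefficients of $[X_w]$ in classes living in the same $X(Z)$-graded piece of $\G\Ho^*(G/P,\CC)$, and derive (i) and (ii) from a Schubert-positivity statement in ordinary cohomology.

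First I would pin down the $X(Z)$-weight of $[\Sigma_u^v]$. By Proposition~\ref{prop:defYuv}, $T_{P/P}\Sigma_u^v=T_u\cap\Ad(w_0^P)(T_v)$ (with transverse intersection), so $\dim\Sigma_u^v=\dim T_u+\dim T_v-\dim(G/P)$. Since $w_0^P\in L$ preserves each $V_i$ in \eqref{eq:decgp}, the intersection respects the $V_i$-decomposition, and summing dimensions forces the term-by-term equality
\[
\dim\bigl(T_u^i\cap\Ad(w_0^P)(T_v^i)\bigr)=\dim T_u^i+\dim T_v^i-\dim V_i\qquad(\text{for all }i).
\]
Feeding this into the definition of the filtration coming from Theorem~\ref{th:BKprodfil}, one verifies that $[\Sigma_u^v]\in F^{\alpha(u)+\alpha(v)}$; consequently $[\Sigma_u^v]_{\bkprod}$ lives in the same graded piece as $[X_u]\bkprod[X_v]$, and both $d_{uv}^w$ and $\tilde c_{uv}^w$ vanish outside $\{w:\alpha(w)=\alpha(u)+\alpha(v)\}$.

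Next, for part~(ii), I would prove in ordinary cohomology that $[X_u]\cdot[X_v]-[\Sigma_u^v]$ is Schubert-effective (i.e.\ $e_{uv}^w\leq c_{uv}^w$, where $[\Sigma_u^v]=\sum_w e_{uv}^w[X_w]$). For this, I would use a deformation: choose a generic one-parameter family $(g_1(t),g_2(t))$ in $G\times G$ with $(g_1(0),g_2(0))=(u^{-1},w_0^Pv^{-1})$. For $t\neq 0$ Kleiman transversality gives a proper transverse intersection $g_1(t)X_u\cap g_2(t)X_v$ of total class $[X_u]\cdot[X_v]$. The transversality along $\Sigma_u^v$ provided by Proposition~\ref{prop:defYuv} is an open condition on $(g_1,g_2)$, so a component of the generic fiber has class $[\Sigma_u^v]$; the other components contribute the effective Schubert difference. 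Passing to the graded piece of weight $\alpha(u)+\alpha(v)$ gives $d_{uv}^w\leq\tilde c_{uv}^w$, i.e.\ part~(ii), and non-negativity immediately implies the direction $d_{uv}^w\neq 0\Rightarrow\tilde c_{uv}^w\neq 0$ of~(i).

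The main obstacle is the converse direction of~(i): $\tilde c_{uv}^w\neq 0\Rightarrow d_{uv}^w\neq 0$. Here I would combine Belkale--Kumar's Levi-movability characterization of $\tilde c_{uv}^w>0$---namely, the existence of $(l_1,l_2,l_3)\in L^3$ making $l_1X_u,l_2X_v,l_3X_{w^\vee}$ transverse at $P/P$ with tangent spaces splitting term-by-term across the $V_i$---with the termwise tangent structure of $\Sigma_u^v$ from Step~1. A natural route is to show that each other irreducible component of $u^{-1}X_u\cap w_0^Pv^{-1}X_v$ sits in a strictly smaller piece of the filtration than $\Sigma_u^v$, so its Schubert expansion involves only $[X_w]$ with $\alpha(w)\neq\alpha(u)+\alpha(v)$; the effective difference of Step~2 would then have no $w$-component at the relevant weight, forcing $\tilde c_{uv}^w=d_{uv}^w$. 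Since that stronger conclusion would prove the full Conjecture~\ref{conj1}, the weaker~(i) should admit a more modest argument: for instance, showing only at the level of the Poincar\'e pairing $\int[\cdot]\cdot[X_{w^\vee}]$ that the other components contribute zero whenever $\alpha(w)=\alpha(u)+\alpha(v)$ and $c_{uv}^w>0$, by a tangent-space analysis at generic points of those components.
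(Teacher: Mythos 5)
Your Steps 1 and 2 reproduce, in substance, the paper's own argument: the transversality of Proposition~\ref{prop:defYuv} gives $T_{P/P}\Sigma_u^v=T_u\cap w_0^PT_v$, the term-by-term dimension count identifies the $X(Z)$-weight $\rho(\Sigma_u^v)$ with the one forced by $(\rho(X_u),\rho(X_v))$, and the positivity $e_{uv}^w\leq c_{uv}^w$ (where $[\Sigma_u^v]=\sum_w e_{uv}^w\sigma_w$) follows from properness of the intersection along $\Sigma_u^v$. Combined with Proposition~\ref{prop:BKclass} (which gives $d_{uv}^w=e_{uv}^w$ when the weight condition holds and $d_{uv}^w=0$ otherwise) and with Theorem~\ref{th:iso} (which gives $\tilde c_{uv}^w=c_{uv}^w$ exactly when the weight condition holds, via the Belkale--Kumar numerical criterion), this yields assertion (ii) and the implication $d_{uv}^w\neq 0\Rightarrow\tilde c_{uv}^w\neq 0$. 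So far, so good.

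The converse implication $\tilde c_{uv}^w\neq 0\Rightarrow d_{uv}^w\neq 0$ is where your proposal has a genuine gap, and the routes you sketch are either too strong (controlling \emph{all} other components of $u\inv X_u\cap w_0^Pv\inv X_v$ would prove Conjecture~\ref{conj1} itself) or not actual arguments. The missing idea is much more local: one only needs a single witness point, and $P/P$ does the job. If $\tilde c_{uv}^w\neq 0$ then $(u,v,w^\vee)$ is Levi-movable, and the density argument of Lemma~\ref{lem:BKprodwbo} (using that the centered tangent spaces $T_u$, $T_v$, $T_{w^\vee}$ are $B_L$-stable and that the relevant open subset of $L^3$ meets a single $L$-diagonal orbit) produces a single $l\in L$ such that $u\inv X_u$, $w_0^Pv\inv X_v$ and $l(w^\vee)\inv X_{w^\vee}$ meet transversally at $P/P$. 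Since $T_{P/P}\Sigma_u^v=T_u\cap w_0^PT_v$, this says precisely that $\Sigma_u^v$ and $l(w^\vee)\inv X_{w^\vee}$ meet transversally at the point $P/P$, whence $\int_{G/P}[\Sigma_u^v]\cdot\sigma_{w^\vee}^\vee\geq 1$, i.e.\ $e_{uv}^w\neq 0$; the weight equality forced by Levi-movability then gives $d_{uv}^w=e_{uv}^w\neq 0$ by Proposition~\ref{prop:BKclass}. Without this (or an equivalent) witness argument, part (i) is only half proved.
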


\bigskip
\noindent
{\bf Known cases.}
Conjecture~\ref{conj1} generalizes another one for $G/B$. Indeed, 
if $G/P=G/B$ is a complete
flag variety then Conjecture~\ref{conj1} is equivalent to the following
one.

\begin{conj}
  \label{conjGBi}
For $G/B$ and any $u,v$, and $w$ in $W$, 
the structure constant $\tilde c_{uv}^w$ is equal to $1$ if for any
$1\leq i\leq s$,  $\dim(T^i_u)+\dim(T^i_v)=
\dim(V_i)+\dim(T^i_w)$ and $0$ otherwise.
\end{conj}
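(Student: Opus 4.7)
The plan is to exploit the equivalence, recorded in the introduction, between Conjecture~\ref{conjGBi} and Conjecture~\ref{conj1} specialized to $G/P=G/B$, and then attack the geometric statement $[\Sigma_u^v]_\bkprod=[X_u]\bkprod[X_v]$ in that setting. The direction ``$\tilde c_{uv}^w=0$ when some $i$ fails the term-by-term BK equality'' is immediate from~\eqref{eq:defctilde2}, so the whole substance is the converse: showing $\tilde c_{uv}^w=1$ when all $i$ satisfy equality. For $G/B$ one has $w_0^P=e$ and $L=Z=T$; the decomposition~\eqref{eq:decgp} is the $T$-weight decomposition of $\lieg/\lb$, so every $V_i$ is one-dimensional and indexed by a single root of $\Phi^-$. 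The BK condition thus becomes a pointwise statement at each root: writing $N(w):=\Phi^-\cap w^{-1}\Phi^+$ for the tangent locus of $w^{-1}X_w$ at $B/B$, it is equivalent to $N(u)\cup N(v)=\Phi^-$ together with $N(u)\cap N(v)=N(w)$.

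I would next compute $[\Sigma_u^v]_\bkprod$ geometrically. For $G/B$ the group $H_u^v=u^{-1}Bu\cap v^{-1}Bv$ is a connected solvable subgroup containing $T$, with root set $\Phi_u^v=u^{-1}\Phi^+\cap v^{-1}\Phi^+$. The orbit $\Sigma_u^v=\overline{H_u^v\cdot B/B}$ is smooth at $B/B$ with tangent space $\bigoplus_{\alpha\in\Phi_u^v\cap\Phi^-}\lieg_\alpha$. A pointwise check using $a+b=1+c$ with $a,b,c\in\{0,1\}$ shows that, under the BK condition, this tangent space equals $T_w$ term-by-term in the decomposition~\eqref{eq:decTw}. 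Combining this coincidence of tangent data with the definition of the BK fundamental class from Section~\ref{sec:defclassY} and with Proposition~\ref{prop:defYuv}, one expects $d_{uv}^w=1$ and $d_{uv}^{w'}=0$ for $w'\neq w$; Proposition~\ref{prop:support}(ii) then yields $\tilde c_{uv}^w\geq 1$.

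The harder half is the opposite inequality $\tilde c_{uv}^w\leq 1$. The natural attempt is to degenerate the transverse intersection $u^{-1}X_u\cap v^{-1}X_v$ of Proposition~\ref{prop:defYuv} along a regular one-parameter subgroup $\tau\subset T$ satisfying~\eqref{eq:taureg}: because $\tau$ separates the weights $\alpha_i$, the associated $\ZZ$-filtration should suppress any component of the intersection that violates the term-by-term BK condition, leaving only $\Sigma_u^v$; the crux is then to show the limit cycle is generically reduced. An alternative would be a Richmond-style recursive factorization of $\tilde c_{uv}^w$ through flag varieties of proper Levi subgroups, combined with induction on the semisimple rank, where at each step the one-dimensionality of the $V_i$'s is preserved on the Levi side.

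The central obstacle is precisely this multiplicity-one statement $\tilde c_{uv}^w\leq 1$: ordinary cup-product coefficients in $G/B$ routinely exceed $1$, so the conjectural $0/1$ dichotomy is a genuine phenomenon that does not follow formally from Theorem~\ref{th:BKprodfil} or Proposition~\ref{prop:support}. A proof would plausibly demand either a tight combinatorial parametrization of the BK-nonzero triples $(u,v,w)$ or a careful excess-intersection analysis of the limit cycle along $\tau$, both of which appear to require input beyond the De Rham filtration machinery developed in this paper.
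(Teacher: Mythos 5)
The statement you are proving is Conjecture~\ref{conjGBi}; it is stated as a conjecture in the paper and is \emph{not} proved there in general, so there is no ``paper's own proof'' to match in full. What the paper does establish is: (a) the equivalence with the geometric Conjecture~\ref{conj} for $G/B$ (Proposition~\ref{prop:casGB}), which is essentially the reduction you carry out in your first two paragraphs --- your translation of the term-by-term condition into $N(u)\cup N(v)=\Phi^-$ and $N(u)\cap N(v)=N(w)$, and the identification of $\Sigma_u^v$ with $w^{-1}X_w$ via the tangent-space computation, is exactly the content of that proposition (the paper invokes Lemma~\ref{lem:Tvar} for the last step); and (b) the conjecture itself only for $\SO_{2n+1}(\CC)$, $F_4$ and $E_6$ (Propositions~\ref{prop:GBSOimpair} and \ref{prop:GBcasordi}), with the other known cases cited from Richmond and Dimitrov--Roth.

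The genuine gap is the one you name yourself: the multiplicity-one bound $\tilde c_{uv}^w\leq 1$. Your degeneration along a regular one-parameter subgroup $\tau$ would at best reproduce Proposition~\ref{prop:support}, i.e.\ $d_{uv}^w\leq\tilde c_{uv}^w$ with equality of supports; it gives the inequality in the wrong direction for an upper bound on $\tilde c_{uv}^w$, and controlling generic reducedness of the limit cycle is precisely the unresolved excess-intersection problem. For the record, the two devices the paper actually uses to close this gap in special cases are different from both of your suggested routes: for $\SO_{2n+1}$ it embeds $G/B$ into $\GL(V)/\hat B$ and pulls back the known type-$A$ statement that the translated triple intersection is the single reduced point $\hat B/\hat B$, whence the intersection downstairs is a transverse point and $d=1$; for $F_4$ and $E_6$ it uses the closed formula $\tilde c_{uv}^w=p(u)p(v)/p(w)$ from \cite[Corollary~44]{BK}, reducing the conjecture to the multiplicativity $p(w)=p(u)p(v)$, verified by computer. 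Your proposal is a correct and honest reduction, but it is not a proof, and it should not be presented as one.
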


In particular, Conjecture~\ref{conjGBi} implies that we have a
uniform combinatorial and geometric model for the Belkale-Kumar product.
Conjecture~\ref{conjGBi} was explicitly stated in  \cite{DR:prv1}.
E.~Richmond proved in \cite{Richmond:recursion} and \cite{Rich:mult} this conjecture for $G=\SL_n (\CC)$
or $G=\Sp_{2n}(\CC)$. In Section~\ref{sec:GB}, we  prove it for
$G=\SO_{2n+1}(\CC)$ (this proof is certainly known from some specialists but
I have shortly included it for convenience). Very rencently, Dimitrov-Roth got also a
proof for classical groups and G2 \cite{DR}.
Using \cite[Corollary~44]{BK}, we wrote a program \cite{MaPage} to check this
conjecture: it is checked in type $F_4$ and $E_6$.
\\

Conjecture~\ref{conj1} will be proved in type A in a forthcoming paper.

\bigskip
\noindent
{\bf Combinatorial evidences.}
Consider the following degenerate version of
Conjecture~\ref{conj1}.

\begin{conj}
 \label{conjcomb}
The product $[X_u]\bkprod[X_v]$ only depends on the set $\Phi_u^v$.
\end{conj}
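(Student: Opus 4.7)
The statement is a formal consequence of Conjecture~\ref{conj1}: since $\Sigma_u^v$ is by construction the closure of the $H_u^v$-orbit of $P/P$, and since $H_u^v$ is a closed connected subgroup of $G$ containing $T$ (hence entirely determined by its root set $\Phi_u^v$), the variety $\Sigma_u^v$, and with it the Belkale-Kumar fundamental class $[\Sigma_u^v]_\bkprod\in\G\Ho^*(G/P,\CC)$, depends only on $\Phi_u^v$. If one grants the equality $[X_u]\bkprod[X_v]=[\Sigma_u^v]_\bkprod$, Conjecture~\ref{conjcomb} is immediate. So the first route I would try is to prove Conjecture~\ref{conj1} in enough generality to cover the case at hand.

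To attack Conjecture~\ref{conjcomb} without assuming Conjecture~\ref{conj1}, I would start from Proposition~\ref{prop:support}: that proposition already shows that the support $\{w\in W^P:\tilde c_{uv}^w\neq 0\}$ coincides with the support of $[\Sigma_u^v]_\bkprod$, which manifestly depends on $\Phi_u^v$ alone. So the vanishing pattern of the Belkale-Kumar structure constants is already under control; the remaining task is to show that the nonzero values $\tilde c_{uv}^w$ themselves depend only on $\Phi_u^v$.

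For the values, my plan would invoke the Levi-movability characterization of $\tilde c_{uv}^w$. When the dimensional equalities hold term by term, the constant $\tilde c_{uv}^w=c_{uv}^w$ should be computable as a local intersection number at $P/P$ of generic $L$-translates of $u^{-1}X_u$ and $w_0^Pv^{-1}X_v$ along the component $\Sigma_u^v$ (which Proposition~\ref{prop:defYuv} singles out). Both the ambient component $\Sigma_u^v$ and the graded pieces $T_u\cap V_i$, $T_v\cap V_i$ at $P/P$ are controlled by $\Phi_u^v$, so one hopes the local intersection multiplicity is intrinsically a function of $\Phi_u^v$. A cleaner packaging of the same idea uses Theorem~\ref{th:BKprodfil}: $[X_u]\bkprod[X_v]$ can be read off the graded pieces of wedge products of Kostant harmonic representatives restricted to the distributions $T^{\ggeq\alpha}G/P$, and one would try to reduce this computation to data visible on $H_u^v$ alone.

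The main obstacle is the last step: while the tangent data at $P/P$ and the subvariety $\Sigma_u^v$ are both determined by $\Phi_u^v$, an intersection multiplicity can a priori depend on higher-order behaviour of $X_u$ and $X_v$ away from the basepoint, and this is not manifestly captured by $\Phi_u^v$. Showing that no such higher-order dependence survives the Belkale-Kumar truncation is essentially the content of Conjecture~\ref{conj1} itself, and I would expect Conjecture~\ref{conjcomb} not to admit a proof substantially shorter than the stronger statement — the two should naturally come in tandem.
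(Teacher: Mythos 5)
The statement you are asked about is not a theorem of the paper: it is Conjecture~\ref{conjcomb}, presented explicitly as a ``degenerate version'' of Conjecture~\ref{conj1}, and the paper offers no proof of it --- only evidence in special cases (type $A$ via Richmond's product formula for the $\tilde c_{uv}^w$, the cominuscule case via the Thomas--Young rule where $\bkprod$ reduces to the cup product, and $\SO_{2n+1}$ or $\Sp_{2n}$ with $P$ maximal via the reference \cite{algo}). Your proposal, by your own admission in its last paragraph, also does not prove it; so there is a genuine gap, and it is the one you name yourself. What your argument does establish unconditionally is correct but is exactly what the paper already proves: since $H_u^v$ is connected and contains $T$, it is determined by $\Phi_u^v$, hence so are $\Sigma_u^v$ and the coefficients $d_{uv}^w$ of $[\Sigma_u^v]_\bkprod$; combined with Proposition~\ref{prop:support}(i) this shows that the \emph{support} $\{w: \tilde c_{uv}^w\neq 0\}$ depends only on $\Phi_u^v$. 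That is precisely the content of Proposition~\ref{prop:support} and adds nothing new.

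The unproved step is the one carrying all the weight: that the nonzero \emph{values} $\tilde c_{uv}^w=c_{uv}^w$ depend only on $\Phi_u^v$. Your plan to realize $c_{uv}^w$ as a local intersection multiplicity along $\Sigma_u^v$ and argue that it is intrinsic to $\Phi_u^v$ is plausible as a strategy but is, as you say, essentially Conjecture~\ref{conj1} (which, together with the observation above, does imply Conjecture~\ref{conjcomb}); Proposition~\ref{prop:support}(ii) only gives the one-sided bound $d_{uv}^w\leq \tilde c_{uv}^w$, and nothing in the paper excludes the higher-order dependence you worry about. So the honest assessment is: your reduction of Conjecture~\ref{conjcomb} to Conjecture~\ref{conj1} is correct and matches the logic of the paper, but neither you nor the paper proves the statement in general; if you want to contribute something checkable, the available routes are the case-by-case verifications the paper cites (e.g.\ reproducing Richmond's factorization in type $A$, or the computations via \cite[Corollary~44]{BK} used in Proposition~\ref{prop:GBcasordi}).
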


The expression of the Belkale-Kumar structure coefficients as products
given in \cite{Richmond:recursion} shows that Conjecture~\ref{conjcomb}
holds in type A. 
Consider now the case $G=\SO_{2n+1}(\CC)$ or $\Sp_{2n}(\CC)$ and $P$ maximal.
In this case, in \cite{algo}, it is proved that the set of triples
$(u,v,w)\in W^P$ such
that $\tilde c_{uv}^w=1$ only depends on $\Phi_u^v$, according to
Conjecture~\ref{conjcomb}.
If $G/P$ is cominuscule $\tilde c_{uv}^w=c_{uv}^w$ for any $(u,v,w)\in
W^P$.
Then the Thomas-Young combinatorial rule \cite{TY} for  $c_{uv}^w$
implies that Conjecture~\ref{conjcomb} holds.

\bigskip
\noindent
{\bf Distributions and  Schubert varieties.}
In Section~\ref{sec:schubertvar}, we study the restriction of the distributions to 
the Schubert varieties $X_u$. More precisely,  for any $x$ in $X_u$ and
$\alpha\in X(Z)$ we are interested in $T_xX_u\cap T_x^{\ggeq\alpha}G/P$.
For $\alpha$ fixed, the dimension of  $T_xX_u\cap T_x^{\ggeq\alpha}G/P$ has a
fixed value for $x\in X_u$ general  and can jump for $x$
in a strict subvariety of $X_u$. 
Consider  the maximal open subset $X_u^0$ of $X_u$ such that 
for any $\alpha\in X(Z)$ 
the dimension of  $T_xX_u\cap T^{\ggeq\alpha}_xG/P$ does not depend on $x\in
X_u^0$.
Consider the global stabilizer $Q_u$ of $X_u$, that is, the set of
$g\in G$ such that $g.X_u=X_u$. Since $X_u$ is $B$-stable, $Q_u$ is a
standard parabolic subgroup of $G$. 

\begin{prop}
\label{prop:flisseSchub} 
 With above notation, we have
$$
X_u^0=Q_u.uP/P.
$$
\end{prop}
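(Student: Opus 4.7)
The plan is to prove each inclusion separately.

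For $Q_u\cdot uP/P\subseteq X_u^0$, the key observation is that each distribution $T^{\ggeq\alpha}G/P$ is $G$-equivariant, so the intersection function
\[
f_\alpha(x):=\dim\bigl(T_xX_u\cap T_x^{\ggeq\alpha}G/P\bigr)
\]
is invariant under $Q_u$ (which preserves $X_u$) and upper semi-continuous in $x$. Thus $X_u^0=\bigcap_\alpha\{f_\alpha=f_\alpha^{\mathrm{gen}}\}$ is an open, dense, $Q_u$-stable subset of $X_u$. Since the open Bruhat cell $BuP/P$ is a single $B$-orbit (hence each $f_\alpha$ is constant on it) and is dense in $X_u$, we have $BuP/P\subseteq X_u^0$; the $Q_u$-stability of $X_u^0$ then yields $Q_u\cdot uP/P\subseteq X_u^0$.

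For the reverse inclusion $X_u^0\subseteq Q_u\cdot uP/P$, I would first note that choosing $\alpha$ sufficiently small so that $T^{\ggeq\alpha}G/P=TG/P$ gives $f_\alpha(x)=\dim T_xX_u$, which exceeds $\dim X_u$ exactly on the singular locus. Hence $X_u^0$ is contained in the smooth locus of $X_u$. Since $X_u^0$ is $B$-stable and the Bruhat cells $BvP/P$ (with $v\leq u$ in $W^P$) partition $X_u$, it suffices to show: if $vP/P\in X_u^0$, then $vP/P\in Q_u\cdot uP/P$.

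At the $T$-fixed point $vP/P$ the tangent space $T_{vP/P}G/P$, the filtration piece $T^{\ggeq\alpha}_{vP/P}G/P$, and $T_{vP/P}X_u$ are all $T$-stable, hence direct sums of root spaces. The decomposition of $T_{vP/P}X_u$ is given by Polo's formula as the sum of $\lieg_{-\gamma}$ for positive roots $\gamma$ with $vs_\gamma\leq u$, while $T_{vP/P}^{\ggeq\alpha}G/P$ is the $v$-translate of the $Z$-graded decomposition at the base point. Requiring $f_\alpha(vP/P)=f_\alpha(uP/P)$ for every $\alpha$ forces equality of the corresponding finite multisets of weights in $X(Z)$.

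The main obstacle is the combinatorial step: interpret this equality of weight multisets as the condition that $v$ lies in the $W_{Q_u}$-coset of $u$ in $W/W_P$. The idea is that the simple reflections which can alter $u$ without disturbing the $Z$-graded tangent data are exactly those $s_i$ with $s_iu\leq u$---that is, the generators of $W_{Q_u}$---so $v\in W_{Q_u}\,u\,W_P$ and consequently $vP/P\in Q_u\cdot uP/P$.
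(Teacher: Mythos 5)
Your first inclusion and the reduction to $T$-fixed points are sound and agree with the easy half of the paper's argument (the paper states the result as the equality of the $X(Z)$-regular locus with the homogeneous locus and gets this half from $G$-equivariance of the filtration plus semicontinuity, exactly as you do). The second half, however, has two genuine gaps. First, the ``Polo formula'' you invoke, $T_{vP/P}X_u=\bigoplus\{\lieg_{-\gamma}\,:\,\gamma>0,\ vs_\gamma\leq u\}$, is a type-$A$ statement: for general $G$ the Zariski tangent space of a Schubert variety at a $T$-fixed point is not spanned by the tangent lines of the $T$-stable curves, so you cannot read off the multiset of $Z$-weights of $T_{vP/P}X_u$ this way. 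Second, and more seriously, the step you yourself label ``the main obstacle'' --- deducing from the equality of the $X(Z)$-graded dimensions at $vP/P$ and at a general point that $v\in W_{Q_u}\,u\,W_P$ --- is precisely the mathematical content of the proposition, and you offer only a heuristic for it. To close it you would have to exhibit, for every $v\leq u$ with $v\notin W_{Q_u}uW_P$, some $\alpha\in X(Z)$ for which $\dim(T_{vP/P}X_u\cap T^{\ggeq\alpha}_{vP/P}G/P)$ actually jumps; nothing in your sketch produces such a jump.

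The paper circumvents both difficulties by never computing tangent spaces at arbitrary $T$-fixed points. It argues by contradiction: if $X_u\Greg$ strictly contains the homogeneous locus, it picks a $T$-fixed point $\dot v$ whose $B$-orbit is open in the difference, joins it by a $T$-stable curve in the direction of a root $\gamma$ to a $T$-fixed point lying in the homogeneous locus, and uses the Peterson map (Lemma~\ref{lem:Peterson}) to compare the weight sets of the tangent spaces at the two ends of the curve: one is obtained from the other by ``pushing'' weights along $\gamma$. Since both points are $X(Z)$-regular the graded dimensions agree, and since $\gamma_{|Z}\neq 0$ the pushing must be trivial; hence the weight sets coincide, Lemma~\ref{lem:Tvar} shows $s_\gamma$ stabilizes $w^{-1}X_w$, and this contradicts $\dot v\notin X_w^{\rm hom}$. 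Some argument of this kind (or a genuine proof of your coset claim) is needed to complete your proposal.
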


If $G/P$ is cominuscule, the filtration is trivial and
Proposition~\ref{prop:flisseSchub} asserts that $Q_u$ acts transitively on
the smooth locus of $X_u$.
This was previously proved by Brion and Polo in \cite{BP:largeschubvar}. 
Proposition~\ref{prop:flisseSchub}  is in the philosophy to generalize known
results from cominuscule homogeneous spaces to any homogeneous space $G/P$, using the
Belkale-Kumar product.  

Note that Proposition~\ref{prop:flisseSchub}  is equivalent to
\cite[Theorem~7.4]{BKR}. Nevertheless, we think that the
distributions give a pleasant interpretation of this result. 
In Section~\ref{sec:schubertvar} we present a proof using the properties 
of the Peterson map.

Retruning to the setting of Conjecture~\ref{conj1}, we assume moreover
that the intersection $u\inv X_u\cap w_0^Pv\inv X_v$ is proper. 
Then Conjecture~\ref{conj1} is implied by the fact that
$\Sigma_u^v$ is the only irreducible component of this intersection
that has the same $X(Z)$-dimension (see Section~\ref{sec:Gammadim}). 
Proposition~\ref{prop:flisseSchub} is clearly related to this version
of Conjecture~\ref{conj1}.

\bigskip
\noindent
{\bf Acknowledgment.}
I thank M. Herzlich, P.E. Paradan,  N. Perrin, C. Vernicos for useful discussions.
The author is partially supported by the French National Agency
(Project GeoLie ANR-15-CE40-0012) and the Institut Universitaire de
France (IUF).

\tableofcontents

\section{Infinitesimal filtrations}

\subsection{The case of a vector space}

\noindent{\bf Ordered group.}
Let $\Gamma$ be a finitely generated free  abelian group whose the law is denoted by
$+$. 
Consider the vector space $\Gamma\otimes_\ZZ\QQ$.
Assume that  a closed strictly convex cone $\cone$  in $\Gamma\otimes_\ZZ\QQ$ of
nonempty interior is given.
Moreover, we assume that $\cone$ is rational polyhedral, that is defined
by finitely many linear rational inequalities, or equivalently
generated, as a cone,  by finitely many vectors in $\Gamma\otimes_\ZZ\QQ$.
We consider the partial order $\lleq$ on $\Gamma$ defined by
$\alpha\lleq\beta$ if and only if $\beta-\alpha$ belongs to $\cone$.
The group $\Gamma$ endowed with
the order $\lleq$ is an  ordered group:
\begin{eqnarray}
  \label{eq:1}
  \forall \alpha,\,\beta,\,\gamma\in \Gamma\ \ \ \alpha\lleq \beta\
  \Rightarrow\ 
(\alpha+\gamma)\lleq (\beta+\gamma).
\end{eqnarray}
%We also write $\alpha\ggeq\beta$ if $\beta\lleq\alpha$,
%$\alpha\sggeq\beta$ if $\alpha\ggeq \beta$ and $\alpha\neq \beta$ and 
%$\alpha\slleq\beta$ if $\alpha\lleq \beta$ and $\alpha\neq \beta$.

The order $\lleq$ satisfies the following version of the Ramsey 
theorem (see also Bolzano-Weirstrass' theorem).

\begin{lemma}\label{lem:BW}
Let $(\alpha_n)_{n\in \NN}$ be a sequence of pairwise distinct
elements of $\Gamma$ such that $\alpha_n\lleq 0$ for any $n$.

Then there exists a subsequence $(\alpha_{\phi(n)})_{n\in \NN}$
such that for any $n$
$$
\alpha_{\phi(n+1)}\slleq\alpha_{\phi(n)}.
$$  
\end{lemma}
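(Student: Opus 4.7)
The strategy is to reduce to Dickson's Lemma by using the defining inequalities of $\cone$ to embed everything into a lattice on which the order becomes coordinate-wise.

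Since $\cone$ is rational polyhedral with nonempty interior, I can write $\cone = \{x\in \Gamma\otimes_\ZZ\QQ : \ell_1(x)\geq 0,\ldots,\ell_r(x)\geq 0\}$ for finitely many linear functionals $\ell_i$ with rational coefficients. Passing to $\beta_n := -\alpha_n \in \cone\cap\Gamma$, the task becomes: extract a subsequence with $\beta_{\phi(n)}\slleq \beta_{\phi(n+1)}$ strictly. Note the order translates as $\beta\lleq\gamma \iff \ell_i(\beta)\leq \ell_i(\gamma)$ for all $i$. The key observation is that the map $\ell := (\ell_1,\ldots,\ell_r) : \Gamma\otimes_\ZZ\QQ \to \QQ^r$ is injective: its kernel is exactly $\cone\cap(-\cone)$, which is $\{0\}$ by strict convexity of $\cone$.

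Next, since $\Gamma$ is finitely generated and $\ell$ is rational, there exists $N\in\NN^*$ with $N\cdot\ell(\Gamma)\subseteq\ZZ^r$; on our sequence this gives vectors $N\cdot\ell(\beta_n)\in\NN^r$, pairwise distinct by the injectivity step above. Apply Dickson's Lemma (the standard fact that $\NN^r$ with the product order is a well-quasi-order, so any infinite sequence admits an infinite non-decreasing subsequence) to produce an index map $\phi$ with $N\cdot\ell(\beta_{\phi(n)})\leq N\cdot\ell(\beta_{\phi(n+1)})$ componentwise for every $n$. Translating back, $\beta_{\phi(n)}\lleq \beta_{\phi(n+1)}$ for all $n$.

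Finally, since the $\beta_n$ are pairwise distinct and $\lleq$ is antisymmetric (again by strict convexity of $\cone$), each comparison $\beta_{\phi(n)}\lleq \beta_{\phi(n+1)}$ between distinct elements is strict, yielding $\alpha_{\phi(n+1)}\slleq \alpha_{\phi(n)}$. There is no serious obstacle: the only thing to be careful about is that the strict convexity hypothesis is used twice (once to guarantee the embedding into $\NN^r$ via $\ell$, once to upgrade ``non-decreasing with distinct values'' to ``strictly increasing''), and the finite generation of $\Gamma$ is used to clear denominators so that Dickson applies.
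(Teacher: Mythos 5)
Your proof is correct and follows essentially the same route as the paper's: both use the finitely many rational linear functionals cutting out $\cone$ to translate the problem into coordinatewise monotonicity in an integer lattice, and both use pairwise distinctness plus antisymmetry (strict convexity of $\cone$) at the end to upgrade to strict comparisons. The only difference is packaging: the paper extracts a nonincreasing subsequence coordinate by coordinate with a short ad hoc argument, whereas you clear denominators and invoke Dickson's Lemma for $\NN^r$ as a black box.
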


\begin{proof}
Let $\varphi_1,\dots,\varphi_s$ be elements of $\Hom(\Gamma,\ZZ)$ such
that $x\in \cone$ if and only if $\varphi_i(x)\geq 0$ for any $i=1,\dots,s$.

Consider first the sequence $\varphi_1(\alpha_n)$ and   
set 
$$
I_1=\{n\,|\,\forall m\geq n\ \ \varphi_1(\alpha_m)>\varphi_1(\alpha_n)\}.
$$
Assume, for a contradiction that  $I_1$ is infinite. 
Denoting by  $\phi(k)$ the $k^{th}$ element
of $I_1$, we get an increasing subsequence of
$(\varphi_1(\alpha_n))_{n\in\NN}$. But $\varphi_1(\alpha_n)\in\ZZ$ and
$\varphi_1(\alpha_n)\leq 0$: a contradiction.
Hence $I_1$ is finite.

Up to taking a subsequence, we may assume that  $I_1$ is empty; that is
$$
\forall n\geq 0\qquad \exists m>n \qquad \varphi_1(\alpha_m)\leq\varphi_1(\alpha_n).
$$
This property allows to construct a nonincreasing subsequence of
$\varphi_1(\alpha_n)$.
Hence, by considering such a subsequence, we may assume that
$$
\forall n\geq 0\qquad
\varphi_1(\alpha_{n+1})\leq\varphi_1(\alpha_n).
$$ 
By successively proceeding similarly, for $i=2,\dots,s$, one gets a
subsequence $\alpha_{\psi(n)}$ such that
$$
\forall i=1,\dots,n\qquad\forall n\qquad
\varphi_i(\alpha_{\psi(n+1)})\leq\varphi_1(\alpha_{\psi(n)}).
$$
Since the $\alpha_n$ are pairwise distinct, we deduce that $\alpha_{\psi(n+1)}\lleq\alpha_{\psi(n)}$.
\end{proof}

\begin{remark}
  Consider the cone $\cone=\{(x,y)\in\RR^2\,:\, y\geq 0 {\rm\ and\ }
  \sqrt 2 x-y\geq 0\}$ and the group $\Gamma=\ZZ^2$. Lemma~\ref{lem:BW}
  does not hold for the induced order $\ggeq$ showing the
  rationality assumption on $\cone$ is necessary.
Indeed, denote by $\pi\,:\,\RR^2\longto\RR$ the linear projection on
the line $y=0$ with kernel the line $y=\sqrt 2x$. 
Then $\pi(\ZZ^2)$ is dense as the group generated by $1$ and
$\frac{\sqrt 2}2$. 
In particular, one can construct a sequence $(x_n,y_n)_{n\in\NN}$ such
that $y_{n+1}<y_n< 0$ and  $0>\sqrt 2 x_{n+1}-y_{n+1}>\sqrt 2
x_n-y_n$. Then the elements of the sequence are pairwise incomparable
for the partial order $\ggeq$.  
\end{remark}

\bigskip
\noindent{\bf $\Gamma$-filtration.} The group $\Gamma$ is used here to
index filtrations.\\

\begin{defin}
Let $V$ be a finite dimensional real or complex vector space.
  A {\it  $\Gamma$-filtration} of $V$ is a collection
  $F^{\ggeq \beta}V$ of linear subspaces of $V$ indexed by
  $\beta\in\Gamma$ satisfying
  \begin{enumerate}\item
\label{ax:dec}   
    $\alpha\lleq\beta\ \Rightarrow\  F^{\ggeq\beta}V\subset
    F^{\ggeq\alpha}V$,
  \item \label{ax:tot} $\exists \beta_0\in\Gamma {\rm \ s.t.\ } V=F^{\ggeq\beta_0}V$,
\item \label{ax:neg} if $F^{\ggeq\alpha}V\neq \{0\}$ then $\alpha\lleq 0$.
 \end{enumerate}
\end{defin}

\begin{lemma}
\label{lem:fini}
  Let $(F^{\ggeq\beta}V)_{\beta\in\Gamma}$ be a $\Gamma$-filtration.
Then the set $\{F^{\ggeq\beta}V\,|\,\beta\in\Gamma\}$ of linear subspaces of
$V$ is finite.
\end{lemma}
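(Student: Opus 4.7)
The plan is to argue by contradiction: assume that the set $\mathcal{F}:=\{F^{\ggeq\beta}V\,|\,\beta\in\Gamma\}$ is infinite, and extract from it an infinite strictly ascending chain of subspaces, contradicting $\dim V<\infty$.

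First I would choose a sequence $(\beta_n)_{n\in\NN}$ of pairwise distinct elements of $\Gamma$ such that the subspaces $F^{\ggeq\beta_n}V$ are pairwise distinct; this is possible since $\mathcal{F}$ is infinite (if two $\beta$'s give the same subspace I simply keep one of them, and the assumption provides infinitely many). By axiom (iii) in the definition of a $\Gamma$-filtration, whenever $\beta\not\lleq 0$ we have $F^{\ggeq\beta}V=\{0\}$; so at most one index $n$ can satisfy $\beta_n\not\lleq 0$ (otherwise infinitely many of the $F^{\ggeq\beta_n}V$ would equal $\{0\}$, contradicting distinctness). After discarding this possible index I may assume $\beta_n\lleq 0$ for every $n$.

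Next I would apply Lemma~\ref{lem:BW} to the sequence $(\beta_n)$ to obtain a subsequence $(\beta_{\phi(n)})_{n\in\NN}$ with $\beta_{\phi(n+1)}\slleq\beta_{\phi(n)}$ for every $n$. By axiom (i) this yields the chain of inclusions
$$
F^{\ggeq\beta_{\phi(1)}}V\subset F^{\ggeq\beta_{\phi(2)}}V\subset F^{\ggeq\beta_{\phi(3)}}V\subset\cdots
$$
Since the subspaces $F^{\ggeq\beta_n}V$ were pairwise distinct to start with, the same is true of the subsequence, so these inclusions are all strict. This produces an infinite strictly ascending chain of subspaces of the finite-dimensional space $V$, which is impossible.

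No step is particularly delicate. The only substantive input is Lemma~\ref{lem:BW}, which turns the $\Gamma$-indexing into a totally ordered chain; everything else is bookkeeping using axioms (i) and (iii) together with the finite-dimensionality of $V$. Axiom (ii) plays no role beyond the fact that $V$ has finite dimension.
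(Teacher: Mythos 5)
Your proof is correct and follows essentially the same route as the paper's: discard the at most one index with $\beta_n\not\lleq 0$ using axiom (iii), apply Lemma~\ref{lem:BW} to extract a strictly decreasing subsequence, and conclude via a strictly increasing chain of subspaces contradicting $\dim V<\infty$. The paper's argument is identical, just stated more tersely.
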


\begin{proof}
By contradiction, assume that there exists a sequence
$F^{\ggeq\alpha_n}V$ of pairwise distinct linear subspaces of $V$.
By axiom~\ref{ax:neg}, $\alpha_n\lleq 0$ for any but eventually one
$n$.
Now, Lemma~\ref{lem:BW} implies that there exists a decreasing
subsequence $\alpha_{\phi(k)}$.
Since the linear subspaces $F^{\ggeq\alpha_n}V$ are pairwise distinct, the subsequence
$F^{\ggeq\alpha_{\phi(k)}}V$ is increasing.
This contradicts the assumption that $V$ is finite dimensional.
\end{proof}

\bigskip
\noindent{\bf $\Gamma$-filtrations coming from decompositions.}
For each $\beta\in\Gamma$,  
$\sum_{\alpha\sggeq \beta}F^{\ggeq \alpha}V$ is a linear subspace of
$F^{\ggeq \beta}V$. 
Let us choose a supplementary subspace $S^\beta$:
\begin{eqnarray}
  \label{eq:8}
  F^{\ggeq \beta}V=S^\beta\oplus \sum_{\alpha\sggeq \beta}F^{\ggeq
  \alpha}V.
\end{eqnarray}

One of the motivation for axiom~\ref{ax:neg} in the definition of
$\Gamma$-filtration is the following lemma.

\begin{lemma}\label{lem:fil1}
  With above notation, 
\begin{eqnarray}
  \label{eq:9}
  F^{\ggeq \beta}V=\sum_{\alpha\ggeq \beta}S^\alpha.
\end{eqnarray}
\end{lemma}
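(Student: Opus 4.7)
The plan is to prove $\supseteq$ by inspection and $\subseteq$ by a Noetherian-style argument that uses Lemma~\ref{lem:BW} to rule out infinite ascending chains of ``bad'' indices. For the easy inclusion, observe that for $\alpha\ggeq\beta$ axiom~\ref{ax:dec} gives $S^\alpha\subset F^{\ggeq\alpha}V\subset F^{\ggeq\beta}V$, so $\sum_{\alpha\ggeq\beta}S^\alpha\subset F^{\ggeq\beta}V$. Note also that by Lemma~\ref{lem:fini} the sum $\sum_{\alpha\sggeq\beta}F^{\ggeq\alpha}V$ appearing in \eqref{eq:8} involves only finitely many distinct subspaces, so everything in sight is a genuine finite sum.

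For the non-trivial inclusion, introduce the set of counterexamples
$$
B:=\{\beta\in\Gamma\,:\, F^{\ggeq\beta}V\not\subset\sum_{\alpha\ggeq\beta}S^\alpha\}
$$
and assume for contradiction that $B\neq\emptyset$. Given $\beta\in B$, pick $v\in F^{\ggeq\beta}V\setminus\sum_{\alpha\ggeq\beta}S^\alpha$ and decompose it via \eqref{eq:8} as $v=s^\beta+\sum_j v_j$ with $s^\beta\in S^\beta$ and $v_j\in F^{\ggeq\gamma_j}V$ for certain $\gamma_j\sggeq\beta$. Since $s^\beta\in\sum_{\alpha\ggeq\beta}S^\alpha$, at least one of the $v_j$ must fail to lie in $\sum_{\alpha\ggeq\gamma_j}S^\alpha$, and the corresponding $\gamma_j$ therefore belongs to $B$. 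Iterating this construction produces an infinite strictly increasing sequence $\beta=\beta_0\slleq\beta_1\slleq\beta_2\slleq\cdots$ in $B$.

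Now observe that membership in $B$ forces $F^{\ggeq\beta_n}V\neq\{0\}$, so axiom~\ref{ax:neg} yields $\beta_n\lleq 0$ for every $n$. The sequence $(\beta_n)$ thus consists of pairwise distinct elements of $\Gamma$ with $\beta_n\lleq 0$, so Lemma~\ref{lem:BW} provides a subsequence $(\beta_{\phi(n)})$ with $\beta_{\phi(n+1)}\slleq\beta_{\phi(n)}$. But this subsequence is also strictly $\slleq$-increasing, and antisymmetry of the partial order gives a contradiction. Hence $B=\emptyset$, proving \eqref{eq:9}.

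The main obstacle to spell out carefully is the recursion step: one must argue that the index $\gamma_j$ extracted at each stage really does satisfy $\gamma_j\sggeq\beta$ strictly (which follows from \eqref{eq:8}) and that the constructed chain in $B$ is infinite and strictly increasing, so that Lemma~\ref{lem:BW} applies and produces the desired contradiction. Everything else is bookkeeping around \eqref{eq:8}.
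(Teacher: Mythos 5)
Your proof is correct and follows essentially the same route as the paper's: both unfold the decomposition \eqref{eq:8} recursively and argue that the recursion must terminate. The only (cosmetic) difference is the termination argument — the paper invokes the finiteness of the order interval $\{\alpha\in\Gamma\,:\,\beta\lleq\alpha\lleq 0\}$ directly, whereas you package the same obstruction as an infinite strictly $\slleq$-increasing chain of counterexamples and contradict Lemma~\ref{lem:BW} via antisymmetry of $\lleq$; both are valid.
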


\begin{proof}
It is clear that the sum is contained in $F^{\ggeq \beta}V$. Conversely,
since $V$ is finite dimensional, we have 
$$
F^{\ggeq \beta}V=S^\beta\oplus (F^{\ggeq
  \alpha_1}V+\cdots+F^{\ggeq
  \alpha_s}V),
$$
for some
$\alpha_i\in\Gamma$ such that $\alpha_i\sggeq\beta$.
By axiom~\ref{ax:neg}, we may assume that
for any $i=1,\dots,s$ we have $\alpha_i\lleq 0$.
If each $F^{\ggeq  \alpha_i}V$ satisfies the lemma, the lemma is
proved for $F^{\ggeq \beta}V$.
Otherwise, 
we restart the proof with each $\alpha_i$ in place of $\beta$.
Since $\Gamma$ is discrete, the set of $\alpha\in\Gamma$ such that
$0\ggeq\alpha\ggeq\beta$ is finite. In particular,
the procedure ends by axiom~\ref{ax:neg} of the definition of a $\Gamma$-filtration.
\end{proof}

\bigskip
Conversely, assume that a linear subspace $S^\alpha$ of $V$ is given 
for any
$\alpha\in\Gamma$.
If these linear subspaces satisfy $(S^{\alpha}\neq
\{0\}\,\Rightarrow\,\alpha\lleq 0)$, and there exist $\alpha_1,\dots,\alpha_s$ such that
$V=S^{\alpha_1}+\cdots+ S^{\alpha_s}$ then the
formula~\eqref{eq:9} defines a 
$\Gamma$-filtration of $V$.
The $\Gamma$-filtration of $V$ is said to {\it come from a decomposition} if there
exists a decomposition
\begin{eqnarray}
  \label{eq:10}
  V=\bigoplus_{\alpha\in\Gamma}S^\alpha,\mbox{ with } S^{\alpha}\neq \{0\}\,\Rightarrow\,\alpha\lleq 0,
\end{eqnarray}
such that  \eqref{eq:9} holds.

\bigskip
The {\it $f$-dimension vector} ($f$ stand for filtered) of the $\Gamma$-filtration, is the vector
$(fd^\beta(V))_{\beta\in\Gamma}$  of
$\NN^\Gamma$ defined by
$$\Gamma\longto\NN,\,\beta\longmapsto fd^\beta(V)=\dim(F^{\ggeq \beta}V),$$
for any $\beta\in\Gamma$.
Define the {\it grading associated to the 
$\Gamma$-filtration} by setting
\begin{eqnarray}
  \label{eq:3}
  \G^\beta V=\frac{F^{\ggeq \beta}V}{\sum_{\alpha\sggeq \beta}F^{\ggeq \alpha}V},
{\rm\ and\ }
\G V=\bigoplus_{\beta\in\Gamma} \G^\beta V.
\end{eqnarray}
The {\it $g$-dimension vector} ($g$ stands for graded) $(gd^\beta(V))_{\beta\in\Gamma}$ of the $\Gamma$-filtration is 
defined by 
$$\Gamma\longto\NN,\,\beta\longmapsto gd^\beta(V):=\dim(\G^\beta V).$$

\begin{lemma}
 \label{lem:decdim}
The $\Gamma$-filtration comes
from a decomposition if and only if 
\begin{eqnarray}
  \label{eq:11}
\dim(\G V)=\dim(V).
\end{eqnarray}
In this case, the $g$-dimension vector of $V$ only depends on the 
$f$-dimension vector of $V$.
\end{lemma}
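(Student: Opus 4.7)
The plan is to prove both implications by analysing supplements to the filtration steps, and then deduce the final statement via a simple Möbius-type inversion.

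For $(\Leftarrow)$, I would start with a decomposition $V=\bigoplus_\alpha S^\alpha$ inducing the filtration via \eqref{eq:9}. A direct manipulation with the partial order gives
\[
\sum_{\alpha\sggeq\beta}F^{\ggeq\alpha}V \;=\; \bigoplus_{\gamma\sggeq\beta}S^\gamma,
\]
since $\gamma\ggeq\alpha\sggeq\beta$ implies $\gamma\sggeq\beta$ and conversely one can choose $\alpha=\gamma$. Combined with $F^{\ggeq\beta}V=\bigoplus_{\gamma\ggeq\beta}S^\gamma$, this identifies $\G^\beta V\cong S^\beta$, whence $\dim\G V=\sum_\alpha\dim S^\alpha=\dim V$.

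For $(\Rightarrow)$, assume $\dim\G V=\dim V$ and choose supplements $S^\beta$ as in \eqref{eq:8}. By construction $\dim S^\beta=gd^\beta V$, and Lemma~\ref{lem:fil1} already gives $F^{\ggeq\beta}V=\sum_{\alpha\ggeq\beta}S^\alpha$. Since $\dim\G V<\infty$ by assumption, only finitely many $S^\alpha$ are nonzero, say $S^{\alpha_1},\dots,S^{\alpha_r}$, each with $\alpha_i\lleq 0$ by axiom~\ref{ax:neg}. The delicate technical step is to produce a single $\beta_0\in\Gamma$ such that both $V=F^{\ggeq\beta_0}V$ and $\beta_0\lleq\alpha_i$ for every $i$; for this I would take $\beta_0=\beta'+\sum_j\alpha_j$, where $\beta'$ is furnished by axiom~\ref{ax:tot} (and necessarily satisfies $\beta'\lleq 0$ when $V\neq 0$), and use that $\cone$ is closed under addition to verify $\alpha_i-\beta_0\in\cone$. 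With such a $\beta_0$ in hand,
\[
V=F^{\ggeq\beta_0}V=\sum_{\alpha\ggeq\beta_0}S^\alpha=\sum_{i=1}^r S^{\alpha_i},
\]
and the dimension equality $\sum_i\dim S^{\alpha_i}=\dim\G V=\dim V$ forces the sum to be direct, producing the required decomposition.

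For the last assertion, I would observe that in the decomposition case the identity $F^{\ggeq\beta}V=\bigoplus_{\alpha\ggeq\beta}S^\alpha$ yields $fd^\beta V=\sum_{\alpha\ggeq\beta}gd^\alpha V$, with only finitely many nonzero summands. To invert this system I would argue that any finitely supported function $f\colon\Gamma\to\ZZ$ satisfying $\sum_{\alpha\ggeq\beta}f(\alpha)=0$ for every $\beta$ must vanish: at a maximal element of $\mathrm{supp}(f)$ the sum collapses to $f$ at that single point. Applying this uniqueness to the difference of two candidate $g$-dimension vectors sharing a common $f$-dimension vector shows that $gd$ is entirely determined by $fd$. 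The only genuine obstacle I foresee is the construction of $\beta_0$ above; everything else is straightforward dimension counting.
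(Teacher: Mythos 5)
Your proof is correct and follows essentially the same route as the paper: both directions rest on the supplements $S^\beta$ of \eqref{eq:8}, Lemma~\ref{lem:fil1} and a dimension count, and your M\"obius-inversion uniqueness argument for the last assertion is the paper's descending induction from the maximal elements of the support in disguise. The only divergence is your construction of a smaller $\beta_0$ lying below every $\alpha_i$; the paper instead keeps the $\beta_0$ of axiom~\ref{ax:tot} and deduces $S^\gamma=\{0\}$ for $\gamma\not\ggeq\beta_0$ directly from the equality $\dim(\G V)=\dim(V)$, so your detour is valid but not needed.
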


\begin{proof}
Assume first that the $\Gamma$-filtration comes from a decomposition.
  Fix linear subspaces $S^\alpha$ satisfying the conditions~\eqref{eq:10}
  and \eqref{eq:9}. For any $\beta\in\Gamma$,  the identity~\eqref{eq:8}
  holds and $\dim (\G^\beta V)=\dim(S^\beta)$.
Hence the lemma follows from  the condition~\eqref{eq:10}.\\

Conversely, assume that the condition~\eqref{eq:11} is fulfilled
and choose linear subspaces $S^\beta$ satisfying~\eqref{eq:8}.
Let $\beta_0\in\Gamma$ such that $F^{\ggeq\beta_0}V=V$.
Lemma~\ref{lem:fil1} implies that  $V=\sum_{\beta\ggeq\beta_0} S^\beta$.
The condition~\eqref{eq:11} implies that the sum is direct. 
Moreover, it implies that $S^\gamma=\{0\}$ if
$\gamma\not\ggeq\beta_0$.
Using Lemma~\ref{lem:fil1} once again, we deduce that the filtration
comes from the decomposition $V=\bigoplus_\beta S^\beta$.
\\

If $fd^\beta=0$ then $F^{\ggeq \beta}V=\{0\}$,  $\G^\beta=\{0\}$
and $gd^\beta=0$.
Let $\Gamma_{max}$ be the set of maximal elements among the elements
$\beta$ in $\Gamma$ satisfying $F^{\ggeq \beta}V\neq\{0\}$.
For $\beta\in \Gamma_{\max}$, we have $gd^\beta(V)=fd^\beta(V)$. 
Et caetera.
\end{proof}

\bigskip
\begin{exple}
Consider the group $\ZZ^2$ endowed with the order $(a,b)\lleq (a',b')$
if and only if $a\leq a'$ and $b\leq b'$.
Fix a two dimensional vector space $V$ and three pairwise distinct lines
$l_1$, $l_2$, and $l_3$ in $V$.
Consider the following family $(S^\beta)_{\beta\in\ZZ^2}$ of
linear subspaces of $V$:  
$
S^{(-2,0)}=l_1,
$  
$
S^{(0,-2)}=l_2,
$  
$
S^{(-1,-1)}=l_3,
$  
and $S^\beta=\{0\}$ if $\beta\not\in\{(-2,0),(0,-2),(-1,-1)\}$.
The  filtration defined by the formula~\eqref{eq:9} does not come from a
decomposition.
More precisely, $\G V\simeq l_1\oplus l_2\oplus l_3$ has dimension
three whereas $V$ has dimension two. 
\end{exple}

\bigskip
Another useful notion is the {\it weight} $\rho(V)$ of the $\Gamma$-filtration of $V$
defined by
\begin{eqnarray}
  \label{eq:30}
 \rho(V)=\sum_{\beta\in\Gamma}gd^\beta(V)\beta. 
\end{eqnarray}

\bigskip
\noindent{\bf Filtrations induced on a linear  subspace.}
Let $W$ be a linear subspace of $V$. The $\Gamma$-filtration on $V$
induces one on $W$ by setting
\begin{eqnarray}
  \label{eq:29}
  \forall \beta\in\Gamma\ \ \ F^{\ggeq\beta}W:=W\cap F^{\ggeq \beta}V.
\end{eqnarray}

\begin{lemma}
  \label{lem:decsous}
If the $\Gamma$-filtration on $V$ comes
from a decomposition then the induced $\Gamma$-filtration on $W$ comes
from a decomposition.
\end{lemma}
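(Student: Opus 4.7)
By Lemma~\ref{lem:decdim} the conclusion is equivalent to the dimensional equality $\dim \G W = \dim W$. The inequality $\dim \G W \geq \dim W$ is automatic for any $\Gamma$-filtration: applying Lemma~\ref{lem:fil1} to $W$ with any choice of supplementaries $S_W^\beta$ to $\sum_{\alpha\sggeq\beta}F^{\ggeq\alpha}W$ in $F^{\ggeq\beta}W$ yields $W=\sum_\beta S_W^\beta$, hence $\dim W\leq\sum_\beta\dim S_W^\beta=\dim\G W$. So the whole content of the lemma lies in the reverse inequality $\dim\G W\leq\dim W$, and this is where the hypothesis on $V$ must intervene.

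First I would fix a decomposition $V=\bigoplus_\alpha S^\alpha$ realizing the filtration of $V$, with associated projections $\pi_\alpha:V\to S^\alpha$, and, for each $\alpha$, set $L_\alpha(W):=\pi_\alpha(F^{\ggeq\alpha}W)\subset S^\alpha$. Since $\ker(\pi_\alpha|_{F^{\ggeq\alpha}V})=\sum_{\gamma\sggeq\alpha}F^{\ggeq\gamma}V$ contains $\sum_{\gamma\sggeq\alpha}F^{\ggeq\gamma}W$, the restriction of $\pi_\alpha$ to $F^{\ggeq\alpha}W$ factors through $\G^\alpha W$, producing a canonical surjection $\G^\alpha W\twoheadrightarrow L_\alpha(W)$. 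Choosing linear sections $\sigma_\alpha:L_\alpha(W)\to F^{\ggeq\alpha}W$ of $\pi_\alpha$, I would assemble the map $\Psi:\bigoplus_\alpha L_\alpha(W)\to W$, $(l_\alpha)\mapsto\sum_\alpha\sigma_\alpha(l_\alpha)$, and aim to show it is a bijection.

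Injectivity of $\Psi$ is a leading-term argument: if $\sum_\alpha\sigma_\alpha(l_\alpha)=0$ with some $l_\alpha\neq 0$, then axiom~(iii) together with Lemma~\ref{lem:fini} supplies a maximal element $\alpha_0\in\{\alpha:l_\alpha\neq 0\}$; applying $\pi_{\alpha_0}$ annihilates every $\sigma_\alpha(l_\alpha)$ with $\alpha\neq\alpha_0$ and $l_\alpha\neq 0$, because $\sigma_\alpha(l_\alpha)\in F^{\ggeq\alpha}V$ has nonzero $\pi_{\alpha_0}$-image only when $\alpha_0\ggeq\alpha$, and maximality of $\alpha_0$ then forces $\alpha=\alpha_0$; we read off $l_{\alpha_0}=0$, a contradiction.

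The hard part---and the step I expect to be the main obstacle---is the surjectivity of $\Psi$, which amounts to showing that each of the surjections $\G^\alpha W\twoheadrightarrow L_\alpha(W)$ is an isomorphism, i.e. $W\cap\sum_{\gamma\sggeq\alpha}F^{\ggeq\gamma}V=\sum_{\gamma\sggeq\alpha}F^{\ggeq\gamma}W$. I would attempt an iterative Gr\"obner-style reduction: for $w\in W\setminus\{0\}$, locate a maximal $\alpha$ in the support of $w$ relative to the decomposition of $V$, use the decomposability of $V$ to lift $\pi_\alpha(w)$ into $L_\alpha(W)$, subtract $\sigma_\alpha(\pi_\alpha(w))$ from $w$ to strictly reduce its support, and iterate, with termination guaranteed by Lemma~\ref{lem:fini}. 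Verifying at each stage that the lift actually lands in $L_\alpha(W)$ is the delicate point, and it is precisely there that the hypothesis that $V$'s filtration comes from a decomposition is indispensable. Once $\Psi$ is bijective, the chain $\dim W=\sum_\alpha\dim L_\alpha(W)\leq\sum_\alpha\dim\G^\alpha W=\dim\G W$, combined with the reverse inequality above, forces $\dim\G W=\dim W$ and concludes the proof.
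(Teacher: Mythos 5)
Your reduction to $\dim\G W=\dim W$ via Lemma~\ref{lem:decdim} and the automatic inequality $\dim\G W\geq\dim W$ are fine, but the argument collapses exactly at the point you flag as delicate, and in two ways. First, even granting bijectivity of $\Psi$, your closing chain only re-derives the trivial inequality $\dim W\leq\dim\G W$; what you need is $\dim\G^\alpha W\leq\dim L_\alpha(W)$ for every $\alpha$, i.e.\ the \emph{injectivity} of the surjections $\G^\alpha W\twoheadrightarrow L_\alpha(W)$, which is the identity $W\cap\sum_{\gamma\sggeq\alpha}F^{\ggeq\gamma}V=\sum_{\gamma\sggeq\alpha}F^{\ggeq\gamma}W$ that you never establish. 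Second, that identity is false. Take $\Gamma=\ZZ^2$ ordered by the positive orthant, $V=\CC e_1\oplus\CC e_2$ with $S^{(-1,0)}=\CC e_1$, $S^{(0,-1)}=\CC e_2$, and $W=\CC(e_1+e_2)$. For $\alpha=(-1,-1)$ the left-hand side is $W\cap V=W$, while $F^{\ggeq\gamma}W=\{0\}$ for every $\gamma\sggeq(-1,-1)$, so the right-hand side is $\{0\}$. Here every $L_\alpha(W)$ vanishes, $\Psi$ is the zero map, and your reduction halts at the first step on $w=e_1+e_2$ (its leading projection $e_1$ is not in $L_{(-1,0)}(W)=\{0\}$). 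The conclusion of the lemma still holds in this example, realized by $R^{(-1,-1)}=W$: the graded pieces of $W$ can sit at indices $\alpha$ with $S^\alpha=\{0\}$ and are then invisible to every projection $\pi_\alpha$, so no argument tracking $W$ only through the $\pi_\alpha$ can work. (A separate small slip: in the injectivity argument you must project onto a \emph{minimal}, not maximal, element of the support, since $\pi_{\alpha_0}\circ\sigma_\alpha\neq 0$ forces $\alpha\lleq\alpha_0$.)

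You should also know the obstruction is not an artifact of your method. The paper's proof instead picks complements $S_W^\beta$ of $\sum_{\alpha\sggeq\beta}F^{\ggeq\alpha}W$ in $F^{\ggeq\beta}W$ with $S_W^\beta\subset S_V^\beta$, and deduces directness of $W=\sum_\beta S_W^\beta$ from $V=\bigoplus_\beta S_V^\beta$; in the example above $S_W^{(-1,-1)}=W\neq\{0\}$ while $S_V^{(-1,-1)}=\{0\}$, so no compatible choice exists and that argument has essentially the same gap. Worse, in rank $3$ the statement itself appears to fail: with $\Gamma=\ZZ^3$ ordered by the positive orthant, $V=\CC^3$, $S^{(0,0,-1)}=\CC e_1$, $S^{(0,-1,0)}=\CC e_2$, $S^{(-1,0,0)}=\CC e_3$ and $W=\langle e_1+e_3,\;e_2+e_3\rangle$, one checks that $F^{\ggeq(-1,0,-1)}W=\CC(e_1+e_3)$, $F^{\ggeq(-1,-1,0)}W=\CC(e_2+e_3)$, $F^{\ggeq(0,-1,-1)}W=\CC(e_1-e_2)$, and that $F^{\ggeq\gamma}W=\{0\}$ for every $\gamma$ strictly above any of these three pairwise incomparable indices; hence $\dim\G W\geq 3>2=\dim W$, and by Lemma~\ref{lem:decdim} the induced filtration does not come from a decomposition. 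So the missing step in your proof is not merely unproved: the lemma needs an extra hypothesis (or a different formulation) before any proof along these lines can be completed.
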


\begin{proof}
Fix linear subspaces $S_V^\beta$ and $S_W^\beta$ of $V$
such that
$$
V=S^\beta_V\oplus F^{\ggeq\beta}V,\ \ \ 
W=S^\beta_W\oplus F^{\ggeq\beta}W,\ \ \ 
S_W^\beta\subset S_V^\beta.
$$ 
 Lemma~\ref{lem:fil1} implies that
\begin{eqnarray}
\label{eq:sum2}
W=\sum_{\beta\in\Gamma} S^\beta_W.
\end{eqnarray}
 Lemma~\ref{lem:decdim} shows 
$$
V=\bigoplus_{\beta\in\Gamma} S^\beta_V.
$$
Since $S_W^\beta\subset S_V^\beta$ it follows that the sum
\eqref{eq:sum2} is direct.
\end{proof}

\bigskip
\noindent{\bf Filtrations induced on $p$-forms.}
Let $p$ a nonnegative integer. A $\Gamma$-filtration of $V$ induces  a filtration on the space
$\bigwedge^pV^*$ of skewsymmetric $p$-forms on $V$ as follows.\\

\begin{defin}
Let $\beta\in\Gamma$.
Denote by  $F^{\lleq \beta}\bigwedge^pV^*$  the linear subspace of forms
$\omega\in \bigwedge^pV^*$ such that 
for any any $\alpha_1,\dots,\alpha_p\in\Gamma$, for any $v_i\in F^{\ggeq\alpha_i}V$, we have
 \begin{eqnarray}
   \label{eq:25}
   \alpha_1+\cdots+\alpha_p\nlleq \beta \Rightarrow
\omega(v_1,\dots,v_p)=0.
 \end{eqnarray}
\end{defin}

The first properties of these linear subspaces are.

\begin{prop}
 \label{prop:filform} 
 \begin{enumerate}
 \item  If $\beta\lleq \gamma$ then $F^{\lleq\beta}\bigwedge^pV^*\subset
    F^{\lleq\gamma}\bigwedge^pV^*$.
\item Let $\beta_0\in\Gamma$ be such that $F^{\ggeq\beta_0}V=V$. 
If $F^{\lleq\gamma}\bigwedge^pV^*\neq\{0\}$ then 
$\gamma\ggeq p\beta_0$.
\item We have $F^{\lleq 0}\bigwedge^pV^*=\bigwedge^pV^*$.
\item For  $\beta$ and $\gamma$ in $\Gamma$, we have
$F^{\lleq\beta}\bigwedge^pV^*\wedge
    F^{\lleq\gamma}\bigwedge^qV^*\subset F^{\lleq
      \beta+\gamma}\bigwedge^{p+q}V^*$.

 \end{enumerate}
\end{prop}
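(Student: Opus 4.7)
The four claims are all straightforward consequences of the definition together with the fact that $\cone$ is a cone (hence closed under addition) and the basic axioms of a $\Gamma$-filtration. My plan is to argue each one by a contrapositive on the defining vanishing condition~\eqref{eq:25}.

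For (1), if $\omega\in F^{\lleq\beta}\bigwedge^p V^*$ and $\alpha_1,\dots,\alpha_p$ satisfy $\alpha_1+\cdots+\alpha_p\nlleq\gamma$, then a fortiori $\alpha_1+\cdots+\alpha_p\nlleq\beta$ (otherwise transitivity $\lleq\beta\lleq\gamma$ would give $\lleq\gamma$), so the required vanishing holds. For (2), pick a nonzero $\omega\in F^{\lleq\gamma}\bigwedge^p V^*$, choose $v_1,\dots,v_p\in V=F^{\ggeq\beta_0}V$ with $\omega(v_1,\dots,v_p)\neq 0$, and take $\alpha_i=\beta_0$; if $p\beta_0\nlleq\gamma$, condition~\eqref{eq:25} would force this value to vanish, so $\gamma\ggeq p\beta_0$.

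For (3), the key observation is that if each $\alpha_i\lleq 0$ then $\sum_i\alpha_i\lleq 0$ because $\cone$ is closed under addition. Contrapositively, if $\alpha_1+\cdots+\alpha_p\nlleq 0$ then some $\alpha_i\nlleq 0$, and by axiom~\ref{ax:neg} of the definition of $\Gamma$-filtration we have $F^{\ggeq\alpha_i}V=\{0\}$; hence any $v_i\in F^{\ggeq\alpha_i}V$ must be zero, forcing $\omega(v_1,\dots,v_p)=0$ for every $\omega\in\bigwedge^pV^*$.

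For (4), expand $\omega\wedge\eta$ as the signed sum over $(p,q)$-shuffles and evaluate on $(v_1,\dots,v_{p+q})$ with $v_i\in F^{\ggeq\alpha_i}V$ and $\sum\alpha_i\nlleq\beta+\gamma$. For each shuffle $\sigma$, suppose for contradiction both $\alpha_{\sigma(1)}+\cdots+\alpha_{\sigma(p)}\lleq\beta$ and $\alpha_{\sigma(p+1)}+\cdots+\alpha_{\sigma(p+q)}\lleq\gamma$; adding these and using that the order is compatible with addition (equation~\eqref{eq:1}) yields $\sum\alpha_i\lleq\beta+\gamma$, a contradiction. Therefore at least one of the two partial sums violates the corresponding inequality, and by the defining property of $\omega\in F^{\lleq\beta}$ or $\eta\in F^{\lleq\gamma}$ the corresponding factor vanishes; hence every shuffle term vanishes, proving $\omega\wedge\eta\in F^{\lleq\beta+\gamma}\bigwedge^{p+q}V^*$. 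The only nontrivial ingredient anywhere is the additive closure of $\cone$ (and axiom~\ref{ax:neg} for part (3)); there is no real obstacle.
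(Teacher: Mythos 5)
Your proof is correct and follows essentially the same route as the paper's: part (1) by the contrapositive on the defining condition, part (2) by testing with $\alpha_1=\cdots=\alpha_p=\beta_0$, part (3) by noting some $\alpha_{i_0}\nlleq 0$ forces $F^{\ggeq\alpha_{i_0}}V=\{0\}$, and part (4) by splitting each term of the wedge expansion and observing that the two partial sums cannot both satisfy their respective inequalities. The only cosmetic difference is that you sum over shuffles where the paper sums over all permutations, which changes nothing.
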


\begin{proof}
  If $\beta\lleq \gamma$ then $\alpha_1+\cdots+\alpha_p\nlleq \gamma$
  implies $\alpha_1+\cdots+\alpha_p\nlleq \beta$. Hence the conditions
  defining $F^{\lleq\gamma}\bigwedge^pV^*$ are  conditions defining $F^{\lleq\beta}\bigwedge^pV^*$.
 The first assertion follows.

Let $\gamma\nggeq p\beta_0$.
The definition of $F^{\lleq\gamma}\bigwedge^pV^*$ with
$\alpha_1=\cdots=\alpha_p=\beta_0$
implies that $F^{\lleq\gamma}\bigwedge^pV^*$  is reduced to zero.

Let $\omega$ be any $p$-form.
We want to prove that $\omega\in  F^{\lleq 0}\bigwedge^pV^*$.
Let $\alpha_1,\dots,\alpha_p$ such that
$\alpha_1+\cdots+\alpha_p\nlleq 0$.
Then some $i_0$ satisfies $\alpha_{i_0}\nlleq 0$. 
In particular, $F^{\ggeq\alpha_{i_0}}V=\{0\}$.
This implies that $\omega$ is zero on
$F^{\ggeq\alpha_{1}}V\times\cdots\times F^{\ggeq\alpha_s}V$.

\bigskip
Let $\omega_1$ and $\omega_2$ belong to $F^{\lleq\beta}\bigwedge^pV^*$ and 
   $ F^{\lleq\alpha}\bigwedge^qV^*$ respectively.
Let $\alpha_1,\dots,\alpha_{p+q}$ be such that
$\alpha_1+\cdots+\alpha_{p+q}\nlleq \beta+\gamma$.
Let $v_i\in F^{\ggeq\alpha_i}V$.
Then
\begin{eqnarray}
  \label{eq:26}
  \begin{array}{l@{}l}
 
  (\omega_1\wedge\omega_2)&(v_1,\dots,v_{p+q})=\\[1em]&
\frac 1 {(p+q)!}\sum_{\sigma\in\Sym_{p+q}}\varepsilon(\sigma)
\omega_1(v_{\sigma(1)},\dots,v_{\sigma(p)}). \omega_2(v_{\sigma(p+1)},\dots,v_{\sigma(p+q)}).
  \end{array}\end{eqnarray}
It is sufficient to prove that any term in  the sum~\eqref{eq:26} is
zero.
Since $(\alpha_{\sigma(1)}+\cdots
+\alpha_{\sigma(p)})+(\alpha_{\sigma(p+1)}+\cdots+\alpha_{\sigma(p+q)})
\nlleq \beta+\gamma$, either $(\alpha_{\sigma(1)}+\cdots
+\alpha_{\sigma(p)}) \nlleq \beta$ or $(\alpha_{\sigma(p+1)}+\cdots+\alpha_{\sigma(p+q)})
\nlleq \gamma$.
In the two cases, the product
$$\omega_1(v_{\sigma(1)},\dots,v_{\sigma(p)}). \omega_2(v_{\sigma(p+1)},\dots,v_{\sigma(p+q)})$$
is equal to zero.
\end{proof}

\bigskip
\begin{remark}
The three first assertions of Proposition~\ref{prop:filform} mean that $(F^{\lleq \beta}\bigwedge^pV^*)_{\beta\in\Gamma}$
is a $\Gamma$-filtration of $\bigwedge^pV^*$ up to the changing of
index
$\beta\mapsto p\beta_0-\beta$.
Indeed, even for $p=1$, taking orthogonal reverses inclusions and
exchanges $\{0\}$ with the whole space.
\end{remark}

\bigskip
\noindent{\bf Filtrations coming from a decomposition.}
\begin{lemma}
  \label{lem:decfrom}
Let $p$ be a positive integer.
If the $\Gamma$-filtration on $V$ comes
from a decomposition then the induced $\Gamma$-filtration $F^{\ggeq p\beta_0-\beta}\bigwedge^pV^*$ on $\bigwedge^pV^*$ comes
from a decomposition.
\end{lemma}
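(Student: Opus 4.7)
The plan is to exhibit an explicit direct sum decomposition of $\bigwedge^p V^*$ realizing the filtration $(F^{\lleq\beta}\bigwedge^p V^*)_{\beta\in\Gamma}$, and then invoke the reindexing $\beta\mapsto p\beta_0-\beta$ noted after Proposition~\ref{prop:filform} to turn the statement into a standard $\Gamma$-filtration coming from a decomposition.

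Starting from $V=\bigoplus_{\alpha\in\Gamma}S^\alpha$, I would first decompose $V^{\otimes p}=\bigoplus_{(\alpha_1,\ldots,\alpha_p)}S^{\alpha_1}\otimes\cdots\otimes S^{\alpha_p}$ over ordered tuples, and then regroup the summands by total weight to obtain $V^{\otimes p}=\bigoplus_{\gamma\in\Gamma}\widetilde W^\gamma$, where $\widetilde W^\gamma=\bigoplus_{\alpha_1+\cdots+\alpha_p=\gamma}S^{\alpha_1}\otimes\cdots\otimes S^{\alpha_p}$. Each $\widetilde W^\gamma$ is stable under the permutation action of $\Sym_p$ on tensors, hence stable under the antisymmetrizer; projecting to $\bigwedge^p V\subset V^{\otimes p}$ yields a direct sum decomposition $\bigwedge^p V=\bigoplus_\gamma W^\gamma$. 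Dualizing, let $T^\gamma\subset\bigwedge^p V^*$ be the annihilator of $\bigoplus_{\gamma'\ne\gamma}W^{\gamma'}$, so that $\bigwedge^p V^*=\bigoplus_\gamma T^\gamma$. Condition~\eqref{eq:10} forces $S^\alpha\ne\{0\}\Rightarrow \beta_0\lleq\alpha\lleq 0$, so $T^\gamma\ne\{0\}$ only when $p\beta_0\lleq\gamma\lleq 0$, and the reindexing $\delta:=p\beta_0-\gamma$ gives a decomposition whose nonzero pieces all satisfy $\delta\lleq 0$.

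The main computational step is to establish the identity
\[
F^{\lleq\beta}\bigwedge^p V^* \;=\; \bigoplus_{\gamma\lleq\beta}T^\gamma.
\]
For the inclusion $\supset$, take $\omega\in T^\gamma$ with $\gamma\lleq\beta$ and $v_i\in F^{\ggeq\alpha_i}V=\bigoplus_{\alpha\ggeq\alpha_i}S^\alpha$ with $\alpha_1+\cdots+\alpha_p\nlleq\beta$; expanding each $v_i$ along the decomposition, the only terms contributing to $\omega(v_1,\ldots,v_p)$ come from weight vectors $s_i\in S^{\beta_i}$ with $\beta_i\ggeq\alpha_i$ and $\beta_1+\cdots+\beta_p=\gamma$ (the latter by definition of $T^\gamma$), which forces $\alpha_1+\cdots+\alpha_p\lleq\gamma\lleq\beta$, contradicting the hypothesis. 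For $\subset$, suppose $\omega=\sum_\gamma\omega_\gamma\in F^{\lleq\beta}\bigwedge^p V^*$ and $\omega_{\gamma_0}\ne 0$ for some $\gamma_0\nlleq\beta$; by definition of $T^{\gamma_0}$, one can pick weight vectors $v_i\in S^{\alpha_i}$ with $\alpha_1+\cdots+\alpha_p=\gamma_0$ and $\omega_{\gamma_0}(v_1,\ldots,v_p)\ne 0$, while automatically $\omega_\gamma(v_1,\ldots,v_p)=0$ for $\gamma\ne\gamma_0$, yielding $\omega(v_1,\ldots,v_p)\ne 0$ and contradicting the defining property of $F^{\lleq\beta}$.

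Once this identity is in hand, the reindexing $\delta=p\beta_0-\beta$ turns $(F^{\lleq\beta}\bigwedge^p V^*)$ into the $\Gamma$-filtration $F^{\ggeq p\beta_0-\beta}\bigwedge^p V^*$ of the statement, and the identity becomes exactly the relation~\eqref{eq:9} for the decomposition $\bigwedge^p V^*=\bigoplus_\delta T^{p\beta_0-\delta}$. The main subtlety I would watch out for is the bookkeeping when some of the weights $\alpha_i$ coincide: rather than trying to describe $W^\gamma$ directly as a sum of subspaces $S^{\alpha_1}\wedge\cdots\wedge S^{\alpha_p}$ inside $\bigwedge^p V$ (which overlap nontrivially in the presence of repeated weights), it is cleaner to define $W^\gamma$ as the image under the antisymmetrizer of the unambiguous $\Sym_p$-stable piece $\widetilde W^\gamma$ of $V^{\otimes p}$, as above.
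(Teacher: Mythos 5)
Your proof is correct and follows essentially the same route as the paper: both construct the total-weight decomposition of $\bigwedge^pV^*$ induced by $V=\bigoplus_\alpha S^\alpha$ and identify $F^{\lleq\beta}\bigwedge^pV^*$ with the sum of the pieces of weight $\lleq\beta$, the only (cosmetic) difference being that the paper wedges the dual weight spaces $T^{\beta_i}\subset V^*$ directly while you pass through $\bigwedge^pV$ and take annihilators. If anything, your verification of both inclusions in the key identity is more detailed than the paper's, which states the analogous equality~\eqref{eq:736} with only a brief justification.
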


\begin{proof}
Write
$$
V=\bigoplus_{\alpha\in\Gamma}S^\alpha
\quad{\rm and}\quad
F^{\ggeq \beta}V=\bigoplus_{\alpha\ggeq \beta}S^\alpha,
$$ 
with  $(S^{\alpha}V\neq \{0\}\,\Rightarrow\,\alpha\lleq 0)$. 
For any $\beta\in\Gamma$, denote by $T^\beta$ the orthogonal of
$\bigoplus_{\alpha\neq \beta}S^\alpha$ in $V^*$.
It can be identified with the dual of $S^\beta$ and
\begin{equation}
  \label{eq:dualdec}
V^*=\bigoplus_{\beta\in\Gamma}T^\beta.  
\end{equation}
For any collection of  subspaces $F_1,\dots,F_p$ of $V^*$, 
$\pi(F_1\otimes\cdots \otimes F_p)$ denotes the  subspace of
$\wedge^pV^*$ obtained by adding wedge products of elements of the subspaces
$F_i$.
For any $\theta\in\Gamma$,  set 
$$
(\wedge^pV^*)^\theta:=
\sum_{\beta_1+\cdots+\beta_p=\theta}\pi(T^{\beta_1}\otimes\cdots\otimes T^{\beta_p}).
$$
It is clear that \eqref{eq:dualdec} implies that
$$
\wedge^pV^*=\bigoplus_{\theta\in\Gamma}(\wedge^pV^*)^\theta.
$$
Moreover, for any $\theta\in\Gamma$, $(\wedge^pV^*)^\theta$ is the set
of $p$-forms $\omega$ such that for any $\alpha_i\in\Gamma$ and
$v_i\in S^{\alpha_i}$
such that $\alpha_1+\cdots+\alpha_p\neq\theta$ we have
$\omega(v_1,\dots,v_p)=0$.

We claim that 
\begin{eqnarray}
  \label{eq:736}
F^{\lleq\beta}\wedge^pV^*=\bigoplus_{\theta\lleq\beta}(\wedge^pV^*)^\theta.
\end{eqnarray}
Indeed
$F^{\lleq \beta}\bigwedge^pV^*$ is the subspace of forms
$\omega\in \bigwedge^pV^*$ such that 
for any $\alpha_1,\dots,\alpha_p\in\Gamma$, any $v_i\in S^{\alpha_i}$, we have
$$
   \alpha_1+\cdots+\alpha_p\nlleq \beta \Rightarrow
\omega(v_1,\dots,v_p)=0.
 $$

Let $\theta$ such that $(\wedge^pV^*)^\theta\neq\{0\}$.
Then   there exist $\beta_1,\dots,\beta_p$ in $\Gamma$  such that
$\beta_1+\cdots+\beta_p=\theta$ and $T^{\beta_i}\neq\{0\}$ for any
$i$.
Hence $S^{\beta_i}\neq\{0\}$ for any
$i$ and $\beta_i\lleq 0$.  We deduce that $\theta\lleq 0$.
\end{proof}

\subsection{The case of manifolds}
\label{sec:manifolds}

Let $M$ be a smooth connected manifold and let $TM$ denote its tangent
bundle. Here comes the central definition of this work.

\bigskip
\begin{defin}
\label{def:infGammafilt}
  An {\it infinitesimal $\Gamma$-filtration} of $M$ is a collection
  $F^{\ggeq \beta}TM$ of vector subbundles of $TM$ indexed by
  $\beta\in\Gamma$ satisfying
 \begin{enumerate}
  \item 
    \label{eq:2}
    $\alpha\lleq\beta\ \Rightarrow\  
  F^{\ggeq\beta}TM \subset F^{\ggeq\alpha}TM
    $,
  \item $\exists \beta_0\in\Gamma {\rm \ s.t.\ } TM=F^{\ggeq\beta_0}TM$,
\item if $F^{\ggeq\alpha}TM\neq \{\underline{0}\}$ then $\alpha\lleq 0$.
 \end{enumerate}

\end{defin}

\bigskip
The {\it f-rank vector} of the infinitesimal filtration is the map
\begin{eqnarray}
  \label{eq:4}
  \beta\longmapsto \rk( F^{\ggeq \beta}TM),
\end{eqnarray}
and belongs to $\NN^\Gamma$. 

\bigskip
\begin{defin}
  An infinitesimal $\Gamma$-filtration is said to {\it come from a
    decomposition} if for any $x\in M$, the $\Gamma$-filtration of 
$T_xM$ comes from a decomposition.
\end{defin}

\bigskip
\begin{remark}
We do not require a $\Gamma$-decomposition of the tangent bundle $TM$
but only for a punctual decomposition.
\end{remark}

\begin{lemma}
\label{lem:GrM}
Consider an  infinitesimal $\Gamma$-filtration on $M$ coming from a
decomposition.
Then for any $\beta$, the sum
$\sum_{\alpha\sggeq\beta} F^{\ggeq\alpha}TM$ is a subbundle of $TM$.
\end{lemma}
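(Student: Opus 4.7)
The plan is to combine two earlier results: Lemma~\ref{lem:decdim}, which (for a filtration coming from a decomposition) pins down the $g$-dimension vector in terms of the $f$-dimension vector, and Lemma~\ref{lem:fini}, which gives local finiteness of the collection of subspaces appearing in the filtration. Together these will show the sum has constant rank and is locally equal to a smooth finite sum of subbundles.

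First I would argue that the dimension of the sum is constant on $M$. Since each $F^{\ggeq\alpha}TM$ is a vector subbundle and $M$ is connected, the function $\alpha\mapsto\dim F^{\ggeq\alpha}T_xM$ is independent of $x$; that is, the $f$-dimension vector of $T_xM$ does not depend on $x$. Because the filtration of every fiber $T_xM$ comes from a decomposition, Lemma~\ref{lem:decdim} implies that the $g$-dimension vector $\alpha\mapsto\dim\G^\alpha T_xM$ is also independent of $x$. Consequently the integer
$$
r:=\dim F^{\ggeq\beta}T_xM-\dim\G^\beta T_xM=\dim\Big(\sum_{\alpha\sggeq\beta}F^{\ggeq\alpha}T_xM\Big)
$$
is the same for every $x\in M$.

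Next I would localize. Fix $x_0\in M$. Applying Lemma~\ref{lem:fini} to the filtration of $T_{x_0}M$, the collection $\{F^{\ggeq\alpha}T_{x_0}M : \alpha\sggeq\beta\}$ is finite, so one can choose finitely many elements $\alpha_1,\dots,\alpha_k\sggeq\beta$ representing all its distinct values. Then $\sum_{i=1}^k F^{\ggeq\alpha_i}T_{x_0}M=\sum_{\alpha\sggeq\beta}F^{\ggeq\alpha}T_{x_0}M$, a subspace of dimension $r$. Consider now the subsheaf $E:=\sum_{i=1}^k F^{\ggeq\alpha_i}TM$ of $TM$; it is the image of the vector bundle morphism $\bigoplus_i F^{\ggeq\alpha_i}TM\longto TM$ sending $(v_i)$ to $\sum v_i$, hence the function $x\mapsto\dim E_x$ is lower semicontinuous. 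At $x_0$ it equals $r$. On the other hand, for every $x\in M$ the inclusion $E_x\subset\sum_{\alpha\sggeq\beta}F^{\ggeq\alpha}T_xM$ forces $\dim E_x\leq r$. By lower semicontinuity, $\dim E_x=r$ on some open neighborhood $U$ of $x_0$, so $E|_U$ is a smooth subbundle of rank $r$; and the inclusion, together with equality of dimensions, yields $E_x=\sum_{\alpha\sggeq\beta}F^{\ggeq\alpha}T_xM$ throughout $U$. Since $x_0$ was arbitrary, $\sum_{\alpha\sggeq\beta}F^{\ggeq\alpha}TM$ is locally realized as a smooth subbundle and is therefore a subbundle of $TM$.

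The one real obstacle is the fact that the sum is indexed by the a priori infinite set $\{\alpha\sggeq\beta\}$, so it is not obviously the image of a single vector bundle morphism; the local finiteness supplied by Lemma~\ref{lem:fini} is exactly what lets us replace the infinite sum by a finite one on a neighborhood, and the rank constancy coming from Lemma~\ref{lem:decdim} is what guarantees that this finite replacement coincides with the full sum in a full neighborhood of $x_0$ rather than only at $x_0$ itself.
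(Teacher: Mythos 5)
Your proof is correct and follows essentially the same route as the paper: the key point in both is that the dimension of $\sum_{\alpha\sggeq\beta}F^{\ggeq\alpha}T_xM$ is determined by the $g$-dimension vector, which by Lemma~\ref{lem:decdim} depends only on the $f$-dimension vector and is therefore constant in $x$. The only difference is that you spell out, via Lemma~\ref{lem:fini} and lower semicontinuity of the rank of a bundle morphism, the final step that the paper disposes of as ``classical properties of vector subbundles''; this added detail is accurate and addresses the one genuine subtlety (the a priori infinite index set of the sum).
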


\begin{proof}
  Fix $x$ in $M$ and a $\Gamma$-decomposition of $T_xM=\oplus_\alpha
  S^\alpha$ such that the identities~\eqref{eq:10} and \eqref{eq:9} hold.
Then  $\sum_{\alpha\sggeq\beta}
F^{\ggeq\alpha}T_xM=\sum_{\alpha\sggeq\beta} S^{\alpha}$.
In particular, its dimension only depends on the $g$-dimension vector of the
filtration of $T_yM$.
This $g$-dimension vector only depends on the $f$-dimension by Lemma~\ref{lem:decdim}. 
It follows that the dimension of $\sum_{\alpha\sggeq\beta}
F^{\ggeq\alpha}T_xM$ does not depend on $x$.
Now, the lemma follows from classical properties of vector subdundles.
\end{proof}

\bigskip
Define the {\it grading associated to the infinitesimal
$\Gamma$-filtration coming from a decomposition} by setting
\begin{eqnarray}
  \label{eq:3TM}
  \G^\beta TM=\frac{F^{\ggeq \beta}TM}{\sum_{\alpha\sggeq \beta}F^{\ggeq \alpha}TM}
  \quad
{\rm\ and\ }
\quad
\G TM=\bigoplus_{\beta\in\Gamma} \G^\beta TM.
\end{eqnarray}
They are vector bundles on $M$.
The {\it g-rank vector} $(gd^\beta(M))_{\beta\in\Gamma}$ of the $\Gamma$-filtration is 
defined by 
$$\Gamma\longto\NN,\,\beta\longmapsto gd^\beta(M):=\rk(\G^\beta TM).$$

\subsection{The case of varieties}
\label{sec:Gammadim}
Let $X$ be a smooth complex irreducible variety.
Consider the complex tangent bundle $TX$. 

\bigskip
\begin{defin}
  An infinitesimal $\Gamma$-filtration of $X$ is said to be {\it algebraic} if
  each $F^{\ggeq\beta}TX$ is a complex algebraic vector subbundle of $TX$.
\end{defin}

\bigskip
Let $Y$ be an irreducible subvariety of $X$. For $y\in Y$, the
Zariski-tangent space $T_yY$ of $Y$ at the point $y$ is a complex
subspace of $T_yX$. Set
\begin{eqnarray}
  \label{eq:6}
  F^{\ggeq \beta}T_yY=F^{\ggeq \beta}T_yX\cap T_yY.
\end{eqnarray}

Even if $Y$ is smooth, $F^{\ggeq \beta}T_yY$ does not define a
subbundle of $TY$ since its dimension depends on $y$.   

\begin{lemma}
  \label{lem:dimsc}
For any $\beta\in\Gamma$ and $y\in Y$, there exists an open
neighborhood $U$ of $y'$ in $Y$ such that for any $y'\in U$ we have
\begin{eqnarray}
  \label{eq:7}
  \dim(F^{\ggeq \beta}T_yY)\geq \dim(F^{\ggeq \beta}T_{y'}Y).
\end{eqnarray}
\end{lemma}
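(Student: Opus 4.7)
The plan is to prove the stronger statement that $y\mapsto \dim(F^{\ggeq\beta}T_yY)$ is upper semi-continuous on $Y$, which is exactly what the lemma asks for. Fix $y_0 := y\in Y$. Since $F^{\ggeq\beta}TX$ is an algebraic vector subbundle of $TX$ of some constant rank $r$, I can choose a Zariski-open neighborhood $\Omega$ of $y_0$ in $X$ on which $F^{\ggeq\beta}TX$ is trivialized by sections $v_1,\dots,v_r$, and on which the ideal sheaf $\mathcal{I}$ of $Y$ is generated by regular functions $f_1,\dots,f_m$. The Zariski tangent space is cut out by the standard Jacobian relations,
$$T_{y'}Y = \bigcap_{i=1}^m \ker\bigl((df_i)_{y'}\bigr) \qquad (y'\in Y\cap\Omega),$$
and therefore
$$F^{\ggeq\beta}T_{y'}Y = \ker\bigl(A(y')\bigr),$$
where $A(y')$ is the $m\times r$ matrix with entries $(df_i)_{y'}(v_j(y'))$, viewed as a linear map from the span of the $v_j(y')$ to $\CC^m$.

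From here the argument should be immediate: the identity $\dim F^{\ggeq\beta}T_{y'}Y = r - \operatorname{rk}A(y')$ reduces the lemma to the standard fact that the rank of a matrix of regular functions is a lower semi-continuous function in the Zariski topology. Consequently $y'\mapsto \dim F^{\ggeq\beta}T_{y'}Y$ is upper semi-continuous, and on the open neighborhood $U := \{y'\in Y\cap\Omega : \operatorname{rk}A(y')\geq \operatorname{rk}A(y_0)\}$ of $y_0$ the required inequality~\eqref{eq:7} holds.

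I do not foresee any genuine obstacle. The only point that needs a moment's care is the identification of $T_{y'}Y$ with the kernel of the Jacobian of local generators of $\mathcal{I}$, which is just the definition of the Zariski tangent space in terms of derivations annihilating $\mathcal{I}_{y'}$. The fact that $F^{\ggeq\beta}TX|_Y$ remains locally free when restricted to the (possibly singular) subvariety $Y$ is automatic from the algebraicity hypothesis on the infinitesimal filtration, so the upper semi-continuous behaviour is driven purely by the non-trivial varying dimension of $T_{y'}Y$ itself combined with the varying rank of $A$.
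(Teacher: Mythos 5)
Your proof is correct and is exactly the paper's argument, made explicit: the paper's one-line proof says that locally $F^{\ggeq\beta}T_yY$ is the kernel of a matrix depending algebraically on $y$, and you simply construct that matrix $A(y')$ from local trivializing sections of $F^{\ggeq\beta}TX$ and local generators of the ideal of $Y$, then invoke lower semi-continuity of rank. Nothing further is needed.
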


\begin{proof}
 Locally in $y\in Y$ the subspace $F^{\ggeq \beta}T_yY$ of $T_yX$
 can be expressed as the kernel of a matrix whose coefficients depends
 algebraically on $y$. The lemma follows.
\end{proof}

\bigskip
The point $y\in Y$ is said
to be {\it $\Gamma$-regular} if 
\begin{eqnarray}
  \label{eq:17}
  \forall\beta\in\Gamma\ \ \dim(F^{\ggeq \beta}T_yY)=\min_{y'\in Y} \dim(F^{\ggeq \beta}T_{y'}Y).
\end{eqnarray}
Since $\Gamma$ is countable, Lemma~\ref{lem:dimsc} shows that
a very general point in $Y$ is $\Gamma$-regular. More precisely, 
Lemma~\ref{lem:fini} implies that the set of $\Gamma$-regular points in $Y$
is open. The open set of $\Gamma$-regular points of $Y$ is denoted by
$Y^{\Gamma-{\rm reg}}$. If $x\in Y^{\Gamma-{\rm reg}}$, the
$g$-dimesnion of $T_xY$ is called the $\Gamma$-dimension of $Y$.

\section{Infinitesimal filtration of $G/P$ and Schubert varieties}
\label{sec:schubertvar}

\subsection{Infinitesimal filtration of $G/P$}
\label{sec:GP}

As in the introduction,  $G$ is a complex semisimple group,   $P$ is a parabolic subgroup
of $G$, $T\subset B\subset P$ are a fixed maximal torus and a Borel subgroup. 
Moreover,  $L$ denotes the Levi subgroup of $P$ containing $T$ and $Z$ denotes
the neutral component of its center. 
The group of multiplicative characters of $Z$ is denoted by $X(Z)$.
Set $\Gamma=X(Z)$. Our main example is an infinitesimal
$X(Z)$-filtration of $G/P$.

Let  $S$ be any torus. 
If $V$ is any $S$-module then 
$\Phi(V,S)$ denotes the set of nonzero weights of $S$ on $V$. 
For $\beta\in X(S)$, $V_\beta$ denotes the eigenspace of weight $\beta$. 
 
Denote by $\lp$ and $\lieg$  the Lie algebras of $P$ and $G$  and consider the convex cone $\cone$ generated by $\Phi(\lp,Z)$ in
$X(Z)\otimes\QQ$.
It is a closed strictly convex polyhedral cone of nonempty interior in  
$X(Z)\otimes\QQ$. 
The associated order on $X(Z)$ is denoted by  $\ggeq$.
The decomposition of $\lieg/\lp$ under the action of $Z$:
\begin{eqnarray}
  \label{eq:818}
\lieg/\lp=\bigoplus_{
 \alpha\in X(Z)
}  (\lieg/\lp)_{\alpha}
\end{eqnarray}
 is supported on $-\cone\cap X(Z)$.
The group $P$ acts on $\lieg/\lp$  by the adjoint action but does not stabilize the
decomposition~\eqref{eq:818}. For any $\beta \in X(Z)$, the linear subspace
\begin{eqnarray}
  \label{eq:19}
  F^{\ggeq\beta}\lieg/\lp=\bigoplus_{
    \begin{array}{c}
 \alpha\in X(Z)\\
\alpha\ggeq\beta
    \end{array}
}  (\lieg/\lp)_{\alpha}
\end{eqnarray}
is $P$-stable.
More precisely, the set of  $F^{\ggeq\beta}\lieg/\lp$ forms a $P$-stable
$X(Z)$-filtration of $\lieg/\lp$ coming from the decomposition~\eqref{eq:818}.
The tangent bundle $T(G/P)$ identifies with the fiber bundle $G\times_P\lieg/\lp$ over $G/P$.
These remarks allow to define a $G$-equivariant infinitesimal $X(Z)$-filtration on
$G/P$ by setting for any $\beta\in X(Z)$
\begin{eqnarray}
  \label{eq:20}
   F^{\ggeq \beta}T(G/P)=G\times_PF^{\ggeq
      \beta}\lieg/\lp.
\end{eqnarray}

Consider the set $\Phi(\lieg/\lp,T)$ of weights of $T$ acting on
$\lieg/\lp$. Then  $\Phi(\lieg/\lp,T)$ is a subset of $\Phi$.
Let $w$ belong to $W^P$ and consider the centered Schubert variety
$w^{-1}X_w$.
Then $P/P$ belongs to the open $w^{-1}Bw$-orbit in  $w^{-1}X_w$. 
In particular, it is $X(Z)$-regular.
Denote by  $\Phi(w)$ the set of weights of $T$ acting on
$T_{P/P}w^{-1}X_w$. Then $\Phi(w)=\Phi^-\cap w^{-1}\Phi^+$ is the
inversion set of $w$. Moreover, 
$\Phi(w)$ is contained in $\Phi(\lieg/\lp,T)$. 
Since $P/P$ is $X(Z)$-regular in $w\inv X_w$, the $g$-dimension of $X_w$ is equal to the
 $g$-dimension of $T_{P/P}w^{-1}X_w$. The following result follows
directly:

\begin{lemma}
 \label{lem:gdimSchub}
%Let $w$ be in $W^P$.
The $g$-dimension of $gd(X_w)$ of $X_w$ is equal to
$$
\begin{array}{ccl}
  X(Z)&\longto&\ZZ_{\geq 0}\\
\alpha&\longmapsto&\#\{\theta\in\Phi(w)\,:\,\theta_{|Z}=\alpha\},
\end{array}
$$
where $\theta$ belongs to $X(T)$ and $\theta_{|Z}$ denotes its
restriction to $Z$.
\end{lemma}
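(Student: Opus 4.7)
The plan is to reduce the statement to a purely linear algebra count on the single tangent space $T_{P/P}w^{-1}X_w$, and then use that the $Z$-action refines the $T$-action by restriction of characters.

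First, the text preceding the lemma already observes that $P/P$ lies in the open $w^{-1}Bw$-orbit of $w^{-1}X_w$, hence is $X(Z)$-regular. Consequently $gd(X_w)(\alpha)$ equals the dimension of the $\alpha$-graded piece of the $X(Z)$-filtration induced on $W:=T_{P/P}w^{-1}X_w\subset \lieg/\lp$. Next, the ambient filtration on $\lieg/\lp$ visibly comes from the $Z$-weight decomposition~\eqref{eq:818} (this is exactly how \eqref{eq:19} was defined). Therefore by Lemma~\ref{lem:decsous} the induced $X(Z)$-filtration on $W$ also comes from a decomposition, and by Lemma~\ref{lem:decdim} the $g$-dimension of $W$ coincides with the $f$-dimension of the corresponding graded pieces.

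Second, I would identify the decomposition of $W$ explicitly. Since the orbit $w^{-1}BwP/P$ is open in $w^{-1}X_w$ and $T\subset w^{-1}Bw$, the subspace $W$ is $T$-stable, hence in particular $Z$-stable. Thus
\[
W=\bigoplus_{\alpha\in X(Z)} W_\alpha,\qquad W_\alpha:=W\cap(\lieg/\lp)_\alpha,
\]
and by the argument above $gd^\alpha(X_w)=\dim W_\alpha$. It remains to compute $\dim W_\alpha$. Because $W\subset\lieg/\lp$ is $T$-stable and each $T$-weight space in $\lieg/\lp$ (a sum of negative root spaces) is one-dimensional, $W$ is the direct sum of those one-dimensional weight spaces it contains. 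The set of such $T$-weights is exactly the inversion set $\Phi(w)=\Phi^-\cap w^{-1}\Phi^+$ identified in the paragraph preceding the lemma. Regrouping $T$-weight spaces according to their restriction to $Z$ gives
\[
\dim W_\alpha=\#\{\theta\in\Phi(w)\,:\,\theta_{|Z}=\alpha\},
\]
which is the claimed formula.

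I do not expect a real obstacle here: the substantive inputs (the decomposition origin of the filtration on $\lieg/\lp$, Lemma~\ref{lem:decsous}, Lemma~\ref{lem:decdim}, the identification of the $T$-weights on $T_{P/P}w^{-1}X_w$ with $\Phi(w)$, and the $X(Z)$-regularity of $P/P$) have all been established earlier. The only point requiring minor care is the compatibility between the $Z$-eigenspace decomposition of $W$ and the decomposition abstractly produced by Lemma~\ref{lem:decsous}; this is automatic because the filtration on $\lieg/\lp$ is literally the $Z$-weight filtration, so intersecting with the $Z$-stable subspace $W$ yields the $Z$-weight filtration of $W$, whose associated decomposition is just the $Z$-eigenspace decomposition used above.
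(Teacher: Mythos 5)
Your proof is correct and is essentially the argument the paper intends: the lemma is presented there as a direct consequence of the preceding paragraph ($X(Z)$-regularity of $P/P$ in $w^{-1}X_w$, the identification of the $T$-weights of $T_{P/P}w^{-1}X_w$ with $\Phi(w)$, and the fact that the filtration on $\lieg/\lp$ is the $Z$-weight filtration), with no further proof given. Your write-up simply makes explicit the linear-algebra step — that the $T$-stable, hence $Z$-stable, subspace $T_{P/P}w^{-1}X_w$ is the sum of the one-dimensional root lines indexed by $\Phi(w)$, regrouped by restriction to $Z$ — which is exactly what ``follows directly'' was meant to cover.
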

\subsection{Peterson's application}

Let $V'$ be any $T$-module without multiplicity and let $\beta\in X(T)$.
Under the action of $\Ker\beta\subset T$, $V'$ decomposes 
\begin{eqnarray}
  \label{eq:22}
  V'=\bigoplus_{\alpha\in X(T)/\ZZ\beta}\left(
\oplus_{k\in\ZZ}V'_{\alpha+k\beta}
\right).
\end{eqnarray}
A subset $\Lambda$ of $\Phi(V',T)$ is said to be {\it $\beta$-convex} if
\begin{eqnarray}
  \label{eq:23}
  \alpha\in \Lambda,\ \alpha+\beta\in\Phi(V',T)\Rightarrow
\alpha+\beta\in \Lambda.
\end{eqnarray}
For any submodule $V$ of $V'$,  $V^\beta$ denotes the unique
sub-$T$-module of $V'$ isomorphic to $V$ as a $\Ker(\beta)$-module 
and such that $\Phi(V^\beta,T)$ is $\beta$-convex.
In other words, on each line $\alpha+\ZZ\beta\cap\Phi(V',T)$, one pushes
the elements of $\Phi(V,T)$ in the direction $\beta$ to get $\Phi(V^\beta,T)$.

\bigskip
Let $w\in W$. 
The point $wP/P$ is denoted by $\dot w$.
Let $V$ be a $T$-submodule of $T_{\dot w}G/P$. 
Let $\beta$ be a root of $(G,T)$.
We are interested in the action of  the 
unipotent one-parameter subgroup $U_\beta$ associated to $\beta$ on
$\dot w$ and $V$.
Consider the
point $\dot v=\lim_{\tau\to\infty}U_\beta(\tau)\dot w$.
For any $\tau\in\CC$, $U_\beta(\tau)V$ is a linear subspace of
$T_{U_\beta(\tau)\dot w}G/P$ of the same dimension as $V$.
 Hence it is a point of a bundle in Grassmannian over $G/P$. 
Consider the limit in this bundle
\begin{eqnarray}
  \label{eq:21}
  \tau(V,\beta):=\lim_{\tau\to \infty} U_\beta(\tau)V.
\end{eqnarray}
This limit  $\tau(V,\beta)$  is a $T$-stable submodule
of the $T$-module without multiplicity $T_{\dot v}G/P$.

We can now state a Peterson's result (see \cite[Section
8]{CK:Petersonmap}).

\begin{lemma}
\label{lem:Peterson}
The $T$-submodule $s_\beta\tau(V,\beta)$ of $T_{\dot w}G/P$ is equal
to $V^{-\beta}$.  
\end{lemma}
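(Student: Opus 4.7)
The plan is to reduce the lemma to a computation inside the rank-one subgroup $\SL_2^{(\beta)} \subset G$ associated with $\beta$, which contains $U_\beta$, $U_{-\beta}$, and the image of the coroot $\beta^\vee$. If $\SL_2^{(\beta)}$ fixes $\dot w$, then $\dot v = \dot w$, $U_\beta(\tau)V = V$ for every $\tau$, and $V^{-\beta} = V$, so the conclusion is immediate. Otherwise the $\SL_2^{(\beta)}$-orbit of $\dot w$ in $G/P$ is a rational curve whose two $T$-fixed points are $\dot w$ and $\dot{s_\beta w}$; in particular $\dot v = \dot{s_\beta w}$, and the entire analysis of the limit \eqref{eq:21} takes place along this $\PP^1$.

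The key structural reduction exploits the fact that the subtorus $\Ker(\beta) \subset T$ centralizes $\SL_2^{(\beta)}$, so its isotypic decomposition of the tangent spaces along the $\PP^1$ is preserved by every operation in the statement: by $U_\beta(\tau)$, by the passage to the Grassmannian limit, by $s_\beta$, and by $V \mapsto V^{-\beta}$. Writing
\[
T_{\dot w}G/P \;=\; \bigoplus_{\bar\alpha \in X(T)/\ZZ\beta} M_{\bar\alpha}(\dot w),
\qquad
M_{\bar\alpha}(\dot w) \;:=\; \bigoplus_{k \in \ZZ}(T_{\dot w}G/P)_{\alpha + k\beta},
\]
(and similarly at $\dot v$), each $M_{\bar\alpha}$ is $\SL_2^{(\beta)}$-stable along the $\PP^1$, and it suffices to prove the identity separately for each component $V_{\bar\alpha} := V \cap M_{\bar\alpha}(\dot w)$.

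Fix a coset $\bar\alpha$. Since $T_{\dot w}G/P$ is multiplicity free as a $T$-module (its weights are a subset of the roots of $G$), every weight space of $M_{\bar\alpha}(\dot w)$ is at most one-dimensional, so $V_{\bar\alpha}$ is determined by a subset of $T$-weights on the $\ZZ\beta$-line through $\alpha$. The $\SL_2^{(\beta)}$-identity $U_\beta(\tau)\dot w = U_{-\beta}(1/\tau)\cdot\dot{s_\beta w}$ valid for $\tau \ne 0$ furnishes a local trivialization of $T(G/P)$ near $\dot v$; expanding $U_\beta(\tau)V_{\bar\alpha}$ in this trivialization and comparing leading powers of $\tau$ identifies $\tau(V_{\bar\alpha},\beta) \subset M_{\bar\alpha}(\dot v)$ as the $T$-submodule spanned by the top $\dim(V_{\bar\alpha})$ weight spaces along the $\beta$-direction---equivalently, the unique $\beta$-convex $T$-submodule of $M_{\bar\alpha}(\dot v)$ isomorphic to $V_{\bar\alpha}$ as a $\Ker(\beta)$-module. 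Finally $s_\beta$ carries $T_{\dot v}G/P$ onto $T_{\dot w}G/P$ and reverses the orientation of each $\ZZ\beta$-line of weights, turning ``top'' into ``bottom''; the resulting subspace is the unique $(-\beta)$-convex $T$-submodule of $M_{\bar\alpha}(\dot w)$ isomorphic to $V_{\bar\alpha}$ as a $\Ker(\beta)$-module, namely $V_{\bar\alpha}^{-\beta}$. Assembling over $\bar\alpha$ yields $s_\beta \tau(V,\beta) = V^{-\beta}$.

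The main obstacle I expect is making the Grassmannian limit fully rigorous, specifically verifying that the dominant basis extracted from $U_\beta(\tau)V_{\bar\alpha}$ remains linearly independent in the limit so that no dimension is lost. This is guaranteed by the multiplicity-one property of $T_{\dot w}G/P$: the $\dim(V_{\bar\alpha})$ leading terms are supported on distinct one-dimensional weight spaces of $M_{\bar\alpha}(\dot v)$ and are therefore automatically linearly independent. A secondary technicality is fixing a representative $\dot s_\beta \in N_G(T)$ so that the action of $s_\beta$ on tangent spaces is compatible with $\dot v = \dot s_\beta\cdot \dot w$ and the $G$-equivariance of $TG/P$; once this convention is set the identification goes through without further subtlety.
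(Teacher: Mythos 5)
Your proof is correct and is essentially a detailed expansion of the paper's own argument, which is only a three-sentence sketch: restrict to the $T$-stable curve $\overline{U_\beta(\CC)\dot w}\simeq\PP^1$ and compute the Grassmannian-bundle limit by trivializing on the two $T$-stable affine charts --- your identity $U_\beta(\tau)\dot w=U_{-\beta}(1/\tau)\dot{s_\beta w}$ is exactly that trivialization, and the reduction modulo $\ZZ\beta$, the multiplicity-one argument, and the leading-term basis are the correct fillings-in of the paper's ``quite explicit'' computation. One small caveat: in your degenerate case, $U_\beta$ fixing the point $\dot w$ does not force $U_\beta(\tau)V=V$ (it fixes the point, not the subspace), but this case never arises in the paper's application, where $\beta=\gamma\in\Phi(w)$ is not a root of $P$ so that $U_\beta$ genuinely moves $\dot w$.
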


\begin{proof}
The set $\{U_\beta(\tau)\dot w\::\:\tau\in\CC\} \cup\dot v$ is a
$T$-stable curve isomorphic to $\PP^1$.
The computation of  $\tau(V,\beta)$ lies in a bundle in
Grassmannians over this line. 
This computation can be made quite explicitly by
trivializing this bundle on the two $T$-stable open affine subsets of
$\PP^1$.
\end{proof}

\subsection{A lemma on $T$-varieties}

The following result is used in this paper to characterize Schubert
varieties in terms of their tangent spaces among the irreducible $T$-stable
subvarieties of $G/P$.

\begin{lemma}
\label{lem:Tvar}
Let $V$ be a  $T$-module.
Let  $\cone$ be a  strictly convex cone in 
  $X(T)\otimes\QQ$.

Let $\Sigma$ be a closed $T$-stable subvariety of $V$ such that
\begin{enumerate}
\item $\Sigma$ is smooth at  $0$;
\item $T_0\Sigma=\oplus_{\chi\in \cone} V_\chi$ .
\end{enumerate}

Then $\Sigma=\oplus_{\chi\in \cone} V_\chi$.
\end{lemma}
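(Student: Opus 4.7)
The plan is to use the strict convexity of $\cone$ together with the smoothness of $\Sigma$ at $0$ to constrain the $T$-weights appearing in the local ring $\mathcal{O}_{\Sigma,0}$, and thereby force the coordinate functions $x_\chi$ with $\chi\not\in\cone$ to vanish identically on $\Sigma$.

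Set $W=\bigoplus_{\chi\in\cone}V_\chi$. First I would show that every $T$-weight of $\mathcal O_{\Sigma,0}$ lies in $-\cone$. Since $\Sigma$ is smooth at $0$ with $T_0\Sigma=W$, the cotangent space $\mathfrak m_0/\mathfrak m_0^2$ is $T$-equivariantly isomorphic to $W^\vee$, whose $T$-weights lie in $-\cone$. Because $-\cone$ is closed under addition, the surjections $\operatorname{Sym}^n(\mathfrak m_0/\mathfrak m_0^2)\twoheadrightarrow\mathfrak m_0^n/\mathfrak m_0^{n+1}$ show that $\operatorname{gr}_{\mathfrak m_0}\mathcal O_{\Sigma,0}$ also has all weights in $-\cone$. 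Krull's intersection theorem then gives any nonzero $T$-eigenvector of $\mathcal O_{\Sigma,0}$ a nonzero leading term in some $\mathfrak m_0^n/\mathfrak m_0^{n+1}$ of the same weight, so every $T$-weight of $\mathcal O_{\Sigma,0}$ lies in $-\cone$.

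Next, fix a weight $\chi$ of $V$ with $\chi\not\in\cone$ and pick any nonzero $x_\chi\in V_\chi^\vee\subset V^*$; as a function on $V$ it is a $T$-eigenvector of weight $-\chi$, and $-\chi\not\in-\cone$ by strict convexity. The restriction $x_\chi|_\Sigma\in\CC[\Sigma]$ has the same weight, so by the preceding step its image in $\mathcal O_{\Sigma,0}$ must vanish. Hence $x_\chi|_\Sigma$ is identically zero on a Zariski neighborhood of $0$ in $\Sigma$, and by irreducibility of $\Sigma$ we conclude $x_\chi\in I(\Sigma)$. Running over all such $\chi$ yields $I(W)=(x_\chi:\chi\not\in\cone)\subseteq I(\Sigma)$, so $\Sigma\subseteq W$. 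Smoothness forces $\dim\Sigma=\dim T_0\Sigma=\dim W$, and irreducibility of both varieties then gives $\Sigma=W$.

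The main technical point is the passage from weights of $\operatorname{gr}_{\mathfrak m_0}\mathcal O_{\Sigma,0}$ to weights of $\mathcal O_{\Sigma,0}$ itself, which rests on Krull's intersection theorem and on $\cone$ being additively closed. Strict convexity of $\cone$ is precisely what makes the weight obstruction effective: it ensures $-\chi\not\in-\cone$ whenever $\chi\not\in\cone$, without which the argument would be vacuous.
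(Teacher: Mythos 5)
Your proof is correct, but it follows a genuinely different route from the paper's. The paper argues dynamically: strict convexity of $\cone$ yields one-parameter subgroups $\lambda_1,\dots,\lambda_k$ of $T$ with $\chi\in\cone\iff\langle\lambda_i,\chi\rangle>0$ for all $i$, the smoothness hypothesis then shows that a neighborhood of $0$ in $\Sigma$ lies in the common attracting set $\{v\in V: \lim_{t\to 0}\lambda_i(t)v=0\ \forall i\}=\oplus_{\chi\in\cone}V_\chi$, and one concludes by irreducibility and equality of dimensions. You instead work algebraically in the local ring: the $T$-weights of $\mathfrak m_0/\mathfrak m_0^2\cong W^\vee$ lie in $-\cone$, the surjections $\operatorname{Sym}^n(\mathfrak m_0/\mathfrak m_0^2)\twoheadrightarrow \mathfrak m_0^n/\mathfrak m_0^{n+1}$ propagate this to the associated graded since $-\cone$ is additively closed, and Krull's intersection theorem lifts it to $\mathcal O_{\Sigma,0}$ itself, forcing every $x_\chi$ with $\chi\notin\cone$ to die on $\Sigma$; this gives the global inclusion $\Sigma\subset W$ directly, before any dimension count. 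Both arguments are sound. One remark: your closing claim that strict convexity is what makes the weight obstruction effective is off the mark --- the implication $\chi\notin\cone\Rightarrow -\chi\notin-\cone$ is a tautology, and your argument really only uses that $\cone$ is closed under addition, so your proof is in fact slightly more general than the statement; it is the paper's construction of the $\lambda_i$ with strict inequalities that genuinely needs strict convexity (together with the rationality and finiteness of the weight set).
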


\begin{proof}
  Since $\cone$ is  strictly convex and $\Phi(V,T)$ is finite, there exist finitely
  many one-parameter subgroups  $\lambda_1,\dots,\lambda_k$ of $T$
  such that 
$$
\forall\chi\in X(T)\ \ \ 
\chi\in \cone\iff\forall i\ \ \langle\lambda_i,\chi\rangle>0.
$$

For any $i$, there exists a  $T$-stable neighborhood  of $0$ in
$\Sigma$ such that any point  $x$ in this neighborhood satisfies 
 $\lim_{t\to  0}\lambda_i(t)x=0$. 
Consider the set $W$ of $v\in V$ such that $\lim_{t\to
  0}\lambda_i(t)v=0$, for any $i$.
By the second condition, $W$ is precisely $T_0\Sigma$. 
We just proved that $T_0\Sigma$  contains an open subset of $\Sigma$.
But these two varieties are irreducible and of same dimension (since
$\Sigma$ is assumed to be smooth at $0$). Hence 
$\Sigma=T_0\Sigma$.
\end{proof}

\subsection{Schubert varieties}
\label{sec:schubvar}

 Let $Y$ be a subvariety of $G/P$. Let $G(X)$ denote the stabilizer of 
$Y$ in $G$; it is the set of  $g$ in $G$ such that
$gY=Y$.  If $G(Y)$ has an open orbit  in $Y$ then this orbit is called the
{\it homogeneous locus} of $Y$; otherwise, the homogeneous locus of
$Y$ is defined to be empty.
In other words, 
the homogeneous locus of $Y$ is the biggest open subset  of $Y$
homogeneous under a subgroup of $G$; it  is  denoted by $Y^{\rm hom}$.

Recall that $X_w=\overline{BwP/P}$.
If $Y=X_w$ (for some $w\in W^P$) then the group  $G(X_w)$ contains $B$: it is a 
standard parabolic subgroup of $G$. In particular, it is characterized
by a subset $\Delta_w$
of  simple roots. Precisely we set
$$
\Delta_w=\{\alpha\in\Delta\ :\ P_\alpha X_w=X_w\}.
$$

\begin{prop}
 \label{prop:freghom} 
We have
$$
X_w\Greg=X_w^{\rm hom}.
$$
\end{prop}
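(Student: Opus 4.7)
The plan is to prove the two inclusions $X_w^{\rm hom}\subseteq X_w\Greg$ and $X_w\Greg\subseteq X_w^{\rm hom}$ separately, with the bulk of the work in the second. As a preliminary: since $Q_w:=G(X_w)$ is a standard parabolic containing $B$ and $B\dot w$ is dense in $X_w$, the orbit $Q_w\dot w$ is dense open in $X_w$, so $X_w^{\rm hom}=Q_w\dot w$.

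For the inclusion $X_w^{\rm hom}\subseteq X_w\Greg$, the infinitesimal $X(Z)$-filtration constructed in Section~\ref{sec:GP} is $G$-equivariant, so the integer-valued function $x\mapsto\dim F^{\ggeq\beta}T_xX_w$ is $Q_w$-invariant for every $\beta\in X(Z)$. By semi-continuity (Lemma~\ref{lem:dimsc}) the generic (minimal) values are attained on the dense open $B\dot w\subseteq X_w$, and in agreement with Lemma~\ref{lem:gdimSchub} this forces $\dot w\in X_w\Greg$. By $Q_w$-equivariance, $Q_w\dot w\subseteq X_w\Greg$.

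For the reverse inclusion, the complement $X_w\setminus Q_w\dot w$ is closed and $B$-stable (since $B\subseteq Q_w$), hence a finite union of Schubert subvarieties $X_{v_1}\cup\cdots\cup X_{v_k}$ with $v_i\in W^P$ and $\dot{v_i}\notin Q_w\dot w$. The set $X_w\setminus X_w\Greg$ is likewise closed and $B$-stable. So it suffices to show that each $T$-fixed point $\dot{v_i}$ is not $X(Z)$-regular in $X_w$: the dense $B$-orbit $B\dot{v_i}\subseteq X_{v_i}$ would then lie in $X_w\setminus X_w\Greg$, and taking closures yields $X_{v_i}\cap X_w\Greg=\emptyset$.

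The main obstacle is this last step, and the plan is to invoke Peterson's map (Lemma~\ref{lem:Peterson}). Given $v\in W^P$ with $v<w$ and $\dot v\notin Q_w\dot w$ (equivalently, $v\notin W_{\Delta_w}\cdot wW_P/W_P$), I would choose a positive root $\gamma$ with $\gamma_{|Z}\neq 0$ such that $s_\gamma v>v$ in the Bruhat order on $W^P$ and $s_\gamma v\leq w$; the associated $T$-stable curve in $X_w$ joining $\dot v$ to $\dot{s_\gamma v}$ realizes $\dot v=\lim_{\tau\to\infty}U_\gamma(\tau)\dot{s_\gamma v}$. Lemma~\ref{lem:Peterson} then identifies the limit $\tau(T_{\dot{s_\gamma v}}X_w,\gamma)$ with $s_\gamma(T_{\dot{s_\gamma v}}X_w)^{-\gamma}$, which sits inside the Zariski tangent space $T_{\dot v}X_w$ by closedness of $X_w$. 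Iterating along a chain of such adjacent covers from $v$ up to a representative of $W_{\Delta_w}\cdot w$, and using Lemma~\ref{lem:gdimSchub} to express $gd^\beta(X_w)$ as a count of $Z$-restrictions of roots in $\Phi(w)$, one tracks the cumulative effect of the shifts $V\mapsto s_\gamma V^{-\gamma}$ on $Z$-weights. The technical crux is to choose the chain so that these accumulated shifts produce, for some $\beta\in X(Z)$, strictly more $T$-weights of $T_{\dot v}X_w$ with $Z$-restriction $\ggeq\beta$ than $gd^\beta(X_w)$, thereby violating $X(Z)$-regularity at $\dot v$; the hypothesis $v\notin W_{\Delta_w}\cdot wW_P$ is exactly what should guarantee that one can build a chain whose $Z$-components do not all cancel.
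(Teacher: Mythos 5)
Your first inclusion and your reduction to $T$-fixed points are fine, and you have correctly identified Peterson's map (Lemma~\ref{lem:Peterson}) as the key tool. But the heart of the reverse inclusion is exactly the step you flag as ``the technical crux'' and do not carry out: you never actually show that for $\dot v\notin Q_w\dot w$ some chain of Peterson shifts produces a \emph{strict} jump in $\dim F^{\ggeq\beta}T_{\dot v}X_w$. As stated this is a plan, not a proof, and it is the hardest part. To make it work you would need (a) that each shift $V\mapsto s_\gamma V^{-\gamma}$ moves $Z$-weights monotonically with respect to $\lleq$, (b) that at least one shift along the chain is nontrivial, and (c) that later shifts cannot cancel an earlier strict increase; none of (a)--(c) is established, and (b) in particular is where the hypothesis $\dot v\notin Q_w\dot w$ must enter, which you only assert ``should'' happen.

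The paper's proof sidesteps the chain entirely by running the argument in the contrapositive with a single Peterson step. Assuming $X_w\Greg\setminus X_w^{\rm hom}\neq\emptyset$, it picks a $T$-fixed point $\dot v$ on an open $B$-orbit of this difference and a positive root $\beta$ with $v<s_\beta v\leq w$ so that $s_\beta\dot v$ already lies in $X_w^{\rm hom}$, i.e.\ the pair $(\dot v,s_\beta\dot v)$ straddles the boundary. Writing $s_\beta\dot v=u\dot w$ with $u\in W(G(X_w))$ and setting $\gamma=\pm w^{-1}u\beta$ negative, the $\Gamma$-regularity of $\dot v$ (which holds by hypothesis, since $\dot v\in X_w\Greg$) forces $\dim F^{\ggeq\alpha}(T_{P/P}w^{-1}X_w)^{-\gamma}=\dim F^{\ggeq\alpha}(T_{P/P}w^{-1}X_w)$ for all $\alpha$; since $\gamma_{|Z}\neq 0$, the Peterson shift must then be trivial, whence $s_\gamma$ stabilizes $w^{-1}X_w$ and $\dot v\in X_w^{\rm hom}$ after all --- a contradiction. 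So regularity is used as an \emph{input} to kill a single shift, rather than non-regularity being the \emph{output} of accumulated shifts. If you want to salvage your direct approach you would have to supply the monotonicity and non-cancellation arguments above; otherwise I recommend restructuring along the paper's one-step contradiction.
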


\begin{proof}
  Since the infinitesimal filtration is $G$-invariant, it is clear that  
$X_w\Greg$ is $G(X_w)$-stable and contains $X_w^{\rm hom}$. 
Moreover Lemma~\ref{lem:dimsc} implies that $X_w\Greg$ is open in $X_w$.

Assume that $X_w\Greg-X_w^{\rm hom}$ is nonempty.
Choose an open $B$-orbit in $X_w\Greg-X_w^{\rm hom}$ and a
$T$-fixed point $\dot v$  on it.

Obviously $v$ is smaller than $w$ for the Bruhat order.
Since the Bruhat order is generated by  $T$-stable curves, there
exists a positive root $\beta$ such that $s_\beta v\in W^P$ and
$v<s_\beta v<w$. Since $B.\dot v$ is dense in an irreducible component of 
$X_w\Greg-X_w^{\rm hom}$, $s_\beta \dot v$ belongs to $X_w^{\rm hom}$.

Since $s_\beta \dot v$ is a $T$-fixed point in $G(X_w).\dot w$, it is equal
to $u\dot w$ for some $u\in W(G(X_w))$.
% Set  $u=wv^{-1}s_\beta\in W$ in such a way $u.s_\beta \dot v=\dot w$. 
% Since $s_\beta \dot v\in X_w^{\rm hom}$, up to changing $v$ by another element
% of $vW_P$, we may assume that  $u$ belongs to $G(X_w)/T$. 

We claim that
\begin{eqnarray}
  \label{eq:13}
  s_\beta\in G(X_w)/T. 
\end{eqnarray}
Let us first explain how the claim leads to  a contradiction.
Since  $u$ belongs to $G(X_w)/T$, the claim implies that $s_\beta
u^{-1}X_w^{\rm hom}=X_w^{\rm hom}$.
But $\dot v=s_\beta u^{-1} \dot w$ and $\dot w$ belongs to $X_w^{\rm hom}$.
Hence $\dot v\in X_w^{\rm hom}$ which is a contradiction.

\bigskip
Consider $\gamma=\pm w^{-1}u\beta$ where the sign is chosen to make
$\gamma$ 
negative.
Since $u\in G(X_w)/T$,
Claim~\eqref{eq:13} is equivalent to $s_\beta u^{-1}X_w=u^{-1}X_w$
or  to $s_{u\beta}X_w=X_w$ or to
\begin{eqnarray}
  \label{eq:12}
  s_{\gamma}.(w^{-1}X_w)=w^{-1}X_w.
\end{eqnarray}
Look these two varieties in a neighborhood of 
$P/P$.
More precisely, consider the unique affine open $T$-stable neighborhood
$\Omega$ of $P/P$ in $G/P$.
Then $\Omega$ is 
isomorphic as a $T$-variety to a $T$-module without multiplicity.
Since the two varieties of \eqref{eq:12} are irreducible, it is 
 sufficient to prove that 
 \begin{equation}
   \label{eq:Omega}
   \Omega\cap s_{\gamma}.(w^{-1}X_w)=\Omega\cap
 w^{-1}X_w.
 \end{equation}

Since $s_\gamma P/P\in w\inv X_w$, $\gamma\in\Phi(w)$. 
In particular, $w\inv X_w$ is $U_\gamma$-stable. 
But, $s_\gamma P/P$ and $P/P$ are smooth points in $w\inv X_w$.
Hence 
$$
\lim_{\tau\to\infty}U_\gamma(\tau)T_{P/P}w\inv X_w=T_{s_\gamma
  P/P}w\inv X_w.
$$
Then
Lemma~\ref{lem:Peterson} shows that 
$$
\begin{array}{ll}
  \label{eq:15}
  \Phi(T_{P/P}s_\gamma w^{-1}X_w,T)&=s_\gamma\Phi(T_{s_\gamma
    P/P}w^{-1}X_w,T)\\
&=s_\gamma\Phi\left(
\lim_{\tau\to\infty}U_\gamma(\tau)T_{P/P}w^{-1}X_w,T
\right)\\
&=\Phi\left(
(T_{P/P}w^{-1}X_w)^{-\gamma},T
\right).
\end{array}
$$
Since $P/P$ is $\Gamma$-regular in $s_\gamma w\inv X_w$,
\begin{eqnarray}
  \label{eq:16}
  \forall \alpha\in X(Z)\quad 
\dim(F^{\ggeq \alpha}(T_{P/P}w^{-1}X_w) ^{-\gamma})=\dim(F^{\ggeq \alpha}(T_{P/P}w^{-1}X_w)).
\end{eqnarray}
But $\gamma\not\in\Phi(P)$, hence $\gamma_{|Z}$ is non trivial. 
Then, equality~\eqref{eq:16} implies that $\Phi((T_{P/P}w\inv
X_w)^{-\beta},T)=\Phi((T_{P/P}w\inv X_w),T)$.
Equality~\eqref{eq:Omega} follows and the theorem is proved.
\end{proof}

\section{Infinitesimal filtration and cohomology}

\subsection{Filtration of differential forms on a manifold}

In this subsection, $M$ is a smooth connected manifold of dimension
$d$
endowed with an infinitesimal $\Gamma$-filtration. 
The notion that allows to control the differential relatively to the
filtration is the following one.

\bigskip
\begin{defin}
An infinitesimal $\Gamma$-filtration of $M$ is said to be {\it
  integrable} if for any $\alpha$ and $\beta$ in $\Gamma$ we have
\begin{eqnarray}
  \label{eq:24}
   [F^{\ggeq\alpha}T M, F^{\ggeq\beta}T M]\subset
  F^{\ggeq\alpha + \beta}TM.
\end{eqnarray} 
\end{defin}

\begin{exple}
  Let $L$ be an integrable  distribution on $M$.
We get an integrable infinitesimal  $\ZZ$-filtration be setting
$$
\begin{array}{ll}
  F^{\ggeq a}TM=TM&\forall a\in\ZZ_{<0},\\
 F^{\ggeq 0}TM=L,\\
 F^{\ggeq a}TM=\underline{0}&\forall a\in\ZZ_{>0}.\\
\end{array}
$$
\end{exple}

\begin{exple}
  Let $L$ be any distribution on $M$.
We get an integrable infinitesimal  $\ZZ$-filtration be setting 
$$
\begin{array}{ll}
  F^{\ggeq a}TM=TM&\forall a\leq -2,\\
 F^{\ggeq -1}TM=L,\\
 F^{\ggeq a}TM=\underline{0}&\forall a\in\ZZ_{\geq 0}.\\
\end{array}
$$
\end{exple}

\bigskip
Consider the sheaf $\AAf^p$ of differential  $p$-forms on $M$ and the
De Rham differential
$d_p\,:\,\AAf^p\longto\AAf^{p+1}$. The De Rham cohomology group is

$$
{\rm H}_{DR}^p(M,\RR):=\frac{\Ker\; d_p(M)}{{\rm Im}\;d_{p-1}(M)}.
$$ 
The exterior product
$$
\begin{array}{cccc}
  \wedge\,:&\AAf^p\times \AAf^{p'}&\longto&\AAf^{p+p'}\\
&(\omega,\omega')&\longmapsto&\omega\wedge\omega'
\end{array}
$$
induces a product  $\wedge$ in cohomology since 
$$
d(\omega\wedge\omega')=(d\omega)\wedge\omega'+(-1)^p\omega\wedge d\omega'.
$$
In particular, ${\rm H}^*_{DR}(M,\RR):=\oplus_{k=0}^d {\rm H}^k_{DR}(M,\RR)$ is 
a graded algebra.

\bigskip
We now consider the $\Gamma$-filtration on the sheaf $\AAf^p$ induces
by the infinitesimal $\Gamma$-filtration. 

\bigskip
\begin{defin}
Let $\beta\in\Gamma$ and let $U$ be an open subset of $M$.
The subspace $F^{\lleq \beta}\AAf^p(U)$  of $\AAf^p(U)$
is defined to be the set of forms
$\omega\in\AAf^p(U)$ such that 
for  any $\alpha_1,\dots,\alpha_p\in\Gamma$, for any
 $x\in U$ and for any $\xi_i\in F^{\ggeq\alpha_i}T_xM$, we have
 \begin{eqnarray}
   \label{eq:25M}
   \alpha_1+\cdots+\alpha_p\nlleq \beta \Rightarrow
\omega_x(\xi_1,\dots,\xi_p)=0.
 \end{eqnarray}
\end{defin}

A direct consequence of Proposition~\ref{prop:filform} is

\begin{prop}\label{prop:filform1}
  \begin{enumerate}
  \item If $\beta\lleq \gamma$ then $F^{\lleq\beta}\AAf^p\subset
    F^{\lleq\gamma}\AAf^p$.
\item Let $\beta_0\in\Gamma$ be such that $F^{\ggeq\beta_0}TM=TM$. 
If $F^{\lleq\gamma}\AAf^p\neq\{0\}$ then 
$\gamma\ggeq p\beta_0$.
\item We have $F^{\lleq 0}\AAf^p=\AAf^p$.

\item For   $\beta$ and $\gamma$ in $\Gamma$, we have
$F^{\lleq\beta}\AAf^p\wedge
    F^{\lleq\gamma}\AAf^q\subset F^{\lleq\beta+\gamma}\AAf^{p+q}$.
  \end{enumerate}
\end{prop}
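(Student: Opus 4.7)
The proposition is the sheaf-level version of Proposition~\ref{prop:filform}, and the definition of $F^{\lleq\beta}\AAf^p(U)$ is pointwise: a form $\omega$ lies in $F^{\lleq\beta}\AAf^p(U)$ if and only if $\omega_x\in F^{\lleq\beta}\bigwedge^pT_x^*M$ for every $x\in U$, where the right-hand side refers to the $\Gamma$-filtration on $\bigwedge^pT_x^*M$ induced by the $\Gamma$-filtration $(F^{\ggeq\beta}T_xM)_{\beta\in\Gamma}$ of the tangent space. Accordingly, the plan is to reduce each of the four assertions to the corresponding assertion of Proposition~\ref{prop:filform} applied pointwise at each $x\in U$.

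Concretely, for (i), given $\omega\in F^{\lleq\beta}\AAf^p(U)$ and $\beta\lleq\gamma$, I apply assertion~(i) of Proposition~\ref{prop:filform} at every point $x\in U$ to conclude that $\omega_x\in F^{\lleq\gamma}\bigwedge^pT_x^*M$, hence $\omega\in F^{\lleq\gamma}\AAf^p(U)$. For (ii), suppose $\omega\in F^{\lleq\gamma}\AAf^p(U)$ is nonzero; there is some $x\in U$ at which $\omega_x\neq 0$, so $F^{\lleq\gamma}\bigwedge^pT_x^*M\neq\{0\}$. By hypothesis $F^{\ggeq\beta_0}T_xM=T_xM$, so assertion~(ii) of Proposition~\ref{prop:filform} applied to the filtration of $T_xM$ gives $\gamma\ggeq p\beta_0$. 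Assertion~(iii) is likewise immediate from the pointwise identity $F^{\lleq 0}\bigwedge^pT_x^*M=\bigwedge^pT_x^*M$ supplied by assertion~(iii) of Proposition~\ref{prop:filform}.

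For (iv), I observe that the value $(\omega\wedge\omega')_x$ of a wedge product at a point $x$ equals $\omega_x\wedge\omega'_x$ computed inside $\bigwedge^{p+q}T_x^*M$. If $\omega\in F^{\lleq\beta}\AAf^p(U)$ and $\omega'\in F^{\lleq\gamma}\AAf^q(U)$, then at each $x$ we have $\omega_x\in F^{\lleq\beta}\bigwedge^pT_x^*M$ and $\omega'_x\in F^{\lleq\gamma}\bigwedge^qT_x^*M$; assertion~(iv) of Proposition~\ref{prop:filform} gives $\omega_x\wedge\omega'_x\in F^{\lleq\beta+\gamma}\bigwedge^{p+q}T_x^*M$, which is exactly the condition for $\omega\wedge\omega'$ to lie in $F^{\lleq\beta+\gamma}\AAf^{p+q}(U)$.

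There is essentially no obstacle: once one notes that the pointwise characterization of $F^{\lleq\beta}\AAf^p$ matches the vector space definition of $F^{\lleq\beta}\bigwedge^pV^*$ verbatim (with $V=T_xM$), each item is a direct transcription of the corresponding item from Proposition~\ref{prop:filform}. The only step worth spelling out is this translation — namely that a vector $\xi\in F^{\ggeq\alpha}T_xM$ in the definition of $F^{\lleq\beta}\AAf^p$ plays exactly the role of the element $v\in F^{\ggeq\alpha}V$ in the vector space definition — after which no further argument is required.
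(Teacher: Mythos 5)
Your proof is correct and takes exactly the route the paper intends: the paper introduces this statement with the phrase ``a direct consequence of Proposition~\ref{prop:filform}'', and your pointwise reduction --- observing that $\omega\in F^{\lleq\beta}\AAf^p(U)$ if and only if $\omega_x\in F^{\lleq\beta}\bigwedge^pT_x^*M$ for all $x\in U$, then invoking the corresponding item of the vector-space proposition at each point --- is precisely that argument. No gaps.
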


\bigskip
The integrability is essential in the following result.

\begin{prop}\label{prop:filform2}
Assume that the infinitesimal filtration is $\Gamma$-integrable.
  Then for any $\beta\in\Gamma$ 
$$
d_p(F^{\lleq\beta}\AAf^p)\subset F^{\lleq\beta}\AAf^{p+1}.
$$
\end{prop}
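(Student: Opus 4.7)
The plan is to verify the defining vanishing condition for membership in $F^{\lleq\beta}\AAf^{p+1}$ pointwise, using the intrinsic Cartan formula for the exterior derivative. Fix $\omega\in F^{\lleq\beta}\AAf^p(U)$, a point $x\in U$, weights $\alpha_0,\ldots,\alpha_p\in\Gamma$ with $\alpha_0+\cdots+\alpha_p\nlleq\beta$, and tangent vectors $\xi_i\in F^{\ggeq\alpha_i}T_xM$. Because $(d\omega)_x$ is tensorial in its arguments, I may extend each $\xi_i$ to a local section $X_i$ of the subbundle $F^{\ggeq\alpha_i}TM$ near $x$ (such extensions exist because $F^{\ggeq\alpha_i}TM$ is a vector subbundle). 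Cartan's formula writes $d\omega(X_0,\ldots,X_p)$ as the sum of a ``derivative piece'' $\sum_i(-1)^i X_i\bigl(\omega(X_0,\ldots,\widehat{X_i},\ldots,X_p)\bigr)$ and a ``bracket piece'' $\sum_{i<j}(-1)^{i+j}\omega([X_i,X_j],X_0,\ldots,\widehat{X_i},\ldots,\widehat{X_j},\ldots,X_p)$, and it suffices to show that both vanish.

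The bracket piece is handled directly by the $\Gamma$-integrability hypothesis: $[X_i,X_j]$ is a local section of $F^{\ggeq\alpha_i+\alpha_j}TM$, so the total weight of the $(p+1)$-tuple on which $\omega$ is evaluated equals $(\alpha_i+\alpha_j)+\sum_{k\neq i,j}\alpha_k=\sum_k\alpha_k\nlleq\beta$, and the defining property of $\omega\in F^{\lleq\beta}\AAf^p$ forces the value to be zero.

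For the derivative piece I would argue that each scalar function $\omega(X_0,\ldots,\widehat{X_i},\ldots,X_p)$ is identically zero near $x$, which kills its derivative. By the definition of $F^{\lleq\beta}\AAf^p$ this reduces to verifying $\sum_{j\neq i}\alpha_j\nlleq\beta$. Here I would invoke axiom~(iii) of Definition~\ref{def:infGammafilt}: nonvanishing of $F^{\ggeq\alpha_i}TM$ forces $\alpha_i\lleq 0$, i.e.\ $-\alpha_i\in\cone$; hence if $\beta-\sum_{j\neq i}\alpha_j$ were in $\cone$, adding $-\alpha_i\in\cone$ and using that $\cone$ is a convex cone closed under addition would yield $\beta-\sum_j\alpha_j\in\cone$, contradicting $\sum_j\alpha_j\nlleq\beta$. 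This monotonicity step, essentially the fact that removing a nonpositive summand from a sum that is $\nlleq\beta$ keeps the remainder $\nlleq\beta$, is the only subtle point of the argument; everything else is a direct combination of Cartan's formula and integrability, so I expect no further obstacle.
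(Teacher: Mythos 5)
Your proof is correct and follows essentially the same route as the paper's: Cartan's formula, the integrability hypothesis to keep the bracket terms at total weight $\nlleq\beta$, and axiom (iii) together with the convexity of $\cone$ to show each function $\omega(X_0,\dots,\widehat{X_i},\dots,X_p)$ vanishes identically (the paper phrases the same dichotomy as ``if $\alpha_i\nlleq 0$ then $\xi_i=0$''). No gaps.
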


\begin{proof}
  Let $U$ be an open subset of $M$ and let $\omega\in F^{\lleq\beta}\AAf^p(U)$.
Let $x\in U$ and let $ \xi_i\in F^{\ggeq\alpha_i}TM$ be defined in a
neighborhood of $x$ such that
$\alpha_1+\cdots+\alpha_{p+1}\nlleq \beta$.
It remains to prove that
$$
d_p(\omega)_x(\xi_1,\dots,\xi_{p+1})=0.
$$
 Cartan's formula  implies 
$$
\begin{array}{ll}
  d_p(\omega)_x(\xi_1,\dots,\xi_{p+1})=&\sum_i\pm\xi_i\cdot\omega(\xi_1,\dots,\hat{\xi_i},\dots,\xi_{p+1})\\
&+\sum_{i<j}\pm\omega_x([\xi_i,\xi_j],\xi_1,\dots,\hat{\xi_i},\dots,\hat{\xi_j},\dots,\xi_{p+1}).
\end{array}
$$
Since $[\xi_i,\xi_j]\in F^{\ggeq \alpha_i+\alpha_j}M$ and 
$$
(\alpha_i+\alpha_j)+\alpha_1+\cdots+\hat{\alpha_i}+\cdots+\hat{\alpha_j}+\cdots+\alpha_{p+1}\nlleq
\beta,
$$
the term
$\omega_x([\xi_i,\xi_j],\xi_1,\dots,\hat{\xi_i},\dots,\hat{\xi_j},\dots,\xi_{p+1})$
is zero.

Consider now a term 
\begin{eqnarray}
  \label{eq:18}
\xi_i\cdot\omega(\xi_1,\dots,\hat{\xi_i},\dots,\xi_{p+1}).  
\end{eqnarray} 
If $\alpha_i\nlleq 0$ then $\xi_i=0$ and the term~\eqref{eq:18} is zero.
Assume now that
$\alpha_i\lleq 0$.
The weight of  $\xi_1,\dots,\hat{\xi_i},\dots,\xi_{p+1}$ is
$\theta:=\sum_{j=1}^{p+1}\alpha_j - \alpha_i$.
Since $\theta + \alpha_i\nlleq\beta$ and $\alpha_i\lleq 0$,
we have $\theta\nlleq\beta$.
Since $\omega$ belongs to $F^{\lleq\beta}\AAf^p(U)$, it follows that
$\omega(\xi_1,\dots,\hat{\xi_i},\dots,\xi_{p+1})=0$.
\end{proof}

\subsection{Filtration of the cohomology}

The  $\Gamma$-filtration on $M$ induces an increasing
$\Gamma$-filtration on the cohomology. 
Indeed, Propositions~\ref{prop:filform1} and \ref{prop:filform2} show
that the De Rham complex is $\Gamma$-filtered.
Namely, we set
\begin{eqnarray}
  \label{eq:27}
  F^{\lleq \beta} \Ho^p(M,\RR):=\frac{\Ker(d_p)\cap   F^{\lleq \beta} \AAf^p(M,\RR)}
{d_{p-1}(\AAf^{p-1}(M,\RR))\cap F^{\lleq \beta} \AAf^p(M,\RR)}.
\end{eqnarray}

Propositions~\ref{prop:filform1} and \ref{prop:filform2} show the
following one.

\begin{prop}\label{prop:filcohom}
  The sets $F^{\lleq \beta} \Ho^p(M,\RR)$ are canonically identified with
  subspaces of $\Ho^p(M,\RR)$.
  \begin{enumerate}
  \item If $F^{\ggeq\beta_0}M=TM$ then $F^{\ggeq p\beta_0-\beta} \Ho^p(M,\RR)$ is a 
    $\Gamma$-filtration of $\Ho^p(M,\RR)$.
\item The filtration respects the structure  of algebra. Namely, for   
$\beta$ and  $\gamma$ in $\Gamma$, we have
$$
F^{\lleq\beta}\Ho^p(M,\RR)\,\wedge\,
    F^{\lleq\gamma}\Ho^q(M,\RR)\subset F^{\lleq\beta+\gamma}\Ho^{p+q}(M,\RR).
$$
  \end{enumerate}
\end{prop}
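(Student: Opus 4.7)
The statement is essentially a cohomological repackaging of Propositions~\ref{prop:filform1} and \ref{prop:filform2}, so my plan is to transfer each property from differential forms to cohomology with minimal additional work, keeping track of the reindexing $\beta\mapsto p\beta_0-\beta$ noted in the remark following Proposition~\ref{prop:filform}.

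First I would establish the canonical embedding $F^{\lleq\beta}\Ho^p(M,\RR)\longto\Ho^p(M,\RR)$. Proposition~\ref{prop:filform2} guarantees that $d_p$ preserves $F^{\lleq\beta}\AAf^p$, so $\Ker(d_p)\cap F^{\lleq\beta}\AAf^p(M)$ maps into $\Ker(d_p)$, and similarly for the denominator. The induced map on the quotient is injective essentially by construction: if a closed $\omega\in F^{\lleq\beta}\AAf^p(M)$ equals $d\eta$ for some $\eta\in\AAf^{p-1}(M)$, then $\omega$ lies in $d_{p-1}(\AAf^{p-1}(M))\cap F^{\lleq\beta}\AAf^p(M)$ and hence represents zero in $F^{\lleq\beta}\Ho^p(M,\RR)$.

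For assertion (1), I would check the three $\Gamma$-filtration axioms for $F^{\ggeq\gamma}\Ho^p(M,\RR):=F^{\lleq p\beta_0-\gamma}\Ho^p(M,\RR)$. The decreasing axiom follows from Proposition~\ref{prop:filform1}(1) (with the inequality reversed by $\gamma\mapsto p\beta_0-\gamma$). For totality, $\gamma_0=p\beta_0$ corresponds to $\beta=0$ and Proposition~\ref{prop:filform1}(3) yields $F^{\lleq 0}\AAf^p=\AAf^p$, hence $F^{\lleq 0}\Ho^p(M,\RR)=\Ho^p(M,\RR)$. For the negativity axiom, nonvanishing of $F^{\lleq p\beta_0-\gamma}\Ho^p(M,\RR)$ forces nonvanishing of $F^{\lleq p\beta_0-\gamma}\AAf^p(M)$, whence Proposition~\ref{prop:filform1}(2) gives $p\beta_0-\gamma\ggeq p\beta_0$, i.e. $\gamma\lleq 0$. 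For assertion (2), I would choose closed representatives $\omega_1\in F^{\lleq\beta}\AAf^p(M)$ and $\omega_2\in F^{\lleq\gamma}\AAf^q(M)$; Proposition~\ref{prop:filform1}(4) places $\omega_1\wedge\omega_2$ in $F^{\lleq\beta+\gamma}\AAf^{p+q}(M)$, and the Leibniz rule keeps it closed, so its class lies in $F^{\lleq\beta+\gamma}\Ho^{p+q}(M,\RR)$.

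There is no substantive obstacle here: Propositions~\ref{prop:filform1} and \ref{prop:filform2} do all the real work, and the only subtle point is the reindexing $\beta\mapsto p\beta_0-\beta$, which is forced by the fact that passing to the dual reverses inclusions and swaps $\{0\}$ with the total space, as already noted in the text.
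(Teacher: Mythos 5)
Your proposal is correct and follows the same route as the paper, which simply asserts that the result is a direct consequence of Propositions~\ref{prop:filform1} and \ref{prop:filform2}; you have filled in exactly the routine verifications (injectivity of the map to $\Ho^p(M,\RR)$ via the definition~\eqref{eq:27} of the denominator, the three filtration axioms under the reindexing $\beta\mapsto p\beta_0-\beta$, and the choice of filtered closed representatives for the product). The only cosmetic point is that your appeal to Proposition~\ref{prop:filform2} in the identification step is not actually needed there ($\Ker d_p\cap F^{\lleq\beta}\AAf^p$ lands in $\Ker d_p$ trivially); its real role is to make the filtered pieces subcomplexes of the De Rham complex, which underlies the filtration being well behaved.
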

 
\bigskip
\begin{remark}
  The $\Gamma$-filtration is defined at the level of the de Rham
  complex and not only at the level of the cohomology. 
In particular, it induces a spectral sequence which should be study to
understand the relations between the ordinary and the Belkale-Kumar
cohomologies.  Here
we only study the Belkale-Kumar product.
\end{remark}

\bigskip
Consider now the $(\Gamma\times \ZZ)$-graded algebra associated to this
$\Gamma$-filtration of the $\ZZ$-graded (by degree) algebra $\Ho^*(M,\RR)$ by setting
\begin{eqnarray}
  \label{eq:28}
  Gr^{\beta}\Ho^p(M,\RR):=\frac{F^{\lleq\beta}\Ho^p(M,\RR)}{\sum_{\gamma\slleq\beta} F^{\lleq\gamma}\Ho^p(M,\RR)}
\end{eqnarray}
and
\begin{eqnarray}
  \label{eq:29GH}
  Gr^{\bullet}\Ho^*(M,\RR):=\bigoplus_{\beta\in\Gamma,\, p\in\NN}Gr^{\beta}\Ho^p(M,\RR).
\end{eqnarray}
Then $Gr^{\bullet}\Ho^*(M,\RR)$ is a ring graded by $\Gamma\times\ZZ$.

Now, we observe the following easy functoriality result.

\begin{lemma}
  \label{lem:funct}
Let $M$ and $N$ be two smooth manifolds endowed with integrable
infinitesimal $\Gamma$-filtrations. Let $\phi\,M\longto N$ a
smooth map such that 
$$
\forall \alpha\in \Gamma%\quad \forall x\in M
\qquad 
T\phi(F^{\geq \alpha}TM)\subset F^{\geq \alpha}TN.
$$

Then the pullback $\phi^*\,:\, \Ho^*(N,\RR)\longto \Ho^*(M,\RR)$
respects the $\Gamma$-filtration. In particular, it induces a $\Gamma$-graded
morphism
$Gr\phi^*\,:\, Gr\Ho^*(N,\RR)\longto Gr\Ho^*(M,\RR)$.
\end{lemma}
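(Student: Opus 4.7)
The plan is to reduce the lemma to a pointwise check at the level of differential forms, and then transport this through the De Rham differential, wedge product, and passage to the associated graded. No auxiliary machinery is needed.

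First, I verify that the pullback $\phi^*\,:\,\AAf^p(N)\longto\AAf^p(M)$ of forms already respects the filtration in the sense that $\phi^*(F^{\lleq\beta}\AAf^p(N))\subset F^{\lleq\beta}\AAf^p(M)$. Let $\omega\in F^{\lleq\beta}\AAf^p(N)$, fix $x\in M$, and let $\alpha_1,\dots,\alpha_p\in\Gamma$ with $\alpha_1+\cdots+\alpha_p\nlleq\beta$ and $\xi_i\in F^{\ggeq\alpha_i}T_xM$. Then
$$(\phi^*\omega)_x(\xi_1,\dots,\xi_p)=\omega_{\phi(x)}\bigl(T_x\phi(\xi_1),\dots,T_x\phi(\xi_p)\bigr),$$
and by the hypothesis $T_x\phi(\xi_i)\in F^{\ggeq\alpha_i}T_{\phi(x)}N$, so the right hand side vanishes by the defining property of $F^{\lleq\beta}\AAf^p(N)$.

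Second, I pass to cohomology. Since $\phi^*$ commutes with the exterior derivative and with restriction to open sets, it sends closed forms in $F^{\lleq\beta}\AAf^p(N,\RR)$ to closed forms in $F^{\lleq\beta}\AAf^p(M,\RR)$, and exact forms to exact forms. Combining this with the definition \eqref{eq:27}, one obtains the inclusion
$$\phi^*\bigl(F^{\lleq\beta}\Ho^p(N,\RR)\bigr)\subset F^{\lleq\beta}\Ho^p(M,\RR),$$
so $\phi^*$ is filtered with respect to the $\Gamma$-filtration of Proposition~\ref{prop:filcohom}. Taking the associated graded defined by \eqref{eq:28} yields a morphism
$$Gr\,\phi^*\,:\,Gr\Ho^*(N,\RR)\longto Gr\Ho^*(M,\RR),$$
which is $\Gamma$-graded by construction and $\ZZ$-graded because $\phi^*$ preserves the cohomological degree. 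Compatibility with the wedge product of Proposition~\ref{prop:filcohom}(ii) is automatic from the analogous compatibility $\phi^*(\omega\wedge\omega')=\phi^*\omega\wedge\phi^*\omega'$.

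There is no real obstacle: the only substantive point is the pointwise computation on $p$-forms above, which uses nothing beyond the definition of $F^{\lleq\beta}$ and the hypothesis on $T\phi$. All subsequent steps are formal consequences of the standard functoriality of the De Rham complex, which is why the lemma is qualified as ``immediate''.
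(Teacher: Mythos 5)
Your proof is correct. The paper gives no proof of this lemma at all (it is labelled ``easy''/``immediate''), and your argument --- the pointwise vanishing check on $p$-forms using the hypothesis $T\phi(F^{\ggeq\alpha}TM)\subset F^{\ggeq\alpha}TN$, followed by the formal functoriality of the De Rham complex through definition~\eqref{eq:27} and the associated graded~\eqref{eq:28} --- is exactly the intended standard argument.
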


\subsection{Cohomology with complex coefficients}

Recall that $M$ is a connected manifold.
Consider the cohomology group $\Ho^*(M,\CC)$ with complex
coefficients and consider the following complex vector bundle on $M$
$$
T^\CC M:=TM\otimes_\RR\CC.
$$ 
A {\it complex infinitesimal $\Gamma$-filtration of $M$} is a family
of complex subbundles
$$
  F^{\lleq\beta}T^\CC M\subset T^\CC M,
$$
 indexed by $\beta\in\Gamma$ satisfying the three assertions of
 Definition~\ref{def:infGammafilt}.
 A complex infinitesimal  $\Gamma$-filtration  is said to be
 {$\Gamma$-integrable} if
for any $\beta$ and $\gamma$ in $\Gamma$, we have
\begin{eqnarray}
  \label{eq:710}
  [F^{\lleq\beta}T^\CC M, F^{\lleq\gamma}T^\CC M]\subset
  F^{\lleq\beta + \gamma}T^\CC M.
\end{eqnarray}
A complex infinitesimal integrable $\Gamma$-filtration induces a
filtration of the De Rham complex and of the groups $\Ho^p(M,\CC)$.

\bigskip
\begin{exple}
  Let $M$ be an holomorphic manifold. 
Let $J$ denote  the complex structure on the tangent bundle $TM$.
Since $J^2=-{\rm Id}$, its eigenvalues acting on $TM\otimes\CC$ are
$\pm\sqrt{-1}$. Let $T^{1,0}M$ (resp. $T^{0,1}M$) denote the complex
subbundle of $TM\otimes\CC$ associated to the eigenvalue $\sqrt{-1}$
(resp. $-\sqrt{-1}$). There is  a natural $\CC$-linear 
isomorphism $\iota^{1,0}\,:\,TM\longto T^{1,0}M$.
It is well known that $T^{1,0}M$  is an integrable
  distribution in $T^\CC M$. Then we get a complex infinitesimal
  integrable $\ZZ$-filtration by setting
$$
\begin{array}{ll}
  F^{\ggeq a}T^\CC M=T^\CC M&\forall a\in\ZZ_{<0},\\
 F^{\ggeq 0}T^\CC M=T^{1,0}M,\\
 F^{\ggeq a}T^\CC M=\underline{0},&\forall a\in\ZZ_{>0}.\\
\end{array}
$$
The $\ZZ$-filtration of $\Ho^p(M,\CC)$ is called the Hodge filtration of
$M$ (see for example \cite{Voisin:Hodge1}).
\end{exple}

\subsection{The case of a smooth complex variety}
\label{sec:defclassY}

Let $M$ be a smooth complex irreducible variety
 endowed with an algebraic
$\Gamma$-filtration. 
Assume that this filtration is integrable and
comes from a decomposition (recall the definition from Section~\ref{sec:manifolds}). 
Set $\tilde\Gamma:=\Gamma\times\ZZ$ endowed with the order
$(\beta,n)\ggeq(\gamma,m)$ if and only if $\beta\ggeq\gamma$ and
$n\geq m$.

Define a complex $\tilde\Gamma$-filtration on $T^\CC M$ by  setting
for any $\beta\in\Gamma$,
 $$
\begin{array}{ll}
  F^{\ggeq(\beta,a)}T^\CC M=T^\CC M&\forall a\in\ZZ_{<0},\\
 F^{\ggeq (\beta,0)}T^\CC M=\iota^{1,0}(F^{\ggeq\beta}TM),\\
 F^{\ggeq(\beta,a)}T^\CC M=\underline{0},&\forall a\in\ZZ_{>0}.\\
\end{array}
$$

\bigskip
\noindent{\bf Integration along subvarieties.}
Let $N$ be an irreducible subvariety of $M$.
Denote by $n$ the dimension of $M$ and by $d$ that of $N$. 
By Lemma~\ref{lem:decdim}, the dimension vector $(fd^\beta(T_xN))_{\beta\in\Gamma}$ does
not depend on $x\in N$ general. 
This general value of the dimension vector is by definition the {\it
  $f$-dimension vector of $N$} and is denoted by $fd^\beta(N)$.
For any $x$ in $N$, the $\Gamma$-filtration of $T_xN$ comes from a
decomposition by Lemma~\ref{lem:decsous}.
In particular, Lemma~\ref{lem:decdim}  shows that the $g$-dimensional
vector of $T_xN$ does not depend on $x$ in $N$ general.
This remark allows to define the $g$-dimension vector of $N$. 
Then the  weight $\rho(N)\in\Gamma$ of $N$ is defined by the formula
 
\begin{eqnarray}
  \label{eq:31}
 \rho(N)=\sum_{\beta\in\Gamma}gd^\beta(N)\beta.
\end{eqnarray}
Consider the extended notions to $\tilde\Gamma$:
 $\widetilde{gd}^{(\beta,0)}(N)=gd^{(\beta,0)}(T_xN\otimes\CC)=
gd^{\beta}(N)$, $\widetilde{gd}^{(0,-1)}(N)=d$ and 
$\widetilde{gd}^{(\beta,a)}(N)=0$ otherwise. Note that $\tilde\rho(N)=(\rho(N),-d)$.

Consider now the linear map
$$
\begin{array}{ccc}
  \AAf^{2d}(M,\CC)&\longto&\CC\\
\omega&\longmapsto&\int_N\omega_{|N}.
\end{array}
$$
The following lemma relies the filtration and the integration.

\begin{lemma}
  \label{lem:intF}
Let $\beta\in\Gamma$ and let $e\in \ZZ$ such that 
$(\beta,e)\nggeq\tilde\rho(N)$.
If $\omega\in F^{\lleq(\beta,e)}\AAf^{2d}(M,\CC)$ then
$$\int_N\omega_{|N}=0.
$$
\end{lemma}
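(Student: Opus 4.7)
The plan is to reduce the vanishing of the integral to a pointwise vanishing of $\omega_{|N}$ at a generic point of $N$. Since the $\Gamma$-regular locus $N^{\Gamma-{\rm reg}}$ is open and dense in $N$ by Lemmas~\ref{lem:fini} and~\ref{lem:dimsc}, and since the integral of $\omega_{|N}$ over $N$ is computed on the smooth locus, it will suffice to show that $\omega_{|N}$ vanishes at every smooth $\Gamma$-regular point $x\in N$; continuity of $\omega_{|N}$ will then force $\int_N\omega_{|N}=0$.

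Fix such a point $x$. By Lemma~\ref{lem:decsous}, the induced $\Gamma$-filtration on $T_xN$ comes from a decomposition $T_xN=\bigoplus_{\alpha\in\Gamma}S^\alpha$, and by Lemma~\ref{lem:decdim} the $g$-dimension vector of $T_xN$ agrees with $gd^\bullet(N)$. Choose a basis $s_1,\dots,s_d$ of $T_xN$ adapted to this decomposition, with $s_i\in S^{\alpha_i}$, so that $\sum_{i=1}^d\alpha_i=\rho(N)$. Then $v_i:=\iota^{1,0}(s_i)$ and $w_i:=\iota^{0,1}(s_i)$ together form a $\CC$-basis of $T^\CC_xN\subset T^\CC_xM$. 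Since $\omega_{|N}$ is a top-degree (complex-valued) form on the real $2d$-dimensional manifold $N$, its value at $x$ is encoded in $\omega_x(v_1,\dots,v_d,w_1,\dots,w_d)$, and it suffices to prove this number is zero.

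To do so, assign $\tilde\Gamma$-weights to the basis vectors: by construction $v_i\in\iota^{1,0}(F^{\ggeq\alpha_i}TM)=F^{\ggeq(\alpha_i,0)}T^\CC_xM$, and $w_i\in T^{0,1}_xM\subset F^{\ggeq(0,-1)}T^\CC_xM$ (which equals all of $T^\CC_xM$, since the $\ZZ$-component is negative). The sum of these admissible weights in $\tilde\Gamma=\Gamma\times\ZZ$ is
\begin{equation*}
\sum_{i=1}^d(\alpha_i,0)+\sum_{i=1}^d(0,-1)=\bigl(\rho(N),-d\bigr)=\tilde\rho(N).
\end{equation*}
Unwinding the product order on $\tilde\Gamma$, the hypothesis $(\beta,e)\nggeq\tilde\rho(N)$ is precisely the statement that $\tilde\rho(N)\nlleq(\beta,e)$. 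Hence by the defining vanishing condition of $F^{\lleq(\beta,e)}\AAf^{2d}(M,\CC)$ applied to this weight assignment, $\omega_x(v_1,\dots,v_d,w_1,\dots,w_d)=0$, as required.

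The main subtlety is the choice of weight for the antiholomorphic vectors $w_i$: one must take the \emph{maximal} admissible weight $(0,-1)$ (and not a smaller $(\theta,-1)$ with $\theta\slleq 0$), so that the total weight sum lands precisely at $\tilde\rho(N)$ rather than at something strictly smaller. This sharp matching between the basis weights and $\tilde\rho(N)$ is exactly what makes the hypothesis $(\beta,e)\nggeq\tilde\rho(N)$ sufficient for the pointwise argument to go through.
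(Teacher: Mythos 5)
Your proof is correct and follows essentially the same route as the paper's: reduce to pointwise vanishing of $\omega_{|N}$ at a general ($\Gamma$-regular) point, pick a basis of $T_xN$ adapted to the decomposition furnished by Lemma~\ref{lem:decsous}, complexify it via $\iota^{1,0}$ and $\iota^{0,1}$, and observe that the resulting $\tilde\Gamma$-weight assignment sums exactly to $\tilde\rho(N)\nlleq(\beta,e)$, so the defining condition of $F^{\lleq(\beta,e)}\AAf^{2d}$ kills the top-degree value. The remark about assigning the antiholomorphic vectors the weight $(0,-1)$ is exactly the point the paper also uses.
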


\begin{proof}
Let $x\in N$ be a general point. 
By Lemma~\ref{lem:decsous}, the $\Gamma$-filtration on $T_xN$ comes
from a decomposition. Then there exists
a basis $(\xi_1,\dots,\xi_d)$
of $T_xN$ such that for any $\beta\in\Gamma$, the set of $\xi_i$ which
belong to $F^{\ggeq\beta}T_xN$ spans    $F^{\ggeq\beta}T_xN$.
Such a basis exists since by Lemma~\ref{lem:decsous}, the
$\Gamma$-filtration
on $T_xN$ comes from a decomposition.
Let $\alpha_i$ be the maximal element of $\Gamma$ such that $\xi_i$
belongs to $F^{\ggeq\alpha_i}T_xN$.

Consider the basis
$(\iota^{(1,0)}(\xi_1),\dots,\iota^{(1,0)}(\xi_d),\iota^{(0,1)}(\xi_1),\dots,\iota^{(0,1)}(\xi_d))$
of $T_xN\otimes \CC$. Since $x$ is any general point on $N$, it is
sufficient to prove that 
$$\omega(\iota^{(1,0)}(\xi_1),\dots,\iota^{(1,0)}(\xi_d),\iota^{(0,1)}(\xi_1),\dots,\iota^{(0,1)}(\xi_d))=0.$$
But $\iota^{(1,0)}(\xi_i)\in F^{\ggeq(\alpha_i,0)} T^\CC N$ and
$\iota^{(0,1)}(\xi_i)\in F^{\ggeq(0,-1)} T^\CC N$.
Hence the weight of
$(\iota^{(1,0)}(\xi_1),\dots,\iota^{(1,0)}(\xi_d),\iota^{(0,1)}(\xi_1),\dots,\iota^{(0,1)}(\xi_d))$
is $\sum_{i=1}^d(\alpha_i,0)+d (0,-1)=\tilde\rho(N)$. The lemma follows.
\end{proof}

\bigskip
The restriction of the map $\omega\longmapsto \int_N\omega_{|N}$ to
the closed $2d$-forms is zero on the exact forms and induces a linear
map
$$
\int_N\,:\,\Ho^{2d}(M,\CC)\longto \CC.
$$
Consider now the restriction of this map to $F^{\lleq
  \tilde\rho(N)}\Ho^{2d}(M,\CC)$. By Lemma~\ref{lem:intF}, this
restriction induces a linear map
$$
\int_N\,:\,\G^{\tilde\rho(N)} \Ho^{2d}(M,\CC)\longto \CC.
$$

\bigskip
\noindent{\bf Poincar\'e pairing.}
Assume that $M$ is compact and recall that it is orientable since
it is holomorphic. 
Let $p$ be an integer such that $0\leq p\leq 2d$.
The integration allows to define a paring
\begin{eqnarray}
  \label{eq:745}
\begin{array}{ccc}
  \Ho^p(M,\CC)\times \Ho^{2d-p}(M,\CC)&\longto&\CC\\
([\omega_1],[\omega_2])&\longmapsto&\int_M\omega_1\wedge\omega_2.
\end{array}
\end{eqnarray}
By Poincar\'e duality, this bilinear form is non degenerated. In
particular, $\Ho^p(M,\CC)$ and  $\Ho^{2d-p}(M,\CC)$ have the same dimension.
 
Let $\tilde\alpha\in\tilde\Gamma$. Consider the following restriction
of the bilinear form~\eqref{eq:745}:
\begin{eqnarray}
  \label{eq:739}
  \begin{array}{ccc}
  F^{\lleq\tilde \alpha}\Ho^p(M,\CC)\times F^{\lleq \tilde\rho(M)-\tilde\alpha}\Ho^{2d-p}(M,\CC)&\longto&\CC\\
([\omega_1],[\omega_2])&\longmapsto&\int_M\omega_1\wedge\omega_2.
\end{array}
\end{eqnarray}
Since
$$
\begin{array}{l}
\tilde\alpha\ggeq \tilde\rho(M)\Rightarrow F^{\lleq
  \tilde\alpha}\AAf^{2d}(M)=\AAf^{2d}(M),\mbox{ and}\\
\tilde\alpha\nggeq \tilde\rho(M)\Rightarrow F^{\lleq \tilde\alpha}\AAf^{2d}(M)=0,
\end{array}
$$
Lemma~\ref{lem:intF} shows that 
\begin{eqnarray}
  \label{eq:737}
\tilde\alpha+\tilde\beta \nggeq \tilde\rho(M) \Rightarrow F^{\lleq
  \tilde \alpha} \Ho^p(M,\CC)\wedge
 F^{\lleq \tilde\beta} \Ho^{2d-p}(M,\CC)=\{0\}.
\end{eqnarray}
In particular,  the pairing~\eqref{eq:739} passes to the quotient and
induces a pairing
\begin{eqnarray}
  \label{eq:746}
  \begin{array}{ccc}
  \G^{\tilde \alpha}\Ho^p(M,\CC)\times \G^{\tilde\rho(M)-\tilde\alpha}\Ho^{2d-p}(M,\CC)&\longto&\CC\\
([\omega_1],[\omega_2])&\longmapsto&\int_M\omega_1\wedge\omega_2.
\end{array}
\end{eqnarray}

\begin{defin}
  The $\tilde\Gamma$-filtration of $\Ho^*(M,\CC)$ is
 said to be {\it compatible with Poincar\'e duality} if for any integer
 $0\leq p\leq 2d$ and
  for any $\tilde\alpha\in\tilde\Gamma$, the pairing~\eqref{eq:746}
is non degenerate.
\end{defin}

\bigskip
\begin{lemma}
\label{lem:PDdim}
  The $\tilde\Gamma$-filtration of $\Ho^*(M,\CC)$ is
 compatible with Poincar\'e duality if and only if 
for any nonnegative integer $p$ and any $\tilde\alpha\in\tilde\Gamma$, we have
\begin{equation}
  \label{eq:14}
  \dim(\G^{\tilde \alpha}\Ho^p(M,\CC))=\dim(\G^{\tilde\rho(M)-\tilde\alpha}\Ho^{2d-p}(M,\CC))
\end{equation}
\end{lemma}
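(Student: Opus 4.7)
The forward direction is immediate from linear algebra: a non-degenerate bilinear pairing between two finite-dimensional vector spaces forces them to have the same dimension. Applied to the pairing \eqref{eq:746}, this yields the stated dimension equality.

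For the reverse direction, set $\tilde\gamma:=\tilde\rho(M)-\tilde\alpha$, and consider the linear map
$$\bar\phi_{\tilde\alpha,p}\,:\,\G^{\tilde\alpha}\Ho^p(M,\CC)\longto (\G^{\tilde\gamma}\Ho^{2d-p}(M,\CC))^*$$
induced by \eqref{eq:746}. Since, by hypothesis, the source and target have the same (finite) dimension, the non-degeneracy of the pairing is equivalent to the injectivity of $\bar\phi_{\tilde\alpha,p}$. First I would check that the rank of $\bar\phi_{\tilde\alpha,p}$ equals the rank of the restricted Poincar\'e pairing
$F^{\lleq\tilde\alpha}\Ho^p\times F^{\lleq\tilde\gamma}\Ho^{2d-p}\to\CC$;
this follows from \eqref{eq:737}, which shows that $\sum_{\tilde\epsilon\slleq\tilde\alpha}F^{\lleq\tilde\epsilon}\Ho^p$ lies in the left kernel (if $\tilde\epsilon\slleq\tilde\alpha$ then $\tilde\epsilon+\tilde\gamma\slleq\tilde\rho(M)$, hence $\not\ggeq\tilde\rho(M)$), and symmetrically for the right.

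The main step is then to prove, under the dimension hypothesis, that this common rank $r_{\tilde\alpha,p}$ equals the common dimension $\dim\G^{\tilde\alpha}\Ho^p$. The plan is to introduce the orthogonal filtration
$G^{\lleq\tilde\gamma}\Ho^p:=(F^{\lleq\tilde\gamma}\Ho^{2d-p})^\perp$
taken in $\Ho^p$ with respect to the full (non-degenerate) Poincar\'e pairing. By \eqref{eq:737}, $G^{\lleq\tilde\gamma}\Ho^p$ contains $\sum_{\tilde\alpha'\not\ggeq\tilde\rho(M)-\tilde\gamma}F^{\lleq\tilde\alpha'}\Ho^p$, and has dimension $\dim\Ho^p-\dim F^{\lleq\tilde\gamma}\Ho^{2d-p}$. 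The equality of the graded dimension vectors of the filtrations $F^{\lleq}$ and $G^{\lleq}$ on $\Ho^p$ is exactly what the hypothesis provides (combined with classical Poincar\'e duality giving $\dim\Ho^p=\dim\Ho^{2d-p}$).

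The main obstacle will be the partial (rather than total) order on $\tilde\Gamma$: one must argue that the inclusion $\sum_{\tilde\epsilon\slleq\tilde\alpha}F^{\lleq\tilde\epsilon}\Ho^p\,\subset\,F^{\lleq\tilde\alpha}\Ho^p\cap G^{\lleq\tilde\gamma}\Ho^p$ is an equality. I would handle this by choosing a linear extension of $\lleq$ to a total order, performing an induction on this total order while keeping the strict comparisons $\slleq$ with respect to the original partial order, and at each step comparing the dimensions of $F^{\lleq\tilde\alpha}\Ho^p$ and $G^{\lleq\tilde\gamma}\Ho^p$ using the hypothesis and Lemma~\ref{lem:fini} (which ensures only finitely many distinct subspaces occur). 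Equality of dimensions of the two filtered pieces, together with the termwise inclusions produced by \eqref{eq:737}, then forces the desired equality and hence non-degeneracy of \eqref{eq:746}.
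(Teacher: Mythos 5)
Your forward direction coincides with the paper's one-line observation. For the converse you take a genuinely different route from the paper, whose proof is: choose a basis of $\Ho^p(M,\CC)$ (resp.\ $\Ho^{2d-p}(M,\CC)$) adapted simultaneously to all the subspaces $F^{\lleq\tilde\beta}$, order it along a linear extension of $\lleq$; then \eqref{eq:737} makes the matrix of the full pairing \eqref{eq:745} block triangular, the diagonal blocks are the matrices of the pairings \eqref{eq:746}, hypothesis \eqref{eq:14} makes these blocks square, and invertibility of a block-triangular matrix with square diagonal blocks forces each block to be invertible. Your reformulation is correct as far as it goes: the left kernel of the restricted pairing is exactly $F^{\lleq\tilde\alpha}\Ho^p\cap G^{\lleq\tilde\gamma}\Ho^p$, so non-degeneracy of \eqref{eq:746} is equivalent to the equality $F^{\lleq\tilde\alpha}\Ho^p\cap G^{\lleq\tilde\gamma}\Ho^p=\sum_{\tilde\epsilon\slleq\tilde\alpha}F^{\lleq\tilde\epsilon}\Ho^p$ that you identify as the main obstacle. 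For a \emph{totally} ordered $\tilde\Gamma$ your dimension count does close this (one gets $F^{\lleq\tilde\alpha}\Ho^p=(F^{\lleq\tilde\gamma'}\Ho^{2d-p})^\perp$ for the successor $\tilde\gamma'$ of $\tilde\gamma$, because for a chain $\dim F^{\lleq\tilde\alpha}\Ho^p=\sum_{\tilde\epsilon\lleq\tilde\alpha}\dim\G^{\tilde\epsilon}\Ho^p$ by telescoping).

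For the genuine partial order, however, the step you defer is where the entire content lies, and the induction you sketch cannot be completed from \eqref{eq:14} alone. Your comparison of $\dim F^{\lleq\tilde\alpha}\Ho^p$ with $\dim G^{\lleq\tilde\gamma}\Ho^p=\dim\Ho^p-\dim F^{\lleq\tilde\gamma}\Ho^{2d-p}$ needs to convert the hypothesis, which concerns the \emph{graded} dimensions, into statements about the \emph{filtered} dimensions; this requires $\dim F^{\lleq\tilde\alpha}\Ho^p=\sum_{\tilde\epsilon\lleq\tilde\alpha}\dim\G^{\tilde\epsilon}\Ho^p$, which for a partially ordered filtration is exactly the assertion that the filtration comes from a decomposition (the paper's example of three lines in a plane shows it can fail, and then the $f$-dimension vector is not determined by the $g$-dimension vector). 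Likewise the equality $F^{\lleq\tilde\alpha}\cap G^{\lleq\tilde\gamma}=\sum_{\tilde\epsilon\slleq\tilde\alpha}F^{\lleq\tilde\epsilon}$ is a distributivity property of the lattice generated by the $F^{\lleq\tilde\beta}$, not a consequence of dimension counting. The missing input, in your argument as in the paper's, is the existence of a basis of $\Ho^*(M,\CC)$ adapted to all the $F^{\lleq\tilde\beta}$ simultaneously; the paper invokes it explicitly (``in a basis adapted to the filtration''), and in the application it is supplied by Proposition~\ref{prop:filtSchub} via the Schubert classes. You should either add this as an explicit hypothesis (after which the block-triangular argument, or your orthogonal-complement version of it, goes through), or show how your induction along a linear extension produces such compatibility --- as written it does not.
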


\begin{proof}
If the $\tilde\Gamma$-filtration of $\Ho^*(M,\CC)$ is
 compatible with Poincar\'e duality we obviously have the equalities
 of dimensions.

Assume now that \eqref{eq:14} hold. 
In a basis adapted to the filtration, implication~\eqref{eq:737}
implies that the matrix $A$ of the pairing~\eqref{eq:745} is upper triangular. 
Moreover, the matrices (in the induced basis) of the
pairings~\eqref{eq:746} are the diagonal blocs of $A$.
 But equalities~\eqref{eq:14} imply that these blocs are square.
Since $A$ is invertible, it follows that any bloc is invertible. 
\end{proof}

\bigskip
\begin{defin}
  Let $N$ be an irreducible subvariety of a compact smooth irreducible
  complex variety $M$ endowed with an integrable infinitesimal
  $\Gamma$-filtration coming from a decomposition.
Assume that the $\tilde\Gamma$-filtration is   compatible with Poincar\'e duality.
Define
$[N]_\bkprod\in\G^{\tilde\rho(M)-\tilde\rho(N)}\Ho^{2(n-d)}(M,\CC))$ to
satisfy 
the following formula
\begin{eqnarray}
  \label{eq:33}
  \int_N[\omega]=\int_M [N]_\bkprod\wedge[\omega],
\end{eqnarray}
for any $[\omega]\in\G^{\tilde\rho(N)}\Ho^{2d}(M,\CC)$.
\end{defin}

On can refer to Proposition~\ref{prop:BKclass} for a more concerte
characterization of $[N]_\bkprod$ and in particular its relation with
$[N]$, in the case when $M=G/P$.

\section{Isomorphism with the Belkale-Kumar product}

\subsection{The Belkale-Kumar product}
\label{sec:BKprod}

In this section, we recall the Belkale-Kumar notion of Levi-movability (see~\cite{BK}).\\

The cycle class of the Schubert variety $X_w$ in $\Ho^*(G/P,\CC)$ is denoted by $\sigma_w$
and it is called a Schubert class. 
The degree of $\sigma_w$ is $2(\dim G/P-l(w)$, where
$l(w)=\sharp\Phi(w)$ is the length of $w$.
The Schubert classes form a basis of the cohomology of $G/P$:
\begin{eqnarray}
  \label{eq:35}
  \Ho^*(G/P,\CC)=\bigoplus_{w\in W^P}\CC\sigma_w.
\end{eqnarray}

The Poincar\'e dual of  $\sigma_w$ is denoted by $\sigma_w^\vee$.
Note that $\sigma_e$ is the class of the point.
Let $\sigma_u,\,\sigma_v,\,\sigma_w$ be three Schubert classes (with
$u,v,w\in W^P$). 
If there exists an integer $d$ such that
$\sigma_u.\sigma_v.\sigma_w=d\sigma_e$ then we set $c_{uvw}=d$; we set $c_{uvw}=0$ otherwise. 
These coefficients are the (symmetrized) structure coefficients 
of the cup product on 
${\rm H}^*(G/P,\CC)$ in the Schubert basis in the following sense:
$$
\sigma_u.\sigma_v=\sum_{w\in W^P} c_{uvw}\sigma_w^\vee
$$
and $c_{uvw}=c_{vuw}=c_{uwv}$.\\

%\paragraphe
%Let $u,\,v$, and $w$ in $W^P$.
Consider the tangent space $T_u$ of the orbit
 $u^{-1}BuP/P$ at the point $P/P$; and, similarly consider $T_v$ and $T_w$. 
Using the transversality theorem 
of Kleiman, Belkale and Kumar showed in \cite[Proposition~2]{BK} the 
following important lemma.

\begin{lemma}\label{lem:fondBK}
  The coefficient $c_{uvw}$ is nonzero if and only if there exist $p_u,p_v,p_w\in P$ such that 
the natural map
$$
T_P(G/P)\longto \frac{T_P(G/P)}{p_uT_u}\oplus \frac{T_P(G/P)}{p_vT_v}\oplus \frac{T_P(G/P)}{p_wT_w}
$$
is an isomorphism.
\end{lemma}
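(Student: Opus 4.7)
The plan is to deduce the lemma from Kleiman's transversality theorem, after reducing the translation parameters from $G$ down to $P$ by exploiting the $B$-stability of Schubert varieties. The key observation is that $T_u=T_{P/P}(u^{-1}X_u)$ is the tangent space at $P/P$ of a variety already passing through $P/P$; for any $p\in P$, the translate $p\cdot u^{-1}X_u$ still contains $P/P$ with tangent space $p\,T_u$ there. Hence the condition of the lemma is exactly the assertion that $p_u u^{-1}X_u$, $p_v v^{-1}X_v$ and $p_w w^{-1}X_w$ meet transversely at $P/P$, with dimensions of the tangent spaces summing to $2\dim(G/P)$.

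For the direction $(\Leftarrow)$, if the natural map is an isomorphism for some $(p_u,p_v,p_w)\in P^3$, then $P/P$ is a transverse intersection point of the three translates just described. Each translate represents the Schubert class $\sigma_u$, $\sigma_v$ and $\sigma_w$ respectively (translation does not change cohomology class), so the intersection product computes $\sigma_u\cdot\sigma_v\cdot\sigma_w$. Over $\CC$, effective cycles contribute non-negatively to intersection numbers and a transverse isolated point contributes exactly $+1$, whence $c_{uvw}\ge 1$ and in particular $c_{uvw}\neq 0$.

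For the direction $(\Rightarrow)$, assume $c_{uvw}\neq 0$. By Kleiman's transversality theorem applied to the transitive $G$-action on $G/P$, for a generic triple $(g_1,g_2,g_3)\in G^3$ the intersection $g_1X_u\cap g_2X_v\cap g_3X_w$ is transverse of the expected dimension, which is zero by the degree condition $l(u)+l(v)+l(w)=2\dim(G/P)$, and consists of exactly $c_{uvw}>0$ points. Pick such a point $x$ and any $g\in G$ with $g\cdot P/P=x$; replacing each $g_i$ by $g^{-1}g_i$ we may assume $P/P$ itself is a transverse intersection point. By further genericity we may assume each $g_i^{-1}\cdot P/P$ lies in the open $B$-orbit $Bu_iP/P\subset X_{u_i}$, with $u_1=u$, $u_2=v$, $u_3=w$. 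Writing $g_i^{-1}=b_iu_iq_i$ with $b_i\in B$, $q_i\in P$, and using that $X_{u_i}$ is $B$-stable, one gets $g_iX_{u_i}=q_i^{-1}u_i^{-1}X_{u_i}$. Setting $p_u:=q_1^{-1}$, $p_v:=q_2^{-1}$, $p_w:=q_3^{-1}\in P$ yields the triple for which the natural map is an isomorphism.

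The central subtlety, and the main obstacle to address carefully, is the replacement of arbitrary $G$-translates by $P$-translates: Kleiman's theorem a priori supplies only a generic triple in $G^3$, whereas the statement demands translates by elements of $P$. This reduction hinges on two ingredients working in tandem: the transverse intersection point can be moved to $P/P$ by the transitive $G$-action without destroying transversality, and the $B$-factor of any element translating a $B$-stable Schubert variety is absorbed, so only the $P$-coset content of the translating element genuinely matters. Once these two facts are invoked the argument is essentially forced.
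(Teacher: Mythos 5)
The paper gives no proof here: it cites \cite[Proposition~2]{BK} and attributes the argument to Kleiman's transversality theorem. Your reconstruction follows exactly that approach, and the central maneuvers — using Kleiman to get a transverse zero-dimensional intersection of $G$-translates, translating a transverse point to $P/P$, asking for the open-cell genericity of each $g_i^{-1}\cdot P/P$, and then absorbing the $B$-factor through $B$-stability so that only the $P$-component survives — are precisely the right ones. The one place worth tightening is the $(\Leftarrow)$ implication: instead of appealing to the nonnegativity of excess contributions from intersection theory, it is cleaner (and closer to what Kleiman's theorem alone gives you) to observe that transversality of the three translated cycles at $P/P$ is an open condition on the translating elements, so a nearby \emph{generic} triple $(g_1,g_2,g_3)\in G^3$ still has a transverse intersection point near $P/P$, and since for such a triple the whole intersection is transverse of dimension zero with exactly $c_{uvw}$ points, we get $c_{uvw}\ge 1$ with no excess-intersection machinery. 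Also, your open-cell genericity claim in $(\Rightarrow)$ is correct but tacitly relies on an incidence-variety dimension count (the projection to $X_u\times X_v\times X_w$ is dominant, so a generic intersection point pulls back into the open cells under each $g_i^{-1}$); it would be worth spelling that out, though it is not a gap.
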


Then Belkale-Kumar defined Levi-movability.\\

\begin{defin}
The triple $(\sigma_u,\,\sigma_v,\,\sigma_w)$ is said to be {\it Levi-movable} if there exist
 $l_u,l_v,l_w\in L$ such that the natural map
$$
T_P(G/P)\longto \frac{T_P(G/P)}{l_uT_u}\oplus \frac{T_P(G/P)}{l_vT_v}\oplus \frac{T_P(G/P)}{l_wT_w}
$$
is an isomorphism.
\end{defin}

\bigskip
Belkale-Kumar  set
$$
c_{uvw}^\kbprod=\left\{
  \begin{array}{ll}
c_{uvw}&{\rm \ if\ }  (\sigma_u,\,\sigma_v,\,\sigma_w) {\rm\ is\ Levi-movable;}\\
    0&{\rm\ otherwise.}
  \end{array}
\right .
$$
They defined on the group ${\rm H}^*(G/P,\CC)$ a bilinear product $\kbprod$ by the formula
$$
\sigma_u\kbprod\sigma_v=\sum_{w\in W^P} c_{uvw}^\kbprod\sigma_w^\vee.
$$

\begin{theo}[Belkale-Kumar 2006]
\label{th:BK}
  The product $\kbprod$ is commutative, associative and satisfies Poincar\'e duality.
\end{theo}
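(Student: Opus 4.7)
The plan is to establish the three properties separately, leaning on the filtration machinery developed in the paper. The cleanest route uses Theorem~\ref{th:BKprodfil} to recognize $\kbprod$ as an associated graded product, which forces two of the three properties. I would also argue Poincar\'e duality intrinsically from Lemma~\ref{lem:PDdim}.

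\textbf{Commutativity.} The ordinary structure constants satisfy $c_{uvw} = c_{vuw} = c_{uwv}$ because every Schubert class sits in even degree, so the cup product is commutative on $\Ho^*(G/P,\CC)$. The Levi-movability predicate is a condition on the \emph{unordered} triple $(\sigma_u, \sigma_v, \sigma_w)$: the map $T_P(G/P) \to \bigoplus_{x \in \{u,v,w\}} T_P(G/P)/l_x T_x$ is an isomorphism independently of the ordering. Hence $c^\kbprod_{uvw}$ is totally symmetric, and commutativity follows by unpacking the definition of $\sigma_u \kbprod \sigma_v$.

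\textbf{Associativity.} I would invoke Theorem~\ref{th:BKprodfil}: the BK algebra is isomorphic, as a bigraded vector space equipped with a bilinear product, to the associated graded $\G \Ho^*(G/P,\CC)$ of the $(X(Z)\times\ZZ)$-filtration on the De Rham complex. Since the cup product on $\Ho^*(G/P,\CC)$ is associative and the filtration is multiplicative by Proposition~\ref{prop:filform1}(4) and Proposition~\ref{prop:filcohom}(2), the induced product on the associated graded is automatically associative. Thus $(\sigma_u \kbprod \sigma_v) \kbprod \sigma_w = \sigma_u \kbprod (\sigma_v \kbprod \sigma_w)$ follows formally from the associativity of $\wedge$.

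\textbf{Poincar\'e duality.} The cleanest way is to appeal to Lemma~\ref{lem:PDdim}, which reduces duality of the graded pairing to the dimensional symmetry $\dim \G^{\tilde\alpha}\Ho^p = \dim \G^{\tilde\rho(G/P)-\tilde\alpha}\Ho^{2d-p}$. Using Lemma~\ref{lem:gdimSchub}, the $g$-dimension of $X_w$ is the function $\alpha \mapsto \#\{\theta \in \Phi(w) : \theta_{|Z} = \alpha\}$. The Poincar\'e-duality involution $w \mapsto w^\vee = w_0 w w_0^P$ on $W^P$ realizes the bijection $\theta \mapsto -w_0 \theta$ between $\Phi(w)$ and the complement of $\Phi(w^\vee)$ inside $\Phi(\lieg/\lp, T)$, which translates into the pointwise identity $gd(X_w) + gd(X_{w^\vee}) = gd(G/P)$ on $X(Z)$. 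Combined with the fact (Theorem~\ref{th:BKprodfil}) that Schubert classes form a basis adapted to the filtration, this yields the required dimensional equality, hence non-degeneracy of the pairing~\eqref{eq:746}, which is exactly Poincar\'e duality for $\kbprod$.

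\textbf{The main obstacle.} The nontrivial ingredient is verifying the term-by-term dimensional identity for the Poincar\'e dual pair, i.e.\ that $\dim T^i_w + \dim T^i_{w^\vee} = \dim V_i$ for each Levi-isotypical component $V_i$. Once this identity is known, all three properties slot into place. Unwinding it, it is equivalent to the $Z$-equivariance of the natural pairing of $\Phi(w)$ with its complement in $\Phi(\lieg/\lp, T)$ under the action $\theta \mapsto -w_0\theta$, and this is a combinatorial statement about how $w_0$ permutes the $Z$-isotypical pieces of $\lieg/\lp$. Alternatively, one could avoid the filtration entirely and reproduce the Belkale-Kumar deformation argument: construct a one-parameter family $\odot_t$ of associative, commutative products on $\Ho^*(G/P,\CC)$ interpolating between the cup product and $\kbprod$, obtained by rescaling structure constants using the $Z$-weights, and take the $t \to 0$ limit.
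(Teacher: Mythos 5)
Your proposal is essentially correct, but note that the paper does not prove this statement at all: it is quoted as Belkale--Kumar's theorem and used as a black box. What you propose is to re-derive it from the paper's own filtration machinery, which is a genuinely different route and is in fact one of the advertised payoffs of the paper's construction. The logic holds up: commutativity is immediate from the symmetry of $c_{uvw}$ and of the Levi-movability condition; associativity follows from Theorem~\ref{th:iso} because the associated graded of a multiplicatively filtered associative algebra is associative, and the proof of Theorem~\ref{th:iso} given in Section~\ref{sec:proofth} nowhere uses associativity of $\bkprod$ (so there is no circularity \emph{within this paper}); and Poincar\'e duality follows from Corollary~\ref{cor:PoincareGP} together with Proposition~\ref{Prop_Dim} applied to the triple $(u,u^\vee,w_0w_0^P)$. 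You correctly isolate the one combinatorial input, namely $\dim T^i_w+\dim T^i_{w^\vee}=\dim V_i$ for each $i$, equivalently $gd(X_w)+gd(X_{w^\vee})=gd(G/P)$ pointwise on $X(Z)$. Two caveats. First, the bijection realizing this identity is $\theta\mapsto w_0^P\theta$ between $\Phi(\lieg/\lp,T)\setminus\Phi(w)$ and $\Phi(w^\vee)$, which preserves restriction to $Z$ because $w_0^P\in W_L$ acts trivially on $Z$; the map $\theta\mapsto -w_0\theta$ you write does not in general preserve $\Phi(\lieg/\lp,T)$ (it exchanges $P$ with its ``dual'' parabolic), so that sentence should be repaired, although the identity you need is true and the paper's Lemma~\ref{lem:calcul} is its weight-level shadow. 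Second, your proof is not independent of Belkale--Kumar's original work: the proof of Theorem~\ref{th:iso} invokes the numerical criterion \cite[Theorem~15]{BK} and Proposition~\ref{Prop_Dim} from \cite{RR}, so what you obtain is a new proof of the ring-structure theorem modulo the numerical characterization of Levi-movability, not a from-scratch argument. Your closing alternative (the one-parameter degeneration of the structure constants) is essentially Belkale--Kumar's own proof.
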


%\subsection{Graded dimension of Schubert varieties}

\cite[Proposition~2.4]{RR} gives  an equivalent characterization of Levi-movability. 
It can be formulated as follows.

\begin{prop}
\label{Prop_Dim}
Let $u,\,v,\,w\in W^P$ such that
$c_{uvw}\neq 0$.
Then $(\sigma_u,\,\sigma_v,\,\sigma_w) $ is Levi-movable if and only if

$$2gd(G/P)=gd(X_u)+gd(X_v) +gd(X_w).$$
\end{prop}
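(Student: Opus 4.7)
The plan is to first translate the vector equality $2\,gd(G/P)=gd(X_u)+gd(X_v)+gd(X_w)$ into an elementary, numerical condition on the decomposition $\lieg/\lp=V_1\oplus\cdots\oplus V_s$. By Lemma~\ref{lem:gdimSchub} and the fact that each $V_i$ is the $Z$-isotypic component of some weight $\alpha_i$, one has $gd^{\alpha_i}(X_w)=\dim T_w^i$ and $gd^{\alpha_i}(G/P)=\dim V_i$, while $gd^\alpha$ vanishes on all other $\alpha\in X(Z)$. Hence the stated equality of vectors in $\NN^{X(Z)}$ is equivalent to the system
\begin{equation*}
\dim T_u^i+\dim T_v^i+\dim T_w^i=2\dim V_i\qquad\text{for every }i=1,\dots,s.
\end{equation*}

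For the direction ``Levi-movable $\Rightarrow$ sum condition'', I would simply use that $L$ preserves each $V_i$: if $l_u,l_v,l_w\in L$ realize Levi-movability, then each $l_*T_*$ decomposes as $\bigoplus_i l_*T_*^i$ (using $T_*=\bigoplus_i T_*^i$ from \eqref{eq:decTw}), so the isomorphism $T_{P/P}(G/P)\to\bigoplus_* T_{P/P}(G/P)/l_*T_*$ splits as a direct sum over $i$ of maps $V_i\to\bigoplus_* V_i/l_*T_*^i$. Each summand is therefore an isomorphism and a dimension count delivers the termwise equality.

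For the converse direction, I would deform a transverse triple in $P^3$ to one in $L^3$ via the $P$-stable filtration. Assuming $c_{uvw}\neq 0$, Lemma~\ref{lem:fondBK} furnishes $p_u,p_v,p_w\in P$ such that $\Phi\colon V\to\bigoplus_* V/p_*T_*$ is an isomorphism, where $V:=\lieg/\lp$. The point is that for every $\alpha\in X(Z)$ the subspace $V^{\ggeq\alpha}$ of \eqref{eq:19} is $P$-stable, so $p_*T_*\cap V^{\ggeq\alpha}=p_*(T_*\cap V^{\ggeq\alpha})$ has dimension $\sum_{\alpha_i\ggeq\alpha}\dim T_*^i$, and $\Phi$ restricts to a map
\begin{equation*}
\Phi_\alpha\colon V^{\ggeq\alpha}\longrightarrow\bigoplus_*V^{\ggeq\alpha}/(V^{\ggeq\alpha}\cap p_*T_*).
\end{equation*}
The key computation is that, under the sum condition, source and target of $\Phi_\alpha$ have the same dimension $\dim V^{\ggeq\alpha}$ (both equal $3\dim V^{\ggeq\alpha}-2\dim V^{\ggeq\alpha}$), so the injection $\Phi_\alpha$ is an isomorphism for every $\alpha$. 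Passing to the associated graded, on the $V_i$-piece one obtains an isomorphism $V_i\to\bigoplus_* V_i/l_*T_*^i$, where $l_*\in L$ is the image of $p_*$ under the projection $P\to L=P/U$ (the unipotent radical acts trivially on each $V_i$). Reassembling these isomorphisms over $i$ yields the isomorphism $V\to\bigoplus_* V/l_*T_*$ required for Levi-movability.

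The main obstacle, and the one deserving real care, is the step that extracts transversality of the Levi triple $(l_uT_u,l_vT_v,l_wT_w)$ from transversality of $(p_uT_u,p_vT_v,p_wT_w)$: a naive limit argument along a one-parameter subgroup of $Z$ sends $\tau(t)p_*\tau(t)^{-1}\to l_*$ and $\tau(t)p_*T_*\to l_*T_*$, but the set of iso-making triples is only open, so semicontinuity alone is insufficient. The filtered-iso-to-graded-iso argument above is precisely what closes this gap, and it is exactly here that the termwise sum condition, rather than the global dimension equality $\sum_*\dim T_*=2\dim V$, is essential.
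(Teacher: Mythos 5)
Your proof is correct; note however that the paper itself does not prove this proposition but simply quotes it as \cite[Proposition~2.4]{RR}, so there is no internal argument to measure yours against. Your reduction of the vector identity $2gd(G/P)=gd(X_u)+gd(X_v)+gd(X_w)$ to the termwise condition $\dim T_u^i+\dim T_v^i+\dim T_w^i=2\dim V_i$ via Lemma~\ref{lem:gdimSchub} is exactly right, and the forward implication is immediate since $L$ preserves each $Z$-isotypic piece $V_i$, so the $L$-isomorphism splits as a direct sum over $i$. The converse is where the content lies, and your filtered degeneration of the $P$-triple of Lemma~\ref{lem:fondBK} to an $L$-triple is sound: the $P$-stability of $V^{\ggeq\alpha}$ gives $p_*T_*\cap V^{\ggeq\alpha}=p_*\bigl(\oplus_{\alpha_i\ggeq\alpha}T_*^i\bigr)$, injectivity of $\Phi_\alpha$ is inherited from $\Phi$, and the termwise hypothesis closes the dimension count. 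The one step worth writing out is ``passing to the associated graded''. Set $V^{\sggeq\alpha_i}:=\oplus_{\alpha_j\sggeq\alpha_i}V_j$, so that $V^{\ggeq\alpha_i}/V^{\sggeq\alpha_i}\simeq V_i$ and, since $\Ad(u)-\Id$ has strictly positive $Z$-weights for $u$ in the unipotent radical, $p_*$ acts on this quotient through its Levi part $l_*$; hence the image of $p_*\bigl(\oplus_{\alpha_j\ggeq\alpha_i}T_*^j\bigr)$ in $V_i$ is exactly $l_*T_*^i$. Surjectivity of $\bar\Phi_i\colon V_i\to\oplus_*V_i/l_*T_*^i$ then follows from surjectivity of $\Phi_{\alpha_i}$ by reducing a preimage modulo $V^{\sggeq\alpha_i}$, and the equality of dimensions makes it an isomorphism. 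With this line added the argument is complete; it is the standard degeneration along the central torus $Z$, in the same spirit as the filtration underlying Theorem~\ref{th:BKprodfil}, and your closing remark is apt that mere semicontinuity in the limit $\tau(t)p_*\tau(t)\inv\to l_*$ would not suffice, precisely because the termwise (not just global) codimension identity is what is needed.
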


\subsection{The statements}

The first aim of this section is to prove (see
Section~\ref{sec:proofs}) the following result of
compatibility between the basis of Schubert classes and the
$\tilde\Gamma$-filtration on $\Ho^*(G/P,\CC)$.

\begin{prop}
  \label{prop:filtSchub}
For any $\tilde\beta\in\tilde\Gamma$ and for any integer $p$, the linear
subspace $F^{\lleq\tilde\beta}\Ho^p(G/P,\CC)$ is spanned by the
Schubert classes it contains.

More precisely, $F^{\lleq\tilde\beta}\Ho^p(G/P,\CC)$ is spanned by the
Schubert classes $\sigma_{w^\vee}$ where $w\in W^P$ satisfies $(\rho(X_w),-l(w))\lleq\tilde\beta$. 
\end{prop}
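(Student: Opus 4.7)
The strategy invokes Kostant's theorem on harmonic forms \cite{Kostant:harmform1}: for a compact form $K$ of $G$ with $T\subset K$, each Schubert class $\sigma_{w^\vee}$ ($w\in W^P$) has a canonical $K$-invariant harmonic representative $\omega_w\in\AAf^{2l(w)}(G/P,\CC)$. At the base point, $\omega_w|_{P/P}$ is a form of Hodge bidegree $(l(w),l(w))$ proportional to $\bigwedge_{\theta\in\Phi(w)}e_\theta^*\wedge\bigwedge_{\theta\in\Phi(w)}\bar e_\theta^*$, so its evaluation on a $2l(w)$-tuple in $T^{1,0}_{P/P}\oplus T^{0,1}_{P/P}$ vanishes unless the holomorphic and antiholomorphic inputs each have $T$-weights exhausting $\Phi(w)$.

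First I would show $\omega_w\in F^{\lleq(\rho(X_w),-l(w))}\AAf^{2l(w)}(G/P,\CC)$ at the cochain level, which by Proposition~\ref{prop:filform1}(1) yields the inclusion ``$\supseteq$'' of the proposition. Under the $\tilde\Gamma$-filtration on $T^\CC(G/P)$ from Section~\ref{sec:defclassY}, a holomorphic eigenvector $\xi\in(T^{1,0}_{P/P})_\alpha$ carries natural $\tilde\Gamma$-weight $(\alpha,0)$, while an antiholomorphic vector in $T^{0,1}_{P/P}$ is assigned the weight $(0,-1)$, consistent with $F^{\ggeq(\beta,-1)}T^\CC(G/P)=T^\CC(G/P)$. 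On a non-vanishing evaluation, the holomorphic $T$-weights sum via Lemma~\ref{lem:gdimSchub} to $\rho(X_w)\in X(Z)$, and the $l(w)$ antiholomorphic vectors contribute $-l(w)$ in the $\ZZ$-factor, producing a total $\tilde\Gamma$-weight of exactly $(\rho(X_w),-l(w))$.

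For the converse inclusion, given $\sigma\in F^{\lleq\tilde\beta}\Ho^p(G/P,\CC)$ with representative $\omega\in F^{\lleq\tilde\beta}\AAf^p$, the $G$-equivariance of the filtration lets me average $\omega$ over $K$ to produce a $K$-invariant representative $\omega^K$ in the same filtration piece. By Kostant, the $K$-invariant $p$-forms on $G/P$ are exactly the harmonic forms, so $\omega^K=\sum_{w\in W^P,\,2l(w)=p}c_w\omega_w$ uniquely. Since distinct $w\in W^P$ have distinct inversion sets $\Phi(w)$, the corresponding diagonal monomials are linearly independent in $\bigwedge^p T^{*\CC}_{P/P}(G/P)$; evaluating $\omega^K$ at $P/P$ on the tuple $(e_\theta,\bar e_\theta)_{\theta\in\Phi(w_0)}$ thus isolates $c_{w_0}$ (up to normalization of $\omega_{w_0}$). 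When $c_{w_0}\neq 0$, the evaluation is nonzero and the weight calculation of the previous paragraph forces $(\rho(X_{w_0}),-l(w_0))\lleq\tilde\beta$; contrapositively, $c_{w_0}=0$ whenever $(\rho(X_{w_0}),-l(w_0))\not\lleq\tilde\beta$, which is ``$\subseteq$''.

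The main obstacle is the explicit identification of $\omega_w|_{P/P}$ with the diagonal monomial supported on $\Phi(w)\times\Phi(w)$, which requires extracting from Kostant the precise description of the harmonic representatives with the correct normalization. A secondary subtlety is the convention for $\tilde\Gamma$-weights of antiholomorphic vectors: despite the partial order on $\tilde\Gamma$ formally admitting several incomparable valid assignments, the relevant one is the uniform $(0,-1)$ dictated by the definition $F^{\ggeq(\beta,-1)}T^\CC(G/P)=T^\CC(G/P)$, and it is essential to this convention that the total weight of Kostant's form matches $(\rho(X_w),-l(w))$ rather than something involving the antiholomorphic $Z$-weights.
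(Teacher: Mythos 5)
Your first half is the paper's route (Kostant's $K$-invariant harmonic representative $\omega_w$ lies in the filtration piece $F^{\lleq(\rho(X_w),-l(w))}$ already at the cochain level), but the key fact you defer as ``the main obstacle'' is not a normalization issue: outside the cominuscule case $\omega_w|_{P/P}$ is \emph{not} the diagonal monomial on $\Phi(w)\times\Phi(w)$. Kostant's representative corresponds to $s_w=h_w+\KoR(h_w)+\KoR^2(h_w)+\cdots$, where already the leading term $h_w=\Id_{M_w}$ is a sum of monomials over all weights of the simple $L$-module $M_w$, and the correction terms add more. What makes the inclusion work is that $h_w$ sits in the component of bidegree $((\rho-w^{-1}\rho)_{|Z},-l(w))$ and that $\KoR$ strictly lowers the $X(Z)$-weight (the paper's Lemma~\ref{lem:Rfilt}, which rests on Lemma~\ref{lem:opLstab} and the orthogonality of the decomposition). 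Without that lemma your weight computation only applies to one monomial of $\omega_w$, not to $\omega_w$ itself.

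The converse inclusion is where your argument genuinely fails. The assertion ``the $K$-invariant $p$-forms on $G/P$ are exactly the harmonic forms'' is false: $\AAf^p(G/P,\CC)^K\cong(\bigwedge^p\lr^*)^L$ is in general strictly larger than $\Ho^p(G/P,\CC)$ (already for $\SL_3/B$ in degree $2$ the $T$-invariant $2$-forms form a $3$-dimensional space while $\Ho^2$ is $2$-dimensional). So you cannot write $\omega^K=\sum_w c_w\omega_w$; the two sides agree only modulo an exact $K$-invariant form, and evaluation at $P/P$ does not descend to cohomology, so it cannot isolate $c_{w_0}$. Even if it did, the non-diagonal terms of the various $\omega_w$ would contaminate the evaluation on your chosen tuple. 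The paper avoids all of this by proving an upper bound on $\dim F^{\lleq\tilde\beta}\Ho^p(G/P,\CC)$ (Lemma~\ref{lem:dimF}): the functionals $\int_{X_{w^\vee}}$ with $\rho(G/P)-\rho(X_w)\nlleq\beta$ kill the whole filtration piece by Lemma~\ref{lem:intF} (a statement about integration over subvarieties, not about pointwise values of representatives) and are linearly independent by Poincar\'e duality; the bijection $w\mapsto w^\vee$ of Lemma~\ref{lem:calcul} then matches this upper bound with the lower bound coming from the Schubert classes already known to lie in the piece, and a dimension count concludes. You need to replace your averaging-plus-evaluation step by some such duality or dimension argument.
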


For any $w\in W^P$,  denote by $\overline{\sigma_{w^\vee}}$ the
class of $\sigma_{w^\vee}\in F^{ \lleq(\rho(X_w),-l(w))}\Ho^{l(w)}(G/P,\CC)$
in $\G^{ (\rho(X_w),-l(w))}\Ho^{l(w)}(G/P,\CC)$.
Proposition~\ref{prop:filtSchub} implies that
$(\overline{\sigma_{w^\vee}})_{w\in W^P}$ is a basis of $\G \Ho^*(G/P,\CC)$.
Consider now the obvious linear isomorphism
$$
\begin{array}{ccccc}
  \Psi\,:&\Ho^*(G/P,\CC)&\longto&\G \Ho^*(G/P,\CC)\\
&\sigma_{w^\vee}&\longmapsto&\overline{\sigma_{w^\vee}}&{\rm\ for\ any\ }w\in W^P.
\end{array}
$$

\begin{theo}\label{th:iso}
  The linear isomorphism $\Psi$ from the algebra $(\Ho^*(G/P,\CC),\bkprod)$ onto
  the algebra $\G \Ho^*(G/P,\CC)$ is an isomorphism of algebras.
\end{theo}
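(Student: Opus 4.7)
The plan is to reduce the theorem to Proposition~\ref{prop:filtSchub} (compatibility of the Schubert basis with the filtration) together with a direct dimension count matching the graded product to the Belkale--Kumar combinatorial condition \eqref{eq:defctilde2}. The map $\Psi$ is a linear isomorphism by construction; what must be shown is multiplicativity.

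First, I would record what Propositions~\ref{prop:filform1}(4), \ref{prop:filcohom}(2), and \ref{prop:filtSchub} already give. The cup product is compatible with the $\tilde\Gamma$-filtration:
\[
F^{\lleq\tilde\alpha}\Ho^\ast(G/P,\CC)\cdot F^{\lleq\tilde\beta}\Ho^\ast(G/P,\CC)\subset F^{\lleq\tilde\alpha+\tilde\beta}\Ho^\ast(G/P,\CC),
\]
and each $F^{\lleq\tilde\gamma}\Ho^\ast(G/P,\CC)$ has a basis of Schubert classes $\sigma_{w^\vee}$ with $(\rho(X_w),-l(w))\lleq\tilde\gamma$. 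Consequently, in the graded piece $\G^{\tilde\gamma}\Ho^\ast(G/P,\CC)$, the classes $\overline{\sigma_{w^\vee}}$ with $(\rho(X_w),-l(w))=\tilde\gamma$ form a basis; all other Schubert classes either live in the subquotient or in strictly smaller filtration pieces.

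Next I would compute $\overline{\sigma_{u^\vee}}\cdot\overline{\sigma_{v^\vee}}\in \G^{(\rho(X_u)+\rho(X_v),\,-l(u)-l(v))}\Ho^\ast(G/P,\CC)$ by expanding the cup product $\sigma_{u^\vee}\cdot\sigma_{v^\vee}=\sum_w c_{u^\vee v^\vee}^{w^\vee}\sigma_{w^\vee}$ in $\Ho^\ast(G/P,\CC)$. Each term $\sigma_{w^\vee}$ sits in $F^{\lleq(\rho(X_w),-l(w))}$, and when projected into the prescribed graded piece only those $w$ with $(\rho(X_w),-l(w))=(\rho(X_u)+\rho(X_v),\,-l(u)-l(v))$ survive. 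Since the weights $\alpha_i$ of the $Z$-isotypic components $V_i$ are pairwise distinct, using Lemma~\ref{lem:gdimSchub} to rewrite $\rho(X_w)=\sum_i(\dim T_w^i)\alpha_i$, this matching condition is equivalent to $\dim T_w^i=\dim T_u^i+\dim T_v^i$ for every $i$ (the equality $l(w)=l(u)+l(v)$ is then automatic from the cup-product degree condition).

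The third step is to recognize this condition as the Belkale--Kumar condition for the triple $(u^\vee,v^\vee,w^\vee)$. For this, one uses the fact that at $P/P$ the tangent spaces $T_u$ and $T_{u^\vee}$ are complementary inside $T_{P/P}G/P$ (Poincar\'e duality of Schubert cycles), and since $T$ acts with distinct weights on $\lieg/\lp$ their $T$-weights partition $\Phi(\lieg/\lp,T)$. Intersecting with each $V_i$ yields the relation $\dim T_{x^\vee}^i=\dim V_i-\dim T_x^i$ for $x\in\{u,v,w\}$. Substituting into the BK condition $\dim T_{u^\vee}^i+\dim T_{v^\vee}^i=\dim V_i+\dim T_{w^\vee}^i$ from \eqref{eq:defctilde2} yields precisely $\dim T_w^i=\dim T_u^i+\dim T_v^i$. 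Combining these two computations,
\[
\overline{\sigma_{u^\vee}}\cdot\overline{\sigma_{v^\vee}}=\sum_w \tilde c_{u^\vee v^\vee}^{w^\vee}\,\overline{\sigma_{w^\vee}}=\Psi\!\left(\sigma_{u^\vee}\bkprod\sigma_{v^\vee}\right),
\]
which gives the multiplicativity of $\Psi$ on the basis and hence everywhere.

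The entire argument rides on Proposition~\ref{prop:filtSchub}; once it is in hand, the theorem is a matter of dimension bookkeeping and the Poincar\'e-duality identity $\dim T_{x^\vee}^i+\dim T_x^i=\dim V_i$. The main obstacle is therefore not in this theorem itself but in establishing Proposition~\ref{prop:filtSchub}, i.e.\ in showing that the intrinsic (Hodge-like) filtration coming from the distributions is compatible with the Schubert basis in the sharp form stated. I would expect that step to rest on Kostant's harmonic-forms model of $\Ho^\ast(G/P,\CC)$ (as mentioned in the introduction), which gives a $T$-equivariant representative of each Schubert class whose $Z$-weight can be read directly off $\Phi(w)$, matching the filtration indexing.
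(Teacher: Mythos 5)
Your overall route is the same as the paper's: use Proposition~\ref{prop:filtSchub} to see that the graded product of two Schubert classes is the cup product truncated to those $w$ satisfying the weight condition $\rho(X_w)=\rho(X_u)+\rho(X_v)$ (plus the automatic degree condition), and then match this truncation with the Belkale--Kumar selection rule. However, there is a genuine gap at the matching step. You assert that, because the weights $\alpha_i$ of the $Z$-isotypic components are \emph{pairwise distinct}, the single equation $\sum_i(\dim T_w^i)\alpha_i=\sum_i(\dim T_u^i+\dim T_v^i)\alpha_i$ in $X(Z)$ is equivalent to the componentwise equalities $\dim T_w^i=\dim T_u^i+\dim T_v^i$ for all $i$. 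Distinctness is not enough: the $\alpha_i$ are in general linearly \emph{dependent} in $X(Z)$ (for $G/B$ with $G=\SL_3$ the weights are $-\alpha$, $-\beta$, $-\alpha-\beta$; for $P$ maximal they are all multiples of a single character), so equality of the weighted sums does not determine the individual coefficients. Only the implication ``componentwise $\Rightarrow$ weighted sum'' is formal; with your argument alone the graded product could a priori contain extra terms $\overline{\sigma_{w^\vee}}$ for which the weight condition holds but the triple is not Levi-movable.

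The paper closes exactly this direction by a non-formal input: assuming $c_{uv}^w\neq 0$ and $\tilde\rho(X_u)+\tilde\rho(X_v)=\tilde\rho(X_w)+\tilde\rho(G/P)$, it invokes the Belkale--Kumar numerical criterion of Levi-movability (\cite[Theorem~15]{BK}) to conclude that $(\sigma_u,\sigma_v,\sigma_{w^\vee})$ is Levi-movable, and then Proposition~\ref{Prop_Dim} converts Levi-movability into the componentwise condition $gd(X_u)+gd(X_v)=gd(X_w)+gd(G/P)$. You need to insert this (or an equivalent substitute) to make your second step correct. The remaining ingredients of your sketch are sound, modulo a small imprecision: $T_{u^\vee}$ is complementary to $w_0^P T_u$ rather than to $T_u$ itself, but since $w_0^P\in L$ preserves each $V_i$ the identity $\dim T_{u^\vee}^i=\dim V_i-\dim T_u^i$ that you actually use is correct.
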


The theorem is proved in Section~\ref{sec:proofth} after some
preparation. 
The first consequence concerns Poincar\'e duality (see Section~\ref{sec:proofs}).

\begin{coro}
 \label{cor:PoincareGP}
The $(X(Z)\times\ZZ)$-filtration of $\Ho^*(G/P,\CC)$ is compatible with
Poincar\'e duality.  
\end{coro}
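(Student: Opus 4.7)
The plan is to deduce the corollary from Lemma~\ref{lem:PDdim} combined with Theorem~\ref{th:iso} and the Poincar\'e duality of the Belkale-Kumar product (Theorem~\ref{th:BK}). By Lemma~\ref{lem:PDdim}, it suffices to establish the dimension equality $\dim \G^{\tilde\alpha}\Ho^p(G/P,\CC) = \dim \G^{\tilde\rho(G/P)-\tilde\alpha}\Ho^{2d-p}(G/P,\CC)$ for all $\tilde\alpha \in \tilde\Gamma$ and all $p$, where $d = \dim G/P$.

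First, I would locate the class of the point $\sigma_e \in \Ho^{2d}(G/P,\CC)$ within the graded algebra. Since $e = (w_0 w_0^P)^\vee$ and $X_{w_0 w_0^P} = G/P$, Proposition~\ref{prop:filtSchub} places $\sigma_e = \sigma_{(w_0 w_0^P)^\vee}$ in $F^{\lleq(\rho(G/P),-d)}\Ho^{2d}(G/P,\CC)$, so that $\overline{\sigma_e}$ canonically generates the one-dimensional top piece $\G^{\tilde\rho(G/P)}\Ho^{2d}(G/P,\CC)$.

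Next, I would use Theorem~\ref{th:iso} to transport the $\bkprod$-Poincar\'e pairing on $(\Ho^*(G/P,\CC),\bkprod)$, namely $(a,b) \mapsto $ coefficient of $\sigma_e$ in $a \bkprod b$, to the pairing on $\G \Ho^*(G/P,\CC)$ extracting the $\overline{\sigma_e}$-coefficient of the graded product. Because $\bkprod$ and the cup product coincide in top degree (Levi-movability is automatic for triples $(\sigma_u, \sigma_v, \sigma_e)$ with nonzero cup-product coefficient), this transported pairing coincides with the integration pairing $(\bar a, \bar b) \mapsto \int_{G/P} \omega_a \wedge \omega_b$. Theorem~\ref{th:BK} asserts non-degeneracy of the former, hence of the latter.

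Finally, by \eqref{eq:737}, the integration pairing on $\G \Ho^*(G/P,\CC)$ vanishes outside pairs $(\tilde\alpha, \tilde\rho(G/P)-\tilde\alpha)$ of complementary grades and degrees. The globally non-degenerate pairing therefore decomposes as a direct sum of block pairings $\G^{\tilde\alpha}\Ho^p \times \G^{\tilde\rho(G/P)-\tilde\alpha}\Ho^{2d-p} \longto \CC$, each of which must itself be non-degenerate. Non-degeneracy of each block yields both the required dimension equality (hence Lemma~\ref{lem:PDdim} applies) and direct compatibility with Poincar\'e duality. The main subtlety will be verifying that the integration pairing matches the transport of the $\bkprod$-Poincar\'e pairing through $\Psi$; this rests on the two elementary observations that $\overline{\sigma_e}$ generates $\G^{\tilde\rho(G/P)}\Ho^{2d}$ and that $\bkprod$ agrees with the cup product in top degree, so no combinatorial bijection on $W^P$ needs to be exhibited directly.
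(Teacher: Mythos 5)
Your proposal is correct, but it takes a genuinely different route from the paper. The paper's own proof is one line: by Lemma~\ref{lem:PDdim} it suffices to check the dimension equality~\eqref{eq:14}, and this is read off directly from the proof of Proposition~\ref{prop:filtSchub}, where $\dim F^{\lleq\tilde\beta}\Ho^p(G/P,\CC)$ is sandwiched between $\# W'(\tilde\beta,p)$ and $\# W(\tilde\beta,p)$ and these two sets are matched by the bijection $w\mapsto w^\vee$ via Lemma~\ref{lem:calcul}; the dimension equality is then immediate combinatorics on $W^P$. You instead transport the non-degenerate Belkale--Kumar Poincar\'e pairing through the isomorphism $\Psi$ of Theorem~\ref{th:iso}, identify it with the integration pairing~\eqref{eq:746} using that $\overline{\sigma_e}$ spans the top graded piece and that $\int_{G/P}$ kills the lower graded pieces of $\Ho^{2d}$, and conclude block by block. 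This is logically sound --- in particular there is no circularity, since the proof of Theorem~\ref{th:iso} in Section~\ref{sec:proofth} does not invoke the corollary --- but it is a heavier dependency: you need the full algebra isomorphism and the external Theorem~\ref{th:BK}, where the paper needs only Proposition~\ref{prop:filtSchub} and Lemma~\ref{lem:calcul} (and both arguments ultimately rest on the same fact, namely that the Schubert basis is adapted to the filtration with $w\mapsto w^\vee$ exchanging complementary graded pieces). One point to state more carefully: the triple governing the coefficient of $\sigma_e$ in $\sigma_u.\sigma_v$ is $(\sigma_u,\sigma_v,\sigma_{w_0w_0^P})$, and its Levi-movability when the cup coefficient is nonzero (which forces $v=u^\vee$) follows from $gd(X_u)+gd(X_{u^\vee})=gd(G/P)$ via Proposition~\ref{Prop_Dim}; this is true but deserves the half-line of justification, since Levi-movability of a pair is not automatic in general.
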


This corollary allows  to define the graded Schubert classes by setting,
for any $w\in W^P$,
\begin{eqnarray}
  \label{eq:34}
  \sigma_w^\bkprod:=[X_w]_\bkprod.
\end{eqnarray}

Finally, we get, by applying Proposition~\ref{prop:BKclass} to $Y=X_w$, the following result of compatibility.

\begin{lemma}
  For any $w\in W^P$, we have
$$
\Psi(\sigma_w)=\sigma_w^\bkprod.
$$
\end{lemma}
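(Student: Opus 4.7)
\bigskip
\noindent\textbf{Proof plan.}
The idea is to show that $\Psi(\sigma_w)$ and $\sigma_w^\bkprod=[X_w]_\bkprod$ lie in the same graded component of $\Ho^*(G/P,\CC)$, and then that they coincide there by pairing against every class in the Poincar\'e-dual graded piece.

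First I would identify the grading degrees. Applying Proposition~\ref{prop:filtSchub} to $u:=w^\vee$, the class $\sigma_w=\sigma_{u^\vee}$ lies in $F^{\lleq(\rho(X_{w^\vee}),-l(w^\vee))}\Ho^{2l(w^\vee)}(G/P,\CC)$, so $\Psi(\sigma_w)=\overline{\sigma_w}$ sits in $\G^{(\rho(X_{w^\vee}),-l(w^\vee))}\Ho^{2l(w^\vee)}(G/P,\CC)$. On the other hand, by the construction of Section~\ref{sec:defclassY} the class $[X_w]_\bkprod$ lives in $\G^{\tilde\rho(G/P)-\tilde\rho(X_w)}\Ho^{2(\dim G/P-l(w))}(G/P,\CC)$. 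These two grading degrees agree iff $l(w)+l(w^\vee)=\dim G/P$ (classical for Poincar\'e-dual Schubert cells) and $\rho(G/P)=\rho(X_w)+\rho(X_{w^\vee})$. This latter equality is the only real check: by Lemma~\ref{lem:gdimSchub}, $\rho(X_w)=\sum_{\theta\in\Phi(w)}\theta_{|Z}$, and the inversion sets $\Phi(w)$ and $\Phi(w^\vee)$ partition $\Phi(\lieg/\lp,T)$ (this is the tangent-level incarnation of the duality $X_w\leftrightarrow X_{w^\vee}$), so summing gives $\rho(G/P)$.

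Next, to compare the two classes, I would pair both against an arbitrary $[\omega]\in\G^{\tilde\rho(X_w)}\Ho^{2l(w)}(G/P,\CC)$, lifted to a representative $\omega\in F^{\lleq\tilde\rho(X_w)}\Ho^{2l(w)}(G/P,\CC)$. The defining formula of $[X_w]_\bkprod$ gives
\[
\int_{X_w}[\omega]=\int_{G/P}[X_w]_\bkprod\wedge[\omega],
\]
while classical Poincar\'e duality (the ordinary cycle class of $X_w$ being $\sigma_w$) gives $\int_{X_w}\omega=\int_{G/P}\sigma_w\wedge\omega$. Since $\sigma_w\wedge\omega\in F^{\lleq\tilde\rho(G/P)}\Ho^{2\dim G/P}(G/P,\CC)$ by Proposition~\ref{prop:filcohom}, Lemma~\ref{lem:intF} together with~\eqref{eq:737} imply that the integral depends only on the image of the wedge in the top graded piece, so
\[
\int_{G/P}\sigma_w\wedge\omega=\int_{G/P}\Psi(\sigma_w)\wedge[\omega]
\]
in the sense of the graded pairing~\eqref{eq:746}.

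Combining these two identities yields
\[
\int_{G/P}\bigl(\Psi(\sigma_w)-[X_w]_\bkprod\bigr)\wedge[\omega]=0
\qquad\text{for every }[\omega]\in\G^{\tilde\rho(X_w)}\Ho^{2l(w)}(G/P,\CC).
\]
Corollary~\ref{cor:PoincareGP} (non-degeneracy of the graded Poincar\'e pairing) then forces $\Psi(\sigma_w)=[X_w]_\bkprod=\sigma_w^\bkprod$. The only non-automatic ingredient is the weight identity $\rho(X_w)+\rho(X_{w^\vee})=\rho(G/P)$; once this bookkeeping is in place, the result reduces to the fact that ordinary integration descends to the graded algebra and that the induced graded Poincar\'e pairing is non-degenerate.
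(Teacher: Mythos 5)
Your argument is correct and is essentially the paper's own: the paper proves this lemma by applying Proposition~\ref{prop:BKclass} to $Y=X_w$, whose proof likewise verifies the defining pairing identity~\eqref{eq:33} for $\overline{\sigma_w}$ by combining Lemma~\ref{lem:calcul} (your weight identity $\rho(X_w)+\rho(X_{w^\vee})=\rho(G/P)$ --- note it is $w_0^P\Phi(w)$ and $\Phi(w^\vee)$ that partition $\Phi(\lieg/\lp,T)$, though this does not affect the restriction to $Z$), Lemma~\ref{lem:intF}, ordinary Poincar\'e duality, and the non-degeneracy of the graded pairing from Corollary~\ref{cor:PoincareGP}. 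The only cosmetic difference is that you pair against an arbitrary class and invoke non-degeneracy, while the paper reduces to pairing against the Schubert classes $\sigma_{u^\vee}$.
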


\subsection{An upper bound for $\dim(F^{\lleq\tilde\alpha}\Ho^p(G/P,\CC))$}

For any $w\in W$, as a consequence of the relation $\Phi^-=
(\Phi^-\cap w^{-1}\Phi^+)\cup (\Phi^-\cap w^{-1}\Phi^-)$,
we have (see \cite[1.3.22.3]{Kumar:KacMoody})
\begin{eqnarray}
  \label{eq:236a}
  \sum_{\alpha\in\Phi^-\cap w^{-1}\Phi^+}\alpha=w^{-1}\rho-\rho.
\end{eqnarray}
Assume that $w\in W^P$.
Since $P/P$ is 
$X(T)$-regular and  $T$ acts on $T_{P/P}w^{-1}X_w$ without
multiplicities  and with
weights $\Phi^-\cap w^{-1}\Phi^+$, we have
\begin{eqnarray}
  \label{eq:239}
  \rho(X_w)=\rho(w^{-1}X_w)=\left(\sum_{\alpha\in\Phi^-\cap
      w^{-1}\Phi^+}\alpha\right)_{|Z}=\left(
w^{-1}\rho-\rho
\right)_{|Z}.
\end{eqnarray}
In particular
\begin{eqnarray}
  \label{eq:237}
  \rho(G/P)=2\left(
\rho_L-\rho
\right)_{|Z}=-2\rho_{|Z},
\end{eqnarray}
since $\rho_L$ is trivial on $Z$.
Hence
\begin{eqnarray}
  \label{eq:238}
  \rho(G/P)-\rho(X_w)=\left(
-\rho-w^{-1}\rho
\right)_{|Z}.
\end{eqnarray}

\begin{lemma}
\label{lem:calcul}
For any $w\in W^P$, we have
$$\rho(G/P)-\rho(X_w) =\rho(X_{w^\vee}).$$
\end{lemma}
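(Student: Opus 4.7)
The plan is simply to combine the formulas already established in the excerpt with two standard facts about $w_0$ and $W_P$. Starting from~\eqref{eq:238}, we have
$$\rho(G/P)-\rho(X_w) = (-\rho - w^{-1}\rho)_{|Z},$$
while applying~\eqref{eq:239} to $w^\vee$ in place of $w$ (noting that $w^\vee \in W^P$) gives
$$\rho(X_{w^\vee}) = ((w^\vee)^{-1}\rho - \rho)_{|Z}.$$
Subtracting, the desired identity reduces to proving that
$((w^\vee)^{-1}\rho)_{|Z} = -(w^{-1}\rho)_{|Z}$
in $X(Z)$.

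To establish this, I would first rewrite $(w^\vee)^{-1}$ explicitly. Since $w_0$ and $w_0^P$ are involutions and $w^\vee = w_0 w w_0^P$, we get $(w^\vee)^{-1} = w_0^P\, w^{-1}\, w_0$. Applying this to $\rho$ and using the classical identity $w_0\rho = -\rho$ yields
$$(w^\vee)^{-1}\rho = -\,w_0^P\, w^{-1}\rho.$$

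The remaining step is to remove the $w_0^P$ after restricting to $Z$. This uses the basic fact that the Weyl group $W_P = W_L$ of the Levi acts trivially on $X(Z)$: for any representative $n \in N_L(T)$ of an element of $W_L$ and any $z \in Z$, we have $n z n^{-1} = z$ because $Z$ is central in $L$, hence $(n\cdot\chi)(z) = \chi(z)$ for every $\chi \in X(T)$. Applied to $\chi = w^{-1}\rho$ and $n$ representing $w_0^P$, this gives $(w_0^P w^{-1}\rho)_{|Z} = (w^{-1}\rho)_{|Z}$, so
$$((w^\vee)^{-1}\rho)_{|Z} = -(w^{-1}\rho)_{|Z},$$
which is exactly what was needed.

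There is no real obstacle here; the lemma is a bookkeeping exercise that packages the Poincar\'e-duality symmetry $w \mapsto w^\vee$ on Schubert weights into a clean statement. The only subtlety worth flagging in the write-up is the invariance of the restriction map $X(T) \to X(Z)$ under $W_P$, which is the reason one can drop $w_0^P$ after restricting.
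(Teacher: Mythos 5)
Your proof is correct and follows essentially the same route as the paper: both reduce to the identity $((w^\vee)^{-1}\rho)_{|Z}=-(w^{-1}\rho)_{|Z}$ via $(w^\vee)^{-1}=w_0^Pw^{-1}w_0$, the relation $w_0\rho=-\rho$, and the fact that $w_0^P\in W_L$ acts trivially on $X(Z)$ because $Z$ is central in $L$. Your extra remark justifying the $W_P$-invariance of the restriction map is a harmless elaboration of the same point.
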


\begin{proof}
Remark that
$$
((w^\vee)^{-1}\rho)_{|Z}=((w_0^Pw^{-1}w_0\rho)_{|Z}=-w_0^P(w^{-1}\rho)_{|Z}=-(w^{-1}\rho)_{|Z},
$$
since $w_0^P$ belongs to $L$ and acts trivially on $Z$.
The lemma follows.
\end{proof}

\begin{lemma}\label{lem:dimF}
Let $n$ denote the dimension of $G/P$.
  The dimension of $F^{\lleq \beta}\Ho^{2(n-d)}(G/P,\CC)$ is less or equal
  to the number of $w\in W^P$ such that $\rho(G/P)-\rho(X_w)\lleq\beta$
  and $l(w)=d$.
\end{lemma}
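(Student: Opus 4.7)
The strategy is to combine Lemma~\ref{lem:intF}, applied to the Schubert varieties of complex dimension $n-d$, with Poincar\'e duality on $G/P$. First I would observe that the Schubert classes $\sigma_u\in\Ho^{2d}(G/P,\CC)$ indexed by $u\in W^P$ with $l(u)=n-d$ form a basis of $\Ho^{2d}(G/P,\CC)$ and that $\int_{X_u}[\omega]=\int_{G/P}[\omega]\wedge\sigma_u$ for any $[\omega]\in\Ho^{2(n-d)}(G/P,\CC)$. Non-degeneracy of the Poincar\'e pairing then implies that the evaluation map
\[
\Phi\,:\,\Ho^{2(n-d)}(G/P,\CC)\longto \CC^{\{u\in W^P\,:\,l(u)=n-d\}},\qquad [\omega]\longmapsto \left(\int_{X_u}[\omega]\right)_u,
\]
is an isomorphism, hence injective on every subspace of $\Ho^{2(n-d)}(G/P,\CC)$.

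Next I would read $\beta$ in the statement as $\tilde\beta:=(\beta,-(n-d))\in\tilde\Gamma$, so that the $\tilde\Gamma$-filtration on the fixed cohomological degree $2(n-d)$ reduces to a $\Gamma$-filtration in the obvious way. For $u\in W^P$ with $l(u)=n-d$ one has $\tilde\rho(X_u)=(\rho(X_u),-(n-d))$; the $\ZZ$-components of $\tilde\beta$ and $\tilde\rho(X_u)$ coincide, so $\tilde\beta\not\ggeq\tilde\rho(X_u)$ is equivalent to $\rho(X_u)\not\lleq\beta$. Lemma~\ref{lem:intF} then forces $\int_{X_u}\omega=0$ for every representative $\omega\in F^{\lleq\tilde\beta}\AAf^{2(n-d)}(G/P,\CC)$ whenever $\rho(X_u)\not\lleq\beta$. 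Consequently, $\Phi$ sends $F^{\lleq\tilde\beta}\Ho^{2(n-d)}(G/P,\CC)$ into the coordinate subspace supported on $\{u\in W^P\,:\,l(u)=n-d,\ \rho(X_u)\lleq\beta\}$, and injectivity of $\Phi$ yields
\[
\dim F^{\lleq\tilde\beta}\Ho^{2(n-d)}(G/P,\CC)\le \#\{u\in W^P\,:\,l(u)=n-d,\ \rho(X_u)\lleq\beta\}.
\]

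Finally, the change of variables $w=u^\vee$ rewrites the counting set as the one in the statement: by Lemma~\ref{lem:calcul}, $\rho(X_u)=\rho(X_{w^\vee})=\rho(G/P)-\rho(X_w)$, and $l(w)=n-l(u)=d$. No substantial obstacle is anticipated; the only delicate point is translating between the $\Gamma$-parameter $\beta$ in the statement and the appropriate slice of the $\tilde\Gamma$-filtration at degree $2(n-d)$, after which the argument is a clean combination of the vanishing Lemma~\ref{lem:intF} with Poincar\'e duality.
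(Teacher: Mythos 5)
Your argument is correct and is essentially the paper's own proof: both use the linear forms $\int_{X_{w^\vee}}$ (your $u=w^\vee$), the vanishing statement of Lemma~\ref{lem:intF}, the weight identity of Lemma~\ref{lem:calcul}, and Poincar\'e duality to get linear independence, yielding the same codimension bound. Your explicit identification of the statement's $\beta$ with $\tilde\beta=(\beta,-(n-d))\in\tilde\Gamma$ is a harmless clarification of a notational shortcut in the paper.
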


\begin{proof}
For each   $w\in W^P$ such that $\rho(G/P)-\rho(X_w)\nlleq\beta$
  and $l(w)=d$, consider the linear form
$$
\int_{X_{w^\vee}}\,:\Ho^{2(n-d)}(G/P,\CC)\longto\CC.
$$
By Lemmas~\ref{lem:calcul} and \ref{lem:intF}, this linear form is
zero on $F^{\lleq \beta}\Ho^{2(n-d)}(G/P,\CC)$.
But by Poincar\'e duality these linear forms are linearly independent. 
This implies that the codimension of 
$F^{\lleq \beta}\Ho^{2(n-d)}(G/P,\CC)$ in $\Ho^{2(n-d)}(G/P,\CC)$ is at least
the number of $w\in W^P$ such that $\rho(G/P)-\rho(X_w)\nlleq\beta$
  and $l(w)=d$. The lemma follows.
\end{proof}

\subsection{Kostant's harmonic forms}

\subsubsection{The role of Kostant's harmonic forms in this paper}

Let $w$ in $W^P$.
In 1963, B.~Kostant constructed an explicit $\CC$-valued closed
differential form $\omega_w$ on $G/P$ such that the associated cohomology class
$[\omega_w]$ is equal to $\sigma_w$ up to a scalar multiplication.
Kostant's form $\omega_w$ is used here to localize the
Schubert class relatively to the filtration.

\begin{lemma}
 \label{lem:Schubfil}
The Schubert class $\sigma_{w^\vee}$ belongs to $F^{\lleq(\rho(X_w),-l(w)) }\Ho^{l(w)}(G/P,\CC)$.
\end{lemma}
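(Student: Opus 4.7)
The plan is to exhibit an explicit closed representative of $\sigma_{w^\vee}$ lying in $F^{\lleq(\rho(X_w),-l(w))}\AAf^{2l(w)}(G/P,\CC)$, namely Kostant's $K$-invariant harmonic form $\omega_w$: a closed $(l(w),l(w))$-form on $G/P=K/K_L$ (where $K_L=K\cap L$) whose cohomology class is $\sigma_{w^\vee}$. Because the filtration subbundles $F^{\ggeq\tilde\alpha}T^\CC(G/P)$ are $G$-equivariant by construction, they are in particular $K$-invariant; together with the $K$-invariance of $\omega_w$ and the transitivity of $K$ on $G/P$, the vanishing condition defining $F^{\lleq(\rho(X_w),-l(w))}\AAf^{2l(w)}$ reduces to a single pointwise statement about $\omega_w|_{P/P}\in\bigwedge^{2l(w)}(T^\CC_{P/P}G/P)^*$.

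\textbf{The $\ZZ$-coordinate via Hodge type.} Since $\sigma_{w^\vee}$ is Poincar\'e dual to the complex subvariety $X_w$, $\omega_w|_{P/P}$ is of pure Hodge type $(l(w),l(w))$; hence $\omega_w|_{P/P}(\xi_1,\ldots,\xi_{2l(w)})$ vanishes unless exactly $l(w)$ of the inputs contribute their $T^{1,0}$-parts and $l(w)$ contribute their $T^{0,1}$-parts. By the definition of the complex filtration, $(1,0)$-vectors occupy subbundles of the form $F^{\ggeq(\cdot,0)}$ whereas $(0,1)$-vectors are admitted only through $F^{\ggeq(\cdot,-1)}=T^\CC(G/P)$; consequently the second coordinates of the tightest admissible $\tilde\alpha_i$ sum to $-l(w)$, matching the $\ZZ$-part of the filtration weight.

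\textbf{The $X(Z)$-coordinate via $K_L$-invariance.} The element $\omega_w|_{P/P}$ lies in $\bigwedge^{l(w)}(\lieg/\lp)^*\otimes\bigwedge^{l(w)}\overline{(\lieg/\lp)}^*$ and is $K_L$-invariant. By Kostant's description of harmonic forms on $G/P$, $\omega_w|_{P/P}$ corresponds to the isotypic projector onto the irreducible $K_L$-submodule $U_w\subset\bigwedge^{l(w)}(\lieg/\lp)$ generated by $\bigwedge_{\alpha\in\Phi(w)}e_\alpha$, itself a $T$-weight vector of weight $\sum_{\alpha\in\Phi(w)}\alpha=w^{-1}\rho-\rho$. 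All $T$-weights appearing in an irreducible $L$-module differ by roots of $L$, which are trivial on $Z$, so every weight of $U_w$ restricts to the single character $(w^{-1}\rho-\rho)_{|Z}=\rho(X_w)$ of $Z$ (using \eqref{eq:239}). Writing $\omega_w|_{P/P}=\sum c_{I,J}\,e_I^*\wedge\bar{e}_J^*$ over $l(w)$-subsets $I,J$ of roots of $\lieg/\lp$, every non-zero term satisfies $\sum_{\alpha\in I}\alpha_{|Z}=\rho(X_w)$.

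\textbf{Combining and the main obstacle.} For $(1,0)$-vectors $v_1,\ldots,v_{l(w)}$ with $v_i\in F^{\ggeq\tilde\beta_i}(T^{1,0}_{P/P}G/P)$, a non-vanishing evaluation of $e_I^*(v_1,\ldots,v_{l(w)})$ --- a determinant in the root basis of $\lieg/\lp$ --- forces a bijection $i\mapsto\alpha_i\in I$ such that the $\lieg_{\alpha_i}$-component of $v_i$ is non-zero. Since the $Z$-weight decomposition of $v_i$ is supported on weights $\ggeq\tilde\beta_i$, this forces $\alpha_i{}_{|Z}\ggeq\tilde\beta_i$; summing over $i$ yields $\sum_i\tilde\beta_i\lleq\sum_{\alpha\in I}\alpha_{|Z}=\rho(X_w)$, verifying the first coordinate of the filtration weight. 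Combined with the Hodge-type analysis of the second coordinate, this shows $\omega_w|_{P/P}\in F^{\lleq(\rho(X_w),-l(w))}\bigwedge^{2l(w)}(T^\CC_{P/P}G/P)^*$ and hence completes the proof. The principal technical obstacle is the identification of $\omega_w|_{P/P}$ with the isotypic projector onto $U_w$, which rests on Kostant's Hodge-theoretic bijection between harmonic forms on $K/K_L$ and $K_L$-invariants in the exterior algebra of $\lieg/\lp$ matched with the Schubert indexing.
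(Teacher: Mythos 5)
Your overall strategy --- represent $\sigma_{w^\vee}$ by Kostant's $K$-invariant form $\omega_w$, use the $K$-invariance of both the form and the ($G$-equivariant) filtration to reduce to a weight computation at the base point $P/P$, and then control the $X(Z)$- and $\ZZ$-coordinates separately --- is exactly the paper's. The gap sits in the step you yourself flag as the ``principal technical obstacle'': the identification of $\omega_w|_{P/P}$ with the isotypic projector onto the irreducible $L$-module generated by $\bigwedge_{\alpha\in\Phi(w)}e_\alpha$. That identification is false in general. The projector is the element $h_w={\rm Id}_w\in M_w\otimes M_w^*$ of \eqref{eq:143}, and it corresponds to a closed form representing the Schubert class only when $G/P$ is cominuscule (the paper says so explicitly right after \eqref{eq:144}). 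In general Kostant's closed representative is $s_w=(\Id-\KoR)^{-1}(h_w)=h_w+\KoR(h_w)+\KoR^2(h_w)+\cdots$, and the correction terms $\KoR^k(h_w)$ do \emph{not} have the same $Z$-weight as $h_w$. Consequently your assertion that every non-zero term of $\omega_w|_{P/P}$ satisfies $\sum_{\alpha\in I}\alpha_{|Z}=\rho(X_w)$ fails: the form is not pure of $X(Z)$-weight, it is only filtered.

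The missing content is precisely Lemma~\ref{lem:Rfilt}: $\KoR$ sends $C_{(\alpha,p)}$ into $\bigoplus_{\beta\slleq\alpha}C_{(\beta,p)}$, so the corrections move the $X(Z)$-weight only in the direction permitted by $F^{\lleq(\rho(X_w),-l(w))}$. Establishing this requires knowing that $\KoL$ preserves the $(X(Z)\times\ZZ)$-weight spaces (Lemma~\ref{lem:opLstab}), that the weight decomposition is orthogonal for the Hermitian form so that the quasi-inverse $\KoL_0$ preserves them as well (Lemma~\ref{lem:decomporth}), and a weight analysis of $\KoE$. None of this appears in your proposal. Your treatment of the $\ZZ$-coordinate via the $(l(w),l(w))$ bidegree is fine, since $\KoR$ preserves the bidegree; but as written the $X(Z)$-part of your argument proves the lemma only for cominuscule $G/P$, where the statement is anyway immediate because the filtration is trivial.
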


Before proving Lemma~\ref{lem:Schubfil} in Section~\ref{sec:proofs}, we recall 
Kostant's construction.

\subsubsection{Restriction to $K$-invariant forms}

Let $K$ be a maximal compact subgroup of $G$ such that $T\cap K$ is a
maximal torus of $K$. 
Then $K$ is a connected compact Lie group. 

Consider the subcomplex of  $K$-invariant forms:
$$
d_p\,:\,\AAf^p(G/P,\CC)^K\longto\AAf^{p+1}(G/P,\CC)^K,
$$
and its cohomology $\Ho^*_{DR}(G/P,\CC)^K$.
The identity
$d_{p-1}(\AAf^{p-1}(G/P,\CC)^K)=d_{p-1}(\AAf^{p-1}(G/P,\CC))\cap
\AAf^p(G/P,\CC)^K$
allows  to define a morphism 
$$
\Ho^*_{DR}(G/P,\CC)^K\longto \Ho^*_{DR}(G/P,\CC),
$$
which is an isomorphism.

Since $K$ acts transitively on $G/P$, 
the restriction map to the tangent space at $P/P$ provides a linear isomorphism
\begin{eqnarray}
  \label{eq:137}
  \AAf^p(G/P,\CC)^K\longto\left(\bigwedge^p \Hom_\RR(\lieg/\lp,\CC)\right)^{K\cap L}.
\end{eqnarray}
Let $\lk$ denote the Lie algebra of $K$.
This compact form $\lk$ determines a real structure $\square^*$ on $\lieg$.
More precisely, $\square^*$ is a $\CC$-antilinear endomorphism of
$\lieg$  such that $\lk$ is the set of $\xi\in\lieg$ such that $\xi^*=-\xi$.

Consider now the complex dual $(\lieg/\liel)^*$ of the complex vector
space $\lieg/\liel$.
Since $\liel$ is stable by $\square^*$, $\lieg/\liel$ is endowed with a real
structure still denoted by $\square^*$.
Then $(\lieg/\liel)^*$ is also endowed with a real structure by setting
$\varphi^*=\overline{\varphi(\square^*)}$, for any $\varphi\in (\lieg/\liel)^*$.
Define a morphism
$$
\begin{array}{cccc}
  \theta\;:&\Hom_\RR(\lieg/\lp,\CC)&\longto&(\lieg/\liel)^*\\
&\varphi+\psi&\longmapsto&\varphi+\psi(\square^*),
\end{array}
$$
where $\varphi$ is $\CC$-linear and $\psi$ is $\CC$-antilinear.
One checks that $\theta$ is a $\CC$-linear isomorphism and that it commutes
with the real structure and the actions of $K\cap L$.
Note that $L$ also acts on $(\lieg/\liel)$. Since $K\cap L$ is Zariski
dense in $L$, we have 
\begin{eqnarray}
  \label{eq:135}
  \left(\bigwedge\nolimits^p (\lieg/\liel)^*\right)^{K\cap L}=\left(\bigwedge\nolimits^p (\lieg/\liel)^*\right)^{L}.
\end{eqnarray}
Finally we get an isomorphism
\begin{eqnarray}
  \label{eq:138}
  \AAf^p(G/P,\CC)^K\longto\left(\bigwedge\nolimits^p (\lieg/\liel)^*\right)^{L}.
\end{eqnarray}

\subsubsection{The Lie algebra $\lr$}

Let $\lu$ and $\lu^-$ be the algebras of the unipotent radicals of $P$
and its opposite parabolic subgroup $P^-$. 
Consider the sum
\begin{eqnarray}
  \label{eq:125}
  \lr=\lu^-\oplus\lu
\end{eqnarray}
endowed  with a Lie algebra structure $[\cdot,\cdot]_\lr$ defined
 by keeping the brackets on $\lu^-$ and $\lu$ unchanged and by setting $[\lu^-,\lu]_\lr=0$.
The $L$-equivariant linear isomorphism
$\lr\simeq\lieg/\liel$ and its transpose
$(\lieg/\liel)^*\simeq \lr^*$
induce isomorphisms
\begin{eqnarray}
  \label{eq:139}
   \AAf^\bullet (G/P,\CC)^K\simeq\left(
\Hom_\RR(\lieg/\lp,\CC)
\right)^{L}\simeq
\left(\bigwedge\nolimits^\bullet\lr^*\right)^L\simeq  
\left(
\bigwedge\nolimits^\bullet
(\lu^-)^*\otimes \bigwedge\nolimits^\bullet\lu^*
\right)^L.
\end{eqnarray}
The term $\bigwedge^\bullet
(\lu^-)^*$ corresponds to holomorphic forms on $G/P$ and the term $\bigwedge^\bullet
\lu^*$ corresponds to antiholomorphic forms.

Combining $\square^*$ and the Killing form  $(\cdot,\cdot)$  one obtains an
Hermitian form $\{\cdot,\cdot\}$ on $\lieg$.
Explicitly,
$$
\{\xi,\eta\}=-(\xi,\eta^*),
$$
for any $\xi,\,\eta\in\lieg$.
Denote by $\{\cdot,\cdot\}_\lr$ its restriction to $\lr$.
The decomposition $\lu^-\oplus\lu=\lr$ is orthogonal
for $\{\cdot,\cdot\}_\lr$.
Consider now the graded exterior algebra
$\wedge^\bullet\lr^*=\oplus_{p}\wedge^p\lr^*$ and extend the bilinear form $\{\cdot,\cdot\}_\lr$  on
$\wedge^\bullet\lr^*$.
The decomposition $\lr=\lu^-\oplus\lu$ induces a $\NN^2$-grading 
$\wedge^\bullet\lr^*=\oplus_{(p,q)\in\NN^2}\wedge^{p,q}\lr^*$ by setting
$$
\wedge^{p,q}\lr^*=\wedge^p(\lu^-)^*\otimes
\wedge^q(\lu)^*.
$$
Moreover, the sum $\oplus_{(p,q)\in\NN^2}\wedge^{p,q}\lr^*$ is
orthogonal for $\{\cdot,\cdot\}_\lr$ .

Let $b\in\End(\wedge^\bullet\lr^*)$ be the Chevalley-Eilenberg
coboundary operator of the Lie algebra $\lr$. It has degree $+1$, more precisely
$$
b(\wedge^{p,q}\lr^*)\subset \wedge^{p+1,q}\lr^*\oplus \wedge^{p,q+1}\lr^*.
$$
Set $b=b^{1,0}+b^{0,1}$ according to this decomposition. 
Let  $\partial\in\End(\wedge^\bullet\lr)$ denote the Chevalley-Eilenberg boundary operator.
Using the Killing form, we identify $\lr$ and $\lr^*$ and transport
$\partial$ to an operation 
$\partial^*\in\End(\wedge^\bullet\lr^*)$ of degree $-1$. 
Decompose $\partial^*=\partial^{-1,0}+\partial^{0,-1}$ according to the
decomposition $\wedge^{p-1,q}\lr^*\oplus \wedge^{p,q-1}\lr^*$.
Set 
\begin{eqnarray}
  \label{eq:836}
  \KoL=\partial^*\circ b+b\circ\partial^*.
\end{eqnarray}
 \cite[Proposition~4.2]{Kostant:harmform2} gives an alternative expression of $\KoL$:
\begin{eqnarray}
  \label{eq:837}
   \KoL=\frac 1 2 (\partial^{0,-1}\circ b^{0,1}+b^{0,1}\circ\partial^{0,-1}).
\end{eqnarray}

\subsubsection{The $(X(Z)\times \ZZ)$-filtration of $\wedge^\bullet\lr^*$}

Consider the action of $Z\times \CC^*$ on $\lr$ given by 
\begin{eqnarray}
  \label{eq:214}
  (z,\tau).(\xi^-+\xi)=(\tau z\xi^-,\xi),\qquad
\forall z\in Z,\,\tau\in\CC^*,\,\xi^-\in\lu^-,\,\xi\in\lu.
\end{eqnarray}
Then  the group $Z\times\CC^*$ acts on $\wedge^\bullet\lr^*$ and induces a
$\tilde\Gamma$-decomposition
\begin{eqnarray}
  \label{eq:75}
  \wedge^\bullet\lr^*=\bigoplus_{\tilde\beta\in X(Z)\times\ZZ} (\wedge^\bullet\lr^*)_{\tilde\beta}.
\end{eqnarray}
Note that the weights of $Z$ acting on $(\lu^-)^*$ are the weights of $Z$
acting on $\lu$; in particular, they are positive for the order
$\ggeq$.
As a consequence, we have
\begin{eqnarray}
  \label{eq:236}
(\wedge^\bullet\lr^*)_{\tilde\beta}\neq\{0\}\,\Rightarrow\,\tilde\beta\ggeq
0.
\end{eqnarray}
Set
\begin{eqnarray}
  \label{eq:2399}
  F^{\lleq\tilde\beta}(\wedge^\bullet\lr^*)=\oplus_{\tilde\alpha\lleq\tilde\beta}(\wedge^\bullet\lr^*)_{\tilde\alpha}.
\end{eqnarray}
Consider now, like in the formula~\eqref{eq:139}, the diagonal action
of $L$ on $\lr$:
$$
l.(\xi^-+\xi)=l\xi^-+l\xi,\qquad \forall l\in L,\,\xi^-\in\lu^-,\,\xi\in\lu.
$$
Since $Z$ is contained in the center of $L$; the action~\eqref{eq:214}
of $Z\times \CC^*$ and the above action of $L$ commute. 
In particular the decomposition~\eqref{eq:75} is
$L$-stable.
Set $C=(\wedge^\bullet\lr^*)^L$ and $C_{\tilde\beta}=C\cap
(\wedge^\bullet\lr^*)_{\tilde\beta}$. The $(Z\times\CC^*)$-module $C$
decomposes as follows
 \begin{eqnarray}
   \label{eq:140}
   C:=\bigoplus_{\tilde\beta\in\tilde\Gamma} C_{\tilde\beta} .
 \end{eqnarray}
The associated filtration of $C$ is: 
$$
 F^{\lleq\tilde\beta}\,C
 =F^{\lleq\tilde\beta}(\wedge^\bullet\lr^*)\cap C.
$$

\subsubsection{Action of $L$ on $\wedge^\bullet(\lu^-)^*$}

We now recall results of Kostant in \cite{Kostant:harmform1} on the
action of $T$ on $\wedge^\bullet(\lu^-)^*$.

\begin{theo}\label{th:Ko1}
  \begin{enumerate}
  \item 
The set of vertices of the convex hull of the weights of $T$ acting on 
$\bigwedge^\bullet(\lu^-)^*$ is the set of $\rho-w^{-1}\rho$ where $w\in W^P$.

These weights are multiplicity free and the eigenline corresponding to
$\rho-w^{-1}\rho$ is generated by 
$$
\phi_w:=\phi_{\alpha_1}\wedge\dots\wedge \phi_{\alpha_p},
$$
where $\{\alpha_1,\dots,\alpha_p\}=\Phi^+\cap
w^{-1}\Phi^-$; and $\phi_{\alpha_i}\in(\lu^-)^*$ is a  vector of weight
$\alpha_i$.
\item For any $w\in W^P$, the vector $\phi_w$ is an highest weight
  vector for $L$.
Denote by $M_w$ the simple $L$-module generated by $\phi_w$.
\end{enumerate}
\end{theo}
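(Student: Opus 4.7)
Both parts will be proved by direct analysis of $T$-weights on $\bigwedge^\bullet(\lu^-)^*$ and by verifying that $\phi_w$ is killed by an appropriate nilpotent subalgebra of $\liel$.

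For Part~(1), the $T$-weights of $(\lu^-)^*$ are exactly $\Phi(\lu,T)$, so the $T$-weights of $\bigwedge^p(\lu^-)^*$ are the sums $\mu_I=\sum_{\alpha\in I}\alpha$ over $p$-subsets $I\subset\Phi(\lu,T)$, with weight space spanned by $\bigwedge_{\alpha\in I}\phi_\alpha$. For $w\in W^P$, the defining condition $w\Phi_L^+\subset\Phi^+$ forces $\Phi^+\cap w^{-1}\Phi^-\subset\Phi(\lu,T)$, so $\phi_w$ is a nonzero weight vector. The classical identity $\sum_{\alpha\in\Phi^+\cap w^{-1}\Phi^-}\alpha=\rho-w^{-1}\rho$ delivers the claimed weight. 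A standard convex-geometric argument, selecting a direction $\xi=-w^{-1}\eta$ with $\eta$ dominant regular so that $I_\xi:=\{\alpha\in\Phi(\lu,T):\langle\xi,\alpha\rangle>0\}=\Phi^+\cap w^{-1}\Phi^-$, identifies $\rho-w^{-1}\rho$ as a vertex of the weight polytope with unique realizing subset, giving multiplicity-freeness. Injectivity of $w\mapsto\rho-w^{-1}\rho$ on $W^P$ follows from regularity of $\rho$.

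For Part~(2), I would show that $\phi_w$ is annihilated by $\lu^-\cap\liel$, i.e., is a highest weight vector for $B^-\cap L$. For $\beta\in\Delta_L$, expanding $f_{-\beta}\cdot\phi_w$ by the Leibniz rule gives $\sum_i \pm c_i\,\phi_{\alpha_1}\wedge\cdots\wedge\phi_{\alpha_i-\beta}\wedge\cdots\wedge\phi_{\alpha_p}$; each summand either vanishes because $\alpha_i-\beta\notin\Phi(\lu,T)$, or has $\alpha_i-\beta\in\{\alpha_j\}$ and hence contains a repeated factor. The key combinatorial verification is: if $\alpha_i\in\Phi^+\cap w^{-1}\Phi^-$ and $\alpha_i-\beta\in\Phi(\lu,T)$, then $\alpha_i-\beta\in\Phi^+\cap w^{-1}\Phi^-$. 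Indeed, $\alpha_i-\beta>0$, and $w(\alpha_i-\beta)=w\alpha_i-w\beta$ is a sum of two negative roots (since $w\alpha_i<0$ and $-w\beta<0$, using $w\in W^P$ to ensure $w\beta>0$), hence itself negative. Consequently $\phi_w$ is $(B^-\cap L)$-highest weight with $L$-antidominant weight $\rho-w^{-1}\rho$, so $M_w=L\cdot\phi_w$ is the irreducible $L$-module of that highest weight.

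The main obstacle is the combinatorial closedness property of $\Phi^+\cap w^{-1}\Phi^-$ under subtracting simple roots of $L$, together with the sign bookkeeping: the correct Borel is $B^-\cap L$ rather than $B\cap L$, so $\phi_w$ is killed by the $f_{-\beta}$ rather than by the $e_\beta$. Once these are pinned down, the vertex description in Part~(1) and the irreducibility of $M_w$ in Part~(2) follow formally from the computation above.
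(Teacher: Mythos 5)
The paper does not prove this statement: it is recalled from Kostant \cite{Kostant:harmform1} with no argument, so your proof can only be compared with Kostant's original one. For the parts of the theorem that the paper actually uses, your argument is correct and complete, and it is essentially the standard one. The weight of $\phi_w$ is $\rho-w^{-1}\rho$ by \eqref{eq:236a}; maximizing the generic linear functional $\langle -w^{-1}\eta,\cdot\rangle$ (with $\eta$ dominant regular) over subsets of $\Phi(\lu,T)$ does single out $\Phi^+\cap w^{-1}\Phi^-$ as the unique realizing subset, which gives both the vertex property and multiplicity one; and your combinatorial lemma for part (2) is right: for $\beta$ a simple root of $L$ and $\alpha_i\in\Phi^+\cap w^{-1}\Phi^-$, if $\alpha_i-\beta$ is a root then it lies in $\Phi(\lu,T)$ and $w(\alpha_i-\beta)=w\alpha_i-w\beta$ is a sum of two negative roots (here $w\in W^P$ enters through $w\beta>0$), hence negative, so every term of the Leibniz expansion of $e_{-\beta}\cdot\phi_w$ either vanishes or contains a repeated factor. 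Your identification of the relevant Borel as $B^-\cap L$ is also correct, since $(\rho-w^{-1}\rho,\beta^\vee)=1-(\rho,(w\beta)^\vee)\le 0$, so $\rho-w^{-1}\rho$ is $L$-antidominant. (Minor slip: $\lu^-\cap\liel=\{0\}$; you mean the nilradical of $B^-\cap L$, i.e.\ the span of the negative root spaces of $\liel$.)

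The one point you do not address is the reverse inclusion in the first sentence of (1): you show every $\rho-w^{-1}\rho$ with $w\in W^P$ is a vertex, but not that every vertex of the convex hull arises this way. You could not have closed this gap, because the statement is false as literally written once $P\neq B$. Take $G=\SL_3$ with the parabolic $P$ whose Levi has the single simple root $\alpha_1$, so that $\Phi(\lu,T)=\{\alpha_2,\alpha_1+\alpha_2\}$. The weights of $\bigwedge^\bullet(\lu^-)^*$ are $0$, $\alpha_2$, $\alpha_1+\alpha_2$ and $\alpha_1+2\alpha_2$, the four vertices of a nondegenerate parallelogram; but $W^P$ produces only $0$, $\alpha_2$ and $\alpha_1+2\alpha_2$, since $\{\alpha_1+\alpha_2\}$ is not the inversion set of any $w\in W^P$. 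Kostant's actual extremality criterion is quadratic rather than linear: every weight $\mu$ of $\bigwedge^\bullet(\lu^-)^*$ satisfies $(\mu-\rho,\mu-\rho)\le(\rho,\rho)$, with equality exactly for $\mu=\rho-w^{-1}\rho$, $w\in W^P$. Since the paper only uses the multiplicity-one assertion, the identification of the weight line with $\CC\phi_w$, and the highest-weight property of part (2), your proof establishes everything that is needed downstream; but the convex-hull formulation of (1) should be flagged as an inaccuracy in the statement rather than proved.
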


\subsubsection{A first differential form}

We are now ready to define a first $K$-invariant differential form on $G/P$. Set
\begin{eqnarray}
  \label{eq:143}
  h_w={\rm Id}_w\in M_w\otimes M_w^*\subset \left(\wedge^p(\lu^-)^*\otimes \wedge^p\lu^*\right)^L,
\end{eqnarray}
where $p$ is the length of $w$, that is s the codimension of $X_{w^\vee}$.
Since $Z$ is central in $L$, $Z$ acts with weight
$(\rho-w^{-1}\rho)_{|Z}$ on $M_w$. In particular, 
\begin{eqnarray}
  \label{eq:144}
  h_w\in C_{((\rho-w^{-1}\rho)_{|Z},-p)}.
\end{eqnarray}

If $G/P$ is cominuscule then $h_w$ corresponds by the
isomorphism~\eqref{eq:139} to the wanted closed differential
form representing $\sigma_w$. In general, more work is useful.

\subsubsection{An Hermitian product on $\lr$}

Recall that 
the Hermitian product $\{\cdot,\cdot\}_\lr$ on $\lr$ 
induces Hermitian inner products on $\wedge^\bullet\lr$ and
$\wedge^\bullet\lr^*$ still denoted by $\{\cdot,\cdot\}_\lr$.

\begin{lemma}
 \label{lem:decomporth}
For any nonnegative integer $p$, the
$(X(Z)\times\ZZ)$-decomposition~\eqref{eq:140} is $\{\cdot,\cdot\}_\lr$-orthogonal.  
\end{lemma}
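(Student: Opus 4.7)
The plan is to exhibit a compact subgroup of $Z\times\CC^*$ whose action on $\lr$ preserves the Hermitian form $\{\cdot,\cdot\}_\lr$; once that is in place, orthogonality of the weight decomposition is the standard fact that weight spaces of distinct characters of a compact torus acting by unitary transformations are pairwise orthogonal. Let $T_Z=Z\cap K$ and let $S^1\subset\CC^*$ denote the unit circle, so that $T_Z\times S^1$ is the compact real form of $Z\times\CC^*$. Since $X(Z)\times\ZZ$ agrees with the character group of $T_Z\times S^1$, and the weight spaces coincide, it suffices to prove that $T_Z\times S^1$ acts unitarily on $\lr$ with respect to $\{\cdot,\cdot\}_\lr$.

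First I would record the orthogonality of the summands $\lu^-$ and $\lu$: for $\xi\in\lu^-$ and $\eta\in\lu$, the element $\eta^*$ lies in $\lu^-$ (since the antilinear involution $\square^*$ associated to $\lk$ exchanges opposite root spaces), so $(\xi,\eta^*)=0$ because the Killing form vanishes on $\lu^-\times\lu^-$. Hence $\{\xi,\eta\}_\lr=0$.

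Next I would check invariance. For $z\in T_Z\subset K$, the adjoint action $\Ad(z)$ preserves $\lk$ and $i\lk$, and therefore commutes with $\square^*$; combined with $\Ad$-invariance of the Killing form, this gives $\{\Ad(z)\xi,\Ad(z)\eta\}=\{\xi,\eta\}$ for all $\xi,\eta\in\lieg$, and in particular on $\lr$. For $\tau\in S^1$, the action sends $\xi^-+\xi^+\mapsto\tau\xi^-+\xi^+$; using the orthogonality $\lu^-\perp\lu$ just established, one computes
$$
\{\tau\xi^-+\xi^+,\tau\eta^-+\eta^+\}_\lr=|\tau|^2\{\xi^-,\eta^-\}_\lr+\{\xi^+,\eta^+\}_\lr=\{\xi^-,\eta^-\}_\lr+\{\xi^+,\eta^+\}_\lr,
$$
so the form is preserved. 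The actions of $T_Z$ and $S^1$ commute, so $T_Z\times S^1$ acts unitarily on $\lr$.

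Finally, the induced action on $\lr^*$ and on $\wedge^\bullet\lr^*$ is unitary for the induced Hermitian form, so the weight decomposition of $\wedge^\bullet\lr^*$ under $T_Z\times S^1$ is orthogonal. Since $C_{\tilde\beta}=C\cap(\wedge^\bullet\lr^*)_{\tilde\beta}$, the decomposition \eqref{eq:140} of $C$ inherits the orthogonality. There is no serious obstacle; the only point requiring care is the verification that $\Ad(z)$ commutes with $\square^*$, which rests precisely on the definition of $K$ as a compact real form and on the invariance of $\lk\oplus i\lk$ under $\Ad(K)$.
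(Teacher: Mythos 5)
Your proof is correct, and it shares the paper's first step (orthogonality of $\lu$ and $\lu^-$, deduced from $\lu^*=\lu^-$ and the vanishing of the Killing form on $\lu^-$) but then diverges on the key point, namely the orthogonality of distinct $Z$-weight spaces inside $\lu^-$. The paper argues directly with the Killing form: for $\xi\in\lu^-_\beta$ and $\eta\in\lu^-_{\beta'}$ with $\beta\neq\beta'$, the composition $\Ad(\eta^*)\circ\Ad(\xi)$ lies in $\End(\lieg)_{\beta-\beta'}$, and any endomorphism of nonzero $Z$-weight is traceless, so $\{\xi,\eta\}=-\tr(\Ad(\eta^*)\circ\Ad(\xi))=0$. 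You instead invoke the general principle that weight spaces of a compact torus acting unitarily are pairwise orthogonal, after checking that $T_Z\times S^1$ preserves $\{\cdot,\cdot\}_\lr$. Your route is more conceptual and would apply to any $(K\cap L)$-invariant Hermitian form, at the cost of the extra verification that $\Ad(Z\cap K)$ commutes with $\square^*$ (which is fine, since $\Ad(K)$ preserves $\lk$ and $i\lk$) and that $Z\cap K$ is Zariski dense in $Z$ so that the two weight decompositions agree; the paper's trace identity is more computational but sidesteps the compact form entirely beyond the identity $\lu^*=\lu^-$. One small imprecision to fix in your write-up: the action~\eqref{eq:214} of $z\in Z$ on $\lr$ is \emph{not} $\Ad(z)$ on all of $\lr$ but rather $\Ad(z)$ on $\lu^-$ and the identity on $\lu$; since this operator is block-diagonal for the orthogonal splitting $\lr=\lu^-\oplus\lu$ and each block is unitary, it is still unitary, but you should say so explicitly (your displayed computation does exactly this for the $S^1$-factor, and the same one line handles the $Z$-factor).
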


\begin{proof}
It is sufficient to prove that the decomposition
\begin{eqnarray}
  \label{eq:1388}
  \lr=\lu\oplus\bigoplus_{\alpha\in X(Z)}\lu^-_\alpha
\end{eqnarray}
is $\{\cdot,\cdot\}_\lr$-orthogonal.  
Since $\lu^*=\lu^-$ and the Killing form vanishes on $\lu^-$, $\lu$
and $\lu^-$ are $\{\cdot,\cdot\}_\lr$-orthogonal.  
Let now fix $\xi\in \lu^-_\beta$ and $\eta\in\lu^-_{\beta'}$ with
$\beta\neq\beta'\in X(Z)$.
  Consider the adjoint action of $Z$ on $\lieg$, the induced one on
  $\End(\lieg)$ and the corresponding decomposition
$$
\End(\lieg)=\oplus_{\alpha\in X(Z)}\End(\lieg)_\alpha.
$$
Note that for any $A\in\End(\lieg)_\alpha$ with $\alpha\neq 0$, we have
$\tr(A)=0$. 
The endomorphism $\Ad(\eta^*)$ belongs to $\End(\lieg)_{-\beta'}$.
It follows that $\Ad(\eta^*)\circ\Ad(\xi)$ belongs to
$\End(\lieg)_{\beta-\beta'}$
and that $\{\xi,\eta\}=-(\xi,\eta^*)=-\tr(\Ad(\eta^*)\circ\Ad(\xi))=0$.
\end{proof}

\subsubsection{Operators on $\wedge^\bullet(\lr^*)$}

Recall, from the formula~\eqref{eq:836}, the definition of the operator
$\KoL\in\End(\wedge^\bullet\lr^*)$.

\begin{lemma}
 \label{lem:opLstab}
The operator $\KoL$ stabilizes $C_{(\alpha,p)}$ for any %nonnegative
integer $p$ and  any $\alpha\in X(Z)$.
\end{lemma}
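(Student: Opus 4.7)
The plan is to decompose the claim into two independent statements: $\KoL$ stabilises the $L$-invariants $C \subset \wedge^\bullet\lr^*$, and $\KoL$ preserves each $(Z\times\CC^*)$-weight space $(\wedge^\bullet\lr^*)_{(\alpha,p)}$. Combining both yields the desired stability of $C_{(\alpha,p)} = C \cap (\wedge^\bullet\lr^*)_{(\alpha,p)}$.

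For $L$-equivariance I would argue as follows. Because $L$ normalises $\lu^-$ and $\lu$ and acts by Lie algebra automorphisms on each, and because the bracket $[\cdot,\cdot]_\lr$ is obtained by keeping the brackets inside $\lu^-$ and $\lu$ and setting the crossed bracket to zero, the bracket on $\lr$ is $L$-equivariant. Hence the Chevalley-Eilenberg operators $b$ on $\wedge^\bullet\lr^*$ and $\partial$ on $\wedge^\bullet\lr$ are $L$-equivariant. The Killing form is $G$-invariant, so its restriction to $\lr$ is $L$-invariant; the induced identification $\lr\simeq\lr^*$ is then $L$-equivariant, which makes the transported operator $\partial^*\in\End(\wedge^\bullet\lr^*)$ $L$-equivariant as well. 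Consequently $\KoL = \partial^*\circ b + b\circ \partial^*$ commutes with the $L$-action and preserves $C$.

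For the weight decomposition I would first check preservation of the $Z$-weight. The adjoint action of $Z$ commutes with the bracket on $\lu^-$ and on $\lu$ (as $Z\subset L$ is central), and the Killing form pairing $\lu^-\times \lu\to\CC$ is $Z$-invariant. Hence $b$ and $\partial^*$ are $Z$-equivariant, and so is $\KoL$. For the $\CC^*$-weight I would invoke Kostant's alternative expression~\eqref{eq:837},
$$
\KoL = \tfrac{1}{2}\bigl(\partial^{0,-1}\circ b^{0,1} + b^{0,1}\circ \partial^{0,-1}\bigr).
$$
By definition the $\CC^*$-action is trivial on $\lu$, and both $b^{0,1}$ (of bidegree $(0,+1)$) and $\partial^{0,-1}$ (of bidegree $(0,-1)$) alter only the $\wedge^\bullet\lu^*$ tensor factor. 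Since $\lu^*$ carries trivial $\CC^*$-weight, both operators are of $\CC^*$-weight zero, and so is $\KoL$. Combined with the $Z$-equivariance, this shows $\KoL$ preserves every $(Z\times\CC^*)$-weight space, completing the proof.

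The only step that might need some care is the $L$-equivariance of $\partial^*$, which depends on the fact that the Killing form identification $\lr\simeq\lr^*$ intertwines the $L$-actions; but this is immediate from $L$-invariance of the Killing form. The rest is a matter of bookkeeping of bidegrees, made transparent by the alternative formula~\eqref{eq:837}.
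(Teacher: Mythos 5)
Your proof takes essentially the same route as the paper's: both hinge on Kostant's formula~\eqref{eq:837}, which reduces the lemma to the behaviour of $b^{0,1}$ and $\partial^{0,-1}$ on the graded pieces. The paper simply cites \cite[Proposition~3.4 and formula~3.5.3]{Kostant:harmform2} for the compatibility of these two operators with the decomposition~\eqref{eq:140}, whereas you argue it directly from the structure of $\lr$; you also make the $L$-equivariance of $\KoL$ explicit, which the paper leaves implicit. That is a reasonable trade: your version is self-contained, at the cost of having to handle the grading by hand.

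One step needs to be repaired. The decomposition~\eqref{eq:75} defining $C_{(\alpha,p)}$ comes from the twisted action~\eqref{eq:214}, in which $Z$ acts by the adjoint action on $\lu^-$ but \emph{trivially} on $\lu$; this is not the restriction of the diagonal adjoint action of $L$, and the two actions give genuinely different weight decompositions (for instance $h_w$ is $L$-invariant, hence of adjoint $Z$-weight $0$, yet lies in $C_{((\rho-w^{-1}\rho)_{|Z},-p)}$). Your argument that $\partial^*$ is ``$Z$-equivariant'' because the Killing form is $Z$-invariant only applies to the adjoint action: the Killing form pairs $\lu^-_{-\gamma}$ with $\lu_{\gamma}$ and is \emph{not} invariant under~\eqref{eq:214}, so the identification $\lr\simeq\lr^*$ is not equivariant for the action that actually defines the grading. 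Likewise, knowing that $b^{0,1}$ and $\partial^{0,-1}$ have bidegree $(0,\pm1)$ controls only the $\CC^*$-component of the weight, not the $X(Z)$-component. The fix is already contained in your own key observation: because $[\lu^-,\lu]_\lr=0$, the operators $b$ and $\partial$ split as tensor products over $\wedge^\bullet(\lu^-)^*\otimes\wedge^\bullet\lu^*$ and $\wedge^\bullet\lu^-\otimes\wedge^\bullet\lu$ respectively, and the Killing form exchanges the two factors; hence $b^{0,1}$ and $\partial^{0,-1}$ are literally of the form $\Id\otimes A$ on $\wedge^\bullet(\lu^-)^*\otimes\wedge^\bullet\lu^*$. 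Since the action~\eqref{eq:214} is trivial on the $\wedge^\bullet\lu^*$ factor and entirely concentrated on $\wedge^\bullet(\lu^-)^*$, such operators preserve \emph{both} components of the $(X(Z)\times\ZZ)$-weight simultaneously. Replacing your separate $Z$-equivariance step by this remark, and keeping your $L$-equivariance argument, the proof is complete.
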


\begin{proof}
  By \cite[Proposition~3.4]{Kostant:harmform2}, $b^{0,1}(C_{(\alpha,p)})$
is contained in $C_{(\alpha,p+1)}$.
By \cite[formula~3.5.3]{Kostant:harmform2}, $\partial^{0,-1}(C_{(\alpha,p+1)})$
is contained in $C_{(\alpha,p)}$.
We deduce that $(\partial^{0,-1}\circ b^{0,1})(C_{(\alpha,p)})$ is
contained in $C_{(\alpha,p)}$. Similarly, 
 $(b^{0,1}\circ\partial^{0,-1}) (C_{(\alpha,p)})$ is
contained in $C_{(\alpha,p)}$. 
We conclude using the formula~\eqref{eq:837}.
\end{proof}

\bigskip
Note that $\KoL$ is an Hermitian operator. In particular, we have a 
$\{\cdot,\cdot\}_\lr$-orthogonal decomposition
$\Ker\KoL\oplus\Im\KoL=\wedge^\bullet\lr^*$.
Consider the quasiinverse $\KoL_0$ of $\KoL$:
$\KoL_0$  is the Hermitian
endomorphism of $ \wedge^\bullet\lr^*$ such that $\Ker\KoL_0=\Ker\KoL$
and ${\KoL_0}_{|\Im\KoL}=(\Li_{|\Im\KoL})^{-1}$.

\bigskip
Let $\pi\,:\,\lr\longto\End(\wedge^\bullet\lr^*)$ be induced by the coadjoint
action.
Let $f_i$ be eigenvectors in $\lu^-$ for the action of $Z$ that form a
basis of $\lu^-$. 
Let $g_j$ be the basis of $\lu$
defined by the conditions $(f_i,g_j)=\delta_i^j$ (the Kronecker symbol).
Set
\begin{eqnarray}
  \label{eq:555}
  \KoE:=2\sum_i\pi(g_i)\circ\pi(f_i)\in\End(\wedge^\bullet\lr^*).
\end{eqnarray}
Kostant defined a third operator
\begin{eqnarray}
  \label{eq:436}
  \KoR:=-\Li_0\circ\KoE \in\End(\wedge^\bullet\lr^*),
\end{eqnarray}
he proved that $\KoR$ is nilpotent and he defined 
\begin{eqnarray}
  \label{eq:205}
  s_w=(\Id-\KoR)^{-1}(h_w)=h_w+\KoR(h_w)+\KoR^2(h_w)+\cdots.
\end{eqnarray}
Here, we  need the  following improvement  of \cite[Lemma
4.6]{Kostant:harmform2} that proves the nilpotency of $\KoR$.

\begin{lemma}\label{lem:Rfilt}
For any integer $p$ and $\alpha\in X(Z)$, we have 
$$\KoR(C_{(\alpha,p)})\subset\bigoplus_{\beta\slleq\alpha}C_{(\beta,p)}.$$  
\end{lemma}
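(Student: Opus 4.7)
The plan is to track how each factor of $\KoR = -\KoL_0 \circ \KoE$ shifts the $(X(Z)\times\ZZ)$-bigrading of $\wedge^\bullet \lr^*$.

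First, I analyze the weight shifts of the operators $\pi(f_i)$ and $\pi(g_j)$. Under the action~\eqref{eq:214}, the torus $Z$ acts via $\Ad$ on $\lu^-$ but trivially on $\lu$, while $\CC^*$ scales $\lu^-$ and fixes $\lu$. Thus $f_i \in (\lu^-)_{-\gamma_i}$ has weight $(-\gamma_i, 1)$ with $\gamma_i \sggeq 0$, whereas $g_i \in \lu_{\gamma_i}$ has weight $(0,0)$. Because $Z$ acts by Lie-algebra automorphisms of $(\lr,[\cdot,\cdot]_\lr)$, the coadjoint action $\pi$ commutes with $Z$; hence $\pi(f_i)$ shifts $Z$-weights by $-\gamma_i$, while $\pi(g_i)$ does not shift $Z$-weights at all (since $g_i$ has $Z$-weight $0$ for the action~\eqref{eq:214}). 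Moreover, $\pi(f_i)$ kills $\lu^*$ and stabilises $(\lu^-)^*$, and symmetrically $\pi(g_i)$ kills $(\lu^-)^*$ and stabilises $\lu^*$; acting as derivations on a monomial $\varphi_1\wedge\cdots\wedge\varphi_k$ each operator replaces an individual factor by one of the same type, preserving the number of $(\lu^-)^*$- and $\lu^*$-factors, and hence preserving the $\CC^*$-weight.

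Combining, each summand $\pi(g_i)\circ\pi(f_i)$ of $\KoE$ shifts the $(X(Z)\times\ZZ)$-weight by $(-\gamma_i, 0)$, so
$$\KoE(C_{(\alpha, p)}) \subset \bigoplus_i C_{(\alpha - \gamma_i,\, p)}.$$
To conclude, I use Lemma~\ref{lem:opLstab} which says $\KoL$ preserves each $C_{(\alpha,p)}$. Since $\KoL$ is Hermitian and the decomposition $C = \bigoplus C_{(\alpha,p)}$ is $\{\cdot,\cdot\}_\lr$-orthogonal by Lemma~\ref{lem:decomporth}, the subspaces $\Ker\KoL$ and $\Im\KoL$ split as direct sums of their intersections with the $C_{(\alpha,p)}$; therefore the quasi-inverse $\KoL_0$ also preserves each $C_{(\alpha,p)}$. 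Composing, $\KoR = -\KoL_0\circ\KoE$ sends $C_{(\alpha,p)}$ into $\bigoplus_i C_{(\alpha - \gamma_i,\, p)}$, and since each $\gamma_i \sggeq 0$ this lies inside $\bigoplus_{\beta\slleq\alpha} C_{(\beta,p)}$, which is the claim.

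The main subtlety lies in the weight analysis of $\pi(g_j)$: one must not confuse the standard $L$-weight of $g_j\in\lu$ (namely $+\gamma_j$) with its weight $0$ for the twisted $Z$-action of~\eqref{eq:214}. Only the latter, which is the grading defining the filtration, makes $\pi(g_i)\circ\pi(f_i)$ a strictly $Z$-weight-lowering operator and thus yields the strict inequality $\beta\slleq\alpha$.
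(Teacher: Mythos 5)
Your proof is correct and follows the same route as the paper's: factor $\KoR=-\KoL_0\circ\KoE$, use Lemmas~\ref{lem:opLstab} and~\ref{lem:decomporth} to conclude that the quasi-inverse $\KoL_0$ preserves each $C_{(\alpha,p)}$, and check that $\KoE$ strictly lowers the $X(Z)$-weight while preserving the degree $p$ by analysing how $\pi(f_i)$ and $\pi(g_i)$ act on $(\lu^-)^*$ and $\lu^*$. Your bookkeeping for the twisted action~\eqref{eq:214} is in fact slightly more careful than the paper's write-up (which records the shift of $\pi(f_i)$ as $\beta\mapsto\beta-\gamma$ where, with $\gamma\lleq 0$ the weight of $f_i$, it should be $\beta+\gamma$), but the argument is the same.
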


\begin{proof}
  Lemma~\ref{lem:opLstab} asserts that $\KoL$ stabilizes the
  $(X(Z)\times\ZZ)$-decomposition of $C$.
Since this decomposition is $\{\cdot,\cdot\}_\lr$-orthogonal by
Lemma~\ref{lem:decomporth}, this implies that $\KoL_0$ also stabilizes the
  $\tilde\Gamma$-decomposition of $C$.
By the formula~\eqref{eq:436}, it remains to prove that
$\KoE(C_{\alpha,p)})\subset\bigoplus_{\beta\slleq\alpha}C_{(\beta,p)}.$

But each $\pi(f_i)$ vanishes on $\wedge^\bullet\lu^*$ and each
$\pi(g_i)$ respects the degree. It follows that
$\KoE(C_{(\alpha,p)})\subset\bigoplus_{\beta\in X(Z)}C_{(\beta,p)}.$
But $\pi(g_i)$ vanishes on $\wedge^\bullet(\lu^-)^*$.
Moreover, $f_i$ belongs to $\lu^-$ and has a weight $\gamma\lleq 0$.
It follows that $\pi(f_i)(\wedge^\bullet(\lu^-)^*_{\beta})\subset
\wedge^\bullet (\lu^-)^*_{\beta-\gamma}$.
The claim follows.
\end{proof}

\subsubsection{Kostant's theorem}

\begin{theo}(\cite{Kostant:harmform2} )
\label{th:Kostant}
Let $w\in W^P$.
  The element $s_{w}\in\bigwedge^\bullet \lr^*$ defined by
  \eqref{eq:205} is $L$-invariant. 
In particular, $ s_{w}$
  corresponds by the
  isomorphism~\eqref{eq:139}
to a $K$-invariant  form $\omega_{w}$ on $G/P$.

Then the form $\omega_w$ is closed and its class $[\omega_w]$ in
$\Ho^*_{DR}(G/P,\CC)$ is equal to the  Schubert class $\sigma_w^\vee$, up to a
positive real scalar.
\end{theo}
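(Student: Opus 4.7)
The theorem has three parts to establish in order: $(i)$ $L$-invariance of $s_w$, $(ii)$ closedness of the resulting $K$-invariant form $\omega_w$, and $(iii)$ identification of $[\omega_w]$ with a positive multiple of $\sigma_{w^\vee}$.

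For $(i)$, the element $h_w=\Id_{M_w}\in M_w\otimes M_w^*$ is manifestly $L$-invariant by Theorem~\ref{th:Ko1}(ii), and $s_w=(\Id-\KoR)^{-1}h_w$ will inherit this property as soon as $\KoR=-\KoL_0\circ\KoE$ is shown to be $L$-equivariant. The operator $\KoE$ from \eqref{eq:555} is the coadjoint action of the $L$-invariant tensor $2\sum_i g_i\otimes f_i\in\lu\otimes\lu^-$ (invariance holds because $(f_i),(g_j)$ are Killing-dual bases and $L$ preserves the Killing form); and $\KoL=\partial^* b+b\partial^*$ is $L$-equivariant since $b,\partial$ are the Chevalley--Eilenberg operators for the $L$-module $\lr$ and $\partial^*$ uses the $L$-invariant identification $\lr\simeq\lr^*$. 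Consequently $\KoL_0$ and $\KoR$ are $L$-equivariant, and \eqref{eq:139} produces the $K$-invariant form $\omega_w$.

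For $(ii)$, under \eqref{eq:139} the de Rham differential on $K$-invariant forms corresponds (up to sign) to the Chevalley--Eilenberg differential $b$ on $L$-invariants, so it suffices to prove $b(s_w)=0$. My plan is to first establish that $s_w$ is $\KoL$-\emph{harmonic}, i.e.\ $\KoL(s_w)=0$: the leading term $h_w$ is harmonic because $b^{1,0}(h_w)=0$ (since $M_w\subset\Ker b_{\lu^-}$ by Kostant's classical theorem on $\lu^-$-cohomology) and $\partial^{-1,0}(h_w)=0$ by degree on the $\lu^-$-factor, while the iterated corrections $\KoR^k(h_w)$ are tailored precisely so that the remaining $b^{0,1}$-contributions telescope through the alternative expression \eqref{eq:837}. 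Once $\KoL(s_w)=0$ is known, Hermitianity of $\KoL$ gives $0=\{s_w,\KoL s_w\}_\lr=\{bs_w,bs_w\}_\lr+\{\partial^*s_w,\partial^*s_w\}_\lr$, forcing $b(s_w)=0$, so $\omega_w$ is closed (and even harmonic, which gives a uniqueness bonus).

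For $(iii)$, the class $[\omega_w]$ has degree $2l(w)$ and expands as $\sum_{u\in W^P,\,l(u)=l(w)}c_u\,\sigma_{u^\vee}$. By \eqref{eq:144} and Lemma~\ref{lem:Rfilt}, $s_w$ lies in a single $(X(Z)\times\ZZ)$-filtration level of $C$ determined by the weight of $h_w$; transported through \eqref{eq:139} and combined with Proposition~\ref{prop:filtSchub} (and Lemma~\ref{lem:calcul}, \eqref{eq:239}), this restricts the admissible $u$ to those whose associated filtration bound is comparable with the weight of $h_w$. The convex-hull description in Theorem~\ref{th:Ko1}(i) then singles out $u=w$, so $[\omega_w]=c_w\,\sigma_{w^\vee}$. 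For positivity of $c_w$, I would compute $\int_{X_w}\omega_w=c_w$: by Lemma~\ref{lem:intF} only the leading term $h_w$ contributes, and its value on the canonical basis $(\phi_{\alpha_i})$ of $T_{P/P}w^{-1}X_w$ is a positive real multiple of $\{\phi_w,\phi_w\}_\lr>0$.

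The main obstacle is $(ii)$: parts $(i)$ and $(iii)$ are essentially formal given the preceding machinery of the paper, but proving the harmonicity $\KoL(s_w)=0$ requires the delicate interplay between $\KoE$, $\KoL_0$, and the $(\lu^-,\lu)$-bigrading that forms the technical core of Kostant's construction.
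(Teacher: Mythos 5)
This theorem is not proved in the paper at all: it is imported verbatim from Kostant (\cite{Kostant:harmform2}), and the surrounding section only recalls his construction. So there is no proof of the author's to compare against; what can be assessed is whether your outline would actually reconstitute Kostant's argument. It would not, for the following reasons.

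First, and most seriously, in part $(ii)$ you identify the de Rham differential on $K$-invariant forms with the Chevalley--Eilenberg coboundary $b$ of $\lr$. But $\lr$ carries the \emph{degenerate} bracket with $[\lu^-,\lu]_\lr=0$, whereas the de Rham differential corresponds to the relative Chevalley--Eilenberg differential of the pair $(\lieg,\liel)$, which involves the true bracket $[\lu^-,\lu]$ projected back to $\lu^-\oplus\lu$. These two operators genuinely differ on mixed arguments, even after restricting to $L$-invariants. The entire content of Kostant's theorem is that the elements $s_w$, built to be harmonic for the operators of the degenerate structure, are nevertheless closed (indeed harmonic) for the honest de Rham differential; by conflating the two differentials you have assumed away exactly the point that needs proof. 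Relatedly, the claim $\KoL(s_w)=0$ is asserted via an unverified ``telescoping'', which you yourself flag as the technical core; as written, $(ii)$ proves nothing.

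Second, part $(iii)$ is circular within the logic of this paper: you invoke Proposition~\ref{prop:filtSchub}, but that proposition is deduced from Lemma~\ref{lem:Schubfil}, which is itself proved using Theorem~\ref{th:Kostant}. Moreover, even a non-circular version of your weight argument (via Lemma~\ref{lem:Rfilt} and the $X(Z)\times\ZZ$-filtration) only constrains $[\omega_w]$ to a filtration level, and distinct Schubert classes of the same degree can share a level; singling out $u=w$ requires the full $T$-weight analysis behind Theorem~\ref{th:Ko1}, including pairing $\omega_w$ against every $u^{-1}X_u$ with $l(u)=l(w)$, which you only gesture at. Part $(i)$ ($L$-equivariance of $\KoE$, $\KoL$, hence $\KoR$, hence $L$-invariance of $s_w$) is essentially correct and is the only part that is genuinely routine.
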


\subsubsection{Application}
\label{sec:proofs}

We can now prove Lemma~\ref{lem:Schubfil}. 

\bigskip
\begin{proof}[of  Lemma~\ref{lem:Schubfil}] 
By Theorem~\ref{th:Kostant}, it is sufficient to prove that $\omega_w$
belongs to $F^{\lleq \tilde\rho(w)}\AAf^{l(w)}(G/P,\CC)$.
But $\omega_w$ and the filtration are $K$-invariant on the
$K$-homogeneous space $G/P$.
Hence it is sufficient to prove that $s_w$ belongs to $F^{\lleq \tilde\rho(w)}C$.
This is a consequence of  the property~\eqref{eq:144} and  Lemma~\ref{lem:Rfilt}.
\end{proof}

\bigskip
\begin{proof}[of  Proposition~\ref{prop:filtSchub}] 
Let $\tilde\beta\in \tilde\Gamma$ and let $p$ be an integer such that
$0\leq p\leq\dim(G/P)$.
 Consider $F^{\lleq\tilde\beta}\Ho^{2p}(G/P,\CC)$.
On one hand,
 Lemma~\ref{lem:dimF} shows that the dimension of
 $F^{\lleq\tilde\beta}\Ho^p(G/P,\CC)$ is not more than the cardinality
 of the set 
$$
W(\tilde\beta,p)=\{w\in W^P\,:\, \tilde\rho(G/P)-\tilde\rho(X_w) \lleq \tilde\beta\mbox{ and }l(w)=n-p\}.
$$
On the other hand, Lemma~\ref{lem:Schubfil} shows that
$F^{\lleq\tilde\beta}\Ho^p(G/P,\CC)$ contains the classes $\sigma_{w^\vee}$ for
$w$ in the set
$$
W'(\tilde\beta,p)=\{w\in W^P\,:\, \tilde\rho(X_w)\lleq \tilde\beta\mbox{ and }l(w)=p\}.
$$
But Lemma~\ref{lem:calcul} implies that the Poincar\'e duality
$w\mapsto w^\vee$ induces a bijection between 
$W(\tilde\beta,p)$ and $W'(\tilde\beta,p)$.
Since the family $(\sigma_{w^\vee})_{w\in W'(\tilde\beta,p)}$ is
linearly independant the proposition follows.
\end{proof}

\bigskip
\begin{proof}[of Corollary~\ref{cor:PoincareGP}]
The corollary is a direct consequence of Lemma~\ref{lem:PDdim} and the
above proof of Proposition~\ref{prop:filtSchub}.
\end{proof}

\subsection{Proof of Theorem~\ref{th:iso}}
\label{sec:proofth}

Let $u$ and $v$ be elements of $W^P$. Consider the following product in the
ordinary cohomology ring $\Ho^*(G/P,\CC)$
$$
\sigma_u.\sigma_v=\sum_{w\in W^P}c_{u\,v}^w\sigma_w.
$$
By Lemma~\ref{lem:Schubfil} and Lemma~\ref{lem:calcul}, $\sigma_u$ 
belongs to $F^{\lleq
  \tilde\rho(G/P)-\tilde\rho(X_u)}\Ho^{l(w_0w_o^P)-l(u)}(G/P,\CC)$.
Similarly, $\sigma_v$ belongs to
 $F^{\lleq
  \tilde\rho(G/P)-\tilde\rho(X_v)}\Ho^{l(w_0w_o^P)-l(v)}(G/P,\CC)$.
Now Proposition~\ref{prop:filcohom} shows that
$$
\sigma_u.\sigma_v\in F^{\lleq
  2\tilde\rho(G/P)-\tilde\rho(X_u)-\tilde\rho(X_v)}\Ho^{2l(w_0w_o^P)-l(u)-l(v)}(G/P,\CC).
$$
By Proposition~\ref{prop:filtSchub}, this means that
\begin{eqnarray}
  \label{eq:786}
  c_{u\,v}^w\neq 0&\Rightarrow&
\tilde\rho(G/P)-\tilde\rho(X_w)\,\lleq\,
2\tilde\rho(G/P)-\tilde\rho(X_u)-\tilde\rho(X_v),\\
&\Rightarrow&\tilde\rho(X_u)+\tilde\rho(X_v)\,\lleq\,\tilde\rho(X_w)+\tilde\rho(G/P).
\end{eqnarray}
Proposition~\ref{prop:filtSchub} implies also that
\begin{eqnarray}
  \label{eq:787}
  \overline{\sigma_u}\,.\, \overline{\sigma_v}=\sum_{\substack{w\in W^P\\
\tilde\rho(X_u)+\tilde\rho(X_v)=\tilde\rho(X_w)+\tilde\rho(G/P)}}
    % \begin{array}{c}
%       w\in W^P\\
% \tilde\rho(X_u)+\tilde\rho(X_v)=\tilde\rho(X_w)+\tilde\rho(G/P)
%     \end{array}}
c_{u\,v}^w \overline{\sigma_w}.
\end{eqnarray}
On the other hand, Proposition~\ref{Prop_Dim} shows that
\begin{eqnarray}
  \label{eq:788}
  \sigma_u\bkprod \sigma_v=\sum_{\substack{
      w\in W^P\\
gd(X_u)+gd(X_v)=gd(X_w)+gd(G/P)}
}c_{u\,v}^w \sigma_w.
\end{eqnarray}
Comparing the identities~\eqref{eq:787} and \eqref{eq:788}, it remains to
prove, under the assumption $c_{u\,v}^w\neq 0$, that the equivalence
$$
\tilde\rho(X_u)+\tilde\rho(X_v)=\tilde\rho(X_w)+\tilde\rho(G/P)
\iff
gd(X_u)+gd(X_v)=gd(X_w)+gd(G/P)
$$
holds.

The implication ``$\Leftarrow$'' is an immediate consequence of the
definition~\eqref{eq:30} of $\rho(\cdot)$.
Conversely,  assume that
$\tilde\rho(X_u)+\tilde\rho(X_v)=\tilde\rho(X_w)+\tilde\rho(G/P)$.
Since $c_{u\,v}^w\neq 0$,  the Belkale-Kumar numerical criterion of
Levi-movability (see \cite[Theorem~15]{BK}) implies that
$\sigma_u\bkprod \sigma_v\bkprod\sigma_{w^\vee}=c_{u\,v}^w[pt]$.
In particular, Proposition~\ref{Prop_Dim} implies that
$gd(X_u)+gd(X_v)=gd(X_w)+gd(G/P)$.
The theorem is proved.

\subsection{The Belkale-Kumar fundamental class}

Recall from Section~\ref{sec:defclassY} the definition of the
Belkale-Kumar fundamental class of any subvariety of $G/P$.
We can now give  a simple characterization of this class using the
notion of $X(Z)$-dimension.

\begin{prop}
  \label{prop:BKclass}
Let $Y$ be an irreducible subvariety of $G/P$ of dimension $d$.
Consider the expansion of its  fundamental class in the Schubert basis
$$[Y]=\sum_{w\in W^P}d_w\sigma_w.$$
Then the expansion of its  $\bkprod$-fundamental class in the Schubert basis
is
$$[Y]_\bkprod=\sum_{\substack{  
w\in W^P\\
\rho(X_w)=\rho(Y)}}
d_w\sigma_w^\bkprod.$$
\end{prop}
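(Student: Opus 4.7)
By construction (Section~\ref{sec:defclassY}), $[Y]_\bkprod$ lies in $\G^{\tilde\rho(G/P)-\tilde\rho(Y)}\Ho^{2(n-d)}(G/P,\CC)$, where $n=\dim G/P$. My first step would be to describe this graded piece explicitly. Combining Proposition~\ref{prop:filtSchub} with Lemma~\ref{lem:calcul}, it admits as basis the images $\overline{\sigma_w}$ of the Schubert classes $\sigma_w$ for $w\in W^P$ satisfying $l(w)=d$ and $\rho(X_w)=\rho(Y)$. Accordingly, I would write
$$[Y]_\bkprod=\sum_{\substack{w\in W^P\\ l(w)=d,\ \rho(X_w)=\rho(Y)}}e_w\,\overline{\sigma_w}$$
and reduce the proposition to the two claims $e_w=d_w$ and $\overline{\sigma_w}=\sigma_w^\bkprod$.

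The coefficients $e_w$ would be determined by pairing $[Y]_\bkprod$ against the Poincar\'e-dual basis. By Proposition~\ref{prop:filtSchub} and Lemma~\ref{lem:calcul} again, $\G^{\tilde\rho(Y)}\Ho^{2d}(G/P,\CC)$ has basis $\{\overline{\sigma_{w^\vee}}\}$ indexed by the same set of $w$'s, and Corollary~\ref{cor:PoincareGP} asserts that the induced pairing~\eqref{eq:746} between these two graded pieces is non-degenerate. Since Schubert classes already realize the filtration (Lemma~\ref{lem:Schubfil}), the graded pairing is computed on these representatives by the ordinary Poincar\'e pairing, so that $\int_{G/P}\overline{\sigma_u}\wedge\overline{\sigma_{w^\vee}}=\delta_{u,w}$. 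Applying the defining relation~\eqref{eq:33} of $[Y]_\bkprod$ with $[\omega]=\overline{\sigma_{w^\vee}}$ then gives
$$d_w=\int_{G/P}[Y]\cdot\sigma_{w^\vee}=\int_Y\sigma_{w^\vee}=\int_{G/P}[Y]_\bkprod\wedge\overline{\sigma_{w^\vee}}=e_w,$$
the first equality being ordinary Poincar\'e duality on $G/P$.

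Specializing the resulting formula to $Y=X_u$ (where $[X_u]=\sigma_u$ has $d_w=\delta_{u,w}$) immediately yields $[X_u]_\bkprod=\overline{\sigma_u}$, i.e.\ $\sigma_u^\bkprod=\overline{\sigma_u}$ by~\eqref{eq:34}, and substituting this back gives the announced formula. The substantive content is all packed into Proposition~\ref{prop:filtSchub} (which rests on Kostant's harmonic forms) and Corollary~\ref{cor:PoincareGP}; the remainder is essentially bookkeeping. The one delicate point is to confirm that the primal and dual Schubert bases of the two graded pieces are indexed by exactly the same set of $w$'s, which is where Lemma~\ref{lem:calcul} (in the form $\rho(G/P)-\rho(X_w)=\rho(X_{w^\vee})$) plays its essential role.
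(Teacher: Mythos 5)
Your argument is correct and is essentially the paper's own proof read in the other direction: the paper verifies the defining relation~\eqref{eq:33} for the candidate class $\sum_{\rho(X_w)=\rho(Y)}d_w\sigma_w^\bkprod$ by testing against the classes $\sigma_{u^\vee}$ spanning $\G^{\tilde\rho(Y)}\Ho^{2d}(G/P,\CC)$ and invoking ordinary Poincar\'e duality of Schubert classes, exactly the ingredients (Proposition~\ref{prop:filtSchub}, Lemma~\ref{lem:calcul}, the graded pairing) you use to extract the coefficients $e_w$. Your explicit specialization to $Y=X_u$ to obtain $\sigma_u^\bkprod=\overline{\sigma_u}$ is a nice touch that makes the paper's subsequent compatibility lemma transparent, but it is the same route.
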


\begin{proof}
It remains to prove that for any $[\omega]\in
\G^{\tilde\rho(Y)}\Ho^{2d}(G/P,\CC)$, 
$$
\int_Y\omega=[\omega]_{\bkprod}\bkprod(\sum_{\substack{
    w\in W^P\\
\rho(X_w)=\rho(Y)}}
d_w\sigma_w).
$$
Since the two members of the equality depend linearly on $[\omega]$, it
is sufficient to prove it for $[\omega]=\sigma_{u^\vee}$, for any
$u\in W^P$ such that $\rho(X_u)=\rho(Y)$ and $l(u)=d$.
By ordinary Poincar\'e duality, this case is equivalent to the following
equality
$$
\sigma_{u^\vee}.(\sum_{\substack{
    w\in W^P\\
l(w)=n-d}}
d_w\sigma_w)=\sigma_{u^\vee}\bkprod(\sum_{\substack{
    w\in W^P\\
\rho(X_w)=\rho(Y)}}
d_w\sigma_w).
$$
Since the only product $\sigma_{u^\vee}.\sigma_w$ that is nonzero in
the above formula is $\sigma_{u^\vee}.\sigma_u$, the proposition follows.
\end{proof}

\section{Intersecting Schubert varieties}

Given  $u,v\in W^P$ such that $v^\vee\bo u$, we construct in this
section a familly of varieties containing both the Richardson variety
$X_u\cap w_0 X_v$ (up to translation) and the variety $\Sigma_u^v$.
We prove (see Proposition~\ref{prop:rec}) that Conjecture~\ref{conj}
holds for $\Sigma_u^v$ if and only if it holds for all these varieties.
To end this section, we show that Conjecture~\ref{conj} is equivalent
to a formula using the Kostant harmonic forms that looks like a Fubini
formula.
 
\subsection{Products on $\Ho^*(G/P,\CC)$ and Bruhat orders}

The Bruhat order on $W^P$ is defined by
$$
u\bo v\ \iff\ X_u\subset X_v.
$$
This order is generated by $u<v$ if $l(v)=l(u)+1$ and $v=s_\alpha u$ for some positive root $\alpha$.
The weak Bruhat order on $W^P$ is  generated by the
relation
$
u\wbo v
$ if $l(v)=l(u)+1$ and $v=s_\alpha u$ for some simple root $\alpha$.
The relation between these two orders is
\begin{eqnarray}
  \label{eq:437}
u\wbo v\ \Rightarrow u\bo v.
\end{eqnarray}

A useful characterization of the weak Bruhat order is given by the
following result (see \cite{Bourb}).

\begin{lemma}
  \label{lem:wboPhi}
Let $u$ and $v$ in $W^P$.
Then $u\wbo v$ if and only if $\Phi(u)$ is contained in $\Phi(v)$.
\end{lemma}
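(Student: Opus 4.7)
My plan is to prove both implications by induction, with the forward direction reducing to a covering-relation computation and the converse relying on a bijection between $\Phi(v) \setminus \Phi(u)$ and the inversion set of $vu^{-1}$.

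For $u \wbo v \Rightarrow \Phi(u) \subset \Phi(v)$, I would induct on the length of a chain of simple-reflection coverings realising $u \wbo v$, reducing to a single covering $v = s_\alpha u$ with $\alpha$ simple and $l(v) = l(u) + 1$. A direct computation from the definition $\Phi(w) = \Phi^- \cap w^{-1}\Phi^+$ gives
$$
v^{-1}\Phi^+ = u^{-1}s_\alpha\Phi^+ = (u^{-1}\Phi^+ \setminus \{u^{-1}\alpha\}) \cup \{-u^{-1}\alpha\}.
$$
Since $l(s_\alpha u) > l(u)$ forces $u^{-1}\alpha \in \Phi^+$, intersecting with $\Phi^-$ yields the identity $\Phi(v) = \Phi(u) \sqcup \{-u^{-1}\alpha\}$, and the containment follows.

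For the converse, assuming $\Phi(u) \subset \Phi(v)$, I would induct on $|\Phi(v) \setminus \Phi(u)|$. The crux is to produce a simple root $\alpha$ such that $s_\alpha u \in W^P$, $l(s_\alpha u) = l(u) + 1$, and $\Phi(s_\alpha u) \subset \Phi(v)$. Setting $w := vu^{-1}$ and $N(w) := \{\delta \in \Phi^+ : w\delta \in \Phi^-\}$, I would first verify that $\gamma \mapsto -u\gamma$ is a bijection $\Phi(v) \setminus \Phi(u) \to N(w)$; both directions rely on $\Phi(u) \subset \Phi(v)$ to rule out the wrong sign on the preimage. The classical fact that any $w \neq e$ sends some simple root to a negative root then yields a simple $\alpha \in N(w)$, and the corresponding $\gamma := -u^{-1}\alpha \in \Phi(v) \setminus \Phi(u)$ automatically satisfies $u^{-1}\alpha > 0$. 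The covering computation from the previous paragraph then provides $\Phi(s_\alpha u) = \Phi(u) \sqcup \{\gamma\} \subsetneq \Phi(v)$, and induction gives $s_\alpha u \wbo v$, hence $u \wbo v$.

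The step I expect to be the main obstacle is verifying that the produced $s_\alpha u$ still lies in $W^P$. For this I would check $(s_\alpha u)\beta > 0$ for every simple root $\beta$ of $L$: since $u \in W^P$ one has $u\beta \in \Phi^+$, and $s_\alpha(u\beta)$ is positive unless $u\beta = \alpha$; but $u\beta = \alpha$ would force $\beta = -\gamma$ and hence $v\beta = -v\gamma < 0$, contradicting $v \in W^P$. This requires juggling the hypotheses $u, v \in W^P$ with the sign conditions defining $\Phi(\cdot)$, and is the subtlety specific to working in $W^P$ rather than merely $W$.
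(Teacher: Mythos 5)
Your proof is correct. Note that the paper does not actually prove Lemma~\ref{lem:wboPhi}: it simply cites Bourbaki, so there is no argument in the text to compare against. What you have written is the standard characterization of the (left) weak order by containment of inversion sets, correctly adapted to the conventions here ($\Phi(w)=\Phi^-\cap w^{-1}\Phi^+$ and coverings $v=s_\alpha u$): the covering identity $\Phi(s_\alpha u)=\Phi(u)\sqcup\{-u^{-1}\alpha\}$ when $u^{-1}\alpha>0$ handles the forward direction, and the bijection $\gamma\mapsto -u\gamma$ from $\Phi(v)\setminus\Phi(u)$ onto $N(vu^{-1})$ produces the simple root needed for the converse. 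Two small remarks: only the surjectivity half of that bijection (showing $\gamma=-u^{-1}\delta$ lies in $\Phi^-$) actually uses the hypothesis $\Phi(u)\subset\Phi(v)$, not both directions as you say; and the base case of the converse induction should be stated ($\Phi(u)=\Phi(v)$ forces $u=v$ since the inversion set determines the element). The genuinely $W^P$-specific point — that $s_\alpha u$ remains a minimal coset representative because $u\beta=\alpha$ would force $v\beta<0$ for a simple root $\beta$ of $L$ — is exactly the right check and you carry it out correctly.
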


The following relation  between the cup product and the Bruhat
order is well known
$$
\sigma_u.\sigma_v\neq 0\ \iff\ v^\vee\bo u.
$$
We have the following relation between the Belkale-Kumar product and the weak Bruhat
order.
\begin{lemma}
  \label{lem:BKprodwbo}Let $u$ and $v$ in $W^P$.
If 
$\sigma_u\bkprod\sigma_v\neq 0$  then $v^\vee\wbo u$.
\end{lemma}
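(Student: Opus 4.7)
The plan is to unpack the non-vanishing $\sigma_u\bkprod\sigma_v\neq 0$ via Levi-movability and then translate the resulting transversality into the set-theoretic containment $\Phi(v^\vee)\subset\Phi(u)$, which by Lemma~\ref{lem:wboPhi} is equivalent to $v^\vee\wbo u$.

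By definition \eqref{eq:defctilde2}, the hypothesis produces some $w\in W^P$ with $\tilde c_{uv}^w\neq 0$: in particular $c_{uv}^w\neq 0$ and, for each isotypical component $V_i$ of $T_{P/P}G/P$, the dimensional identity
\[
 \dim T_u^i+\dim T_v^i=\dim V_i+\dim T_w^i
\]
holds. The per-component complementarity $\dim T_v^i+\dim T_{v^\vee}^i=\dim V_i$ (itself a consequence of Levi-movability applied to the triple $(\sigma_v,\sigma_{v^\vee},\sigma_{w_0w_0^P})$) rewrites this as $\dim T_u^i=\dim T_{v^\vee}^i+\dim T_w^i$, giving $\dim T_u^i\geq\dim T_{v^\vee}^i$ for every $i$. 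Applying Lemma~\ref{lem:fondBK} with $L$ in place of $P$, the Levi-movability of $(\sigma_u,\sigma_v,\sigma_{w^\vee})$ furnishes $l_u,l_v,l\in L$ such that $l_uT_u\cap l_vT_v\cap lT_{w^\vee}=\{0\}$; since each $V_i$ is $L$-stable (being the $\alpha_i$-weight space of $Z\subset L$), this restricts to transversality inside each $V_i$ and, together with the dimension identity, forces $l_uT_u^i+l_vT_v^i=V_i$ with $lT_{w^\vee}^i$ a complement of $l_uT_u^i\cap l_vT_v^i$.

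The hard step is to promote this $L$-transversality in each $V_i$ into the inversion-set containment $\Phi(v^\vee)\cap\Phi_i\subset\Phi(u)\cap\Phi_i$. Within the irreducible $L$-module $V_i$, the $T$-fixed subspace $T_u^i=\bigoplus_{\alpha\in\Phi(u)\cap\Phi_i}\lieg_\alpha$ is an extremal point whose $L$-orbit closure is a Schubert-like cell in a partial flag variety of $L$ attached to $V_i$. One expects the combined transversality constraints on $T_u^i$, $T_v^i$, $T_{w^\vee}^i$ (holding jointly for $l_u,l_v,l\in L$, not merely in each $V_i$ individually) to force the Schubert cell of $T_u^i$ to dominate that of $T_{v^\vee}^i$ in the appropriate weak Bruhat order on this flag variety, which would yield the required set-theoretic containment. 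Summing over $i$ then gives $\Phi(v^\vee)\subset\Phi(u)$ and hence $v^\vee\wbo u$ via Lemma~\ref{lem:wboPhi}. The chief obstacle is precisely this Schubert-cell analysis within each $V_i$, amounting to establishing a weak-Bruhat criterion for products in the relevant $L$-homogeneous space and ruling out any $L$-configuration that produces only the dimensional inequality without the stronger containment.
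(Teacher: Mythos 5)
There is a genuine gap, and you acknowledge it yourself: the entire content of the lemma is concentrated in the step you label ``the hard step'' and leave as something ``one expects.'' The dimensional inequality $\dim T_u^i\geq\dim T_{v^\vee}^i$ that you do establish is far from the containment $\Phi(v^\vee)\cap\Phi_i\subset\Phi(u)\cap\Phi_i$: two $T$-stable subspaces of $V_i$ of comparable dimensions need not have nested weight sets, and the proposed ``Schubert-cell analysis within each $V_i$'' is never carried out. As written, the argument proves only a numerical inequality, not the weak Bruhat relation.

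The paper's proof closes exactly this gap with a short and different device. From Levi-movability one gets $l_1T_u\cap l_2T_v\cap l_3T_w=\{0\}$ for $(l_1,l_2,l_3)$ in a nonempty open subset of $L^3$, and the dimension count forces $l_1T_u+l_2T_v=T_{P/P}G/P$ for $(l_1,l_2)$ generic. The key point is then the Bruhat decomposition of $(L/B_L)^2$ under the diagonal $L$-action: the orbit of the pair $(B_L,w_0^PB_L)$ is open, so a generic pair $(l_1,l_2)$ can be written as $(lb_1,\,lw_0^Pb_2)$ with $l\in L$ and $b_1,b_2\in B_L$. Since $T_u$ and $T_v$ are $B_L$-stable and $T_{P/P}G/P$ is $L$-stable, this yields the exact equality $T_u+w_0^PT_v=T_{P/P}G/P$ for the \emph{specific} translates $e$ and $w_0^P$. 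Comparing $T$-weights gives $\Phi(u)\cup w_0^P\Phi(v)=\Phi(\lieg/\lp,T)$, and since $\Phi(v^\vee)=\Phi(\lieg/\lp,T)\setminus w_0^P\Phi(v)$ one concludes $\Phi(v^\vee)\subset\Phi(u)$, i.e.\ $v^\vee\wbo u$ by Lemma~\ref{lem:wboPhi}. If you want to salvage your approach, this big-cell argument is precisely the missing ingredient that converts generic $L$-transversality into a statement about the distinguished $T$-stable representatives $T_u$ and $w_0^PT_v$; no per-component Schubert-calculus in the $L$-modules $V_i$ is needed.
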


\begin{proof}
  By assumption, there exists  $w\in W^P$ such that  $(u,v,w)$ is
  Levi-movable and $l(u)+l(v)+l(w)=l(w_0w_0^P)$.
Hence, for $(l_1,\,l_2,\,l_3)$ in a nonempty open subset of $L^3$:
%the intersection  
$$
l_1T_u\cap l_2T_v\cap l_3 T_w=\{0\}.
$$
In particular, $l_1T_u+ l_2T_v=T_{P/P}G/P$.
Since $\Delta L.(B,w_0^PB_L)$ is open in $L^2$, there exist $l\in L$,
$b_1,b_2\in B_L$ such that
  $lb_1T_u+ lw_0^Pb_2T_v=T_{P/P}G/P$.
But $T_u$ and $T_v$ are $B_L$-stable and $T_{P/P}G/P$ is $L$-stable, hence   
$$
T_u+ w_0^PT_v=T_{P/P}G/P.
$$
It follows that $\Phi(u)\cup w_0^P\Phi(v)=\Phi(G/P)$.
But
$ \Phi({v^\vee})=\Phi(G/P)-w_0^P\Phi(v)$. Hence
$\Phi({v^\vee})\subset\Phi(u)$
and  $v^\vee\wbo u$.
\end{proof}

\bigskip
\begin{remark}
  The converse of the implication of Lemma~\ref{lem:BKprodwbo} does
  not hold.
Indeed consider $\SL_3 (\CC)$ with its usual maximal torus and Borel
subgroup $B$. Denote the two simple reflections of $W$ by $s_1$ and $s_2$.  
%Consider the two divisors $X_{s_2s_1}$ and $X_{s_1s_2}$ of $\SL_3(\CC)/B$.
Then $\sigma_{s_1s_2}\bkprod\sigma_{s_2s_1}=0$ while $(s_2s_1)^\vee=s_2\wbo s_1s_2$.
\end{remark}

\subsection{Like Richardson's varieties}

Let $u,v\in W^P$. The Richardson variety $X_u^v$ is defined by
$$
X_u^v=X_u\cap w_0 X_v.
$$ 
It is well known that $X_u^v$ is irreducible, normal and satisfies
$[X_u^v]=\sigma_u.\sigma_v$.
In particular, $X_u^v$ is empty if and only if $v^\vee\bo u$.

Assume now that $v^\vee\wbo u$.
Fix $y\in W^P$ such that  $v^\vee\wbo y\wbo u$.
Consider the intersection
\begin{eqnarray}
  \label{eq:5}
  I_u^v(y):=y^{-1}X_u\cap w_0^Pv^{-1}.X_v.
\end{eqnarray}
The first example  $I_u^v(v^\vee)=(v^\vee)^{-1}X_u^v$ is just a
translated  Richardson variety.
 
By the relation~\eqref{eq:437}, the point $yP/P$ belongs to $X_u$. It
follows that $P/P$ belongs to $y^{-1}X_u$. Since $vP/P$ belongs to
$X_v$, $P/P$ belongs to $w_0^Pv^{-1}.X_v$. It follows that
\begin{eqnarray}
  \label{eq:436b}
P/P\in   I_u^v(y).
\end{eqnarray}

The following lemma shows that the variety $ I_u^v(y)$ contains a
translated Richardson variety.

\begin{lemma}
  \label{lem:RiinclusI}
Let $u$, $v$, and $y$ in $W^P$ such that $v^\vee\wbo y\wbo u$. Then 
$I_u^{y^\vee}(y)$ is contained in $I_u^v(y)$.
\end{lemma}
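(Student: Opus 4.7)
The plan is to peel off the common first factor and the outer multiplier $w_0^P$, reducing the containment $I_u^{y^\vee}(y)\subseteq I_u^v(y)$ to the single inclusion
\[(y^\vee)^{-1}X_{y^\vee}\;\subseteq\; v^{-1}X_v.\qquad(\ast)\]
To prove $(\ast)$ I plan to invoke two auxiliary facts: a geometric one about centered Schubert varieties, and a combinatorial one about the Poincar\'e dual.

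The geometric fact I want is: for $a,b\in W^P$ with $a\wbo b$, one has $a^{-1}X_a\subseteq b^{-1}X_b$. The idea is that the open dense cell of $a^{-1}X_a$ through $P/P$ is $a^{-1}BaP/P=U_a\cdot P/P$, where $U_a$ denotes the unipotent subgroup generated by the root subgroups $U_\gamma$ for $\gamma\in\Phi(a)$. This really is a subgroup because $\Phi(a)=\Phi^-\cap a^{-1}\Phi^+$ is closed in $\Phi$ under those sums that remain roots, being an intersection of two such closed subsets. By Lemma~\ref{lem:wboPhi}, the hypothesis $a\wbo b$ gives $\Phi(a)\subseteq\Phi(b)$, so $U_a\subseteq U_b$; hence $U_a\cdot P/P\subseteq U_b\cdot P/P$, and the closures of these cells give the inclusion $a^{-1}X_a\subseteq b^{-1}X_b$.

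The combinatorial fact I want is that the Poincar\'e dual map $w\mapsto w^\vee$ on $W^P$ reverses the weak Bruhat order. Here I plan to use the identity
\[\Phi(w^\vee)=\Phi(G/P)\setminus w_0^P\Phi(w),\]
already encountered in the proof of Lemma~\ref{lem:BKprodwbo}. Since $w_0^P\in W_L$ is an involution that acts on the set $\Phi(G/P)$, the condition $\Phi(v^\vee)\subseteq\Phi(y)$, written as $\Phi(G/P)\setminus w_0^P\Phi(v)\subseteq\Phi(y)$, is equivalent, by taking complements inside $\Phi(G/P)$ and applying $w_0^P$, to $\Phi(G/P)\setminus w_0^P\Phi(y)\subseteq\Phi(v)$, i.e.\ to $\Phi(y^\vee)\subseteq\Phi(v)$. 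Lemma~\ref{lem:wboPhi} then reads this as $y^\vee\wbo v$.

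Putting the pieces together, from the assumption $v^\vee\wbo y$ the combinatorial fact delivers $y^\vee\wbo v$, and then the geometric fact applied to $a=y^\vee$, $b=v$ yields $(\ast)$. I expect the main obstacle to be the combinatorial part: while the displayed identity for $\Phi(w^\vee)$ is standard, one has to check carefully that $w_0^P$ really stabilizes $\Phi(G/P)$ as a set so that taking complements in $\Phi(G/P)$ commutes with applying $w_0^P$. The geometric step, by contrast, rests only on the standard description of the open cell of a centered Schubert variety in terms of its inversion set.
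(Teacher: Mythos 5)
Your proof is correct and follows essentially the same route as the paper's: both reduce to the single inclusion $(y^\vee)^{-1}X_{y^\vee}\subseteq v^{-1}X_v$, pass from $v^\vee\wbo y$ to $y^\vee\wbo v$, and conclude via the inversion-set characterization of the weak order (Lemma~\ref{lem:wboPhi}) applied to the open cells through $P/P$. You merely spell out two steps the paper leaves implicit, namely the order-reversal of $w\mapsto w^\vee$ and the closure-of-cells argument, and both of these verifications are sound.
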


\begin{proof}
It remains to prove that 
$y^{-1}X_u\cap w_0^P(y^\vee)^{-1}.X_{y^\vee}$ is contained in
$y^{-1}X_u\cap w_0^Pv^{-1}.X_v$.
It is sufficient to prove that
$(y^\vee)^{-1}.X_{y^\vee}$ is contained in $v^{-1}.X_v$.
But $(y^\vee)^{-1}.X_{y^\vee}=\overline{((y^\vee)^{-1}By^\vee).P/P}$
and $v^{-1}.X_v =\overline{(v^{-1}Bv).P/P}$.
Hence it is sufficient to prove that $\Phi(\lieg/\lp,T)\cap
(y^\vee)^{-1}\Phi^+$
is contained in $\Phi(\lieg/\lp,T)\cap
v^{-1}\Phi^+$. 
But $v^\vee\wbo y$ and hence $y^\vee\wbo v$. 
Lemma~\ref{lem:wboPhi} allows to conclude.
\end{proof}

\bigskip
The fact that $X_u$ and $X_v$ are $B$-stable implies that the group
$H_u^v(y):=y^{-1}By\cap w_0^Pv^{-1}Bvw_0^P$ acts on $I_u^v(y)$.
Set $y'=y(v^\vee)^{-1}$ in such a way that $y=y'v^\vee$.
Note that
 $ yw_0^Pv^{-1}=y'w_0 $
and that
\begin{eqnarray}
  \label{eq:6v}
H_u^v(y)=(v^\vee)^{-1}(y'^{-1}By'\cap   B^-)v^\vee.
\end{eqnarray}
The group $H_u^v(y)$  is a connected subgroup of $G$,
containing  $T$
 and  acting on $I_u^v(y)$. 
Consider now the group $U(y')= y'^{-1}Uy'\cap   U^-$.

Let $\overset{\circ}{G/P}=B^-P/P$ denote the open $T$-stable affine cell containing
$P/P$. Set $\overset{\circ}I{}_u^v(y)=\overset{\circ}{G/P}\cap
I_u^v(y)$;
it is an open $T$-stable affine neighborhood of $P/P$ in $I_u^v(y)$.  
The following statement describes the geometry of this neighborhood.

\begin{theo}
\label{th:structloc}
Let $u$, $v$, and $y$ in $W^P$ such that $v^\vee\wbo y\wbo u$.
Then the following morphism
$$
\begin{array}{cccc}
  \Psi\;:&U(y')\times \overset{\circ}I_u^{y^\vee}{}(y)&\longto&\overset{\circ}I{}_u^v(y)\\
&(u,x)&\longmapsto&(v^\vee)^{-1} u v^\vee.x
\end{array}
$$
is an isomorphism.
\end{theo}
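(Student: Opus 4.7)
The plan is to realize $\Psi$ as the free action of the unipotent radical $K := (v^\vee)^{-1}U(y')v^\vee$ of $H_u^v(y)$ (by~\eqref{eq:6v}) on the transverse slice $\overset{\circ}{I}_u^{y^\vee}(y) \subset \overset{\circ}{I}_u^v(y)$, and then invoke a dimension count plus a root-theoretic analysis to conclude bijectivity.

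To verify well-definedness, first note that $K \subset H_u^v(y)$ preserves $I_u^v(y)$, and $\overset{\circ}{I}_u^{y^\vee}(y) \subset I_u^v(y)$ by Lemma~\ref{lem:RiinclusI}; it remains to check that the image stays in $\overset{\circ}{G/P} = B^- P/P$. From $v^\vee \wbo y$ and $y = y' v^\vee$ with $l(y) = l(y') + l(v^\vee)$ (by Lemma~\ref{lem:wboPhi}), one obtains $\Phi(K) \subset \Phi(y)$; since $y \in W^P$ sends $\Phi^+(L)$ into $\Phi^+$, this further gives $\Phi(y) \subset \Phi(\lu^-_P)$, hence $K \subset R_u(P^-) \subset B^-$. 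Therefore $\Psi(u,x) \in B^- \cdot \overset{\circ}{G/P} = \overset{\circ}{G/P}$, as required.

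For injectivity, observe that $R_u(P^-)$ acts simply transitively on $\overset{\circ}{G/P}$, so its subgroup $K$ acts freely; restricting to the $K$-stable subvariety $\overset{\circ}{I}_u^v(y)$, freeness forces $\Psi$ to be injective. The dimension count $\dim U(y') + \dim \overset{\circ}{I}_u^{y^\vee}(y) = l(y') + (l(u) - l(y)) = l(u) + l(v) - l(w_0 w_0^P) = \dim \overset{\circ}{I}_u^v(y)$ shows $\Psi$ is a bijection onto an open subvariety of $\overset{\circ}{I}_u^v(y)$ of full dimension.

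The main obstacle is surjectivity. My approach would be to use the variety isomorphism $\overset{\circ}{G/P} \cong R_u(P^-) \cong K \times K^c$, via a total order on the roots of $R_u(P^-)$ listing $\Phi(K)$ first (with $K^c$ the corresponding product of root subgroups). Each $z \in \overset{\circ}{I}_u^v(y)$ then decomposes uniquely as $z = k \cdot z'$ with $k \in K$ and $z' \in K^c \cdot P/P$, and $z' \in \overset{\circ}{I}_u^v(y)$ by $K$-stability. It remains to show $z' \in y^{-1}\overline{B^- y P/P}$, which characterizes $\overset{\circ}{I}_u^{y^\vee}(y)$ inside $\overset{\circ}{I}_u^v(y)$. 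The root directions of $K^c$ not already tangent to $y^{-1}\overline{B^- y P/P}$ at $P/P$ are precisely $\Phi(y) \setminus \Phi(K)$, and the technical heart of the proof is showing these extra components of $z'$ must vanish, forced by the constraint $z \in w_0^P v^{-1} X_v$. This reduces to a weight-space analysis of the tangent space of $w_0^P v^{-1} X_v$ at $P/P$.
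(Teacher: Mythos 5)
Your overall strategy --- factoring the unipotent group $K=(v^\vee)^{-1}U(y')v^\vee\subset H_u^v(y)$ out of $\overset{\circ}I{}_u^v(y)$ with slice $\overset{\circ}I{}_u^{y^\vee}(y)$ --- is the same as the paper's. But the proposal stops exactly at the step you yourself call the technical heart, and the mechanism you propose there would not suffice. What is actually needed is a global fact, not an infinitesimal one: for $z\in W^P$, the intersection $\overline{z^{-1}B^-z\cdot P/P}\cap\overset{\circ}{G/P}$ equals the orbit $z^{-1}B^-z\cdot P/P$ itself, i.e.\ the product of the root subgroups $U_\alpha$, $\alpha\in\Phi(\lieg/\lp,T)\cap z^{-1}\Phi^-$, applied to $P/P$ --- a ``linear subspace'' of the cell $\overset{\circ}{G/P}\simeq\lieg/\lp$. (This holds because the orbit of a unipotent group on an affine variety is closed, while $\overline{z^{-1}B^-zP/P}\cap\overset{\circ}{G/P}$ is irreducible of the same dimension.) Applied to $w_0^Pv^{-1}X_v=\overline{(v^\vee)^{-1}B^-v^\vee\cdot P/P}$, it says that a point of $\overset{\circ}{G/P}$ lies in $w_0^Pv^{-1}X_v$ if and only if its coordinates along the $\Phi(v^\vee)$-directions vanish; that is what kills the extra components of your $z'$. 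A ``weight-space analysis of the tangent space of $w_0^Pv^{-1}X_v$ at $P/P$'' only controls the variety to first order at a single point and cannot force coordinates of points $z'$ away from $P/P$ to vanish. In the paper this global statement is packaged as the injectivity of the map $\Theta:U(v^\vee)\times[w_0^Pv^{-1}X_v\cap\overset{\circ}{G/P}]\to\overset{\circ}{G/P}$, itself an instance of the isomorphism~\eqref{eq:829} coming from the $T$-module structure of the big cell. Until you prove this, surjectivity is open and the theorem is not established.

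Two smaller points. First, your injectivity argument is incomplete: freeness of the $K$-action gives trivial stabilizers, but injectivity of $\Psi$ also requires that the slice meet each $K$-orbit at most once, i.e.\ that $kx_1=x_2$ with $x_1,x_2\in\overset{\circ}I{}_u^{y^\vee}(y)$ and $k\in K$ force $k=e$; this follows from the same global fact (with $z=y$), which yields $\overset{\circ}I{}_u^{y^\vee}(y)\subset K^c\cdot P/P$ and lets you conclude via the uniqueness in $\overset{\circ}{G/P}\simeq K\times K^c$ --- but it does not follow from freeness alone. Second, the dimension count takes $\dim\overset{\circ}I{}_u^v(y)=l(u)+l(v)-l(w_0w_0^P)$ for granted, whereas properness of this intersection is part of what the theorem establishes; a priori one only has the inequality $\geq$ for each component through $P/P$, so the count cannot be used to infer that the image is dense.
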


\begin{proof}
The weights of $T$ acting on the Lie algebra of the group
$U(y)=U^-\cap y^{-1}Uy$ are $\Phi(y)=\Phi^-\cap y^{-1}\Phi^+$.
The weights of $T$ acting on the tangent space at the point $P/P$
of the variety $w_0^P(y^\vee)^{-1}X_{y^\vee}$ are $\Phi(\lieg/\lp,T)\cap
y^{-1}\Phi^-$.
But   $\overset{\circ}{G/P}$ is isomorphic as a $T$-variety to the affine
  space $\lieg/\lp$.
It follows that the map 
\begin{eqnarray}
  \label{eq:829}
\begin{array}{ccc}
  U(y)\times [w_0^P(y^\vee)^{-1}X_{y^\vee}\cap
  \overset{\circ}{G/P}]&\longto&\overset{\circ}{G/P}\\
(u,x)&\longmapsto&ux
\end{array}
\end{eqnarray}
is an isomorphism.
The variety $y^{-1}X_u$ is stable by $y^{-1}By$ and so by $U(y)$.
It follows that  the map 
$$
\begin{array}{ccc}
  U(y)\times [w_0^P(y^\vee)^{-1}X_{y^\vee}\cap
  \overset{\circ}{G/P}\cap y^{-1}X_u]&\longto&\overset{\circ}{G/P}\cap
  y^{-1}X_u\\
(u,x)&\longmapsto&ux
\end{array}
$$
is an isomorphism.

Since $v^\vee\wbo y$ and $y=y'v^\vee$, the set $\Phi(y)$ is the
disjoint union of $\Phi(v^\vee)$ and $(v^\vee).\Phi(y')$ (see for
example \cite{Bou}).
Then the map
$$
\begin{array}{ccc}
  U(y')\times U(v^\vee)&\longto&U(y)\\
(u',u)&\longmapsto&(v^\vee)^{-1}u'v^\vee u
\end{array}
$$
is an isomorphism.
Note that in the above expression we have fixed
representative (still denoted by $v^\vee$) of $v^\vee$ in the normalizer of the torus $T$.
Composing these isomorphisms gives the following one:
$$
\begin{array}{ccc}
  U(y')\times U(v^\vee)\times 
  \overset{\circ}I{}_u^{y^\vee}(y)&\longto&\overset{\circ}{G/P}\cap
  y^{-1}X_u\\
(u',u,x)&\longmapsto&(v^\vee)^{-1}u'v^\vee ux.
\end{array}
$$
Since $\Phi(y')$ is contained in $(v^\vee)^{-1}\Phi^-$, and
$w_0^Pv^{-1}X_v=\overline{((v^\vee)^{-1}B^-v^\vee).P/P}$, the variety 
$w_0^Pv^{-1}X_v$ is stable under the action of $U(y')$.
Hence 
$$
\begin{array}{ccc}
  U(y')\times \left[ (U(v^\vee)\cdot 
  \overset{\circ}{I}{}_u^{y^\vee}(y))\cap w_0^Pv^{-1}X_v \right]&\longto&\overset{\circ}I{}_u^v(y)\\
(u',x)&\longmapsto&(v^\vee)^{-1}u'v^\vee x
\end{array}
$$
is an isomorphism.
It remains to prove that 
$$(U(v^\vee)\cdot 
  \overset{\circ}{I}{}_u^{y^\vee}(y))\cap w_0^Pv^{-1}X_v=
\overset{\circ}I{}_u^{y^\vee}(y).
$$
Let $u\in U(v^\vee)$ and $x\in \overset{\circ}{I}{}_u^{y^\vee}(y)$
such that $ux$ belongs to
$w_0^Pv^{-1}X_v$. It is sufficient to prove that $u=e$.
Replacing $y^\vee$ by $v$ in the morphism~\eqref{eq:829}, we get an
isomorphism
$$
\begin{array}{cccc}
  \Theta\;:&U(v^\vee)\times [w_0^Pv^{-1}X_{v}\cap
  \overset{\circ}{G/P}]&\longto&\overset{\circ}{G/P}\\
&(u',x')&\longmapsto&u'x'.
\end{array}
$$
One can easily 
check that $x$ belongs to $w_0^Pv^{-1}X_{v}\cap
  \overset{\circ}{G/P}$ and that $\Theta(u,x)=\Theta(e,ux)$.
Now, the
  injectivity of $\Theta$ implies that $u=e$.
\end{proof}

\bigskip
An important consequence of Theorem~\ref{th:structloc} for our purpose
is the following statement.

\begin{coro}
\label{lem:Ilocirred}
  The variety $I_u^v(y)$ is normal  at the point $P/P$. 
In particular, there exists an unique irreducible component
$\Sigma_u^v(y)$  of $I_u^v(y)$ which contains $P/P$.
\end{coro}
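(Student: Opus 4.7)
The plan is to reduce the statement to a known property of Richardson varieties via the product structure provided by Theorem~\ref{th:structloc}. Indeed, that theorem gives an algebraic isomorphism
$$\Psi\,:\,U(y')\times \overset{\circ}I{}_u^{y^\vee}(y)\longrightarrow \overset{\circ}I{}_u^v(y),$$
and, since $U(y')$ is a unipotent group (in particular isomorphic to an affine space, hence smooth and irreducible), normality of $\overset{\circ}I{}_u^v(y)$ at the point corresponding to $(e,P/P)$ is equivalent to normality of $\overset{\circ}I{}_u^{y^\vee}(y)$ at $P/P$. Thus it is enough to treat the ``base case'' $v=y^\vee$.

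In that case one computes directly that $v^\vee=y$, so $y'=e$, and
$$w_0^P(y^\vee)^{-1}=w_0^P\cdot w_0^Py^{-1}w_0=y^{-1}w_0,$$
hence
$$I_u^{y^\vee}(y)=y^{-1}X_u\cap y^{-1}w_0X_{y^\vee}=y^{-1}\bigl(X_u\cap w_0X_{y^\vee}\bigr)=y^{-1}X_u^{y^\vee}.$$
Therefore $I_u^{y^\vee}(y)$ is a translate of the Richardson variety $X_u^{y^\vee}$. By the well-known result (Brion, Richardson, Knutson--Woo--Yong, etc.) that Richardson varieties are irreducible and normal, $I_u^{y^\vee}(y)$ is itself irreducible and normal, so in particular normal at $P/P$.

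Pulling back through $\Psi$, we conclude that $\overset{\circ}I{}_u^v(y)$ is normal at $P/P$. Since a Noetherian normal local ring is a domain, the local ring $\mathcal{O}_{I_u^v(y),P/P}$ is a domain, which means that exactly one irreducible component of $I_u^v(y)$ passes through $P/P$. Defining $\Sigma_u^v(y)$ to be that component gives the corollary.

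The only non-trivial ingredient is the normality of Richardson varieties in the base case; the reduction from the general case to that base case is essentially formal given Theorem~\ref{th:structloc}. If one wished to avoid quoting normality of Richardson varieties, the main obstacle would be reproving it here, but since this is a well-established fact it seems cleanest simply to invoke it.
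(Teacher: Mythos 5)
Your proof is correct and follows exactly the paper's own (much terser) argument: combine the product decomposition of Theorem~\ref{th:structloc} with the known irreducibility and normality of Richardson varieties, noting that $I_u^{y^\vee}(y)$ is a translated Richardson variety. The only difference is that you spell out the details (the identification $I_u^{y^\vee}(y)=y^{-1}X_u^{y^\vee}$ and the passage from normality of the local ring to uniqueness of the component through $P/P$), which the paper leaves implicit.
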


\begin{proof}
  The corollary follows from the theorem and the fact that the
  Richardson varieties are irreducible and normal (see \cite{KWY:Rich} for a
  short proof).
\end{proof}

\bigskip
If $y=v^\vee$ then Theorem~\ref{th:structloc} is trivial. In the
opposite situation when $y=u$ it implies the following result.

\begin{coro}
\label{cor:quasihom}
Let $u$ and $v$ in $W^P$ such that $v^\vee \wbo u$.
  The orbit $H_u^v(u).P/P$ is open in $I_u^v(u)$.
In other words, $\Sigma_u^v(u)$ is the closure of $H_u^v(u).P/P$.
\end{coro}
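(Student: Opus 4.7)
The plan is to apply Theorem~\ref{th:structloc} with the special choice $y=u$, for which the hypothesis $v^\vee\wbo y\wbo u$ reduces to the given assumption $v^\vee\wbo u$. The first step is to observe that the factor $\overset{\circ}{I}_u^{y^\vee}(y) = \overset{\circ}{I}_u^{u^\vee}(u)$ collapses to the single point $\{P/P\}$. Indeed, as noted in the text right after \eqref{eq:436b}, $I_u^v(v^\vee)=(v^\vee)^{-1}X_u^v$; applied with $v=u^\vee$ (so $v^\vee=u$), this gives $I_u^{u^\vee}(u)=u^{-1}(X_u\cap w_0 X_{u^\vee})$. The Richardson variety $X_u\cap w_0 X_{u^\vee}$ has expected dimension $l(u)+l(u^\vee)-\dim G/P=0$ and is nonempty since it contains the $T$-fixed point $uP/P$, so it reduces to $\{uP/P\}$ and $I_u^{u^\vee}(u)=\{P/P\}$.

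Setting $y'=u(v^\vee)^{-1}$, Theorem~\ref{th:structloc} then provides an isomorphism
$$\Psi\colon U(y')\longrightarrow \overset{\circ}{I}_u^v(u),\qquad u'\longmapsto (v^\vee)^{-1}u' v^\vee\cdot P/P.$$
In particular, $\overset{\circ}{I}_u^v(u)=(v^\vee)^{-1}U(y')v^\vee\cdot P/P$, which is an irreducible open subset of $I_u^v(u)$ (being the intersection of $I_u^v(u)$ with the open affine cell $\overset{\circ}{G/P}$).

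The next step is to identify this set with the $H_u^v(u)$-orbit of $P/P$. By \eqref{eq:6v} one has $H_u^v(u)=(v^\vee)^{-1}(y'^{-1}By'\cap B^-)v^\vee$. The connected group $y'^{-1}By'\cap B^-$ contains the maximal torus $T$ and has unipotent radical $U(y')$, hence factorizes as $T\cdot U(y')$. Since $v^\vee$ normalizes $T$ and $T$ fixes $P/P$, we may absorb the torus factor and obtain $H_u^v(u)\cdot P/P=(v^\vee)^{-1}U(y')v^\vee\cdot P/P=\overset{\circ}{I}_u^v(u)$. This establishes that $H_u^v(u)\cdot P/P$ is open in $I_u^v(u)$.

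Finally, since $H_u^v(u)$ is connected, its orbit through $P/P$ is irreducible, so its closure is an irreducible closed subvariety of $I_u^v(u)$ containing $P/P$. By Corollary~\ref{lem:Ilocirred}, the unique irreducible component of $I_u^v(u)$ containing $P/P$ is $\Sigma_u^v(u)$; thus $\overline{H_u^v(u)\cdot P/P}=\Sigma_u^v(u)$, which proves the corollary. The only slightly delicate point is the reduction $I_u^{u^\vee}(u)=\{P/P\}$, but this follows at once from the zero-dimensional Richardson variety having a visible $T$-fixed point and from the irreducibility/normality of Richardson varieties invoked in the proof of Corollary~\ref{lem:Ilocirred}; all the remaining work is a direct specialization of Theorem~\ref{th:structloc}.
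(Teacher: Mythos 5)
Your argument is correct and takes essentially the same route as the paper: specialize Theorem~\ref{th:structloc} to $y=u$, observe that the translated Richardson variety $I_u^{u^\vee}(u)$ reduces to the single point $P/P$, and conclude. The paper states this reduction and says the corollary ``follows immediately''; your write-up merely makes explicit the identification of $\overset{\circ}{I}{}_u^v(u)$ with the $H_u^v(u)$-orbit and the appeal to Corollary~\ref{lem:Ilocirred}.
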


\begin{proof}
If $y=u$ then the translated Richardson variety $I_u^{y^\vee}(y)=I_u^{u^\vee}(u)$ is
reduced to the point $P/P$. The corollary follows immediately. 
\end{proof}

\subsection{A conjecture}

Here comes our main conjecture.

\begin{conj}
\label{conj}  
Let $u,v\in W^P$ such that $v^\vee\wbo u$. 
Then
$$
[\Sigma_u^v(u)]_\bkprod=\sigma_u^\bkprod\bkprod\sigma_v^\bkprod.
$$
\end{conj}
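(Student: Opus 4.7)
My strategy is to verify Conjecture~\ref{conj} by expanding both sides in the graded Schubert basis of $\G\Ho^*(G/P,\CC)$ and comparing coefficients on the $\rho$-maximal stratum. The first task is the degree matching $\tilde\rho(\Sigma_u^v(u)) = \tilde\rho(X_u) + \tilde\rho(X_v) - \tilde\rho(G/P)$. By Corollary~\ref{cor:quasihom}, $P/P$ lies in the open $H_u^v(u)$-orbit of $\Sigma_u^v(u)$, hence is $X(Z)$-regular there. The identity $w_0^P v^{-1} = (v^\vee)^{-1} w_0$ shows that $T_{P/P}(w_0^P v^{-1}X_v)$ has $T$-weights $\Phi^- \setminus \Phi(v^\vee)$; intersecting with the weights $\Phi(u)$ of $T_{P/P}(u^{-1}X_u)$ yields $\Phi(u) \setminus \Phi(v^\vee)$, where I use $v^\vee \wbo u \Leftrightarrow \Phi(v^\vee) \subset \Phi(u)$ from Lemma~\ref{lem:wboPhi}. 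Summing these weights on $Z$ and applying Lemma~\ref{lem:calcul} to $\rho(X_{v^\vee}) = \rho(G/P) - \rho(X_v)$ gives the desired identity.

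With degree matching secured, Proposition~\ref{prop:BKclass} reduces Conjecture~\ref{conj} to a comparison in ordinary cohomology. Writing $[\Sigma_u^v(u)] = \sum_w d_w \sigma_w$,
$$[\Sigma_u^v(u)]_\bkprod = \sum_{\rho(X_w)=\rho(\Sigma_u^v(u))} d_w \sigma_w^\bkprod,$$
while Theorem~\ref{th:iso} together with its proof shows that
$$\sigma_u^\bkprod \bkprod \sigma_v^\bkprod = \sum_{\rho(X_w)=\rho(\Sigma_u^v(u))} c_{uv}^w \sigma_w^\bkprod.$$
Thus the conjecture reduces to proving $d_w = c_{uv}^w$ for every $w \in W^P$ with $\rho(X_w) = \rho(\Sigma_u^v(u))$.

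To extract this equality, I would invoke Proposition~\ref{prop:defYuv}: the intersection $u^{-1}X_u \cap w_0^P v^{-1}X_v$ is transverse along $\Sigma_u^v(u)$, so $\Sigma_u^v(u)$ appears as one irreducible component with multiplicity one. In the proper-intersection case,
$$\sigma_u \cdot \sigma_v = [\Sigma_u^v(u)] + \sum_i m_i [Y_i]$$
for the remaining irreducible components $Y_i$. If each such $Y_i$ satisfies $\tilde\rho(Y_i) \slleq \tilde\rho(\Sigma_u^v(u))$ strictly, then Lemma~\ref{lem:intF} forces $\int_{Y_i} \omega_{w^\vee} = 0$ whenever $\tilde\rho(X_w) = \tilde\rho(\Sigma_u^v(u))$; by ordinary Poincar\'e duality this means $[Y_i]$ contributes $0$ to the Schubert coefficient at such a $w$, yielding $d_w = c_{uv}^w$.

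The main obstacle---and the reason the conjecture remains open---is proving this strict $\tilde\rho$-inequality for every off-$\Sigma$ component, together with handling non-proper intersections (in which $\Sigma_u^v(u)$ may even fail to be maximal-dimensional in $u^{-1}X_u \cap w_0^P v^{-1}X_v$). My intended attack follows the philosophy of Section~5: work through the family $\{I_u^v(y)\}$ indexed by $v^\vee \wbo y \wbo u$, which interpolates between the Richardson variety ($y = v^\vee$, class $\sigma_u \cdot \sigma_v$) and $\Sigma_u^v(u)$ ($y = u$), using the local product structure $U(y') \times I_u^{y^\vee}(y)$ of Theorem~\ref{th:structloc} to induct on $l(y) - l(v^\vee)$. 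At each step one must show that any newly appearing irreducible component has $\tilde\rho$ strictly smaller than $\tilde\rho(\Sigma_u^v(y))$; this is essentially the Fubini-type identity for Kostant harmonic forms announced at the beginning of Section~5.
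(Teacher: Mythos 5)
You have not proved the statement, and neither does the paper: this is Conjecture~\ref{conj}, which the paper leaves open and supports only with partial results. What you outline is essentially that partial evidence. Your degree computation $\tilde\rho(\Sigma_u^v(u))=\tilde\rho(X_u)+\tilde\rho(X_v)-\tilde\rho(G/P)$ via $\Phi(u)\setminus\Phi(v^\vee)$ is correct (and consistent with~\eqref{eq:786}--\eqref{eq:787}), the reduction via Proposition~\ref{prop:BKclass} and Theorem~\ref{th:iso} to the equality $d_w=c_{uv}^w$ for all $w$ with $\rho(X_w)=\rho(\Sigma_u^v(u))$ is exactly what underlies Proposition~\ref{prop:zero}, and your plan to induct through the family $I_u^v(y)$, $v^\vee\wbo y\wbo u$, using Theorem~\ref{th:structloc} is the content of Proposition~\ref{prop:rec}; the ``Fubini'' reformulation you mention is Proposition~\ref{prop:conjform}. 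So the strategy matches the paper's, and you correctly identify that the decisive step is missing.

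Within that strategy there is, however, a concrete error of direction that would derail an attempt to execute it. By Lemma~\ref{lem:intF} (used exactly this way in the proof of Lemma~\ref{lem:dimF}), the coefficient of $\sigma_w$ in $[Y_i]$, namely $\int_{Y_i}$ of the Kostant form representing $\sigma_{w^\vee}$, vanishes when $\tilde\rho(X_w)\nggeq\tilde\rho(Y_i)$. Hence to kill the contribution of an extra component $Y_i$ at the Belkale--Kumar--relevant classes, i.e.\ those $w$ with $\tilde\rho(X_w)=\tilde\rho(\Sigma_u^v(u))$, you need $\tilde\rho(Y_i)\nlleq\tilde\rho(\Sigma_u^v(u))$ (for instance $\tilde\rho(Y_i)\sggeq\tilde\rho(\Sigma_u^v(u))$ strictly). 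You assert the opposite inequality $\tilde\rho(Y_i)\slleq\tilde\rho(\Sigma_u^v(u))$; under that hypothesis $\tilde\rho(X_w)=\tilde\rho(\Sigma_u^v(u))\ggeq\tilde\rho(Y_i)$, Lemma~\ref{lem:intF} gives no vanishing, and the comparison of coefficients does not close. Equivalently, in the graded ring the class $[Y_i]$ lies in $F^{\lleq\tilde\rho(G/P)-\tilde\rho(Y_i)}$, and it dies in $\G^{\tilde\rho(G/P)-\tilde\rho(\Sigma_u^v(u))}$ precisely when $\tilde\rho(G/P)-\tilde\rho(Y_i)$ is \emph{strictly smaller} than $\tilde\rho(G/P)-\tilde\rho(\Sigma_u^v(u))$, i.e.\ when $\tilde\rho(Y_i)$ is strictly \emph{larger}. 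With the inequality corrected, your sketch is a faithful account of what is known and of where the conjecture remains open.
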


Some observations on this conjecture are collected in the following
propositions.

\begin{prop}\label{prop:zero}
  Expand  $[\Sigma_u^v(u)]_\bkprod$ and $\sigma_u\bkprod\sigma_v$ in
  the Schubert basis:
$$
\begin{array}{r@{\:}c@{\:}l}
[\Sigma_u^v(u)]_\bkprod&=&\sum_{w\in
    W^P}d_{uv}^w\sigma_w^\bkprod, \mbox{ and}\\[1em]
\sigma_u^\bkprod\bkprod\sigma_v^\bkprod&=&\sum_{w\in
    W^P}\tilde c_{uv}^w\sigma_w^\bkprod.
\end{array}$$
Then, for any $w\in W^P$,
\begin{enumerate}
\item $\tilde c_{uv}^w\geq d_{uv}^w$;
\item $\tilde c_{uv}^w\neq 0\;\iff\;d_{uv}^w\neq 0$.
\end{enumerate}
\end{prop}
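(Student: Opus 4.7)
The plan is to translate the comparison into ordinary cohomology, use positivity of Schubert coefficients, and match the two vanishing conditions via the transversality of Proposition~\ref{prop:defYuv}; the reverse implication in~(ii) will be the main obstacle.

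In $\Ho^*(G/P,\CC)$, since Proposition~\ref{prop:defYuv} gives transversality along $\Sigma_u^v(u)$, the component $\Sigma_u^v(u)$ appears with multiplicity $1$ in the intersection cycle of $u^{-1}X_u$ and $w_0^Pv^{-1}X_v$, and (deforming to a transverse intersection via Kleiman if needed) one obtains
\[
\sigma_u\cdot\sigma_v=[\Sigma_u^v(u)]+R,
\]
where $R$ is an effective class recording the remaining components. Expanding in the Schubert basis, $c_{uv}^w=d_w+r_w$ with $d_w,r_w\geq 0$. Transversality further yields $T_{P/P}\Sigma_u^v(u)=T_u\cap w_0^PT_v$ and $T_u+w_0^PT_v=T_{P/P}G/P$; since $w_0^P\in L$ acts trivially on $Z$, inclusion--exclusion on $Z$-weights together with the obvious $\ZZ$-dimension count gives
\[
\tilde\rho(X_u)+\tilde\rho(X_v)=\tilde\rho(\Sigma_u^v(u))+\tilde\rho(G/P).
\]

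Consequently the Belkale--Kumar numerical condition $\tilde\rho(X_u)+\tilde\rho(X_v)=\tilde\rho(X_w)+\tilde\rho(G/P)$ used in the proof of Theorem~\ref{th:iso} (see \eqref{eq:787}--\eqref{eq:788}) to detect $\tilde c_{uv}^w\neq 0$ coincides with the condition $\tilde\rho(X_w)=\tilde\rho(\Sigma_u^v(u))$ used in Proposition~\ref{prop:BKclass} to detect $d_{uv}^w\neq 0$. When this common condition on $w$ fails both coefficients vanish; when it holds $d_{uv}^w=d_w$ and $\tilde c_{uv}^w=c_{uv}^w=d_w+r_w$. This gives~(i) and the implication $d_{uv}^w\neq 0\Rightarrow \tilde c_{uv}^w\neq 0$ of~(ii).

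The converse $\tilde c_{uv}^w\neq 0\Rightarrow d_{uv}^w\neq 0$ is the main obstacle; it amounts to showing $r_w=0$ whenever $w$ satisfies the common $\tilde\rho$-condition, i.e., that no other irreducible component $Y$ of $u^{-1}X_u\cap w_0^Pv^{-1}X_v$ contributes to a Schubert class of $\tilde\rho$-weight equal to $\tilde\rho(\Sigma_u^v(u))$. The plan is to combine: (a) each such $Y$ avoids $P/P$ by Corollary~\ref{lem:Ilocirred}; (b) the identity $\int_Y\omega=\int_{G/P}[Y]\wedge\omega$ and Lemma~\ref{lem:intF} together force every Schubert class $\sigma_w$ occurring in $[Y]$ to satisfy $\tilde\rho(X_w)\ggeq\tilde\rho(Y)$; and (c) the strict inequality $\tilde\rho(Y)\slleq\tilde\rho(\Sigma_u^v(u))$ for each $Y\neq\Sigma_u^v(u)$. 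Step (c) is the genuinely delicate point; the expected route is a Peterson-limit analysis (Lemma~\ref{lem:Peterson}) along $T$-stable curves joining a $T$-fixed point of $Y$ to $P/P$, comparing the $X(Z)$-graded tangent data of $Y$ with that of $\Sigma_u^v(u)$.
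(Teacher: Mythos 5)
Parts of your argument are sound and close to the paper's: the inequality $c_{uv}^w=d_w+r_w$ with $d_w,r_w\geq 0$ coming from properness/transversality along $\Sigma_u^v(u)$, the identity $\tilde\rho(X_u)+\tilde\rho(X_v)=\tilde\rho(\Sigma_u^v(u))+\tilde\rho(G/P)$ obtained from $T_u+w_0^PT_v=T_{P/P}G/P$ at the $X(Z)$-regular point $P/P$, and the use of Proposition~\ref{prop:BKclass} plus the numerical criterion of Levi-movability to match the two vanishing conditions. This yields (i) and the implication $d_{uv}^w\neq 0\Rightarrow\tilde c_{uv}^w\neq 0$, essentially as in the paper.

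The converse implication is where your proposal has a genuine gap, and moreover the route you sketch is the wrong one. Showing $r_w=0$ for every $w$ satisfying the common $\tilde\rho$-condition would give $d_{uv}^w=e_{uv}^w=c_{uv}^w=\tilde c_{uv}^w$ for all Levi-movable $w$, i.e.\ it would prove Conjecture~\ref{conj} itself, which the paper leaves open; in particular your step (c) (a strict inequality $\tilde\rho(Y)\slleq\tilde\rho(\Sigma_u^v(u))$ for every other component $Y$) cannot be expected to follow from a Peterson-limit analysis, and the proposition does not require it. The paper never touches the other components. Instead it shows \emph{directly} that the coefficient $e_{uv}^w$ of $\sigma_w$ in the ordinary class $[\Sigma_u^v(u)]$ is nonzero: Levi-movability of $(u,v,w^\vee)$ furnishes, as in the proof of Lemma~\ref{lem:BKprodwbo}, an element $l\in L$ such that $u^{-1}X_u$, $w_0^Pv^{-1}X_v$ and $l(w^\vee)^{-1}X_{w^\vee}$ meet transversally at $P/P$; since $T_{P/P}\Sigma_u^v(u)=T_u\cap w_0^PT_v$, the variety $\Sigma_u^v(u)$ itself meets $l(w^\vee)^{-1}X_{w^\vee}$ transversally at $P/P$, whence $e_{uv}^w=[\Sigma_u^v(u)]\cdot\sigma_{w^\vee}\geq 1$. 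Combined with the $\rho$-condition forced by Levi-movability, Proposition~\ref{prop:BKclass} then gives $d_{uv}^w=e_{uv}^w\neq 0$. You should replace your steps (a)--(c) by this local transversality argument.
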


\begin{proof}
  Write $[\Sigma_u^v(u)]=\sum_{w\in
    W^P}e_{uv}^w\sigma_w$
and 
$\sigma_u.\sigma_v=\sum_{w\in
    W^P} c_{uv}^w\sigma_w$
 in ordinary cohomology.
Since $\Sigma_u^v(u)$ is an irreducible component of the intersection
$I_u^v(u)$ and that this intersection is proper along this component,
the inequality
\begin{eqnarray}
  \label{eq:8814}
  c_{uv}^w\geq e_{uv}^w
\end{eqnarray}
holds for any $w\in W^P$.
Consider now a coefficient $d_{uv}^w$ for some fixed $w\in W^P$.
If $d_{uv}^w=0$ then the first assertion of the proposition is
obvious. Assume  $d_{uv}^w\neq 0$.
By Proposition~\ref{prop:BKclass}, $d_{uv}^w=e_{uv}^w$.
Comparing the inequality~\eqref{eq:8814} and the first assertion, one
observes that it is sufficient to prove that $\tilde
c_{uv}^w=c_{uv}^w$; that is that $\tilde c_{uv}^w\neq 0$.

Since $d_{uv}^w\neq 0$, Proposition~\ref{prop:BKclass} implies that
$\rho(X_w)=\rho(\Sigma_u^v(u))$.
Since $P/P$ belongs to the open $H_u^v(u)$-orbit in $\Sigma_u^v(u)$
and
is $X(Z)$-regular. In particular 
$$
\rho(\Sigma_u^v(u))=\sum_{\gamma\in X(Z)}\dim[(T_{P/P}\Sigma_u^v(u))_\gamma]
\gamma,
$$
where $(T_{P/P}\Sigma_u^v(u))_\gamma$ is the weight space of weight
$\gamma$ of the $Z$-module $T_{P/P}\Sigma_u^v(u)$.
But $T_{P/P}\Sigma_u^v(u)$ is the transverse intersection of
$T_{P/P}u^{-1}X_u$ and $T_{P/P}w_0^Pv^{-1}X_v$. It follows that 
$\rho(\Sigma_u^v(u))=\rho(u^{-1}X_u)+\rho(w_0^Pv^{-1}X_v)=
 \rho(X_u)+\rho(X_v)$.
 Finally  $\rho(X_w)=\rho(X_u)+\rho(X_v)$
and Proposition~\ref{prop:BKclass} shows that $\tilde c_{uv}^w=c_{uv}^w$. \\

Assuming that $d_{uv}^w\neq 0$, the first assertion implies that
$\tilde c_{uv}^w\neq 0$. Assume conversely that $\tilde c_{uv}^w\neq
0$ in other words that $(u,v,w^\vee)$ is Levi-movable.
Arguing like in the proof of Lemma~\ref{lem:BKprodwbo}, one can check
that there exists $l\in L$ such that $u^{-1}X_u$,
$w_0^Pv^{-1}X_v$ and $l(w^\vee)^{-1}X_{w^\vee}$ intersect transversally
at $P/P$.
It follows immediately that $\Sigma_u^v(u)$ and 
 $l(w^\vee)^{-1}X_{w^\vee}$ intersect transversally at $P/P$.
Hence $e_{uv}^w\neq 0$.

It remains to prove that $e_{uv}^w=d_{uv}^w$.
The condition $\tilde c_{uv}^w\neq 0$ in the $X(Z)$-graded algebra
$\G \Ho^*(G/P,\CC)$ implies that $\rho(X_w)=\rho(X_u)+\rho(X_v)$.
But $\rho(X_u)+\rho(X_v)=\rho(\Sigma_u^v(u))$.
Proposition~\ref{prop:BKclass} shows that $e_{uv}^w=d_{uv}^w$.
\end{proof}

\bigskip
\begin{prop}
\label{prop:rec}
  Let $u,v\in W^P$ such that $v^\vee\wbo u$. 
  \begin{enumerate}
  \item Conjecture~\ref{conj} holds if $\Sigma_u^v(u)$ has dimension
    $0$,  $1$ or $2$.
\item Conjecture~\ref{conj} holds if and only if for any $y\in W^P$
  such that $v^\vee\wbo y\wbo u$ we have $
[\Sigma_u^v(y)]_\bkprod=\sigma_u\bkprod\sigma_v
$.
  \end{enumerate}
\end{prop}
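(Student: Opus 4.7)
The plan is to handle part~(2) first, as it exposes the main structural point, and then settle part~(1) by case analysis.

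For part~(2), the direction $(\Leftarrow)$ is immediate: specializing to $y=u$, the variety $\Sigma_u^v(u)$ coincides with the $\Sigma_u^v$ of Conjecture~\ref{conj} by Corollary~\ref{cor:quasihom}. The content lies in $(\Rightarrow)$: assuming $[\Sigma_u^v(u)]_\bkprod=\sigma_u\bkprod\sigma_v$, one must establish the same identity for every $y$ with $v^\vee\wbo y\wbo u$. I would apply Proposition~\ref{prop:BKclass} to reduce this to two sub-claims: (a) the weight $\rho(\Sigma_u^v(y))$ is a $y$-independent constant (namely $\rho(X_u)+\rho(X_v)-\rho(G/P)$, following the transverse intersection argument used in the proof of Proposition~\ref{prop:zero}); and (b) for every Schubert class $\sigma_w$ with $\rho(X_w)$ equal to this common value, the ordinary coefficient of $\sigma_w$ in the Schubert expansion of $[\Sigma_u^v(y)]$ does not depend on $y$ (and equals $\tilde c_{uv}^w$). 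Claim~(a) would follow from a direct $Z$-weight count on the tangent space at the smooth point $P/P$, exploiting the local product $\overset{\circ}I{}_u^v(y)\simeq U(y')\times\overset{\circ}I{}_u^{y^\vee}(y)$ from Theorem~\ref{th:structloc}. Claim~(b) would follow from the ordinary identity $[I_u^v(y)]=\sigma_u\cdot\sigma_v$ (translation invariance of Schubert classes) together with the decomposition of $I_u^v(y)$ into irreducible components $\Sigma_u^v(y)\cup Z_1^{(y)}\cup\cdots\cup Z_k^{(y)}$: the $y$-independence of the $\rho$-maximal coefficients amounts to the vanishing of the contribution of the non-principal components $Z_i^{(y)}$ on those Schubert classes, and the assumption at $y=u$ gives this vanishing there, to be propagated to the whole interval using the identification of non-principal components furnished by Theorem~\ref{th:structloc}.

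For part~(1), I would argue case by case via Proposition~\ref{prop:zero}, which supplies both the equality of supports $d_{uv}^w\neq 0\iff\tilde c_{uv}^w\neq 0$ and the inequality $d_{uv}^w\leq\tilde c_{uv}^w$. It remains only to check equality of these coefficients for the (few) Schubert classes that can enter $[\Sigma_u^v(u)]_\bkprod$ when $\dim\Sigma_u^v(u)\in\{0,1,2\}$. In dimension~$0$, $\Sigma_u^v(u)=\{P/P\}$ and $[\Sigma_u^v(u)]_\bkprod$ is the top class $\sigma_{w_0w_0^P}^\bkprod$; by Corollary~\ref{cor:PoincareGP} the top graded piece of $\G\Ho^*(G/P,\CC)$ is one-dimensional, and BK Poincar\'e duality forces $\tilde c_{uv}^{w_0w_0^P}=1$. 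In dimensions~$1$ and~$2$ the set of Schubert classes of the appropriate codimension that can appear is small; combining Proposition~\ref{prop:zero} with the pairing against BK Poincar\'e-dual classes and a short case-by-case inspection of the transverse intersection at $P/P$ yields equality coefficient by coefficient.

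The main obstacle lies in step~(b) of part~(2): propagating the vanishing of the non-principal contributions on $\rho$-maximal Schubert classes from $y=u$ to an arbitrary $y$ in the weak Bruhat interval $[v^\vee,u]_\wbo$. Concretely, this requires a careful identification --- via Theorem~\ref{th:structloc} --- of the non-principal components of $I_u^v(y)$ with components arising in auxiliary translated Richardson intersections, and a verification that this correspondence preserves the $\rho$-weight data controlling which Schubert contributions survive the projection to $\G^{\rho_0}\Ho^*(G/P,\CC)$. The delicate point is that the Schubert expansion of a single non-principal component generally mixes several $\rho$-weights, so one must track precisely which of them can pair nontrivially with the $\rho$-maximal Schubert classes.
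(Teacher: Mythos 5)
Your proposal does not close either part. For part (2), the easy direction is fine, but the hard direction is exactly the point you defer: you reduce everything to showing that the coefficients of the $\rho$-maximal Schubert classes in $[\Sigma_u^v(y)]$ are independent of $y$, and you yourself flag the propagation of the vanishing of the non-principal contributions as ``the main obstacle'' without resolving it. Note also that the identity $[I_u^v(y)]=\sigma_u\cdot\sigma_v$ you want to invoke is not available for general $y$: the paper only establishes properness of the intersection \emph{along} $\Sigma_u^v(y)$, not globally, so excess components could a priori occur. The paper's mechanism is different and is what actually makes the argument work: for a simple root $\alpha$ with $y\wbo s_\alpha y\wbo u$, one degenerates $\Sigma_u^v(y)$ by the unipotent one-parameter subgroup $U_\beta$ with $\beta=-y\inv\alpha$ (which preserves $w_0^Pv\inv X_v$ and sends $y\inv X_u$ to $y\inv s_\alpha X_u$ in the limit); since $\Sigma_u^v(s_\alpha y)$ is an irreducible component of the flat limit, the Schubert coefficients satisfy $d''_w\leq d'_w$ for all $w$. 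Iterating along the weak Bruhat interval gives $e_w\leq d'_w\leq d_w$, where the $d_w$ come from the honest Richardson variety at $y=v^\vee$ (so $d_w=c_{uv}^w$) and the $e_w$ from $y=u$. The hypothesis at $y=u$ forces $e_w=d_w$ on the Levi-movable classes, and the squeeze then pins down every intermediate $d'_w$ there. No identification of non-principal components is ever needed.

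For part (1), ``a short case-by-case inspection'' is not a proof: Proposition~\ref{prop:zero} gives $d_{uv}^w\leq\tilde c_{uv}^w$ and equality of supports, which is compatible with, say, $d_{uv}^w=1<2=\tilde c_{uv}^w$, so you need an actual reason for equality. The paper's reason is that in dimensions $0$, $1$ and $2$ the whole intersection $I_u^v(u)$ is a \emph{translated Richardson variety}, hence irreducible, so that $[\Sigma_u^v(u)]=[I_u^v(u)]=\sigma_u\cdot\sigma_v$ in ordinary cohomology and Proposition~\ref{prop:BKclass} finishes. Concretely, if $\dim=1$ then $u=s_\alpha v^\vee$ with $\alpha$ simple and $l(u)=l(v^\vee)+1$ forces $s_\alpha X_u=X_u$, whence $u\inv X_u=(v^\vee)\inv X_u$ and $I_u^v(u)=I_u^v(v^\vee)$; if $\dim=2$ one writes $u=s_\alpha s_\beta v^\vee$ and uses both $s_\alpha X_u=X_u$ and $s_{\beta^*}X_v=X_v$ (with $\beta^*=-w_0\beta$) to exhibit $I_u^v(u)$ as a translate of $s_\alpha X_u\cap w_0 s_{\beta^*}X_v$. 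This stability input is absent from your sketch, and without it the low-dimensional cases are not established.
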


\begin{proof}
  If $\Sigma_u^v(u)$ has dimension $0$ then $u=v^\vee$. In particular 
$[\Sigma_u^v(y)]_\bkprod=[pt]=\sigma_u\bkprod\sigma_v$.\\

 If $\Sigma_u^v(u)$ has dimension $1$ then $u=s_\alpha v^\vee$, for
 some simple root $\alpha$.
Moreover, $l(u)=l(v^\vee)+1$.
This implies that $X_u$ is stable by the action of $P_\alpha$ (the
minimal parabolic subgroup associated to $\alpha$).
In particular $s_\alpha X_u=X_u$.
It follows that
$u^{-1}X_u=u^{-1} s_\alpha X_u=(v^\vee)^{-1}X_u$. In particular
$I_u^v(u)=I_u^v(v^\vee)$ is a translated Richardson variety
and is irreducible.
Moreover, 
$\sigma_u.\sigma_v=[I_u^v(u)]=[\Sigma_u^v(u)]$.
Proposition~\ref{prop:BKclass} implies that $\sigma_u\bkprod\sigma_v
=[\Sigma_u^v(u)]_\bkprod$.\\

Assume now that $u=s_\alpha s_\beta v^\vee$, for
 some simple roots $\alpha$ and $\beta$ such that $l(u)=l(v^\vee)+2$.
Then (note that $s_\alpha X_u=X_u$)
$$
\begin{array}{ccl}
  I_u^v(u)&=&u^{-1}X_u\cap w_0^Pv^{-1}X_v\\
&=&(v^\vee)^{-1}(s_\beta s_\alpha X_u\cap w_0 X_v)\\
&=&(v^\vee)^{-1}s_\beta (s_\alpha X_u\cap w_0 s_{\beta^*}X_v),\\
\end{array}
$$
where $\beta^*=-w_0\beta$.
But the condition $v^\vee\wbo s_\beta v^\vee$ implies that 
$s_{\beta^*}v\wbo v$ (see for example Lemma~\ref{lem:wboPhi}).
Then $s_{\beta^*}X_v=X_v$ and 
$I_u^v(u)$ is obtained by translation from the Richardson variety
$s_\alpha X_u\cap w_0 s_{\beta^*}X_v$.
The first assertion of the proposition follows.\\

Let $\alpha$ be a simple root such that $y\wbo s_\alpha y\wbo u$. 
Set $\beta=-y\inv \alpha$ and set $U_\beta\,:\,\CC\longto G$, the associated additive one-parameter subgroup.
Consider the flat limit 
$\lim_{t\to\infty} U_\beta(t)\Sigma_u^v(y)$.
Since $U_\beta(t)y\inv B/B$ tends to $y\inv s_\alpha B/B$ when $t$
goes to infinity, 
 $\lim_{t\to\infty} U_\beta(t)y\inv X_u=y\inv s_\alpha X_u$. 
Since  $v^\vee\wbo y\wbo s_\alpha y$, $\beta\in \Phi(s_\alpha
y)-\Phi(v^\vee)$ and $w_0^P\beta\in\Phi(v)$. 
In particular, $w_0^Pv\inv X_v$ is $U_\beta$-stable. 
But $\Sigma_u^v(s_\alpha y)$ is an irreducible component of the
intersection $y\inv s_\alpha X_u\cap w_0^Pv\inv X_v$ ; and, this
intersection is transverse along this component.
It follows that $\Sigma_u^v(s_\alpha y)$ is an irreducible component
of $\lim_{t\to\infty} U_\beta(t)\Sigma_u^v(y)$.
Writing
$$
[\Sigma_u^v(y)]=\sum_{w\in W^P}d'_w\sigma_w
\quad{\rm and}\quad
[\Sigma_u^v(s_\alpha y)]=\sum_{w\in W^P}d''_w\sigma_w,
$$
we deduce that 
\begin{equation}
  \label{eq:51}
  d''_w\leq d'_w\qquad\forall w\in W^P.
\end{equation}
Write now
$$
[\Sigma_u^v(v^\vee)]=\sum_{w\in W^P}d_w\sigma_w
\quad{\rm and}\quad
[\Sigma_u^v(u)]=\sum_{w\in W^P}e_w\sigma_w.
$$
Since $\Sigma_u^v(v^\vee)$ is a translated of the Richardson variety
$X_u\cap w_0 X_v$, 
$$
\sigma_u.\sigma_v =\sum_{w\in W^P}d_w\sigma_w.
$$
By an immediate induction, we deduce from \eqref{eq:51} that 
$$
e_w\leq d'_w\leq d_w \qquad\forall w\in W^P.
$$
Conjecture~\eqref{conj} holds for $y=u$ if and only if for any $w\in W^P$
such that $(u,v,w)$ is Levi-movable $e_w=d_w$. 
Then, $d'_w=d_w$ for any such $w\in W^P$ and 
$
[\Sigma_u^v(y)]_\bkprod=\sigma_u\bkprod\sigma_v
$.
\end{proof}

\subsection{Interpretation in terms of harmonic forms}

Kostant's harmonic forms allow to formulate Conjecture~\ref{conj} as
an identity of integrals.

\begin{prop}
 \label{prop:conjform}
Let $u$ and $v$ in $W^P$ such that $v^\vee\wbo u$.
Then $\sigma_u\bkprod\sigma_v=[\Sigma_u^v(u)]_\bkprod$ if and only if
for any $w$ in $W^P$ such that $(u,v,w)$ is Levi-movable, we have
$$
\int_{(u^\vee)^{-1}X_{u^\vee}}\omega_{u^\vee}.
\int_{(v^\vee)^{-1}X_{v^\vee}}\omega_{v^\vee}. \int_{\Sigma_u^v(u)}\omega_{w^\vee}=
\int_{(P^u)^-}\omega_{u^\vee}\wedge\omega_{v^\vee}\wedge \omega_{w^\vee}.
$$ 
\end{prop}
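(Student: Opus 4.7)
The plan is to translate both sides of the conjectural identity $\sigma_u\bkprod\sigma_v=[\Sigma_u^v(u)]_\bkprod$ into concrete integrals via Kostant's harmonic representatives, and to observe that the resulting equation is precisely the one displayed in the proposition.

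First I would evaluate the four integrals appearing in the proposed identity. By Kostant's Theorem~\ref{th:Kostant}, $[\omega_{x^\vee}]=c_{x^\vee}\sigma_x$ for a positive real scalar $c_{x^\vee}$, for each $x\in\{u,v,w\}$. Because $G$ is connected, the translate $(x^\vee)^{-1}X_{x^\vee}$ is homologous to $X_{x^\vee}$, so ordinary Poincar\'e duality on $G/P$ gives $\int_{(u^\vee)^{-1}X_{u^\vee}}\omega_{u^\vee}=c_{u^\vee}$ and $\int_{(v^\vee)^{-1}X_{v^\vee}}\omega_{v^\vee}=c_{v^\vee}$. Expanding $[\Sigma_u^v(u)]=\sum_{w'}e_{uv}^{w'}\sigma_{w'}$ in ordinary cohomology and applying Schubert orthogonality yields $\int_{\Sigma_u^v(u)}\omega_{w^\vee}=c_{w^\vee}\,e_{uv}^{w^\vee}$. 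The top-degree form $\omega_{u^\vee}\wedge\omega_{v^\vee}\wedge\omega_{w^\vee}$ has cohomology class $c_{u^\vee}c_{v^\vee}c_{w^\vee}\,c_{uvw}\,\sigma_e$, so its integral over the dense open subset $(P^u)^-\subset G/P$ equals $c_{u^\vee}c_{v^\vee}c_{w^\vee}c_{uvw}$. Cancelling the positive scalar $c_{u^\vee}c_{v^\vee}c_{w^\vee}$, the stated identity becomes
\[
e_{uv}^{w^\vee}=c_{uvw}\qquad\text{for every Levi-movable triple }(u,v,w).
\]

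Next I would translate the conjecture into the same shape. By Corollary~\ref{cor:PoincareGP} the graded Poincar\'e pairing on $\G\Ho^*(G/P,\CC)$ is non-degenerate, so $\sigma_u\bkprod\sigma_v=[\Sigma_u^v(u)]_\bkprod$ is equivalent to equality of their pairings against every graded Schubert class $\sigma_w^\bkprod$; the bi-degree condition selects exactly those $w$ for which $(u,v,w)$ is Levi-movable (using Proposition~\ref{Prop_Dim}). By Theorem~\ref{th:iso}, the pairing of $\sigma_u\bkprod\sigma_v$ with $\sigma_w^\bkprod$ is the triple Belkale-Kumar constant $c_{uvw}^{\bkprod}$, which equals $c_{uvw}$ on Levi-movable triples. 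By the defining formula for $[N]_\bkprod$, the pairing of $[\Sigma_u^v(u)]_\bkprod$ with $\sigma_w^\bkprod$ equals $\int_{\Sigma_u^v(u)}\sigma_w=e_{uv}^{w^\vee}$. Thus the conjecture is also equivalent to $e_{uv}^{w^\vee}=c_{uvw}$ for all Levi-movable triples, matching the reformulation in the first paragraph.

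The key technical point is that Kostant's form $\omega_{w^\vee}$ is a legitimate representative of the graded Schubert class $\sigma_w^\bkprod$ in the filtered De Rham complex; this is guaranteed by Lemma~\ref{lem:Schubfil}, together with the identification $\Psi(\sigma_w)=\sigma_w^\bkprod$ established immediately after Theorem~\ref{th:iso}. Once this is in place, every integral appearing on either side of the stated identity reduces to a standard Poincar\'e duality computation on $G/P$, and the proposition follows.
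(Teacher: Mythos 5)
Your proposal is correct and follows essentially the same route as the paper: both reduce the conjecture, via Proposition~\ref{prop:BKclass} (together with Proposition~\ref{prop:zero} to dispose of the non-Levi-movable terms), to the equality $e_{uv}^{w^\vee}=c_{uvw}$ for Levi-movable triples, and then identify each integral in the displayed formula using Kostant's Theorem~\ref{th:Kostant} and ordinary Poincar\'e duality, the only cosmetic difference being that the paper keeps the scalars $\lambda_{u^\vee},\lambda_{v^\vee},\lambda_{w^\vee}$ explicit and obtains the $(P^u)^-$-integral by comparing two expressions for the intersection number $d$, whereas you evaluate the four integrals directly and cancel the scalars.
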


\begin{proof}
For any $w\in W^P$, consider the Kostant's harmonic form $\omega_w$ and the nonzero
complex number $\lambda_w$ (see Theorem~\ref{th:Kostant}) such that 
\begin{eqnarray}
  \label{eq:914}
  [\omega_w]=\lambda_w\sigma_w^\vee.
\end{eqnarray}
Then
\begin{eqnarray}
  \label{eq:916}
  \lambda_w=\int_{w^{-1}X_w}\omega_w.
\end{eqnarray}
  By Propositions~\ref{prop:BKclass} and \ref{prop:zero},
  Conjecture~\ref{conj} is equivalent to the fact that for any $w\in
  W^P$ such that $(u,v,w)$ is Levi-movable, we have
  \begin{eqnarray}
    \label{eq:915}
    \sigma_u.\sigma_v.\sigma_w=[\Sigma_u^v(u)].\sigma_w.
  \end{eqnarray}
But on one hand
\begin{eqnarray}
  \label{eq:917}
  \sigma_u.\sigma_v.\sigma_w=
\frac{1}{\lambda_{u^\vee}\lambda_{v^\vee}}[\omega_{u^\vee}\wedge\omega_{v^\vee}].\sigma_w=
\frac{\int_{w^{-1}X(w)}\omega_{u^\vee}\wedge\omega_{v^\vee}}{\lambda_{u^\vee}\lambda_{v^\vee}}.
\end{eqnarray}
And on the other hand 
\begin{eqnarray}
  \label{eq:918}
  [\Sigma_u^v(u)].\sigma_w=\frac{\int_{\Sigma_u^v(u)}\omega_{w^\vee}}{\lambda_{w^\vee}}.
\end{eqnarray}
In particular the equality~\eqref{eq:915} is equivalent to 
\begin{eqnarray}
  \label{eq:919}
  \lambda_{w^\vee}.\int_{w^{-1}X_w}\omega_{u^\vee}\wedge\omega_{v^\vee}=
\lambda_{u^\vee}.\lambda_{v^\vee}. \int_{\Sigma_u^v(u)}\omega_{w^\vee};
\end{eqnarray}
which is,  by~\eqref{eq:916}, equivalent to
\begin{eqnarray}
  \label{eq:920}
%  \int_{(w^\vee)^{-1}X_{w^\vee}}\omega_{w^\vee}
\lambda_{w^\vee}
.\int_{w^{-1}X_w}\omega_{u^\vee}\wedge\omega_{v^\vee}=
\int_{(u^\vee)^{-1}X_{u^\vee}}\omega_{u^\vee}.
\int_{(v^\vee)^{-1}X_{v^\vee}}\omega_{v^\vee}. \int_{\Sigma_u^v(u)}\omega_{w^\vee}.
\end{eqnarray}
We claim that
\begin{eqnarray}
  \label{eq:921}
  %\int_{(w^\vee)^{-1}X_{w^\vee}}\omega_{w^\vee}
\lambda_{w^\vee}
.\int_{w^{-1}X_w}\omega_{u^\vee}\wedge\omega_{v^\vee}=
\int_{(P^u)^-}\omega_{u^\vee}\wedge\omega_{v^\vee}\wedge \omega_{w^\vee}.
\end{eqnarray}

Let $d$ be the positive integer such that
$\sigma_u.\sigma_v.\sigma_w=d[pt]$.
We have
$$d=\int_{G/P}\frac{\omega_{u^\vee}\wedge\omega_{v^\vee}\wedge\omega_{w^\vee}}{\lambda_{u^\vee}
  \lambda_{v^\vee} \lambda_{w^\vee}}.$$
Since $\sigma_u.\sigma_v=d\sigma_{w^\vee}$, we also have
$$d=\int_{w^{-1}X(w)}\frac{\omega_{u^\vee}\wedge\omega_{v^\vee}}{\lambda_{u^\vee}
  \lambda_{v^\vee}}.$$
Claim~\eqref{eq:921} is obtained by identifying these two expressions of
$d$.

The proposition follows now from the equations~\eqref{eq:921} and \eqref{eq:920}.
\end{proof}

\bigskip
\begin{remark}
  Observe that $(P^u)^-$ is isomorphic to the product of the three
  $T$-stable affine neighborhoods of $P/P$ in
  $(u^\vee)^{-1}X_{u^\vee}$, $(v^\vee)^{-1}X_{v^\vee}$ and
  $\Sigma_u^v(u)$. With this observation the equality of
  Proposition~\ref{prop:conjform} looks like a Fubini formula.
\end{remark}

\section{The case of the complete flag varieties}
\label{sec:GB}

Given $u$ in $W$, set $\Phi(u)^c:=\Phi^--\Phi(u)$.
Let $u,v$, and $w$ in $W$.
For the complete flag variety $G/B$ the Levi-movability is easy to
understand.
Indeed $T_u$, $T_v$, and $T_w$ are $L=T$-stable. 
In particular, 
$(\sigma_u,\sigma_v,\sigma_w)$ is Levi-movable if and only if
the natural map
$T_{B/B}(G/B) \longto \frac{T_{B/B}(G/B)}{T_u}\oplus
\frac{T_{B/B}(G/B)}{T_v}\oplus \frac{T_{B/B}(G/B)}{T_w}$
is an isomorphism.
This is equivalent to the fact that $\Phi^-$ is the disjoint union of
$\Phi(u)^c$, $\Phi(v)^c$, and $\Phi(w)^c$.
Since   $\Phi(w)^c=\Phi(w^\vee)$, one gets the following equivalence
$$
\tilde c_{uv}^w\neq 0\iff \Phi(w)^c=\Phi(u)^c\ccup\Phi(v)^c.
$$
Conjecture~\ref{conj} generalizes a classical one on $G/B$.

\begin{prop}
  \label{prop:casGB}
Let $G$ be a semisimple group and consider the Belkale-Kumar
cohomology of $G/B$.
Let $u$ and $v$ belong to $W$.
Then $\sigma_u\bkprod\sigma_v=[\Sigma_u^v(u)]_\bkprod$ if and only if
$\sigma_u\bkprod\sigma_v$ is either equal to zero or to 
$\sigma_w$ for some $w\in W$.
\end{prop}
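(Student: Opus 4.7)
My plan is to identify $\Sigma_u^v(u)$ explicitly as a translated Schubert variety whenever $\sigma_u\bkprod\sigma_v\neq 0$, and then to read off the proposition by comparing the resulting coefficients. Recall from the opening of Section~\ref{sec:GB} that for $G/B$ Levi-movability of $(\sigma_u,\sigma_v,\sigma_w)$ reads $\Phi^-=\Phi(u)^c\sqcup\Phi(v)^c\sqcup\Phi(w)^c$. Discarding the trivial case $v^\vee\not\wbo u$ where both sides vanish (by Lemma~\ref{lem:BKprodwbo} and Proposition~\ref{prop:zero}(ii)), the assumption $v^\vee\wbo u$ forces $\Phi(u)\cup\Phi(v)=\Phi^-$, so there is at most one such $w$, characterized by $\Phi(w^\vee)=\Phi(u)\cap\Phi(v)$, and it exists iff $\Phi(u)\cap\Phi(v)$ is an inversion set. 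Consequently $\sigma_u\bkprod\sigma_v$ is either $0$ or $c\,\sigma_{w^\vee}$ for a positive integer $c$.

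The geometric core is the identification of $\Sigma_u^v(u)$. By Corollary~\ref{cor:quasihom}, $\Sigma_u^v(u)$ is the closure of the $H_u^v(u)$-orbit of $B/B$ with $H_u^v(u)=u^{-1}Bu\cap v^{-1}Bv$; hence $B/B$ is a smooth point and its tangent space has $T$-weights $(u^{-1}\Phi^+\cap v^{-1}\Phi^+)\cap\Phi^-=\Phi(u)\cap\Phi(v)$. In Case A (Levi-movability holds for some $w$, so $\Phi(w^\vee)=\Phi(u)\cap\Phi(v)$), I apply Lemma~\ref{lem:Tvar} in the open cell $\overset{\circ}{G/B}=B^-B/B\simeq\lu^-$ with the strictly convex cone $\cone$ generated by $\Phi(w^\vee)$: this $\cone$ satisfies $\cone\cap\Phi^-=\Phi(w^\vee)$, because these roots are precisely those in $\Phi^-$ pairing strictly positively with the regular weight $(w^\vee)^{-1}\rho$. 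The lemma applied to both $\Sigma_u^v(u)\cap\overset{\circ}{G/B}$ and $(w^\vee)^{-1}X_{w^\vee}\cap\overset{\circ}{G/B}$ identifies each with the coordinate subspace $\bigoplus_{\chi\in\Phi(w^\vee)}(\lu^-)_\chi$; by irreducibility, $\Sigma_u^v(u)=(w^\vee)^{-1}X_{w^\vee}$, so $[\Sigma_u^v(u)]=\sigma_{w^\vee}$ in ordinary cohomology, and Proposition~\ref{prop:BKclass} gives $[\Sigma_u^v(u)]_\bkprod=\sigma_{w^\vee}^\bkprod$. Comparison with $\sigma_u\bkprod\sigma_v=c\,\sigma_{w^\vee}^\bkprod$ (via $\Psi$) shows that equality is equivalent to $c=1$, i.e., to $\sigma_u\bkprod\sigma_v$ being a single Schubert class. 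In Case B (no Levi-movable $w$), $\sigma_u\bkprod\sigma_v=0$ and Proposition~\ref{prop:zero}(ii) forces the coefficients $d_{uv}^w$ in $[\Sigma_u^v(u)]_\bkprod$ to vanish, so $[\Sigma_u^v(u)]_\bkprod=0$; both sides vanish and the zero alternative of the conclusion is satisfied.

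The main obstacle is the clean application of Lemma~\ref{lem:Tvar} in Case A, which hinges on producing a strictly convex cone $\cone\subset X(T)\otimes\QQ$ with $\cone\cap\Phi^-=\Phi(w^\vee)$. This is available precisely because inversion sets are cut out of $\Phi^-$ by the open half-space defined by the regular weight $(w^\vee)^{-1}\rho$. Case B requires no geometric description of $\Sigma_u^v(u)$, since the vanishing of $[\Sigma_u^v(u)]_\bkprod$ is supplied directly by Proposition~\ref{prop:zero}(ii).
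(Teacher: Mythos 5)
Your proof is correct and follows essentially the same route as the paper: the decisive step in both is Lemma~\ref{lem:Tvar}, applied in the open cell to identify $\Sigma_u^v(u)$ with the translated Schubert variety $(w^\vee)^{-1}X_{w^\vee}$ whenever $\Phi(u)\cap\Phi(v)$ is an inversion set, after which Proposition~\ref{prop:BKclass} reduces the claimed equivalence to comparing the coefficient $c$ with $1$. Your treatment of the case where no Levi-movable $w$ exists is a mild streamlining (you invoke Proposition~\ref{prop:zero}(ii) directly, where the paper reruns the $\rho$-comparison together with the numerical criterion), but the argument is the same in substance.
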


\begin{proof}
  Assume that $\sigma_u\bkprod\sigma_v=[\Sigma_u^v(u)]_\bkprod$.

\noindent
Case 1. Suppose there exists $w\in W$ such that $\Phi(w)=\Phi(H_u^v(u))$.\\
Then  (see for example Lemma~\ref{lem:Tvar}) 
$\Sigma_u^v(u)=w^{-1}X_w$; hence $[\Sigma_u^v(u)]_\bkprod=\sigma_w$.
In particular $\sigma_u\bkprod\sigma_v=\sigma_w$.

\noindent
Case 2. Suppose there exists no $w\in W$ such that
$\Phi(w)=\Phi(H_u^v(u))$.\\
Since $\Phi(H_u^v(u))=\Phi(u)\cap\Phi(v)$, there is no $w\in W$ such that 
$\Phi(w)^c=\Phi(u)^c\ccup\Phi(v)^c$. Hence there is no $w\in W$ such that 
$(\sigma_u,\sigma_v,\sigma_{w^\vee})$ is Levi-movable.
Then
$\sigma_u\bkprod\sigma_v=0$.
Moreover Proposition~\ref{prop:filtSchub} implies that
$\G^{\tilde\rho(G/P)-\tilde\rho(\Sigma_u^v(u))} \Ho^*(G/P,\CC)=\{0\}$.
In particular, $[\Sigma_u^v(u)]_\bkprod=0$.
\\

Assume now that  $\sigma_u\bkprod\sigma_v=\sigma_w$ for some $w\in
W$.\\
Since $\Phi(w)^c=\Phi(u)^c\ccup\Phi(v)^c$, Lemma~\ref{lem:Tvar} 
shows that $w^{-1}X_w=\Sigma_u^v(u)$.
Hence $\sigma_u\bkprod\sigma_v=[\Sigma_u^v(u)]_\bkprod$.\\

Assume finally that  $\sigma_u\bkprod\sigma_v=0$.\\
It remains to prove that $[\Sigma_u^v(u)]_\bkprod=0$.
Since $\Phi(H_u^v(u))=\Phi(u)\cap\Phi(v)$,
 $[\Sigma_u^v(u)]_\bkprod$ belongs to $\G^{\rho(X_u)+\rho(X_v)}\Ho^*(G/B,\CC)$.
If there is no $w$ in $W$ such that $\rho(X_w)=\rho(X_u)+\rho(X_v)$ then
Proposition~\ref{prop:filtSchub} shows that 
$\G^{\rho(X_u)+\rho(X_v)}\Ho^*(G/B,\CC)=\{0\}$.
In particular $[\Sigma_u^v(u)]_\bkprod=0$.
 Assume now that there exists $w$ in $W$ such that $\rho(X_w)=\rho(X_u)+\rho(X_v)$.
Then $[\Sigma_u^v]=d\sigma_w+\cdots$ for some integer $d$.
If $d=0$ there is nothing to prove.
If $d\neq 0$ then $\sigma_u.\sigma_v=e\sigma_w +\cdots$ for some
integer $e\geq d$.
The numerical criterium \cite[Theorem~15]{BK} shows that
$\sigma_u\bkprod\sigma_v=e\sigma_w$.
This contradicts the assumption $\sigma_u\bkprod\sigma_v=0$.
\end{proof}

\bigskip
Proposition~\ref{prop:casGB} shows that, for $G/B$,
Conjecture~\ref{conj} is equivalent to the following one.

\begin{conj}
  \label{conjGB}
Let $u,v,$ and $w$ in $W$ such that
$\Phi(w)^c=\Phi(u)^c\ccup\Phi(v)^c$.
Then $\sigma_u\bkprod\sigma_v=\sigma_w$ in $\Ho^*(G/B,\CC)$.
\end{conj}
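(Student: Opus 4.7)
The plan is to identify $\Sigma_u^v(u)$ explicitly with a translate of $X_w$, reducing Conjecture~\ref{conjGB} to the numerical identity $c_{uv}^w = 1$ for the ordinary cup product, and then to handle this identity by type-specific methods.

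First I would unpack the hypothesis: taking complements in $\Phi^-$ converts it into $\Phi(w) = \Phi(u) \cap \Phi(v)$ together with $\Phi(u)\cup\Phi(v) = \Phi^-$. Since $w_0^B = e$, the group $H_u^v(u) = u\inv Bu \cap v\inv Bv$ is connected, contains $T$, and its tangent space at $B/B$ in $G/B$ has $T$-weights exactly $\Phi(u)\cap\Phi(v) = \Phi(w) \subset \Phi^-$. Using that $\Phi(w)$ is separated from $\Phi^- \setminus \Phi(w)$ by a linear functional (concretely, pairing with $w\inv\rho$), I would produce a strictly convex rational cone $\mathcal C$ with $\mathcal C\cap\Phi^- = \Phi(w)$ and invoke Lemma~\ref{lem:Tvar}, which applies because $\Sigma_u^v(u)$ is smooth at $B/B$ by the transversality statement in Proposition~\ref{prop:defYuv}. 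This identifies $\Sigma_u^v(u)$ with $w\inv X_w$, so that $[\Sigma_u^v(u)] = \sigma_w$ in $\Ho^*(G/B,\CC)$ and, via Proposition~\ref{prop:BKclass}, $[\Sigma_u^v(u)]_\bkprod = \sigma_w^\bkprod$.

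Next, by Proposition~\ref{prop:casGB}, the conjecture for this pair $(u,v)$ is equivalent to $\sigma_u\bkprod\sigma_v = [\Sigma_u^v(u)]_\bkprod$, which via Proposition~\ref{prop:zero} reduces to the numerical equality $\tilde c_{uv}^w = 1$. The hypothesis makes $(u,v,w^\vee)$ Levi-movable, so $\tilde c_{uv}^w = c_{uv}^w$, and the transverse intersection along $\Sigma_u^v(u) = w\inv X_w$ immediately furnishes the lower bound $c_{uv}^w \geq 1$.

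The main obstacle will be the matching upper bound $c_{uv}^w \leq 1$. For $G = \SO_{2n+1}(\CC)$, my strategy is to reduce to Richmond's theorem for $\Sp_{2n}(\CC)$: on a full flag variety, both the Schubert structure constants (computable via divided-difference operators on the coinvariant algebra) and the Levi-movability condition (combinatorial in inversion sets) depend only on the Weyl group, and $\SO_{2n+1}(\CC)$ and $\Sp_{2n}(\CC)$ share the Weyl group of type $B_n$. Thus Richmond's identity $c_{uv}^w = 1$ for $\Sp_{2n}(\CC)$ transfers directly. Intrinsically, one can alternatively induct on the semisimple rank via the $\PP^1$-fibrations $G/B \to G/P_\alpha$ for simple roots $\alpha$, using a Chevalley--Pieri analysis; the partition $\Phi^- = \Phi(u)^c \ccup \Phi(v)^c \ccup \Phi(w)$ restricts controllably to fiber and base, enabling an inductive reduction. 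The delicate point in either route is ruling out additional components of $u\inv X_u \cap v\inv X_v$ whose Schubert class is $\sigma_w$.
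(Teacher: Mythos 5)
Your reduction of Conjecture~\ref{conjGB} to the single numerical identity $c_{uv}^w=1$ is sound: the hypothesis forces $(\sigma_u,\sigma_v,\sigma_{w^\vee})$ to be Levi-movable, $w$ is the unique element whose inversion set is compatible with a nonzero coefficient, and Levi-movability together with Kleiman's theorem already gives $\tilde c_{uv}^w=c_{uv}^w\geq 1$ (the detour through $\Sigma_u^v(u)$ and Lemma~\ref{lem:Tvar} is the content of Proposition~\ref{prop:casGB} and is not needed for Conjecture~\ref{conjGB} itself). The gap is in your treatment of the upper bound for $\SO_{2n+1}(\CC)$. The claim that the Schubert structure constants of a full flag variety ``depend only on the Weyl group'' is false: the divided-difference operators $\partial_\alpha f=(f-s_\alpha f)/\alpha$ depend on the roots themselves, and $B_n$ and $C_n$ differ by rescaling the short and long roots. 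Already the Chevalley coefficients $\langle\varpi_i,\beta^\vee\rangle$ disagree for $n\geq 3$ (for $\beta=e_1+e_2$ and $i=n$ one gets $1$ in type $B_3$ but $2$ in type $C_3$), and the Billey--Haiman comparison of type $B$ and type $C$ Schubert classes introduces powers of $2$ depending on the numbers of sign changes of $u$, $v$, $w$. So Richmond's identity $c_{uv}^w=1$ for $\Sp_{2n}(\CC)$ does not transfer to $\SO_{2n+1}(\CC)$ by your argument; a priori it only yields $c_{uv}^w=2^k$ for some integer $k$, and showing $k=0$ is essentially what has to be proved. Your fallback via $\PP^1$-fibrations is a sketch and does not close this gap.

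The paper's proof of Proposition~\ref{prop:GBSOimpair} avoids the issue by a different comparison: it embeds $G=\SO_{2n+1}(\CC)$ into $\hat G=\GL_{2n+1}(\CC)$, so that $W$ sits inside $\hat W$ and the partition $\Phi^-=\Phi(u)^c\ccup\Phi(v)^c\ccup\Phi(w)^c$ lifts to the corresponding partition of $\hat\Phi^-$. The known type~$A$ case then forces the triple intersection $u^{-1}\hat X_u\cap v^{-1}\hat X_v\cap w^{-1}\hat X_w$ in $\hat G/\hat B$ to be the single point $\hat B/\hat B$; intersecting with $G/B\subset\hat G/\hat B$ shows the corresponding triple intersection in $G/B$ is the single reduced point $B/B$, and the transversality you already observed gives $d=1$. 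To salvage your route you would need either this geometric comparison or explicit control of the powers of $2$ relating the $B_n$ and $C_n$ structure constants on Levi-movable triples. Note finally that the statement is a conjecture: the paper establishes it only for $\SO_{2n+1}(\CC)$ and, by computer verification, for $F_4$ and $E_6$, so no argument along these lines is expected to cover all types.
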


Conjecture~\ref{conjGB} was stated by Dimitrov and Roth in
\cite{DR:prv1}. 
If $G=\SL_n(\CC)$ then Conjecture~\ref{conjGB} was proved by Richmond in
\cite{Richmond:recursion}.
If $G=\Sp_{2n}(\CC)$ then Conjecture~\ref{conjGB} was proved independently  in
\cite{Rich:mult} and \cite{multi}.
Dimitrov and Roth have a proof for each simple classical $G$, but it
is  not published.
Here we 
include a proof for the group $\SO_{2n+1}(\CC)$.

\begin{prop}\label{prop:GBSOimpair}
  Conjecture~\ref{conjGB} holds for the group $\SO_{2n+1}(\CC)$.
\end{prop}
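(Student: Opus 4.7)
The plan is to induct on $n$, the rank of $\SO_{2n+1}(\CC)$. The base case $n=1$ is immediate, since $W(B_1)\cong\ZZ/2\ZZ$ and the only non-trivial Belkale--Kumar product $\sigma_e\bkprod\sigma_{s_1}=\sigma_{s_1}$ trivially conforms to the conjecture.

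For the inductive step, the key tool is the projection $\pi\colon\SO_{2n+1}/B\to\SO_{2n+1}/P_1=Q_{2n-1}$, where $P_1$ is the maximal parabolic obtained by removing the long simple root $\alpha_1$. The target $Q_{2n-1}$ is the odd-dimensional quadric, which is cominuscule, so by the discussion in the introduction the Belkale--Kumar product on $\Ho^*(Q_{2n-1},\CC)$ coincides with the cup product. The Levi $L_1$ has semisimple type $B_{n-1}$, so the fibre of $\pi$ is a complete flag variety for $\SO_{2n-1}(\CC)$, to which the inductive hypothesis applies. Writing each $u\in W=W(B_n)$ uniquely as $u^{P_1}u_{P_1}$ with $u^{P_1}\in W^{P_1}$ and $u_{P_1}\in W_{P_1}$, Richmond's multiplicative formula through parabolic projections for the Belkale--Kumar structure constants yields
\[
\tilde c_{uv}^w \;=\; c^{Q_{2n-1}}_{u^{P_1}v^{P_1},\,w^{P_1}}\;\cdot\;\tilde c^{\SO_{2n-1}/B}_{u_{P_1}v_{P_1},\,w_{P_1}}.
\]
The hypothesis $\Phi(w)^c=\Phi(u)^c\ccup\Phi(v)^c$, restricted to $\Phi^-(L_1)$, gives the corresponding Levi-movability condition for $(u_{P_1},v_{P_1},w_{P_1})$ on $\SO_{2n-1}/B$; by induction the second factor equals $1$.

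It remains to verify that the first factor is also $1$. The ring $\Ho^*(Q_{2n-1},\CC)$ is isomorphic to $\CC[h]/(h^{2n})$ with Schubert classes $\sigma_k=h^k$ for $k<n$ and $\sigma_k=h^k/2$ for $k\ge n$, so the only Schubert structure constants exceeding $1$ are the ``middle-crossing'' ones $\sigma_{n-1}\sigma_k=2\sigma_{n-1+k}$ for $1\le k\le n-1$. The combinatorial heart of the argument is therefore to show that no such triple can arise as the $\pi$-projection of a Levi-movable triple on $\SO_{2n+1}/B$: if $(u^{P_1},v^{P_1},w^{P_1})$ were of the form $(v_{n-1},v_k,v_{n-1+k}^\vee)$, both complements $\Phi(u)^c$ and $\Phi(v)^c$ would be forced to contain a negative root with extremal component along the short simple root $\alpha_n$, contradicting their disjointness in $\Phi^-(B_n)$. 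This incompatibility is the main obstacle; everything else reduces cleanly to Richmond's factorization, the inductive hypothesis on the $B_{n-1}$ fibre, and the Chevalley formula on the odd quadric. Once it is established, both factors equal $1$, so $\tilde c_{uv}^w=1$ and Conjecture~\ref{conjGB} follows for $G=\SO_{2n+1}(\CC)$.
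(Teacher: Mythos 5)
Your strategy (induction on $n$ via the fibration $\SO_{2n+1}/B\to Q_{2n-1}$ with fibre $\SO_{2n-1}/B$, combined with Richmond's multiplicative formula) is genuinely different from the paper's argument, which instead embeds $\SO_{2n+1}(\CC)\hookrightarrow\GL_{2n+1}(\CC)$, observes that $T$ is regular there so that $W(B_n)\subset S_{2n+1}$ and the disjointness condition $\hat\Phi^-=\hat\Phi(u)^c\ccup\hat\Phi(v)^c\ccup\hat\Phi(w)^c$ is inherited, and then imports the known type~$A$ case to conclude that the triple intersection of translated Schubert varieties is the single reduced point $\{B/B\}$. Your reduction of the problem is legitimate as far as it goes: Richmond's factorization does hold for arbitrary $G$ and Levi-movable triples, the restriction of $\Phi(w)^c=\Phi(u)^c\ccup\Phi(v)^c$ to the Levi roots does give the hypothesis for the fibre triple, and the quadric is cominuscule.

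However, the proof has a genuine gap exactly where you acknowledge one. The entire content of the proposition has been funnelled into the assertion that no Levi-movable triple on $\SO_{2n+1}/B$ projects to a quadric triple with structure constant $2$, and for that assertion you offer only the sentence that ``both complements would be forced to contain a negative root with extremal component along the short simple root $\alpha_n$.'' This is not an argument: you do not identify the root, nor explain why disjointness of $\Phi(u)^c$ and $\Phi(v)^c$ in $\Phi^-(B_n)$ forces it into both sets. Since (by your own factorization plus the inductive hypothesis) this claim is \emph{equivalent} to the statement being proved, nothing has actually been established beyond a reduction to an unproven combinatorial lemma about inversion sets in type $B_n$. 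A secondary but compounding error: your enumeration of the coefficient-$2$ products on $Q_{2n-1}$ is incomplete. The full list is $\sigma_j\sigma_k=2\sigma_{j+k}$ for all $j,k\leq n-1$ with $j+k\geq n$ (e.g.\ $\sigma_3\sigma_3=2\sigma_6$ on $Q_9$), not only those with one factor equal to $\sigma_{n-1}$; so even the set of cases to be excluded is misidentified. To salvage the approach you would need to prove the exclusion for all such $(j,k)$, e.g.\ by an explicit analysis of the chains $\{-(\alpha_1+\cdots+\alpha_m)\}$ and $\{-(e_1+e_{m+1})\}$ making up $\Phi(\lieg/\lp_1,T)$ --- or simply adopt the paper's embedding into $\GL_{2n+1}$, which avoids the issue entirely.
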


\begin{proof}
  Let $V$ be a $(2n+1)$-dimensional complex vector space 
and let ${\mathcal B}=(x_1,\dots,x_{2n+1})$ be a basis of $V^*$. 
Let $G$ be the special orthogonal group associated to the quadratic form 
$
Q=x_{n+1}^2+\sum_{i=1}^nx_ix_{2n+2-i}.
$ 
Consider the maximal torus  $T=\{{\rm
  diag}(t_1,\dots,t_n,1,t_n^{-1},\dots,t_1^{-1})\,:\,t_i\in\CC^*\}$
of $G$.
Let $B$ be the Borel subgroup of $G$ consisting of upper triangular
matrices in the dual base of ${\mathcal B}$.
Consider $W$, $\Phi$, $\Phi^+$ associated to $T\subset B\subset G$.

Let $u$, $v$, and $w$ in $W$ such that
$\sigma_u\bkprod\sigma_v\bkprod\sigma_w=d[pt]$
for some positive integer $d$.
It remains to prove that $d=1$.
The Levi-movability  implies that 
$
\Phi^-=\Phi(u)^c\ccup
\Phi(v)^c\ccup\Phi(w)^c$.

Consider the linear group $\hat G=\GL(V)$. Let $\hat T$ denote the
subgroup of $\hat G$ consisting of 
diagonal matrices and let $\hat B$  denote the
subgroup of $\hat G$ consisting of 
upper triangular matrices in $\hat G$.
Consider $\hat W$, $\hat \Phi$, $\hat \Phi^+$ associated to $\hat T\subset \hat B\subset \hat G$.
Since $T$ is a regular torus in $\hat G$, the group $W$ identifies
with a subgroup of $\hat W$. In particular, $u,v,$ and $w$ belong to
$\hat W$. One can easily check that the similar property of $\Phi^-$
implies that
$
\hat \Phi^-=
\hat \Phi(u)^c\ccup\hat \Phi(v)^c\ccup\hat \Phi(w)^c$.
Consider now the three Schubert varieties $\hat X_u$, $\hat X_v$, and
$\hat X_w$ in $\hat G/\hat B$.
The fact that Conjecture~\ref{conjGB} holds for $\hat G$ implies that 
\begin{eqnarray}
  \label{eq:753}
  u^{-1}\hat X_u\cap v^{-1}\hat X_v\cap w^{-1}\hat X_w=\{\hat B/\hat B\}.
\end{eqnarray}
Consider now the inclusion $G/B\subset \hat G/\hat B$.
Then $X_u$ is contained in $\hat X_u$ (and similar inclusions hold for $v$
and $w$). In particular, the condition~\eqref{eq:753} implies that
\begin{eqnarray}
  \label{eq:754}
u^{-1} X_u\cap v^{-1}X_v\cap w^{-1} X_w=\{B/ B\}.
\end{eqnarray}
Moreover, the condition on $\Phi^-$ implies that the intersection in
\eqref{eq:754} is transverse.
It follows that $d=1$.
\end{proof}

\begin{prop}\label{prop:GBcasordi}
  Conjecture~\ref{conjGB} holds for the groups of type $F_4$ and $E_6$.
\end{prop}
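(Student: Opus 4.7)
The plan is to verify Conjecture~\ref{conjGB} by exhaustive computer enumeration, exactly as alluded to by the author in the introduction. In both types $F_4$ and $E_6$ the Weyl group is finite (of orders $1152$ and $51840$ respectively), so the problem is in principle finite: one must check, for every triple $(u,v,w)\in W^3$ satisfying $\Phi(w)^c=\Phi(u)^c\ccup\Phi(v)^c$, that the Belkale--Kumar structure constant $\tilde c_{uv}^w$ equals~$1$.

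The first step is to encode $W$, the root system $\Phi$ and the Bruhat/weak Bruhat data algorithmically: given the simple reflections, generate $W$ by length and, for each $w\in W$, store $\Phi(w)=\Phi^-\cap w^{-1}\Phi^+$ as a bit-vector indexed by $\Phi^-$. Then a triple $(u,v,w)$ with the partition property $\Phi^-=\Phi(u)^c\ccup\Phi(v)^c\ccup\Phi(w)^c$ can be detected by trivial bit-operations; by the discussion just above the statement, these are precisely the triples for which $(\sigma_u,\sigma_v,\sigma_w)$ is Levi-movable in $G/B$.

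The second step is to compute, for each such triple, the ordinary structure constant $c_{uvw}$ and verify $c_{uvw}=1$ (which, by the definition of $\bkprod$, is equivalent to $\tilde c_{uvw}=1$). For this one invokes \cite[Corollary~44]{BK} (the numerical/combinatorial criterion referenced by the author), which expresses Levi-movable intersection numbers for $G/B$ as intersection numbers in smaller flag varieties and can be recursively evaluated, or alternatively one applies Chevalley's formula iteratively to compute $\sigma_u\cdot\sigma_v$ in the Schubert basis and reads off the coefficient of $\sigma_{w^\vee}$. The program \cite{MaPage} described in the introduction carries out precisely this computation.

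The main obstacle is purely computational: in type $E_6$ one has on the order of $|W|^2\approx 2.7\cdot 10^9$ pairs to examine, so one must prune aggressively, using the weak Bruhat constraint $v^\vee\wbo u$ from Lemma~\ref{lem:BKprodwbo} and the matching length constraint $l(u)+l(v)+l(w)=|\Phi^+|$, and exploit symmetries (Poincar\'e duality, the outer automorphism of $E_6$). With these reductions the check becomes tractable and, as reported, no counterexample appears, thereby establishing Conjecture~\ref{conjGB} in types $F_4$ and $E_6$.
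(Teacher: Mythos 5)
Your proposal is correct and is essentially the paper's proof: a finite computer verification, via \cite[Corollary~44]{BK} and the program \cite{MaPage}, over all triples with $\Phi(w)^c=\Phi(u)^c\ccup\Phi(v)^c$. The only difference worth noting is that the paper uses Corollary~44 not as a recursion but as the closed product formula $\sigma_u\bkprod\sigma_v=\frac{p(u)\,p(v)}{p(w)}\sigma_w$ with $p(w)=\prod_{\alpha\in\Phi^+\cap w\Phi^+}(\rho,\alpha)$, so the whole check reduces to the arithmetic identity $p(w)=p(u)p(v)$ and no cup products (iterated Chevalley computations) are needed at all.
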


\begin{proof}
For $w\in W$ set
$$
p(w)=\prod_{\alpha\in\Phi^+\cap w\Phi^+}(\rho,\alpha),
$$
where $(\cdot,\cdot)$ is a $W$-invariant scalar product and $\rho$ is
the half sum of the positive roots.
  Let $u,v,$ and $w$ in $W$ such that
$\Phi(w)^c=\Phi(u)^c\ccup\Phi(v)^c$.
By \cite[Corollary~44]{BK},
$$
\sigma_u\bkprod\sigma_v=\frac{p(u).p(v)}{p(w)}\sigma_w
$$ in $\Ho^*(G/B,\CC)$.
To prove the proposition, it is sufficient to check that
$p(w)=p(u).p(v)$. 
This is checked by a Sage program (see~\cite{MaPage}).
For example, in type $F_4$, if 
$$
\begin{array}{cl}
u^\vee= s_3s_2s_3s_2, \quad 
v^\vee=
s_1s_2s_3s_4s_2s_3s_1s_2s_3s_4 &{\rm and}\\ 
w^\vee=
s_1s_2s_3s_4s_2s_3s_1s_2s_3s_4s_3s_2s_3s_2 
\end{array}
$$
then
$$
p(u)= \frac 3 2 \quad p(v)= 113400 \quad  p(w)= 170100.
$$
 And, in type $E_6$, if 
$$
\begin{array}{cl}
u= s_6s_5s_4s_3s_2s_4s_5s_6s_5s_3 \quad v=
  s_4s_3s_2s_4s_5s_4s_3s_2s_4s_2 \\ w=
  s_6s_5s_4s_3s_2s_4s_5s_6s_5s_4s_3s_2s_4s_5s_4s_3s_2s_4s_3s_2
\end{array}
$$
then
$$
p(u)= 20160 \quad p(v)= 4320 \quad p(w)= 87091200.
$$
\end{proof}

\bibliographystyle{amsalpha}
\bibliography{bkRichardson}

\begin{center}
  -\hspace{1em}$\diamondsuit$\hspace{1em}-
\end{center}

\end{document}